\documentclass[11pt,letterpaper,reqno]{amsart}
\usepackage{fullpage}
\usepackage{amsmath,amsthm,amsfonts,amssymb,amscd}
\usepackage{empheq}
\usepackage{lipsum}
\usepackage{lastpage}
\usepackage{enumerate}
\usepackage[shortlabels]{enumitem}
\usepackage{mathrsfs}
\usepackage{xcolor}
\usepackage{listings}
\usepackage{chngcntr}

\usepackage{bbding}
\newcommand{\cmark}{\text{\Checkmark}}
\newcommand{\xmark}{\text{\XSolidBrush}}
\newcommand{\namark}{\text{\rule[2pt]{11.5pt}{1.2pt}}}

\usepackage[utf8]{inputenc}

\usepackage{hyperref}
\hypersetup{
    colorlinks=true,
    linkcolor=blue,
    citecolor=blue,
    urlcolor=blue
}

\usepackage{array}
\usepackage{makecell}
\usepackage{diagbox}

\newcommand{\tentry}[4]{%
  $\begin{array}{r@{\,:\,}l}
    \rule{0pt}{2.5ex}#1 & \mathrm{#2}\\
    \rule{0pt}{2.5ex}#3 & \mathrm{#4}
  \end{array}$%
}
\newcommand{\theading}[2]{%
  \makecell{\rule{0pt}{2.5ex}$#1$\\
            \rule{0pt}{2.5ex}$#2$}%
}

\newtheorem{theorem}{Theorem}[section]
\newtheorem{corollary}[theorem]{Corollary}
\newtheorem{lemma}[theorem]{Lemma}

\newtheorem{mytable}[theorem]{Table}
\newtheorem{proposition}[theorem]{Proposition}

\theoremstyle{definition}
\newtheorem{definition}[theorem]{Definition}

\mathtoolsset{showonlyrefs=true}
\numberwithin{equation}{section}
\numberwithin{figure}{section}
\numberwithin{table}{section}

\newcommand{\lrabs}[1]{\!\left\lvert #1 \right\lvert}
\newcommand{\lrp}[1]{\!\left(#1\right)}
\newcommand{\lrb}[1]{\!\left[#1\right]}
\newcommand{\lrcb}[1]{\!\left\{#1\right\}}
\newcommand{\lrfloor}[1]{\!\left\lfloor #1 \right\rfloor}

\mathtoolsset{showonlyrefs=true}
\numberwithin{equation}{section}
\numberwithin{figure}{section}
\numberwithin{table}{section}

\allowdisplaybreaks

\let\subsectiontemp\subsection
\renewcommand{\subsection}[1]{ 
    \subsectiontemp{#1} \hfill\vspace{0.5\linespacing} 
}

\makeatletter
\NewDocumentCommand{\sump}{e{_}}
 {%
  \DOTSB
  \mathop{\IfNoValueTF{#1}{\sump@{}}{\sump@{#1}}}%
  \nolimits
 }
\newcommand{\sump@}[1]{\mathpalette\sump@@{#1}}
\newcommand{\sump@@}[2]{%
  \ifx#1\displaystyle
    {\sump@display{#2}}%
  \else
    \sum@\nolimits'_{#2}%
  \fi
}
\newcommand{\sump@display}[1]{%
  \sbox\z@{$\m@th\displaystyle\sum@\nolimits'$}%
  \sbox\tw@{$\m@th\displaystyle\sum@\limits_{#1}$}%
  \sbox\@tempboxa{$\m@th\displaystyle'$}
  \mathop{\sum@\nolimits' \kern-\wd\@tempboxa}\limits_{#1}%
  \ifdim\wd\z@>\wd\tw@
    \kern\dimexpr\wd\z@-\wd\tw@\relax
  \fi
}
\makeatother

\newcommand{\ben}{\begin{equation}}
\newcommand{\een}{\end{equation}}

\newcommand{\genlegendre}[4]{%
  \genfrac{(}{)}{}{#1}{#3}{#4}%
  \if\relax\detokenize{#2}\relax\else_{\!#2}\fi
}

\author[M. Bohanek]{Maggie Bohanek}
\address[M. Bohanek]{Department of Mathematical Sciences, Smith College, Northampton, MA}
\email{maggie.bohanek@yahoo.com}

\author[O. McGinty]{Owen McGinty}
\address[O. McGinty]{Department of Mathematics, University of California Berkeley, Berkeley, CA }
\email{owen\_mcginty@berkeley.edu}

\author[E. Ross]{Erick Ross}
\address[E. Ross]{School of Mathematical and Statistical Sciences, Clemson University, Clemson, SC}
\email{erickjohnross@gmail.com}

\author[Y. Su]{Yanhui Su}
\address[Y. Su]{School of Mathematical and Statistical Sciences, Clemson University, Clemson, SC}
\email{yanhuis@clemson.edu}

\author[H. Xue]{Hui Xue}
\address[H. Xue]{School of Mathematical and Statistical Sciences, Clemson University, Clemson, SC}
\email{huixue@clemson.edu}

\subjclass[2020]{11F11 and 11F03.}
\keywords{Eisenstein series; zeros; Serre derivative; interlacing; Stieltjes interlacing}

\title[Interlacing for Zeros]{Interlacing for zeros of the Serre \\ derivative of Eisenstein series}

\begin{document}

\begin{abstract}
    In 1970, Rankin and Swinnerton-Dyer showed that the non-elliptic zeros of Eisenstein series $E_k$ in the fundamental domain all lie on the lower arc $\{ e^{i\theta}: \frac{\pi}{2} < \theta < \frac{2\pi}{3}\}$. Very recently, Sugibayashi showed that the same property also holds for the Serre derivative $\vartheta_k(E_k)$ of Eisenstein series. In this paper, we first give very precise estimates for where exactly these zeros are located on the lower arc. These location estimates then allow us to prove four main results. First, we show that the zeros of $\vartheta_\ell(E_\ell)$ Stieltjes interlace with the zeros of $\vartheta_k(E_k)$ on the lower arc for all $\ell > k$. Second, we classify precisely when the zeros of $\vartheta_\ell(E_\ell)$ (standard) interlace with the zeros of $\vartheta_k(E_k)$ on the lower arc. Third, we show that the zeros of $\vartheta_k(E_k)$ always (standard) interlace with the zeros of $E_{k+2}$ on the lower arc. Fourth, as an application of the third main result, we show that the zeros of the cuspidal projection of $\vartheta_k(E_k)$ all lie on the lower arc, extending a result of Xue and Zhu.
\end{abstract}

\maketitle
\tableofcontents

\section{Introduction}
For an even integer $k \geq 4$, the normalized Eisenstein series of weight $k$ on the upper half-plane is defined by
\begin{align}
    E_k(z) = \frac{1}{2}\sum_{\substack{c,d \in \mathbb{Z} \\ \gcd(c,d)=1}} \frac{1}{(cz+d)^k}.
\end{align}
Recall that $E_k(z)$ is a modular form of weight $k$ for $\text{SL}_2(\mathbb{Z})$, and that the action of $\text{SL}_2(\mathbb{Z})$ on the upper half plane has a standard fundamental domain $\mathcal{F}$ given by
\begin{align}
    \mathcal{F} = \left\{|z| \geq 1, -\frac{1}{2} \leq \text{Re}(z) \leq 0 \right\} \cup \left\{|z| > 1, 0 < \text{Re}(z) < \frac{1}{2}\right\}.
\end{align} Rankin and Swinnerton-Dyer \cite{rankin1970zeros} showed that all of the non-elliptic zeros of $E_k(z)$ in $\mathcal F$ are simple and lie on the lower arc $\mathcal{A}:=\left\{e^{i\theta}: \theta \in \left(\frac{\pi}{2}, \frac{2\pi}{3}\right)\right\}$. Note that the elliptic zeros $z=i,e^\frac{2\pi i }{3}$ correspond to the endpoints of this arc. For the rest of the paper, whenever we refer to a zero of a modular form, we will always be referring to a non-elliptic zero in the fundamental domain.

To prove that the zeros lie on the lower arc, Rankin and Swinnerton-Dyer studied the zeros of $(e^{i\theta})^\frac{k}{2}E_k\left(e^{i\theta}\right)$, which is a real-valued function with the same zeros as $E_k\left(e^{i\theta}\right)$. Later, Nozaki \cite{nozaki-separation} used the same function to prove that the zeros of $E_{k+12}(z)$ interlace with those of $E_k(z)$ on the lower arc $\mathcal{A}$; a result originally conjectured by Gekeler \cite{gekeler2001}. This method was later expanded by
Griffin et al. \cite{interlacing-Eisenstein} to establish necessary and sufficient conditions for the zeros of $E_{\ell}$ to interlace with the zeros of $E_{k}$. More recently, Frendreiss et al. \cite{Frendreiss2022} proved that the zeros of $E_\ell(z)$ Stieltjes interlace with the zeros of $E_k(z)$ for all $\ell >k$.

We recall here the definition of Stieltjes interlacing and (standard) interlacing.
\begin{definition}
    Let $\lrcb{\alpha_i}_{1 \le i \le m}$ and $\lrcb{\beta_j}_{1 \le j \le n}$ be two sequences of real numbers written in increasing/decreasing order. 
    Then $\lrcb{\alpha_i}_{1 \le i \le m}$ \textit{Stieltjes interlaces} with $\lrcb{\beta_j}_{1 \le j \le n}$ if there exists at least one $\alpha_i$ value between any two consecutive $\beta_j$ values. 
\end{definition}

\begin{definition}\label{defn:interlacing-defn}
    Let $\lrcb{\alpha_i}_{1 \le i \le m}$ and $\lrcb{\beta_j}_{1 \le j \le n}$ be two sequences of real numbers written in increasing/decreasing order.
    Then $\lrcb{\alpha_i}_{1 \le i \le m}$ \textit{interlaces} with $\lrcb{\beta_j}_{1 \le j \le n}$ if there exists exactly one $\alpha_i$ value between any two consecutive $\beta_j$ values, and exactly one $\beta_j$ value between any two consecutive $\alpha_i$ values. 
\end{definition}

It was recently shown by Sugibayashi \cite{sugibayashi-serre} that the zeros of the Serre derivative of Eisenstein series $\vartheta_k(E_k)$ all lie on the lower arc.
(In fact, Sugibyashi showed this more generally for $\vartheta_k(f)$, where $f$ has all its zeros on the lower arc.)
Recall that the Serre derivative is a differential operator preserving modularity, defined for a weight $k$ modular form $f$ as
\begin{align}
    \vartheta_k(f) := D(f) - \frac{k}{12} E_2f,
\end{align}
where $D := \frac{1}{2\pi i } \frac{d}{dz}$ is the normalized differential operator. Note that the Serre derivative $\vartheta_k : M_k \to M_{k+2}$ maps a modular form of weight $k$ to a modular form of weight $k+2$.

In this paper, we first extend Sugibayashi's result to obtain quantitative estimates for the locations of the zeros of $\vartheta_k(E_k)$, up to error $O\left(\frac{1}{k^2}\right)$. 
We will then use our location estimates to prove the following four results. 
\begin{itemize}
    \item We show that the zeros of $\vartheta_\ell(E_\ell)$ Stieltjes interlace with the zeros of $\vartheta_k(E_k)$.
    \item We classify precisely when the zeros of $\vartheta_\ell(E_\ell)$ interlace with the zeros of $\vartheta_k(E_k)$.
    \item We show that the zeros of $\vartheta_\ell(E_\ell)$ interlace with the zeros of $E_{k+2}$.
    \item We show that the zeros of the cuspidal projection $\operatorname{cuspproj}\!\big(\vartheta_k(E_k)\big)$ all lie on the lower arc.
\end{itemize}

We now give an overview of how our results can be understood within a larger context. One of the overall motivations for this study is the philosophy that $\vartheta_k(E_k)$ can, in a certain sense, be viewed as the modified product ``$E_2\cdot E_k$"; see \cite{Xue-Prabhath-2023} for details. Many interlacing results are known for the product $E_w E_k$ for certain values of $w \ge 4$. So one is led to ask if any corresponding results hold for $w=2$. Since $E_2 E_k$ itself is not even modular, it seems that the most natural generalization of this product to $w=2$ is to use the modified product $``E_2\cdot E_k" = \vartheta_k(E_k)$. We note six natural questions that arise from the above philosophy. In the following, we will state each of the relevant facts in terms of $E_w E_k$ for purposes of symmetry; however note that $E_w$ does not have any non-elliptic zeros for $w=4,6,8,10,14$. 
\begin{itemize}
    \item 
    For $w \ge 4$, the results of \cite{rankin1970zeros} imply that $E_w E_k$ has all its zeros located on the lower arc. So the first natural question is to ask if the same property continues to hold for $``E_2\cdot E_k"=\vartheta_k(E_k)$. This question is answered in the affirmative by Sugibayashi's work. 
    \item 
    The second natural question is to ask where exactly on $\mathcal A$ these zeros of $``E_2\cdot E_k"=\vartheta_k(E_k)$ are located. We will determine the answer to this question in Corollary \ref{cor:actual-zero-sample-zero-distance-bound}.
    \item 
    For $w=4,6,8,10,14$, the results of \cite{Frendreiss2022} imply that the zeros of $E_wE_{\ell}$ Stieltjes interlace with the zeros  $E_wE_{k}$ for all $\ell>k$. So the third natural question is to ask if the same property continues to hold for $``E_2\cdot E_k"=\vartheta_k(E_k)$. We will answer this question in the affirmative in Theorem \ref{thm:stieltjes-theorem}.
    \item 
    For $w=4,6,8,10,14$, the results of \cite{interlacing-Eisenstein} yield a complete classification of when the zeros of $E_w E_\ell$ interlace with the zeros of $E_w E_k$ for all $\ell > k$. So the fourth natural question is to ask if one can also give a similar classification for $``E_2\cdot E_k"=\vartheta_k(E_k)$. We give such a classification in Theorem \ref{thm:perfect-interlacing-theorem}.
    \item 
    For $w=4,6,8$, the results of \cite{interlacing-Eisenstein} also show that the zeros of $E_w E_k$ interlace with the zeros of $E_{k+w}$. So the fifth natural question to ask is if 
    the same property also holds for $``E_2\cdot E_k"=\vartheta_k(E_k)$.
    We will answer this question in the affirmative in Theorem \ref{thm:interlacing_thetakEk_Ek+2}.
    \item 
    For $w=4,6,8$, the results of \cite{Xue-Zhu-2021} show that the zeros of the cuspidal projection of a product of Eisenstein series $\operatorname{cuspproj}(E_w E_k) = E_wE_k-E_{k+w}$ all lie on the lower arc. 
    So the sixth natural question is to ask is if the same property also holds for 
    $\operatorname{cuspproj}(``E_2 \cdot E_k") = \operatorname{cuspproj}\!\big(\vartheta_k(E_k)\big) = \vartheta_k(E_k) + \frac{k}{12}E_{k+2}$.
    We will answer this question in the affirmative in Corollary \ref{cor:cuspidal-proj}.
\end{itemize}

The precise details of our location estimates for the zeros of $\vartheta_k(E_k)$ are given in Corollary \ref{cor:actual-zero-sample-zero-distance-bound}.
We state the four remaining main results here.
\begin{theorem}\label{thm:stieltjes-theorem}
    For all even integers $\ell > k \ge 4$, the zeros of $\vartheta_\ell(E_\ell)$ Stieltjes interlace with the zeros of $\vartheta_k(E_k)$ on the lower arc.
\end{theorem}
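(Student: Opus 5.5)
The plan is to derive Theorem \ref{thm:stieltjes-theorem} from the location estimates of Corollary \ref{cor:actual-zero-sample-zero-distance-bound}, in the same way \cite{Frendreiss2022} derived Stieltjes interlacing for $E_k$ from the Rankin--Swinnerton-Dyer approximation. Following Sugibayashi, I would work with a real-valued normalization $F_k(\theta)$ of $\vartheta_k(E_k)(e^{i\theta})$ on $\left(\frac{\pi}{2},\frac{2\pi}{3}\right)$, obtained from $e^{ik\theta/2}$ times a suitable combination of $E_k$ and its derivative. Its dominant part is an explicit trigonometric expression in $k\theta/2$, plus an error that is exponentially small or $O(1/k)$. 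This gives explicit \emph{sample zeros} $\alpha_{k,1} > \alpha_{k,2} > \cdots$ on $\mathcal A$ with the following properties:
\begin{itemize}
\item they are spaced essentially uniformly in $\theta$, with gaps approximately $2\pi/k$ up to a slowly varying correction;
\item each actual zero $\theta_{k,m}$ of $\vartheta_k(E_k)$ satisfies $|\theta_{k,m}-\alpha_{k,m}| \le C/k^2$ for an explicit constant $C$;
\item the zeros are in bijection with the sample zeros.
\end{itemize}

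\textbf{Step 1 (reduction to sample points).} Fix consecutive actual zeros $\theta_{k,m+1}<\theta_{k,m}$. It suffices to produce an index $j$ with $\theta_{k,m+1}<\theta_{\ell,j}<\theta_{k,m}$. Using the error bounds, this holds as soon as some sample zero $\alpha_{\ell,j}$ satisfies
\[
\alpha_{k,m+1}+\tfrac{C}{k^2}+\tfrac{C}{\ell^2} \;<\; \alpha_{\ell,j} \;<\; \alpha_{k,m}-\tfrac{C}{k^2}-\tfrac{C}{\ell^2}.
\]
So the problem becomes purely about the explicit sample points.

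\textbf{Step 2 (counting).} The sample zeros are (approximately) solutions of an equation of the form $\frac{k\theta}{2}+\varphi(\theta)\in \pi\mathbb{Z}+c$, with $\varphi$ an explicit slowly varying phase. The relevant interval therefore corresponds to a $\theta$-window of length about $2\pi/k-2C/k^2-2C/\ell^2$. Over that window the phase $\frac{\ell\theta}{2}+\varphi(\theta)$ advances by roughly $\pi\ell/k - O(1/k)$. When $\ell\ge k+4$, or $k$ is large, this exceeds $\pi$ by a definite margin, so the window must contain a sample zero of $\vartheta_\ell(E_\ell)$. I would make this quantitative by bounding $\varphi'$ on $\mathcal A$ and comparing the total phase increments of the two functions across the window.

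\textbf{Step 3 (the tight case; main obstacle).} The main obstacle is $\ell=k+2$, and more generally $\ell/k$ close to $1$. There the phase advance is only $\pi(1+2/k)$, so the surplus is comparable to the $O(1/k)$ error terms. I would handle this by comparing sample points directly rather than through phase increments. Writing $\alpha_{k,m}$ explicitly, I would show that $\alpha_{\ell,m}$ (or $\alpha_{\ell,m+1}$) lies strictly between $\alpha_{k,m+1}$ and $\alpha_{k,m}$, with distance to each endpoint of order $1/k$ and an explicit constant. Because the sample points of $k$ and $\ell$ drift apart monotonically in $m$, the only danger is at the two ends of the arc, near $\theta=\pi/2$ and $\theta=2\pi/3$. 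At those ends I would check the first and last gaps separately using the exact endpoint behaviour of the sample formulas.

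\textbf{Step 4 (small weights).} The constants will only be effective for $k\ge k_0$. Finitely many pairs with small $k$ remain (for large $\ell$ the Step~2 argument is uniform), and for these I would verify the Stieltjes interlacing by direct numerical computation of the zeros of $\vartheta_k(E_k)$ and $\vartheta_\ell(E_\ell)$ on the arc.
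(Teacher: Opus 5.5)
Your proposal has a genuine gap. Step 1 rests on the claim that every actual zero lies within $C/k^2$ of an explicit sample zero solving $\frac{k\theta}{2}+\varphi(\theta)\in\pi\mathbb{Z}+c$ with $\varphi$ slowly varying; these are the analogue of the paper's $\theta^*_{k,i}$.

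\textbf{The error bound fails near $\frac{2\pi}{3}$.} Besides the main term $B_1(\theta)\sin(\frac{k}{2}\theta-B_2(\theta))$, the function also contains the contribution $B_3(\theta)d(\theta)^{-k}$ of the $(z+1)^{-k}$ terms, with $d(\theta)^{-k}\approx e^{-\frac{\sqrt3}{2}s}$ and $s=k(\frac{2\pi}{3}-\theta)$. This term is neither exponentially small in $k$ nor $O(1/k)$. At the zero nearest $\frac{2\pi}{3}$ (where $s\ge 3.1$) it can be about $0.07$ for every $k$. It moves the zero by $W_{k,i}/k$ with $|W_{k,i}|\approx e^{-\frac{\sqrt3}{2}s^*_{k,i}}$. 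That is a shift of order $1/k$, not $1/k^2$, for the $\asymp\log k$ zeros with $s\le\frac{2}{\sqrt3}\log k$. If you build this term into your sample zeros, as the paper's postsample zeros do, you recover $O(1/k^2)$ accuracy. But then the correction varies with $\theta$ at scale $1/k$ through $s$, so the slowly-varying-phase counting of Steps 2--3 no longer applies. Relatedly, the surplus $\pi(\ell-k)/k$ in Step 2 decays at the same rate as your errors, so large $k$ buys no extra room.

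\textbf{The tight case is misidentified, and endpoint checks do not close it.} The hard case is not $\ell=k+2$. Since $k(\frac{2\pi}{3}-\theta^{**}_{k,i})=\frac{2\pi}{3}(3i-\delta_k)$, sample zeros of weights $k$ and $\ell$ with different $s^{**}$-values are $\gg 1/k$ apart near $\frac{2\pi}{3}$. They can approach each other only where $s\gtrsim k/(\ell-k)$, and there the displacement is exponentially small; this covers $\ell=k+2$. The dangerous case is $\ell\equiv k\pmod 6$ with $\ell-k$ small (e.g.\ $\ell=k+6$). Then $s^{**}_{k,i}=s^{**}_{\ell,i}$, and the weight-$\ell$ sample zero in a gap sits only about $s(\ell-k)/(k\ell)=O(1/k^2)$ from a weight-$k$ sample zero, contradicting your ``distance of order $1/k$''. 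Meanwhile both actual zeros are displaced by $O(e^{-\frac{\sqrt3}{2}s}/k)$. This happens across $\asymp\log k$ gaps, not just the first and last, so checking the endpoint gaps separately cannot fix it.

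\textbf{The missing idea.} The displacement is essentially a function of $s$ alone, determined by $2\sin(\frac{W}{2})e^{-\frac{\sqrt3}{2}W}=(-1)^{i-\delta_k}e^{-\frac{\sqrt3}{2}s^*_{k,i}}$. Hence when $s^{**}_{k,i}=s^{**}_{\ell,j}$ the two displacements nearly cancel: $|W_{k,i}-W_{\ell,j}|<3.7/k$, so the difference of shifts is $O(1/(k\ell))$ and fits inside the sample-zero margin $3(\ell-k)/(\ell k)$. When $s^{**}_{k,i}\ne s^{**}_{\ell,j}$, you use either the $\gg 1/k$ separation or the exponential decay of $W$. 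This three-case comparison is what lets the paper pass from interlacing of sample zeros with a margin to interlacing of the actual zeros. Your proposal needs it, or an equivalent cancellation argument, in place of the uniform $C/k^2$ error bound.
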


\begin{theorem} \label{thm:perfect-interlacing-theorem} 
    For all even integers $\ell > k \ge 4$, the zeros of $\vartheta_\ell(E_\ell)$ interlace with the zeros of $\vartheta_k(E_k)$ on the lower arc if and only if $\ell - k \in \{2,4,6,8,12\}$ or $(k,\ell)$ is one of the exceptional pairs given in Table \ref{table:exceptional-pairs-(k,ell)}.
\end{theorem}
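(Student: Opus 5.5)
Both directions will come from the location estimates for the zeros of $\vartheta_k(E_k)$ (Corollary \ref{cor:actual-zero-sample-zero-distance-bound}), which I take to give each zero as an explicit ``sample point'' up to an error $O(1/k^2)$. Following Rankin--Swinnerton-Dyer, I expect the real-valued function $e^{ik\theta/2}\vartheta_k(E_k)(e^{i\theta})$ (suitably normalized, with weight $k+2$) to be approximately $\cos\big(\tfrac{k+2}{2}\theta + \phi_k(\theta)\big)$, with a small phase correction coming from the $E_2$ term. Its zeros on $\mathcal A$ are then approximately $\theta_{k,m}$ solving $\tfrac{k+2}{2}\theta_{k,m} + \phi_k(\theta_{k,m}) = \pi\big(m+\tfrac12\big)$ for $m$ in an explicit range. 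Writing $N(k)$ for the number of zeros, which is given by the valence formula for weight $k+2$ and depends on $k \bmod 12$, I reduce interlacing to two checks. First, the counts must satisfy $N(\ell)=N(k)+1$ or $N(\ell)=N(k)$, with the appropriate endpoint configuration. Second, the real points $\theta_{k,m}$ and $\theta_{\ell,m'}$ must alternate, once one accounts for the error intervals.

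For sufficiency when $\ell-k\in\{2,4,6,8,12\}$, I first note that the zero count increases by at most one. Since $\ell<k+12$ in the cases $2,4,6,8$, and $\ell-k=12$ adds exactly one zero, $N(\ell)-N(k)\in\{0,1\}$ in every case. Next I compare the sample points in the variable $x=\tfrac{k+2}{2}\theta$ (mod $\pi$). Consecutive zeros of $\vartheta_k(E_k)$ are spaced by roughly $2\pi/(k+2)$, and those of $\vartheta_\ell(E_\ell)$ by roughly $2\pi/(\ell+2)$. Because $\ell-k$ is at most $12$, the cumulative drift across the whole arc, which has length $\pi/6$, is at most one spacing. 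This is the same mechanism as in Nozaki and in \cite{interlacing-Eisenstein}. Concretely, I show that for each $m$ the point $\theta_{\ell,m}$ lies strictly between $\theta_{k,m-1}$ and $\theta_{k,m}$, or between $\theta_{k,m}$ and $\theta_{k,m+1}$, with one fixed choice throughout, and with margin $\gg 1/k$ at the worst $m$ (near the endpoints $\pi/2$ and $2\pi/3$). Since this margin exceeds the $O(1/k^2)$ error once $k$ is large, interlacing follows for $k\ge K_0$. The finitely many pairs with $k<K_0$ I would handle by direct numerical computation of the zeros.

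For necessity when $\ell-k\ge 10$, $\ell-k\neq 12$, there are two sources of failure. If $N(\ell)\ge N(k)+2$, interlacing fails immediately; this covers all $\ell-k\ge 14$ beyond a few small cases, and also $\ell-k=10$ in the residues where two zeros are gained. In the remaining cases the counts are compatible, but the drift of more than one spacing across the arc forces two zeros of $\vartheta_\ell(E_\ell)$ into a single gap between consecutive zeros of $\vartheta_k(E_k)$. I would exhibit such a gap explicitly, near an endpoint, again with margin $\gg 1/k$ against the $O(1/k^2)$ error. Small $k$, where the asymptotic margins do not dominate the error, produce exactly the exceptional pairs of Table \ref{table:exceptional-pairs-(k,ell)}, and these would be found and verified by computer.

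The main obstacle is making the constants explicit. This means obtaining a fully explicit version of the $O(1/k^2)$ error and of the phase $\phi_k$, uniformly in $\theta$ near the endpoints, so that $K_0$ is small enough for the finite check to be feasible and the exceptional list can be certified. I would first try to bound the tail of the Eisenstein sum together with the $E_2$ correction term by elementary estimates at $|cz+d|\ge \sqrt2$.
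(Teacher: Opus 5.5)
\textbf{The gap is in your sufficiency step.} You claim that, after pairing indices, every zero of $\vartheta_\ell(E_\ell)$ lies in its gap with margin $\gg 1/k$, so an $O(1/k^2)$ error is eventually harmless. That claim is false, and it fails in cases the theorem asserts.

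In $s=k(\tfrac{2\pi}{3}-\theta)$ the presample points are $s^{**}_{k,i}=\tfrac{2\pi}{3}(3i-\delta_k)$. So if $\ell\equiv k \pmod 6$ (i.e.\ $\gamma_\ell=\gamma_k$, as for $\ell-k\in\{6,12\}$), they coincide for the two weights, and
\[
\theta^{**}_{\ell,i}-\theta^{**}_{k,i}=s^{**}_{k,i}\,\frac{\ell-k}{k\ell}.
\]
This is only $\asymp(\ell-k)/k^2$ for the first few $i$. Likewise at $\pi/2$, if $\ell\equiv k\pmod 4$, then $\theta^{**}_{k,n_k}-\theta^{**}_{\ell,n_\ell}=\gamma'_k\pi\,\frac{\ell-k}{k\ell}$. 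Concrete instances:
\begin{itemize}
\item $k\equiv 10\pmod{12}$, $\ell=k+6$: here $n_\ell=n_k$ and $\gamma'_\ell>\gamma'_k$, so interlacing holds iff $\theta_{\ell,1}>\theta_{k,1}$, a separation of about $4\pi/(k\ell)$.
\item $k\equiv 10\pmod{12}$, $\ell=k+4$: interlacing hinges on $\theta_{\ell,n_\ell}<\theta_{k,n_k}$, again about $4\pi/(k\ell)$.
\item In the interior, for $\ell-k=6$ the zero $\theta_{\ell,2}$ that must fill $(\theta_{k,2},\theta_{k,1})$ sits only $\asymp 1/k^2$ to the right of $\theta_{k,2}$.
\end{itemize}
Since margin and error are both of order $1/k^2$, enlarging $K_0$ buys nothing. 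You need two things:
\begin{enumerate}
\item[(a)] an error constant numerically smaller than the margin constant;
\item[(b)] a proof that the first-order shifts $U(\theta^{**})/k$ and $W_{k,i}/k$, both $\Theta(1/k)$ near $2\pi/3$, nearly cancel between weights $k$ and $\ell$, leaving at most $C\frac{\ell-k}{k\ell}$ with $C<\tfrac{2\pi}{3}$.
\end{enumerate}
That is the real content of the paper's argument. It proves Stieltjes interlacing with only a $\frac{3(\ell-k)}{\ell k}$ margin, using $|\alpha_{k,i}|\le 0.60001$ and $|W_{k,i}-W_{\ell,j}|<3.7/k$ when $s^{**}_{k,i}=s^{**}_{\ell,j}$ (Lemma \ref{lem:W-diff-bound}, Proposition \ref{prop:stieltjes-interlacing-large-k}). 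It then reduces interlacing to zero counts plus two endpoint comparisons (Corollary \ref{cor:perfect-interlacing-condition-theta}). Finally, it settles the equal-$\gamma$ endpoint case in Case 2 of Lemma \ref{lem:perfect-interlacing} via $1.764\frac{\ell-k}{\ell k}<\frac{2\pi}{3}\frac{\ell-k}{\ell k}$.

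\textbf{Further corrections.}
\begin{itemize}
\item Your main term $\cos(\tfrac{k+2}{2}\theta+\phi_k(\theta))$ and your tail bound via $|cz+d|\ge\sqrt2$ both omit $(c,d)=(1,1)$. On $\mathcal A$, $|z+1|=2\cos(\theta/2)\in(1,\sqrt2)$, equal to $1$ at $\rho$. So $(z+1)^{-k}$ has size about $e^{-\frac{\sqrt3}{2}s}$ and moves the first zeros by $\Theta(1/k)$. This is the $W_{k,i}/k$ term of Corollary \ref{cor:actual-zero-sample-zero-distance-bound}; it belongs in the main term, not the error.
\item $N(\ell)-N(k)\in\{0,1\}$ is false. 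For $k\equiv 10\pmod{12}$, $\ell=k+2$, one has $n_\ell=n_k-1$ (e.g.\ $n_{22}=2$, $n_{24}=1$). This pair does interlace, with the weight-$k$ zeros outermost, so your count check must allow it.
\item Your necessity direction is sound in substance. In every failing case with compatible counts, the violated endpoint comparison has $\gamma_\ell>\gamma_k$ or $\gamma'_\ell>\gamma'_k$ strictly, so there the margin really is $\asymp 1/k$. However, for $\ell-k=10$ the drift is only about $10/12$ of a spacing, so failure comes from endpoint misalignment, not excess drift. It is cleaner to argue from the endpoint criterion than to locate a doubly occupied gap.
\end{itemize}
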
 

\begin{theorem}\label{thm:interlacing_thetakEk_Ek+2}
    For all even integers $k \ge 4$, the zeros of $\vartheta_k(E_k)$ interlace with the zeros of $E_{k+2}$ on the lower arc.
\end{theorem}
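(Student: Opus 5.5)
We will work with the real-valued normalizations on the arc. Write $z=e^{i\theta}$ with $\theta\in(\pi/2,2\pi/3)$. Following Rankin--Swinnerton-Dyer, $F_k(\theta):=e^{ik\theta/2}E_k(e^{i\theta})$ is real. Its leading behaviour is $F_k(\theta)=2\cos(k\theta/2)+R_k(\theta)$, where $R_k$ is exponentially small in $k$ (roughly $O(2^{-k/2})$). For the Serre derivative, the quantity $e^{i(k+2)\theta/2}\vartheta_k(E_k)(e^{i\theta})$ is likewise real, after normalizing by a constant. Differentiating the expansion of $E_k$ term by term, the dominant contribution comes from the $(c,d)=(1,0),(0,1)$ terms together with the $E_2$ correction. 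This gives a main term of the shape
\begin{align}
G_k(\theta) = A_k\sin\!\big(\tfrac{k\theta}{2}\big)+B_k(\theta)\cos\!\big(\tfrac{k\theta}{2}\big)+\text{(small error)},
\end{align}
where $B_k(\theta)/A_k$ is bounded. Equivalently, $G_k \approx C_k(\theta)\cos\!\big(\tfrac{k\theta}{2}-\phi_k(\theta)\big)$ with a slowly varying phase $\phi_k(\theta)$. I will use the location estimates of Corollary \ref{cor:actual-zero-sample-zero-distance-bound} in exactly this form. Those estimates say that the zeros of $\vartheta_k(E_k)$ lie within $O(1/k^2)$ of explicit "sample points" $\theta_{k,j}$ solving $\tfrac{k\theta}{2}-\phi_k(\theta)=\tfrac{\pi}{2}+j\pi$. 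The zeros of $E_{k+2}$ likewise sit exponentially close to $\theta'_{j}=\tfrac{(2j+1)\pi}{k+2}$, from $\cos\!\big(\tfrac{(k+2)\theta}{2}\big)=0$.

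The plan is then a counting and separation argument.

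\textbf{Step 1: count the zeros.} By the valence formula, $E_{k+2}$ and $\vartheta_k(E_k)$ both have weight $k+2$, so they have the same number of arc zeros (roughly $\lfloor (k+2)/12\rfloor$, adjusted by the residue of $k+2$ mod 12). Sugibayashi's result confirms that all non-elliptic zeros of $\vartheta_k(E_k)$ lie on $\mathcal A$. With equal counts, interlacing reduces to the following: between any two consecutive zeros of $E_{k+2}$ there is exactly one zero of $\vartheta_k(E_k)$. The endpoint behaviour must also be compatible, i.e. the first zero in order is of a fixed type. The "exactly one" direction then follows from "at least one" plus counting.

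\textbf{Step 2: place the zeros of $\vartheta_k(E_k)$ between those of $E_{k+2}$.} I will show that each sample point $\theta_{k,j}$ lies strictly between $\theta'_{j}$ and $\theta'_{j+1}$ (for the appropriate index shift), with a gap of size $\gg 1/k^2$ or better, $\gg 1/k$. Comparing phases, $\tfrac{k\theta}{2}-\phi_k(\theta)$ versus $\tfrac{(k+2)\theta}{2}$ differ by $\theta+\phi_k(\theta)$. On $(\pi/2,2\pi/3)$, one needs this difference to stay inside an interval of length less than $\pi$ strictly avoiding multiples of $\pi$ modulo the offset. Concretely, I expect $\phi_k(\theta)$ to be close to an explicit function such as $\arctan$ of a ratio involving $\tfrac{k}{12}$ times the value of $E_2$ on the arc and $\tfrac{k}{2\pi}$. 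Its range must be checked to keep the phase shift strictly inside $(0,\pi)$ uniformly.

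\textbf{Step 3: conclude.} Once the separation in Step 2 exceeds the $O(1/k^2)$ location error for all $k\ge K_0$, interlacing follows for large $k$. The remaining small $k<K_0$ are verified numerically by computing both zero sets directly, which is feasible since there are few zeros.

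The main obstacle is Step 2 near the endpoints $\theta\to\pi/2$ and $\theta\to 2\pi/3$. There the phase difference could approach a multiple of $\pi$, and the first or last zero could slip past an elliptic point or fail to alternate correctly. I would handle this by separate local analysis of the first and last zeros, using the explicit sample-point formula to bound their distance from $\pi/2$ and $2\pi/3$. The explicit error constants from Corollary \ref{cor:actual-zero-sample-zero-distance-bound} then determine how large $K_0$ must be.
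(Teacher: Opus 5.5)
There is a genuine gap. Your overall shape matches the paper: equal counts from the valence formula, pairing each zero of $E_{k+2}$ with a nearby zero of $\vartheta_k(E_k)$, and a computer check for small $k$. But Step 2 cannot be carried out as you set it up, and the trouble is not confined to the first and last zeros.

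Two of your inputs fail near $\frac{2\pi}{3}$.

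First, the zeros of $E_{k+2}$ are not exponentially close to $\frac{(2j+1)\pi}{k+2}$ there. The $(c,d)=\pm(1,1)$ terms contribute $-\frac12 d(\theta)^{-k-2}$ with $d(\theta)=2\cos\frac{\theta}{2}$. Writing $s=k(\frac{2\pi}{3}-\theta)$, we have $d(\theta)^{-k}\approx e^{-\frac{\sqrt3}{2}s}$, which is not small for the zeros nearest $\frac{2\pi}{3}$. The Rankin--Swinnerton-Dyer remainder is merely bounded there, not $O(2^{-k/2})$.

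Second, Corollary \ref{cor:actual-zero-sample-zero-distance-bound} reads $\theta_{k,i}=\theta^*_{k,i}+\frac{W_{k,i}}{k}+\frac{\alpha_{k,i}}{k^2}$. You dropped the term $\frac{W_{k,i}}{k}$, which is of order $e^{-\frac{\sqrt3}{2}s^*_{k,i}}/k$ (up to about $0.075/k$), not $O(1/k^2)$.

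Meanwhile the main-term separation is tiny. We have $\frac{k+2}{2}\theta^*_{k,i}=\frac{k}{2}\theta^{**}_{k,i}+\frac{\pi}{2}+B_4(\theta^*_{k,i})$ with $B_4=\theta+B_2-\frac{\pi}{2}\in[-0.06,0]$. Moreover $B_4$ vanishes at both endpoints (at $\frac{2\pi}{3}$ because $B_2(\frac{2\pi}{3})=-\frac{\pi}{6}$). So your phase difference does not merely risk approaching a multiple of $\pi$; it does so at both ends.

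Consequently the $E_{k+2}$ main-term zero sits only $\frac{2|B_4|}{k+2}\le\frac{0.12}{k}$ to the right of the sample zero. Near $\frac{2\pi}{3}$ this is of order $s/k^2$, so there is no $\gg\frac1k$ gap on that side. For all zeros with $s$ up to about $\frac{2}{\sqrt3}\log k$, the individual shifts of size $e^{-\frac{\sqrt3}{2}s}/k$ exceed this separation. The ratio worsens as $k$ grows, so no choice of $K_0$ makes the error-bar comparison of Step 3 work. Near $\frac{\pi}{2}$ the separation is also $O(1/k^2)$, but there the corrections really are exponentially small, so that end is only a matter of constants.

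The missing idea is that the two shifts nearly cancel. Keep the $(z+1)^{-k}$ terms in both approximations, $\hat F_k$ and $\hat F^\sharp_k=-\cos(\frac{k+2}{2}\theta)-\frac12 d(\theta)^{-k-2}$. Then near $\frac{2\pi}{3}$ both zeros move by approximately $(-1)^{i-\delta_k}d(\theta)^{-k}/k$, because $-2B_3/B_1=1$ at $\frac{2\pi}{3}$.

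The paper makes this rigorous in Lemma \ref{lem:f-k-theta-sharp-bound}:
\begin{itemize}
\item It evaluates $\hat F_k$ at the zero $\hat\theta^\sharp_{k,i}$ of $\hat F^\sharp_k$. Substituting $-\cos(\frac{k+2}{2}\hat\theta^\sharp_{k,i})=\frac12 d(\hat\theta^\sharp_{k,i})^{-k-2}$ yields $\hat F_k(\hat\theta^\sharp_{k,i})=\sin(\frac{k+2}{2}\hat\theta^\sharp_{k,i})\,B_1\sin(-B_4)-Q\,d^{-k}$.
\item Here $Q(\theta)=\frac{\sin\theta+B_0(\theta)(2+\cos\theta)}{4(1+\cos\theta)}$, which vanishes at $\frac{2\pi}{3}$.
\item The bound $B_1\sin(-B_4)\ge\frac15\min\{\theta-\frac{\pi}{2},\frac{2\pi}{3}-\theta\}$ then dominates $Q\,d^{-k}\le\frac35(\frac{2\pi}{3}-\theta)e^{-0.66k(\frac{2\pi}{3}-\theta)}$. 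This gives $(-1)^{i-\delta_k}\hat F_k(\hat\theta^\sharp_{k,i})>\frac{0.3}{k}$.
\end{itemize}

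Together with $0.45k<(-1)^{i-\delta_k}\hat F_k'<0.63k$, this places $\hat\theta^\sharp_{k,i}$ in $(\hat\theta_{k,i}+\frac{0.47}{k^2},\hat\theta_{k,i}+\frac{0.23}{k})$. Both postsample zeros are within $10^{-5}/k^2$ of the true zeros. The spacing $\theta_{k,i}-\theta_{k,i+1}\ge\frac{2\pi-2.4}{k}$ then gives interlacing.

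Some cancellation argument of this kind is indispensable. Separate bookkeeping for the first and last zeros will not close Step 2.
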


\begin{corollary} \label{cor:cuspidal-proj}
    For all even integers $k \ge 4$ with $k\notin\{4,6,8,12\}$, every (non-cuspidal, non-elliptic) zero of the cuspidal projection $\operatorname{cuspproj}\!\big(\vartheta_k(E_k)\big)$ lies on the lower arc $\mathcal{A}$.
\end{corollary}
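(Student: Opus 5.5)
We will deduce Corollary \ref{cor:cuspidal-proj} from Theorem \ref{thm:interlacing_thetakEk_Ek+2} by an intermediate value argument on the lower arc. Throughout we work with real-valued normalizations. For $\theta \in [\pi/2, 2\pi/3]$, set
\[
F(\theta) := e^{i(k+2)\theta/2}\,\vartheta_k(E_k)(e^{i\theta}), \qquad G(\theta) := e^{i(k+2)\theta/2}\,E_{k+2}(e^{i\theta}).
\]
Both are real for any weight $k+2$ form with real Fourier coefficients, by the modular relation under $z\mapsto -1/z$ restricted to $|z|=1$. The cuspidal projection is $H := \vartheta_k(E_k) + \frac{k}{12}E_{k+2}$, with real normalization $h(\theta) = F(\theta) + \frac{k}{12}G(\theta)$. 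Its constant term vanishes, since $\vartheta_k(E_k) = -\frac{k}{12} + O(q)$. Hence $H$ is a cusp form of weight $k+2$. By the valence formula, its number of non-cuspidal, non-elliptic zeros in $\mathcal F$ is $\dim S_{k+2} - 1$ minus the elliptic contributions. We will show that $h$ has at least that many sign changes on $(\pi/2, 2\pi/3)$; this forces all such zeros onto $\mathcal A$.

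The key steps are as follows.

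\textbf{Step 1: zeros of $G$.} Let the zeros of $E_{k+2}$ on $\mathcal A$ be $\beta_1 > \dots > \beta_n$. There are $n = \lfloor (k+2)/12 \rfloor$ of them, adjusted for the residue class of $k+2 \bmod 12$, as in Rankin--Swinnerton-Dyer.

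\textbf{Step 2: sign pattern of $h$.} At each $\beta_j$ we have $h(\beta_j) = F(\beta_j)$. By Theorem \ref{thm:interlacing_thetakEk_Ek+2}, exactly one zero of $F$ lies between consecutive $\beta_j$. Since all zeros are simple, this gives the sign pattern of $F$ at the $\beta_j$. Consecutive values $F(\beta_j)$ and $F(\beta_{j+1})$ have opposite signs. So $h$ has a zero strictly between each consecutive pair, giving at least $n-1$ zeros on $\mathcal A$.

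\textbf{Step 3: the endpoint intervals.} We also need a zero of $h$ in $(\beta_1, 2\pi/3)$ or $(\pi/2, \beta_n)$, or both, as the count demands. For this we compare the signs of $F$ and $G$ at the elliptic endpoints $\theta = \pi/2$ and $\theta = 2\pi/3$. We use the known values $E_{k+2}(i)$ and $E_{k+2}(\rho)$, and the values of $\vartheta_k(E_k)$ there. At $z=i$ we have $E_2(i) = 3/\pi$, and $D E_k(i)$ can be expressed through the Ramanujan identities, or via the leading-term approximation $F(\theta) \approx$ (main term of the Eisenstein sum) used in the location estimates of Corollary \ref{cor:actual-zero-sample-zero-distance-bound}.

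The main obstacle will be this endpoint analysis combined with the exact zero count. The valence formula for a cusp form of weight $k+2$ depends on $k+2 \bmod 12$, through the orders at $i$ and $\rho$. We will need to do casework on $k \bmod 12$ to match the required number of zeros. In each case we must check that near an endpoint, either $h$ vanishes to the forced elliptic order, or $F(\beta_1)$ and $h(2\pi/3^-)$ have opposite signs. Near $\theta = 2\pi/3$, I would first try the asymptotic $F(\theta) \approx -\frac{k}{12}\cdot 2\cos(\frac{(k+2)\theta}{2}) + (\text{derivative term})$, using the dominant terms $(cz+d) = z, 1$ as in the paper's location estimates. This should show that $G$ dominates the sign of $h$ near the endpoints, with error $O(k^{-1})$ or better. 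Small $k$ would then be checked numerically. The excluded values $k \in \{4,6,8,12\}$ are presumably the cases where $S_{k+2}$ is zero or too small for the statement to have content, or where the sign argument fails; those cases are simply omitted.
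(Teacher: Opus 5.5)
Your Steps 1 and 2 are precisely the paper's argument; the paper obtains Step 2 by citing Lemma 3 of Xue--Zhu. Since $G(\beta_j)=0$, we have $h(\beta_j)=F(\beta_j)$. Interlacing plus simplicity makes these values alternate in sign, so $h$ vanishes in each of the $n-1$ intervals $(\beta_{j+1},\beta_j)$.

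The gap is in Step 3. There you claim the count demands one more zero of $h$ in $(\beta_1,2\pi/3)$ or $(\pi/2,\beta_n)$, and you make the endpoint analysis the main obstacle. That is a miscount. The zeros of $E_{k+2}$ on $\mathcal A$ number $n=\dim S_{k+2}=n_k$. For a nonzero cusp form $H$ of weight $k+2$, the valence formula gives
\[
\sum_{p \text{ non-elliptic}} v_p(H) = \frac{k+2}{12} - v_\infty(H) - \frac{1}{2}v_i(H) - \frac{1}{3}v_\rho(H) \le n_k - 1.
\]
This holds because the elliptic terms are at least the forced contribution $\frac{k+2}{12}-n_k$, and $v_\infty(H)\ge 1$. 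It is an upper bound, not an exact count as your first paragraph suggests.

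So the $n_k-1$ zeros from Step 2 already exhaust what the valence formula permits, and the proof ends there. Worse, the goal of Step 3 is false. Precisely because of this count, $h$ has \emph{no} zero in $(\beta_1,2\pi/3)$ or $(\pi/2,\beta_n)$. The endpoint sign comparison, the casework modulo $12$, and the asymptotics for $F$ near $2\pi/3$ could never succeed and should be dropped.

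You do still need something you did not state: that $H\not\equiv 0$ when $k\notin\{4,6,8,12\}$. For $k\in\{4,6,8,12\}$ one has $S_{k+2}=0$, which is the actual reason for the exclusion. When $n_k\ge 2$, nonvanishing follows from your Step 2, since $h(\beta_j)=F(\beta_j)\neq 0$ by interlacing. When $n_k=1$, the paper checks it on a Fourier coefficient, after which the statement is vacuous.
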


We note that Corollary \ref{cor:cuspidal-proj} is a corollary of Theorem \ref{thm:interlacing_thetakEk_Ek+2}. This type of result is one of the applications of studying interlacing properties.

We also note here that Theorem \ref{thm:perfect-interlacing-theorem} can also be viewed as a modular analog of the following result of Markov: If the zeros of real polynomials $p(x)$ and $q(x)$ are real, simple, and interlacing, then so are the zeros of their derivatives; see  \cite{Dimitrov2012}. Conversely, Markov's result can also be applied to demonstrate certain special cases of Theorem \ref{thm:perfect-interlacing-theorem}; see the discussion at the end of Section \ref{sec:standard}.

Now, to prove our results, we will construct a sequence of increasingly accurate estimates for the zeros of $\vartheta_k(E_k)$. To study the zeros of $\vartheta_k(E_k)$, we first define the real-valued function 
\begin{align}
    F_k(\theta):=\frac{2\pi}{k}z^{\frac{k+2}{2}}\vartheta_k\big(E_k\big)(z)= \frac{2\pi}{k} \text{Re} \left[ z^{\frac{k+2}{2}} \lrp{D(E_k) - \frac{k}{12}  E_2 E_k} \right]\label{eq:actual-F},
\end{align}
where $z=e^{i\theta}$ for $\theta\in\big(\frac{\pi}{2},\frac{2\pi}{3}\big)$.
Note that this function $F_k(\theta)$ has precisely the same zeros as $\vartheta_k(E_k)(e^{i\theta})$. 
Also note that taking the real part in the formula above is unnecessary, since $z^{\frac{k+2}{2}}\vartheta_k(E_k)$ is already real-valued. We state the formula in this way to show the symmetry between $F_k(\theta)$ and its variations below.

We also introduce the following two approximations of $E_k(z)$, ordered from least to most accurate:
\begin{align}
E^*_k(z)&:=1+z^{-k}, \\
    \hat E_k(z)&:=1+z^{-k}+(z+1)^{-k}.
\end{align}
These two approximations of $E_k(z)$ can then be used to construct the following approximations for $F_k(\theta)$, ordered from least to most accurate:

\begin{align}
F_k^{**}(\theta) &:= \frac{2\pi}{k} \text{Re} \left[ z^\frac{k+2}{2} D(E^*_k) \right] \\
    &\ = \sin\left(\frac{k}{2} \theta\right), \label{eq:F^**}\\
    F^*_k(\theta)&:=\frac{2\pi}{k} \text{Re} \left[ z^\frac{k+2}{2}\left(D(E^*_k) - \frac{k}{12}  E_2  E^*_k\right) \right] \\
    &\ = \sin\left(\frac{k}{2} \theta\right) - B_0(\theta) \cos\left(\frac{k}{2} \theta\right)\label{eq:F-star-second} \\
    &\ = B_1(\theta) \sin\left(\frac{k}{2} \theta - B_2(\theta)\right), \label{eq:F^*}\\
    \hat F_k(\theta) &:= \frac{2\pi}{k} \text{Re} \left[ z^\frac{k+2}{2} \left(D(\hat E_k) - \frac{k}{12}  E_2 \hat E_k\right) \right]  \\
    &\ = \sin\left(\frac{k}{2} \theta\right) - B_0(\theta) \cos\left(\frac{k}{2} \theta\right) + B_3(\theta) d(\theta)^{-k}\label{eq:F-hat-second} \\ 
    &\ = B_1(\theta) \sin\left(\frac{k}{2} \theta - B_2(\theta)\right) + B_3(\theta) d(\theta)^{-k} \label{eq:F-hat-final},
\end{align}
where
\begin{align}
    B_0(\theta) &:= \frac{\pi}{3} \text{Re}\lrb{z E_2(z)}, \\
    B_1(\theta) &:= \sqrt{1 + B_0(\theta)^2}, \\
    B_2(\theta) &:= \arctan(B_0(\theta)), \\
    B_3(\theta) &:= -\frac{1}{2} \lrp{
        \tan\lrp{\frac{\theta}{2}} + B_0(\theta)
    },\\
    d(\theta) &:= 2 \cos\left(\frac{\theta}{2}\right).
\end{align}

Next, we define the \textit{presample zeros} $\{\theta^{**}_{k,i}\}_{1\le i \le n_k}$ to be the zeros of $F^{**}_k(\theta)$ in $\big(\frac{\pi}{2}, \frac{2\pi}{3}\big)$; i.e., the integer multiples of $\frac{2\pi}{k}$ in $\lrb{\frac{\pi}{2}+\frac{\pi}{k}, \  \frac{2\pi}{3}-\frac{2\pi}{3k}}$. It is straightforward to compute that the number of presample zeros is given by $n_k:= \dim S_{k+2} = \lrfloor{\frac{k+2}{12}} - 1_{k \equiv 0 \text{ mod } 12}$.
The presample zeros can be written explicitly as 
\begin{align}
    \lrcb{\theta^{**}_{k,i}}_{1\le i \le n_k} 
    &=  
    \lrcb{
        \frac{2\pi}{k} \cdot 2 \lrfloor{\frac{k}{6}} - \frac{2\pi}{k} \lrp{i - \delta_k}
    }_{1\le i \le n_k}
    =  
    \lrcb{
        \frac{2\pi}{3} - \frac{2\pi}{k} \lrp{i - \frac{\delta_k}{3}}
    }_{1\le i \le n_k} \label{eq:pre-sample-zeros}\\
    \text{where } \delta_k &:=
    \begin{cases}
        0 & \text{if } k \equiv 0 \mod 6 \\
        1 & \text{if } k \equiv 2 \mod 6 \\
        2 & \text{if } k \equiv 4 \mod 6
    \end{cases}. 
\end{align}
Note that the presample zeros are indexed from right to left; i.e., $\theta^{**}_{k,i} > \theta^{**}_{k,i+1}$. Also note that the above explicit formula for $\theta^{**}_{k,i}$ implies that
\begin{align} \label{eqn:_theta**_equiv_(i-deltak)pi}
    \frac{k}{2} \theta^{**}_{k,i} \equiv (i-\delta_k)\pi \mod 2\pi.
\end{align}

We then define the \textit{sample zeros} $\theta^*_{k,i}$, the \textit{postsample zeros} $\hat \theta_{k,i}$, and the \textit{actual zeros} $\theta_{k,i}$ to be the corresponding zeros of $F_k^*(\theta)$, $\hat F_k(\theta)$ and $F_k(\theta)$, respectively. Specifically, $\theta^*_{k,i}$, $\hat \theta_{k,i}$, and $\theta_{k,i}$ are defined as the unique zeros of $F_k^*(\theta)$, $\hat F_k(\theta)$ and $F_k(\theta)$ in the interval $\lrb{\theta^{**}_{k,i} - \frac{1.2}{k}, \theta^{**}_{k,i} + \frac{1.2}{k}}$. We show in Lemma \ref{lem:preliminary-estimate} that this definition is well-posed. 
Note that these intervals are non-overlapping since the $\theta^{**}_{k,i}$ are spaced apart by $\frac{2\pi}{k}$.

Now, since $n_k = \dim S_{k+2} = \lrfloor{\frac{k+2}{12}} - 1_{k \equiv 0 \text{ mod } 12}$, observe that
\begin{align}
    k = 12n_k + r_k \qquad \text{where}\quad r_k \in \left \{-2, 12, 2, 4, 6, 8 \right\}.
\end{align}
This identity is equivalent to $\frac{k+2}{12} = n_k + \frac{r_k+2}{12}$, so that $\frac{r_k+2}{12}$ corresponds to the elliptic zeros of $\vartheta_k(E_k)$ guaranteed by the valence formula, and $n_k$ corresponds to the (maximum) number of non-elliptic zeros of $\vartheta_k(E_k)$ allowed by the valence formula. In particular, this means that $F_k(\theta)$ can have at most $n_k$ zeros in $\big(\frac{\pi}{2}, \frac{2\pi}{3}\big)$.
Hence the $n_k$ zeros $\{\theta_{k,i}\}_{1\le i \le n_k}$ we have constructed above are precisely the zeros of $F_k(\theta)$ in $\big(\frac{\pi}{2}, \frac{2\pi}{3}\big)$.

In closing, we give a brief overview of the strategies we use to prove our results. The first half of the paper is devoted to calculating precise location estimates for the zeros of $F_k(\theta)$, up to order $O\big(\frac{1}{k^2}\big)$. This is accomplished by first locating the presample zeros by the explicit formula \eqref{eq:pre-sample-zeros}. Then the sample zeros can be located in terms of the presample zeros, utilizing certain functions of $\theta$ (Proposition \ref{prop:sample-zero-locations}). Then the postsample zeros can be located in terms of the sample zeros (Proposition \ref{prop:postsample-sample-distance-formula}); however, the key technical property of this approximation is that it utilizes a function of $s = k(\frac{2\pi}{3}-\theta)$ (i.e. it is not just a function of $\theta$). Lastly, the postsample zeros are exponentially close to the actual zeros (Proposition \ref{prop:actual-postsample-zero-bound}), yielding the desired zero estimate for actual zeros (Corollary \ref{cor:actual-zero-sample-zero-distance-bound}).

With these precise location estimates, it is then fairly straightforward to prove Stieltjes interlacing. It essentially suffices to just calculate that the distance between consecutive weight $k$ zeros $\theta_{k,i} - \theta_{k,i+1}$ is greater than the distance between consecutive weight $\ell$ zeros $\theta_{\ell,j} - \theta_{\ell,j+1}$. 
Then (standard) interlacing can be obtained from Stieltjes interlacing by comparing the number of weight $\ell$ zeros to the number of weight $k$ zeros, and by studying the behavior of the first/last weight $k$ and weight $\ell$ zeros. Lastly, interlacing of the zeros of $\vartheta_k(E_k)$ with the zeros of $E_{k+2}$ follows from calculating that each zero of $E_{k+2}$ lies just to the right of a corresponding zero of $\vartheta_k(E_k)$.

We note here that our strategy differs somewhat from several previous works studying the zeros of Eisenstein series \cite{nozaki-separation,interlacing-Eisenstein,Frendreiss2022}. The key difference of our strategy is that by including an extra postsample approximation term (see \eqref{eqn:temp-postsample-term}), we are able to give postsample approximations  $\hat \theta_{k,i}$ that are very close to all of the $\theta_{k,i}$. 
In contrast, the works \cite{nozaki-separation,interlacing-Eisenstein,Frendreiss2022} used sample approximations (analagous to our $\theta^*_{k,i}$) that are only accurate away from $\theta = \frac{2\pi}{3}$.
This then required the above works to break into several cases to carefully analyze the ``movement" of zeros near $\frac{2\pi}{3}$.

Because our approach already gives very precise location estimates, we are able to completely avoid any ``movement near $\frac{2\pi}{3}$" arguments (which it seems would be rather complicated and require quite a bit of casework). 
For this reason, compared to the previous approaches, our uniform approach seems to be conceptually simpler and more amenable to generalizations in other settings. For example, because of the technical difficulties that arise near $\frac{2\pi}{3}$, the interlacing result \cite[Theorem 1.3]{jenkins-pratt} was only stated over the interval $\big(\frac{\pi}{2}, \frac{2\pi}{3}-\varepsilon\big)$. In such a setting, we believe that a more uniform approach could be useful.

\section{Preliminary estimates for zeros}
The following lemma provides a preliminary estimate for the locations of the sample, postsample, and actual zeros.

\begin{lemma} \label{lem:preliminary-estimate}
    There exists a unique zero of $F_k^*(\theta)$, $\hat F_k(\theta)$, and $F_k(\theta)$ in the interval
    \begin{align}
        \left[\theta^{**}_{k,i} - \frac{1.2}{k},\theta^{**}_{k,i} + \frac{1.2}{k}\right].
    \end{align}
\end{lemma}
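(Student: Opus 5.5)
Write $\theta^{**}=\theta^{**}_{k,i}$, write $I=[\theta^{**}-\frac{1.2}{k},\theta^{**}+\frac{1.2}{k}]$, and put $\phi=\frac{k}{2}(\theta-\theta^{**})\in[-0.6,0.6]$. By \eqref{eqn:_theta**_equiv_(i-deltak)pi},
\[
\sin\!\left(\tfrac{k}{2}\theta\right)=\pm\sin\phi,\qquad \cos\!\left(\tfrac{k}{2}\theta\right)=\pm\cos\phi,
\]
with the same sign $\epsilon=(-1)^{i-\delta_k}$ in both. Hence $F^*_k(\theta)=\epsilon B_1(\theta)\sin(\phi-B_2(\theta))$. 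The plan is to show that each of $F^*_k$, $\hat F_k$ and $F_k$ takes opposite signs at the two endpoints of $I$, which gives existence by the intermediate value theorem. Uniqueness then comes from a separate argument.

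\textbf{Step 1 (bounds on $B_0$).} On $[\frac{\pi}{2},\frac{2\pi}{3}]$, bound $B_0(\theta)=\frac{\pi}{3}\operatorname{Re}[zE_2(z)]$. At $z=i$, $E_2(i)=3/\pi$, so $B_0=0$. At $z=\rho=e^{2\pi i/3}$, $E_2(\rho)=2\sqrt3/\pi$, so $\operatorname{Re}[\rho E_2(\rho)]=-\sqrt3/\pi$ and $B_0=-1/\sqrt3$. Using the $q$-expansion of $E_2$ with $|q|\le e^{-\pi\sqrt3}$, I expect to show $B_0$ is monotone with $-0.58<B_0\le 0$. It follows that $|B_2|\le\pi/6\approx0.524<0.6$ and $B_1\le 2/\sqrt3$. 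Since $\phi=\pm0.6$ at the endpoints, $\phi-B_2$ has the sign of $\phi$ there, and $|\sin(\phi-B_2)|\ge\sin(0.6-0.524)\approx0.076$. The margin is small, so this bound has to be done carefully. A nicer alternative is to write $B_2$ as a slowly varying function on $I$ (its derivative is $O(1)$, so its variation is $O(1/k)$) and work directly with $\sin\phi-B_0\cos\phi$, which at the endpoints equals $\pm\sin0.6-B_0\cos0.6$. Because $B_0\le0$, the right endpoint gives at least $\sin 0.6\approx0.56$, but the left endpoint gives only $-\sin0.6+0.58\cos0.6\approx-0.085$, with $B_0$ evaluated at that endpoint. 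So the worst case sits near $\theta=\frac{2\pi}{3}$, and there I will use the exact value of $B_0$ at the actual endpoint, which is $\le \frac{2\pi}{3}-\frac{0.2}{k}$ or so, to gain a bit.

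\textbf{Step 2 (perturbations).} I need $|\hat F_k-F^*_k|$ and $|F_k-F^*_k|$ to be smaller than the Step 1 margin. We have $\hat F_k-F^*_k=B_3 d(\theta)^{-k}$. Since $d(\theta)=2\cos(\theta/2)$ and $\theta\le\frac{2\pi}{3}-\frac{0.4}{k}$ on $I$ (after checking that the top presample zero sits at $\frac{2\pi}{3}-\frac{2\pi}{3k}$ or lower), we get $d^{-k}\le(1+c/k)^{-k}\approx e^{-c'}$, a fixed constant less than 1. I expect this to be the main obstacle: near $\frac{2\pi}{3}$ the term $d^{-k}$ is not small, so Step 1 is tight against it. I would handle the first few indices ($i=1$, small $k$) by explicit estimates or direct numerics, and the rest by the exponential decay $d^{-k}\le e^{-k(\dots)}$ once $\frac{2\pi}{3}-\theta\ge \frac{4\pi}{k}$. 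The term $F_k-\hat F_k$ collects the terms of $E_k$ with $\max(|c|,|d|)\ge2$ and their derivatives. Following Rankin--Swinnerton-Dyer, it is bounded by $O(k\,|cz+d|^{-k})$ with $|cz+d|\ge\sqrt3$ roughly, which is tiny for $k\ge 12$. Small $k$ have $n_k\le1$, and I would check those numerically.

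\textbf{Step 3 (uniqueness).} For $F^*_k$, the zero satisfies $\phi=B_2(\theta)$. Since $\frac{d}{d\theta}\bigl(\frac{k}{2}\theta-B_2\bigr)=\frac{k}{2}-O(1)>0$, the function is monotone and the zero is unique. For $F_k$, uniqueness follows by counting. The intervals $I$ are disjoint, each contains at least one zero, and the valence formula allows at most $n_k$ zeros in $(\frac{\pi}{2},\frac{2\pi}{3})$, so each interval contains exactly one. For $\hat F_k$ there is no valence formula, so I would instead show $\hat F_k'\ne0$ on $I$. The derivative is $\frac{k}{2}\bigl(\cos(\cdot)+B_0\sin(\cdot)\bigr)$ plus $O(1)$ plus $B_3 d^{-k}\cdot k\tan(\theta/2)/2$. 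Since $|\phi|\le0.6$, the main part is at least $\frac{k}{2}(\cos0.6-0.58\sin0.6)>0$, and I need to show it dominates the $d^{-k}$ term, again using the same endpoint-near-$\frac{2\pi}{3}$ estimate.
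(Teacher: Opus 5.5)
Your overall plan matches the paper's: a sign change at the endpoints of $I$ for $B_1\sin(\frac k2\theta-B_2)+B_3d^{-k}+(F_k-\hat F_k)$, valence counting for $F_k$, and a derivative bound for $\hat F_k$. Your phase-monotonicity argument for $F^*_k$ is a clean shortcut. The gap is that the quantitative step carrying the lemma is deferred, and as you describe it, it would not close.

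You set the thin Step 1 margin (about $0.07$--$0.088$, attained at the \emph{left} endpoint) against a ``not small'' $d^{-k}$. That comparison is lost, since near $\frac{2\pi}{3}$ one has $|B_3|d^{-k}\approx0.58\cdot0.48\approx0.28$. Also, ``direct numerics'' for $i=1$ cannot cover infinitely many $k$. The missing idea is to pair each endpoint with its own value of $d^{-k}$, uniformly in $k$ and $i$. Three facts give this: $d(\theta)^{-k}$ increases in $\theta$, $\theta^{**}_{k,i}\le\frac{2\pi}{3}-\frac{2\pi}{3k}$, and $d(\frac{2\pi}{3}-\frac ck)^{-k}$ decreases in $k$. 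Evaluating at $k=10$ then gives, for all $k\ge10$ and all $i$,
\[
d\bigl(\theta^{**}_{k,i}+\tfrac{1.2}{k}\bigr)^{-k}\le 0.4793,\qquad d\bigl(\theta^{**}_{k,i}-\tfrac{1.2}{k}\bigr)^{-k}\le 0.0913 .
\]
This is Lemma \ref{lem:d-theta-k-endpoint-bounds-cos}. The second bound holds because the left endpoint lies at least $(\frac{2\pi}{3}+1.2)/k\approx3.29/k$ below $\frac{2\pi}{3}$.

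With these bounds the two endpoints close separately, and no case split on $i$ is needed:
\begin{itemize}
\item At the right endpoint, the margin $\sin0.6\approx0.564$ beats the perturbation $0.58\cdot0.4793+0.016\approx0.294$.
\item At the left endpoint, the margin $\sin(0.6-0.53)\approx0.0699$ (your $0.076$ is slightly better) beats $0.58\cdot0.0913+0.016\approx0.069$.
\end{itemize}
Here $0.016$ bounds $|F_k-\hat F_k|$ for $k\ge10$. Note that the nearest omitted term is $(c,d)=(1,-1)$, with $|z-1|=2\sin\frac\theta2\ge\sqrt2$, not $\sqrt3$. So this error is only $O(2^{-k/2})$, which is not ``tiny'' at $k=12$, and it must be checked numerically for $10\le k\le20$.

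The same precision is needed in Step 3. With your $\theta\le\frac{2\pi}{3}-\frac{0.4}{k}$ you only get $d^{-k}\lesssim0.71$. Since $\tan\frac\theta2\,|B_3|\approx1$ near $\frac{2\pi}{3}$, the term $\frac k2\tan\frac\theta2\,|B_3|\,d^{-k}$ would then be about $0.36k$. That exceeds your main part $\frac k2(\cos0.6-0.58\sin0.6)\approx0.249k$, so the derivative argument fails.

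You must use the true distance $(\frac{2\pi}{3}-1.2)/k\approx0.894/k$, which follows from the fact you already cite about the top presample zero. This gives $d^{-k}\le0.4793$, so the $d^{-k}$ contribution to $\hat F_k'$ is at most $0.241k+0.322$. The main part is at least $0.248k+0.544$, using $-B_0'\cos0.6\ge0.66\cos0.6$. The inequality holds only barely, because both worst cases occur at the right endpoint of the interval around $\theta^{**}_{k,1}$. So these constants cannot be left as ``$O(1)$'' or ``roughly''.
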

\begin{proof} ~\\
    \textbf{Existence: \\} 
    We show the result for $F_k(\theta)$, as the proof for $\hat F_k(\theta)$ and $F_k^*(\theta)$ is nearly identical.

    Observe that
    \begin{align}
        F_k(\theta) 
        &= \hat F_k(\theta) + (F_k(\theta) - \hat F_k(\theta)) \\
        &= B_1(\theta) \sin\left(\frac{k}{2} \theta - B_2(\theta)\right) + B_3(\theta) d(\theta)^{-k} + (F_k(\theta) - \hat F_k(\theta)) \\
        &= \alpha_1\sin\lrp{\frac{k}{2} \theta - \alpha_2} + B_3(\theta) d(\theta)^{-k} + \alpha_3, \\
        \text{where}\quad 
        \alpha_1 &:= B_1(\theta) \ge 1, \qquad
        0 \geq \alpha_2 := B_2(\theta)  \ge -0.53, 
        \qquad
        \text{(by Lemma \ref{lem:function-bounds})}
        \\
        |\alpha_3| &:= |F_k(\theta) - \hat F_k(\theta)| \leq 0.016. \qquad\qquad\qquad\qquad\quad \text{(by Corollary \ref{cor:bound-for-Fk-minus-Fhatk})} 
    \end{align}
    This  means that
    \begin{align}
        F_k\lrp{\theta^{**}_{k,i} \pm \frac{1.2}{k}} 
        &= \alpha_1 \sin\lrp{\frac{k}{2}\lrp{\theta^{**}_{k,i} \pm \frac{1.2}{k}} - \alpha_2} + B_3\left(\theta^{**}_{k,i} \pm \frac{1.2}{k}\right) d\left(\theta^{**}_{k,i} \pm \frac{1.2}{k}\right)^{-k} + \alpha_3 \\
        &=: \alpha_1 \sin\lrp{\frac{k}{2}\theta^{**}_{k,i} \pm 0.6 - \alpha_2} + \alpha_4^{\pm} + \alpha_3   \\
        &\qquad \left(\text{where $|\alpha_4^{\pm}| \le 0.58(0.2853 \pm 0.194)$ by Lemmas \ref{lem:d-theta-k-endpoint-bounds-cos} and \ref{lem:function-bounds}}\right)\\
        &= \alpha_1 (-1)^{i-\delta_k} \sin\lrp{\pm 0.6 - \alpha_2} + \alpha_4^{\pm} + \alpha_3 \\ 
        &\qquad \left(
        \text{since  $\tfrac{k}{2}\theta^{**}_{k,i}\equiv\pi(i-\delta_k)\mod{2\pi}$ by \eqref{eqn:_theta**_equiv_(i-deltak)pi}}
        \right)\\
        &= \pm \alpha_1 (-1)^{i-\delta_k} \sin\lrp{0.6 \mp \alpha_2} + \alpha_4^{\pm} + \alpha_3,
    \end{align}
    which has opposite signs since
    \begin{align}
        \lrabs{\alpha_1 (-1)^{i-\delta_k} \sin\lrp{0.6 - \alpha_2}} &\ge 1 \cdot \sin(0.6 - 0) \ge 0.564 > 
        |\alpha_4^+ + \alpha_3 |\end{align} \text{and} 
        \begin{align}
        \lrabs{\alpha_1 (-1)^{i-\delta_k} \sin\lrp{0.6 + \alpha_2}} &\ge 1 \cdot \sin(0.6 - 0.53) \ge 0.069 > 
        |\alpha_4^- + \alpha_3|.
    \end{align}
    Hence, $F_k(\theta)$ has a zero in the interval $\left[\theta^{**}_{k,i} - \frac{1.2}{k},\theta^{**}_{k,i} + \frac{1.2}{k}\right]$.

    \textbf{Uniqueness: \\}
    Uniqueness follows immediately for $F_k(\theta)$; as discussed at the end of the introduction, $F_k^{**}(\theta)$ and $F_k(\theta)$ have the same number of zeros by the valence formula, so the zero of $F_k(\theta)$ in the interval must be unique.
    
    We demonstrate uniqueness for $\hat F_k(\theta)$; the proof for $F^*_k(\theta)$ is nearly identical. It suffices to show that $\hat F'_k(\theta)$ is bounded away from zero on the interval. We define $\phi = \theta^{**}_{k,i} + \frac{t}{k}$ for $-1.2 \leq t \leq 1.2$:

    We first bound
    \begin{align}
       \left | \frac{d}{d\theta}\left[B_3(\theta)d(\theta)^{-k} \right]_{\theta = \phi} \right | &= \left |B_3'(\phi)d(\phi)^{-k} + k\sin\left(\frac{\phi}{2}\right)B_3(\phi)d(\phi)^{-k-1} \right | \\
        &\leq \left |B_3'(\phi)d\left(\frac{2\pi}{3} - \frac{2\pi}{3k} + \frac{1.2}{k}\right)^{-k} + \frac{k}{2}\tan\left(\frac{\phi}{2}\right)B_3(\phi)d\left(\frac{2\pi}{3} - \frac{2\pi}{3k} + \frac{1.2}{k}\right)^{-k}\right | \\[-0.35cm]
        &\qquad \text{(by the definition of $d$ and since $d(\theta)^{-k}$ increasing in $\theta$)}\\
        &\leq 0.67(0.2853 + 0.194) + \frac{k}{2}(1.733)(0.58)(0.2853 + 0.194)\\
        &\qquad\text{(by Lemmas \ref{lem:d-theta-k-endpoint-bounds-cos} and \ref{lem:function-bounds})} \\
        &\leq 0.322 + 0.241k. \label{eq:b3-d-theta-derivative-bound}
    \end{align}
    We then have
    \begin{align}
        &\left | \hat F_k'(\phi) \right | \\
        &=\left |\frac{d}{d\theta}\left[ \sin\left(\frac{k}{2} \theta\right) - B_0(\theta) \cos\left(\frac{k}{2} \theta\right) + B_3(\theta) d(\theta)^{-k} \right]_{\theta = \phi} \right |\\
        &= \left | \frac{k}{2}\cos\left(\frac{k}{2}\phi\right) + \frac{k}{2}B_0(\phi)\sin\left(\frac{k}{2}\phi\right) - B_0'(\phi)\cos\left(\frac{k}{2}\phi\right) + \frac{d}{d\theta}\left[B_3(\theta)d(\theta)^{-k}\right]_{\theta = \phi} \right |
        \\
        & = \left | (-1)^{i-\delta_k} 
        \lrb{
        \frac{k}{2}\left(\cos\left(\frac{t}{2}\right) + B_0(\phi)\sin\left(\frac{t}{2}\right)\right) - B_0'(\phi)\cos\left(\frac{t}{2}\right)
        } 
        + \frac{d}{d \theta}\left[B_3(\theta)d(\theta)^{-k}\right]_{\theta = \phi} \right| \\
        &\qquad\left(\text{since} \ 
        \tfrac{k}{2}\phi
        =
        \tfrac{k}{2}\theta^{**}_{k,i} +
        \tfrac{t}{2}
        \equiv
        \pi(i-\delta_k)
        + \tfrac{t}{2}
        \mod{2\pi} 
        \text{ by \eqref{eqn:_theta**_equiv_(i-deltak)pi}}
        \right)\\
        & \geq \left |\frac{k}{2}\cos\left(\frac{t}{2}\right) + \frac{k}{2} B_0(\phi)\sin\left(\frac{t}{2}\right) - B_0'(\phi)\cos\left(\frac{t}{2}\right) \right | - \left| \frac{d}{d \theta}\left[B_3(\theta)d(\theta)^{-k}\right]_{\theta = \phi} \right|\\
        &\geq \left |\frac{k}{2}\Big(\cos(0.6) -0.58\sin(0.6)\Big) - (-0.66)\cos(0.6)  \right| - \left|0.321 + 0.241k\right| \\& \qquad\left(\text{by Lemma \ref{lem:function-bounds} and \eqref{eq:b3-d-theta-derivative-bound}}\right)
        \\
        &\geq 0.248k + 0.544 - 0.322 - 0.241k\\
        &> 0 \quad \text{for }k\geq 10.
    \end{align}
    Thus, $\hat F_k(\theta)$ is monotonic, and the zero $\hat \theta_{k,i}$ must be unique.
\end{proof}

We now prove the bound used in the proof of Lemma \ref{lem:preliminary-estimate}.
\begin{lemma}\label{lem:d-theta-k-endpoint-bounds-cos}
    For $k \geq 10$,
    \begin{align}
        d\left(\theta^{**}_{k,i}\pm \frac{1.2}{k}\right)^{-k} &\leq 0.2853 \pm 0.194.
    \end{align}
\end{lemma}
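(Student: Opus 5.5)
The plan is to reduce the claim to a one-variable monotonicity statement and then check the worst case $k=10$ numerically.

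\textbf{Step 1: reduce to the largest presample zero.} On $\big(\frac{\pi}{2},\frac{2\pi}{3}\big]$ the function $d(\theta)=2\cos(\theta/2)$ is positive and decreasing, so $d(\theta)^{-k}$ is increasing in $\theta$. Hence it suffices to bound $d^{-k}$ at the largest admissible point. By \eqref{eq:pre-sample-zeros}, every presample zero satisfies $\theta^{**}_{k,i}\le \frac{2\pi}{3}-\frac{2\pi}{3k}$, so
\begin{align}
    \theta^{**}_{k,i}\pm\frac{1.2}{k}\le \frac{2\pi}{3}-\frac{c_\pm}{k},
    \qquad c_+:=\frac{2\pi}{3}-1.2\approx 0.894,\quad c_-:=\frac{2\pi}{3}+1.2\approx 3.294 .
\end{align}
It is therefore enough to show
\begin{align}
    d\!\left(\frac{2\pi}{3}-\frac{c}{k}\right)^{-k}\le 0.2853\pm0.194 \qquad\text{for } c=c_\pm \text{ and all } k\ge 10.
\end{align}

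\textbf{Step 2: reduce to $k=10$.} Write $x=c/k$ and set
\begin{align}
    h(x):=\log d\!\left(\frac{2\pi}{3}-x\right)=\log\!\Big(2\cos\!\big(\tfrac{\pi}{3}-\tfrac{x}{2}\big)\Big),
\end{align}
so that $h(0)=0$ and
\begin{align}
    d\!\left(\frac{2\pi}{3}-\frac{c}{k}\right)^{-k}=\exp\!\big(-c\,h(x)/x\big).
\end{align}
Next, $h'(x)=\tfrac12\tan\!\big(\tfrac{\pi}{3}-\tfrac{x}{2}\big)$ is decreasing in $x$, so $h$ is concave on the relevant range $0<x\le c_-/10$. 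A concave function vanishing at $0$ has decreasing chord slope $h(x)/x$. As $k$ increases, $x=c/k$ decreases, so $h(x)/x$ increases and the bound $\exp(-c\,h(x)/x)$ decreases. Consequently the quantity is maximized at the smallest weight, $k=10$, and both inequalities reduce to two numerical evaluations at $k=10$.

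\textbf{Step 3: the two evaluations at $k=10$.}
\begin{itemize}
    \item For $c_+$: $d\big(\frac{2\pi}{3}-0.0894\big)=2\cos(1.0025)\approx1.0767$, so $d^{-10}\approx e^{-0.739}\approx0.4776\le 0.4793=0.2853+0.194$.
    \item For $c_-$: $d\big(\frac{2\pi}{3}-0.3294\big)=2\cos(0.8825)\approx1.2704$, so $d^{-10}\approx e^{-2.393}\approx0.0913\le 0.0913=0.2853-0.194$.
\end{itemize}

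\textbf{Main obstacle.} The delicate point is the second bound: the constant $0.0913$ is essentially sharp at $k=10$. The arithmetic must therefore be done with careful directed rounding, for example with rigorous interval bounds on $\cos$ and $\log$, or by shrinking the argument slightly. The monotonicity step itself is routine once the concavity of $h$ is observed.
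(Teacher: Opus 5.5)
Your proof is correct and follows the paper's route. The paper also bounds $\theta^{**}_{k,i}$ above by $\frac{2\pi}{3}-\frac{2\pi}{3k}$ using that $d(\theta)^{-k}$ increases in $\theta$, reduces to $k=10$, and evaluates numerically. Your concavity/chord-slope argument for $\log d(\frac{2\pi}{3}-x)$ supplies the justification for the monotonicity in $k$ that the paper only asserts; that step needs an argument, because the base of the power also moves with $k$.

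There is one arithmetic slip in the $+$ case. Since $2\cos(1.0025)\approx 1.0764$ rather than $1.0767$, the value at $k=10$ is about $0.4788$, not $0.4776$. This is still below $0.2853+0.194=0.4793$, but only by about $5\cdot 10^{-4}$, so the first bound also needs the careful rounding you flagged for the second.
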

\begin{proof}
We have
    \begin{align}
        d\left(\theta^{**}_{k,i}\pm \frac{1.2}{k}\right)^{-k} &\leq d\left(\frac{2\pi}{3} - \frac{2\pi}{3k} \pm \frac{1.2}{k}\right)^{-k} &\text{(by Lemma \ref{lem:location-of-first-and-last-presample-zero}, since $d(\theta)^{-k}$ increasing in $\theta$)} \\
        &\leq d\left(\frac{2\pi}{3} - \frac{2\pi}{30} \pm \frac{1.2}{10}\right)^{-10} &\left(\text{since $d\left(\tfrac{2\pi}{3} - \tfrac{2\pi}{30} \pm \tfrac{1.2}{10}\right)^{-k}$ is decreasing in $k$}\right) \\
        &\leq 0.2853 \pm 0.194,
    \end{align}
    as desired.
\end{proof}

We now note where the first and last presample zeros can occur. This lemma follows directly from the definition of $\theta^{**}_{k,i}$.
\begin{lemma} \label{lem:location-of-first-and-last-presample-zero}
    We have
    \begin{align}
        \frac{2\pi}{3} - \frac{2\pi}{k} \le \theta^{**}_{k,1} \le \frac{2\pi}{3} - \frac{2\pi}{3k} 
        \qquad \text{and} \qquad 
        \frac{\pi}{2} + \frac{\pi}{k} \le \theta^{**}_{k,n_k} \le \frac{\pi}{2} + \frac{2\pi}{k}.
    \end{align}
\end{lemma}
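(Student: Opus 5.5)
The plan is to read off both bounds from the explicit formula \eqref{eq:pre-sample-zeros},
\begin{align}
    \theta^{**}_{k,i} = \frac{2\pi}{3} - \frac{2\pi}{k}\lrp{i - \frac{\delta_k}{3}}, \qquad \delta_k \in \{0,1,2\},
\end{align}
together with the characterization of the presample zeros as exactly the integer multiples of $\frac{2\pi}{k}$ lying in $\lrb{\frac{\pi}{2}+\frac{\pi}{k},\ \frac{2\pi}{3}-\frac{2\pi}{3k}}$. No analytic input is needed; the only care required is in the indexing and in the case analysis modulo $6$.

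\textbf{First zero.} Set $i=1$, which gives $\theta^{**}_{k,1} = \frac{2\pi}{3} - \frac{2\pi}{k}\lrp{1-\frac{\delta_k}{3}}$. As $\delta_k$ ranges over $\{0,1,2\}$, the factor $1-\frac{\delta_k}{3}$ ranges over $\{1,\frac23,\frac13\}$. So $\theta^{**}_{k,1}$ lies between $\frac{2\pi}{3}-\frac{2\pi}{k}$ and $\frac{2\pi}{3}-\frac{2\pi}{3k}$, which is the first claim. Equivalently, $\theta^{**}_{k,1}$ is the largest multiple of $\frac{2\pi}{k}$ that is at most $\frac{2\pi}{3}-\frac{2\pi}{3k}$. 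Since consecutive multiples are spaced $\frac{2\pi}{k}$ apart, it cannot lie more than $\frac{2\pi}{k}$ below $\frac{2\pi}{3}$.

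\textbf{Last zero.} The upper bound $\theta^{**}_{k,n_k}\ge \frac{\pi}{2}+\frac{\pi}{k}$ is immediate from the characterization. For the other bound, $\theta^{**}_{k,n_k}$ is the smallest multiple of $\frac{2\pi}{k}$ that is at least $\frac{\pi}{2}+\frac{\pi}{k}$, so it is less than $\frac{\pi}{2}+\frac{\pi}{k}+\frac{2\pi}{k}$. This crude spacing argument only gives $\frac{\pi}{2}+\frac{3\pi}{k}$, so a case check is needed to sharpen it to $\frac{\pi}{2}+\frac{2\pi}{k}$.

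Write $\frac{\pi}{2} = \frac{2\pi}{k}\cdot\frac{k}{4}$. Since $k$ is even, $\frac{k}{4}$ is either an integer or a half-integer.
\begin{itemize}
    \item[(a)] If $k\equiv 0 \bmod 4$, the multiples of $\frac{2\pi}{k}$ near $\frac{\pi}{2}$ are $\frac{\pi}{2}+\frac{2\pi j}{k}$ with $j\in\mathbb{Z}$. The smallest one that is at least $\frac{\pi}{2}+\frac{\pi}{k}$ is $\frac{\pi}{2}+\frac{2\pi}{k}$.
    \item[(b)] If $k\equiv 2 \bmod 4$, the multiples are $\frac{\pi}{2}+\frac{\pi}{k}+\frac{2\pi j}{k}$. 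The smallest admissible one is $\frac{\pi}{2}+\frac{\pi}{k}$ itself.
\end{itemize}
In both cases $\frac{\pi}{2}+\frac{\pi}{k}\le\theta^{**}_{k,n_k}\le\frac{\pi}{2}+\frac{2\pi}{k}$.

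\textbf{Main obstacle.} The only step that needs attention is confirming that the index $n_k$ given by the dimension formula really does correspond to this smallest admissible multiple. This is the consistency between the explicit formula \eqref{eq:pre-sample-zeros} and the count $n_k=\dim S_{k+2}$, which the paper asserts in the introduction. If it had to be checked directly, I would run through $k \bmod 12$ using $k=12n_k+r_k$ and verify that
\begin{align}
    \frac{2\pi}{3}-\frac{2\pi}{k}\lrp{n_k-\frac{\delta_k}{3}} \in \lrb{\frac{\pi}{2}+\frac{\pi}{k},\ \frac{\pi}{2}+\frac{2\pi}{k}}.
\end{align}
Since $\frac{2\pi}{3}-\frac{\pi}{2}=\frac{\pi}{6}=\frac{2\pi}{k}\cdot\frac{k}{12}$, this reduces to the elementary check $\frac{k}{12}-n_k+\frac{\delta_k}{3}\in\lrb{\frac14,\frac12}$ for each of the six residue classes.
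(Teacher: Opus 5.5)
Your proof is correct and follows the paper's route; the paper simply states that the lemma follows directly from the definition of $\theta^{**}_{k,i}$. You read off $i=1$ from the explicit formula with $\delta_k\in\{0,1,2\}$. For $i=n_k$, you note that $\theta^{**}_{k,n_k}$ is the smallest multiple of $\frac{2\pi}{k}$ exceeding $\frac{\pi}{2}$ and split by $k \bmod 4$. Tying index $n_k$ to that multiple holds by definition, since $n_k$ is the number of presample zeros and they are indexed from right to left.

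There are two slips, neither of which affects the argument:
\begin{enumerate}
\item The bound $\theta^{**}_{k,n_k}\ge\frac{\pi}{2}+\frac{\pi}{k}$ is a lower bound, not an upper bound.
\item In your optional consistency check, the target should be $\frac{k}{12}-n_k+\frac{\delta_k}{3}\in[\tfrac12,1]$, not $[\tfrac14,\tfrac12]$. The quantity equals $\frac{r_k}{12}+\frac{\delta_k}{3}$, which is $\tfrac12$ for $r_k\in\{-2,2,6\}$ and $1$ for $r_k\in\{12,4,8\}$. As written, your check would spuriously fail in those last three classes.
\end{enumerate}
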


As an immediate corollary of Lemma \ref{lem:preliminary-estimate}, we can easily show that the actual zeros are equidistributed along the lower arc. This result could also be shown using the techniques of \cite{sugibayashi-serre}.

\begin{corollary}
    The zeros of $\vartheta_k(E_k)$ are equidistributed along the lower arc $\mathcal{A}$ according to the uniform angle measure.
\end{corollary}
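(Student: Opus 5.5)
The plan is to deduce equidistribution directly from Lemma \ref{lem:preliminary-estimate}. That lemma places exactly one actual zero $\theta_{k,i}$ within $\frac{1.2}{k}$ of each presample zero $\theta^{**}_{k,i}$. The presample zeros are the explicit arithmetic progression \eqref{eq:pre-sample-zeros} with spacing $\frac{2\pi}{k}$, filling $\lrb{\frac{\pi}{2}+\frac{\pi}{k}, \frac{2\pi}{3}-\frac{2\pi}{3k}}$. Since $n_k$ is exactly the number of non-elliptic zeros allowed by the valence formula, the $\theta_{k,i}$ are all of the zeros. It therefore suffices to show that the normalized counting measures $\mu_k := \frac{1}{n_k}\sum_{i=1}^{n_k} \delta_{\theta_{k,i}}$ converge weakly to the normalized uniform measure $\frac{6}{\pi}\,d\theta$ on $\lrb{\frac{\pi}{2},\frac{2\pi}{3}}$.

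First I would treat the presample zeros. For any subinterval $[a,b]\subseteq\lrb{\frac{\pi}{2},\frac{2\pi}{3}}$, the number of multiples of $\frac{2\pi}{k}$ in $[a,b]$ equals $\frac{k(b-a)}{2\pi}+O(1)$. Since $n_k = \frac{k}{12}+O(1)$, the fraction of presample zeros in $[a,b]$ is $\frac{6(b-a)}{\pi}+O\!\lrp{\frac1k}$.

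Next I would transfer this to the actual zeros. Because $|\theta_{k,i}-\theta^{**}_{k,i}|\le \frac{1.2}{k}$, the count of actual zeros in $[a,b]$ is bounded above by the count of presample zeros in $\lrb{a-\frac{1.2}{k},\,b+\frac{1.2}{k}}$. It is bounded below by the count of presample zeros in $\lrb{a+\frac{1.2}{k},\,b-\frac{1.2}{k}}$. Both counts are $\frac{k(b-a)}{2\pi}+O(1)$, so
\begin{align}
\frac{\#\{i : \theta_{k,i}\in[a,b]\}}{n_k} \longrightarrow \frac{b-a}{\pi/6} \qquad (k\to\infty).
\end{align}
This holds for every subinterval, and by the standard criterion (the limit measure has no atoms) it gives weak convergence. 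Equivalently, one can check it on continuous test functions, for which the $O\!\lrp{\frac1k}$ perturbation of each point changes the Riemann sum by $o(1)$ by uniform continuity.

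I do not expect a serious obstacle. The only care needed is that Lemma \ref{lem:preliminary-estimate} is stated with bounds checked for $k\ge 10$, but equidistribution is an asymptotic statement, so small $k$ are irrelevant. I would also remark that "uniform angle measure" is understood as the normalized Lebesgue measure in $\theta$ on the arc $\mathcal A$.
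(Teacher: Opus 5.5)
Your proposal is correct and follows essentially the same route as the paper: both use Lemma \ref{lem:preliminary-estimate} to compare the number of actual zeros in $[a,b]$ with the number of presample zeros (multiples of $\frac{2\pi}{k}$), which is $\frac{k}{2\pi}(b-a)+O(1)$, and then divide by $n_k=\frac{k}{12}+O(1)$. Your shifted-interval sandwich just makes explicit the $O(1)$ discrepancy that the paper attributes to the disjointness of the intervals $\lrb{\theta^{**}_{k,i}-\frac{1.2}{k},\,\theta^{**}_{k,i}+\frac{1.2}{k}}$.
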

    
\begin{proof}
    Observe that for all intervals $[X,Y] \subseteq \left[\frac{\pi}{2}, \frac{2\pi}{3}\right]$,
    \begin{align}
        &\#\left(\{\theta_{k,i}\}_{1\leq i \leq n_k} \cap [X,Y]\right) \\
        &= \#\left(\{\theta^{**}_{k,i}\}_{1\leq i \leq n_k} \cap [X,Y]\right) + O(1) \\
        &\qquad\left(\text{since the $\theta_{k,i}$ are in the disjoint intervals $\lrcb{[\theta^{**}_{k,i}- \tfrac{1.2}{k}, \theta^{**}_{k,i}+\tfrac{1.2}{k}]}_{1\le i\le n_k}$}\right) \\
        &=\frac{k}{2\pi}\left(Y-X\right) + O(1). \\
        &\qquad \left(\text{since the presample zeros are precisely the multiples of $\tfrac{2\pi}{k}$ in $(\tfrac{\pi}{2}, \tfrac{2\pi}{3})$}\right)
    \end{align}
    This then implies that as $k \to \infty$,
    \begin{align}
        \frac{
            \#\left(\{\theta_{k,i}\}_{1\leq i \leq n_k} \cap [X,Y]\right)
        }{
            \#\{\theta_{k,i}\}_{1\leq i \leq n_k} 
        }
        =
        \frac{\frac{k}{2\pi}(Y-X)+O(1)}{\frac{k}{2\pi}\left(\frac{2\pi}{3} - \frac{\pi}{2}\right) + O(1)} \longrightarrow \frac{Y-X}{\frac{2\pi}{3} - \frac{\pi}{2}} = \frac{6}{\pi}\left(Y-X\right)=\int^Y_{X} \frac{6}{\pi} \ d\theta, \label{eq:equidistribution-equation}
    \end{align}
    as desired.
\end{proof}

\section{Precise estimates for actual zeros}

In this section, we give precise estimates for the locations of the actual zeros of $\vartheta_k(E_k)$ on the lower arc. We first estimate the locations of the sample zeros, then the locations of the postsample zeros, and finally the locations of the actual zeros. Throughout this section, we will only consider $k \geq 1000$ to obtain more accurate estimates.

We first determine the location of the sample zeros, i.e., the zeros of $F^*_k(\theta)$. 
 
\begin{proposition}\label{prop:sample-zero-locations}
    Let $k\geq1000$ and define for $\theta\in(\frac{\pi}{2},\frac{2\pi}{3})$
    \begin{align}
        U(\theta) &:= 2B_2(\theta), \\
        V(\theta) &:= U'(\theta)U(\theta).
    \end{align}
    Then, the sample zeros are given by
    \begin{align}
        \lrcb{\theta^*_{k,i}}_{1\le i \le n_k} 
        =
        \lrcb{ 
            \theta^{**}_{k,i}
            + \frac{U(\theta^{**}_{k,i})}{k} 
            + \frac{V(\theta^{**}_{k,i})}{k^2} 
            + \frac{\alpha^*_{k,i}}{k^3}
        }_{1 \le i \le n_k} \label{eq:sample-zero-formula},\quad \text{where}\ |\alpha^*_{k,i}| \le 5.
    \end{align}
\end{proposition}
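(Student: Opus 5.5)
Since $F^*_k(\theta)=B_1(\theta)\sin\bigl(\frac k2\theta-B_2(\theta)\bigr)$ and $B_1\ge 1$, the sample zeros are exactly the solutions of $\frac k2\theta-B_2(\theta)\in\pi\mathbb Z$. By Lemma \ref{lem:preliminary-estimate}, $\theta^*_{k,i}$ lies within $\frac{1.2}{k}$ of $\theta^{**}_{k,i}$. Because $|B_2|\le 0.53<\pi/2$, the correct integer multiple is the one with $\frac k2\theta^{**}_{k,i}\equiv(i-\delta_k)\pi$. So $\theta=\theta^*_{k,i}$ is characterized as the unique solution near $\theta^{**}_{k,i}$ of the fixed-point equation
\begin{align}
    \theta=\theta^{**}_{k,i}+\frac{2B_2(\theta)}{k}=\theta^{**}_{k,i}+\frac{U(\theta)}{k}.
\end{align}
The plan is to iterate this equation twice and control the remainders with Taylor's theorem.

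Write $\theta^*=\theta^{**}+\varepsilon$, with $\varepsilon=U(\theta^*)/k$, so that $|\varepsilon|\le \frac{1.06}{k}$. Expanding $U$ to second order about $\theta^{**}$ gives
\begin{align}
    U(\theta^*)=U(\theta^{**})+U'(\theta^{**})\varepsilon+\tfrac12U''(\xi)\varepsilon^2 .
\end{align}
Substituting $\varepsilon=\frac{U(\theta^{**})}{k}+\eta$ into the linear term, where $\eta:=\varepsilon-\frac{U(\theta^{**})}{k}=\frac{U(\theta^*)-U(\theta^{**})}{k}$ satisfies $|\eta|\le \frac{\sup|U'|\,|\varepsilon|}{k}$, yields
\begin{align}
    k\varepsilon=U(\theta^{**})+\frac{U'(\theta^{**})U(\theta^{**})}{k}+U'(\theta^{**})\eta+\tfrac12U''(\xi)\varepsilon^2 .
\end{align}
Dividing by $k$ gives the claimed form, with
\begin{align}
    \alpha^*_{k,i}=k^2\bigl(U'\eta+\tfrac12U''\varepsilon^2\bigr),\qquad |\alpha^*_{k,i}|\le 1.06\,\sup|U'|^2+\tfrac12(1.06)^2\sup|U''|.
\end{align}

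The remaining work is quantitative. We need explicit bounds on $\sup|U'|$ and $\sup|U''|$ over $\bigl(\frac\pi2,\frac{2\pi}3\bigr)$, slightly enlarged by $\frac{1.2}{k}$ so that the intermediate points $\xi$ are covered. Since $U=2\arctan B_0$ with $B_0(\theta)=\frac\pi3\operatorname{Re}[e^{i\theta}E_2(e^{i\theta})]$, these reduce to bounds on $B_0$, $B_0'$ and $B_0''$. These follow from the $q$-expansion of $E_2$, truncated with a tail estimate, or equivalently from bounds of the type already recorded in Lemma \ref{lem:function-bounds} (for instance $|B_0'|\le 0.66$). Using $U'=\frac{2B_0'}{1+B_0^2}$ and $U''=\frac{2B_0''}{1+B_0^2}-\frac{4B_0B_0'^2}{(1+B_0^2)^2}$, we get rough bounds of about $|U'|\lesssim1.4$, so the first term is around $2$. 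The constant $5$ therefore leaves room provided $|U''|\lesssim 5$.

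The main obstacle is obtaining a rigorous numerical bound on $B_0''$ near the endpoints, where $\operatorname{Im}z$ is smallest and the $q$-series for $E_2$ converges most slowly. One could alternatively use the quasimodular transformation of $E_2$ to handle this region. The assumption $k\ge1000$ is used only to make the enlargement by $\frac{1.2}{k}$ negligible and to keep all the second-order remainders small.
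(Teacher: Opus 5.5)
\textbf{There is a genuine gap in the quantitative step.} Your strategy is sound and is essentially equivalent to the paper's: you iterate the fixed-point identity $\theta^*=\theta^{**}+U(\theta^*)/k$ with a second-order Taylor expansion, while the paper substitutes $\phi+U/k+V/k^2\pm 5/k^3$ into the phase $\frac k2\theta-B_2(\theta)$ and checks a sign change. The problem is the constant $5$, which is the real content of the statement.

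You misread Lemma \ref{lem:function-bounds}. It gives $-1.30\le B_0'\le -0.66$, so $|B_0'|$ reaches $1.30$. Indeed $B_0(\pi/2)=0$ and $B_0'(\pi/2)\approx-1.30$, hence $U'(\pi/2)=2B_0'(\pi/2)\approx-2.6$; the lemma itself lists $-2.60\le U'\le-1$. With $\sup|U'|=2.60$ and $\sup|U''|=4.97$, your bound $1.06\sup|U'|^2+\tfrac12(1.06)^2\sup|U''|$ is about $7.2+2.8\approx 10$. So the argument as written does not give $|\alpha^*_{k,i}|\le 5$. Relatedly, the bound on $B_0''$ is not the main obstacle (it is supplied by direct computation); the constant bookkeeping is.

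To close the gap you need two refinements, and the paper uses both.

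\emph{Opposite signs.} Since $U\le 0$ and $U'\le 0$, you have $\varepsilon\le 0$ and $\eta=U'(c)\varepsilon/k\ge 0$. Hence $U'(\theta^{**})\eta\le 0$, while $\tfrac12U''(\xi)\varepsilon^2\ge 0$. So $|\alpha^*_{k,i}|$ is bounded by the larger of the two terms, not their sum.

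\emph{Bound the product at a common point.} Estimate the first term as $|U'(\theta^{**})|\cdot|U'(c)U(\theta^*)|\le 2.60\,(\sup|V|+5.5/k)\approx 3.73$, rather than $\sup|U'|^2\sup|U|\approx 7.1$. This exploits that $V=U'U$ stays small where $|U'|$ is large (near $\pi/2$, where $U\approx 0$). Moving $c$ to $\theta^*$ costs only $O(1/k)$ via $\sup|U''|$.

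With both, $|\alpha^*_{k,i}|\le\max\{3.73,\,2.74\}<5$. Neither refinement suffices alone: $3.73+2.74>5$ and $\max\{7.1,2.74\}>5$. This is exactly the paper's estimate $|\beta|\le\max\{1.30\cdot1.43,\ \tfrac{2.49}{2}(1.05+\cdots)^2\}+0.0065\le 1.87<2.5$ on the phase error, where $1.30\cdot 1.43$ is $\sup|B_2'|\cdot\sup|V|$ and the maximum comes from the two terms having opposite signs.

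A minor point: no enlargement of the interval is needed. Since $\theta^{**}_{k,i}\ge\frac\pi2+\frac\pi k$ and $U\in[-1.05,0]$, you have $\theta^*_{k,i}\in[\theta^{**}_{k,i}-\tfrac{1.05}{k},\theta^{**}_{k,i}]\subset(\frac\pi2,\frac{2\pi}3)$. So all intermediate points already lie where Lemma \ref{lem:function-bounds} applies.
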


\begin{proof}
    For ease of notation, we will denote  $\phi = \theta^{**}_{k,i}$ for the entire proof.

    Recall the definition of $F^*_k(\theta)$ from \eqref{eq:F^*}. Note that $B_1(\theta) > 0$ by Lemma \ref{lem:function-bounds}. Hence, to prove the desired result, it suffices to show that $\sin\left(\frac{k}{2}\theta-B_2(\theta)\right)$ has opposite signs for 
    \begin{align}
        \theta = \phi
            + \frac{U(\phi)}{k} 
            + \frac{V(\phi)}{k^2} 
            \pm \frac{5}{k^3}.
    \end{align}

    We have that
  \begin{align}
      &\sin\left(\frac{k}{2}\left(\phi+\frac{U(\phi)}{k}+\frac{V(\phi)}{k^2}\pm\frac{5}{k^3}\right)-B_2\left(\phi+\frac{U(\phi)}{k}+\frac{V(\phi)}{k^2}\pm\frac{5}{k^3}\right)\right)\\
      &=(-1)^{i-\delta_k}
      \sin\left(\frac{U(\phi)}{2}+\frac{V(\phi)}{2k}\pm\frac{2.5}{k^2}-B_2\left(\phi+\frac{U(\phi)}{k}+\frac{V(\phi)}{k^2}\pm\frac{5}{k^3}\right)\right)\\
      &\qquad \left(\text{since} \ \tfrac{k}{2}\phi=\tfrac{k}{2}\theta^{**}_{k,i}\equiv\pi(i-\delta_k)\mod{2\pi}
      \text{ by \eqref{eqn:_theta**_equiv_(i-deltak)pi}}
      \right)\\
      &=(-1)^{i-\delta_k}\sin\Bigg(\frac{U(\phi)}{2}+\frac{V(\phi)}{2k}\pm\frac{2.5}{k^2}-B_2(\phi)-B'_2(\phi)\left(\frac{U(\phi)}{k}+\frac{V(\phi)}{k^2}\pm\frac{5}{k^3}\right)\\
      & \hspace{33mm}
      -\frac{B''_2(c)}{2}\left(\frac{U(\phi)}{k}+\frac{V(\phi)}{k^2}\pm\frac{5}{k^3}\right)^2\Bigg)\\
      &\qquad \left(\text{for some } c \text{ between $\phi$ and } \phi+\tfrac{U(\phi)}{k}+\tfrac{V(\phi)}{k^2}\pm\tfrac{5}{k^3}\right)\\
      &=(-1)^{i-\delta_k}\sin\Bigg(
        \lrb{
            \frac{U(\phi)}{2} - B_2(\phi)
        }
        +
        \frac{1}{k}
        \lrb{
            \frac{V(\phi)}{2} - B_2'(\phi) U(\phi)
        } \\
        &\hspace{33mm}
        \pm \frac{2.5}{k^2}
        +
        \frac{1}{k^2}
        \lrb{
            - B_2'(\phi) V(\phi) \mp \frac{5B_2'(\phi)}{k}
            - \frac{B_2''(c)}{2}
            \lrp{U(\phi)+\frac{V(\phi)}{k}\pm\frac{5}{k^2}}^2
        } 
        \Bigg)
        \\
        &= (-1)^{i-\delta_k}\sin\Bigg(
        \pm \frac{2.5}{k^2}
        +
        \frac{1}{k^2}
        \lrb{
            - B_2'(\phi) V(\phi) \mp \frac{5B_2'(\phi)}{k}
            - \frac{B_2''(c)}{2}
            \lrp{U(\phi)+\frac{V(\phi)}{k}\pm\frac{5}{k^2}}^2
        } 
        \Bigg) \\
        &=: (-1)^{i-\delta_k}\sin\Bigg(
        \pm \frac{2.5}{k^2}
        +
        \frac{\beta}{k^2} 
        \Bigg),
  \end{align}
  which has opposite signs since
  \begin{align}
      |\beta| 
      &= 
      \lrabs{
            - B_2'(\phi) V(\phi) \mp \frac{5B_2'(\phi)}{k}
            - \frac{B_2''(c)}{2}
            \lrp{U(\phi)+\frac{V(\phi)}{k}\pm\frac{5}{k^2}}^2
        }  \\
        &\leq \lrabs{
            - B_2'(\phi) V(\phi)
            - \frac{B_2''(c)}{2}
            \lrp{U(\phi)+\frac{V(\phi)}{k}\pm\frac{5}{k^2}}^2
        } + \left | \frac{5B_2'(\phi)}{k}\right | \\
        &\leq\text{max}\lrcb{1.30\cdot1.43,\frac{2.49}{2}\left(1.05+\frac{1.43}{1000}+\frac{5}{1000^2}\right)^2} +\frac{6.5}{1000} \qquad(\text{by Lemma \ref{lem:function-bounds}})\\
        &\qquad \text{(since this is the difference of positive terms)}\\
        &\le 1.87.
    \end{align}
    This completes the proof.
\end{proof}

Using this explicit formula, we can show that the sample zeros are not too close to the endpoints of $(\frac{\pi}{2}, \frac{2\pi}{3})$.
\begin{lemma} \label{lem:sample-zero-at-least-3.1/k-away-from-2pi/3}
    For $k \ge 1000$, every sample zero $\theta^*_{k,i}$ satisfies
    \begin{align}
        \frac{\pi}{2} + \frac{3.1}{k} \le \theta^*_{k,i} \le \frac{2\pi}{3} - \frac{3.1}{k}.
    \end{align}
\end{lemma}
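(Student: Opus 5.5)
We will combine the explicit formula of Proposition \ref{prop:sample-zero-locations} with the endpoint locations of the presample zeros from Lemma \ref{lem:location-of-first-and-last-presample-zero}. Since the sample zeros are indexed from right to left, and consecutive presample zeros are spaced by $\frac{2\pi}{k}$ while the correction $\frac{U}{k}+\frac{V}{k^2}+\frac{\alpha^*}{k^3}$ is $O(1/k)$, it suffices to handle the two extreme zeros. Ordering is automatic: the sample zeros lie in the disjoint intervals of Lemma \ref{lem:preliminary-estimate}. So the upper bound only needs checking for $\theta^*_{k,1}$, and the lower bound only for $\theta^*_{k,n_k}$. Even so, I would simply bound every $\theta^*_{k,i}$ directly using $\theta^{**}_{k,1}\ge\theta^{**}_{k,i}\ge\theta^{**}_{k,n_k}$.

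\textbf{Upper bound.} By Lemma \ref{lem:location-of-first-and-last-presample-zero}, $\theta^{**}_{k,i}\le \frac{2\pi}{3}-\frac{2\pi}{3k}$. We also have $U=2B_2=2\arctan(B_0)\le 0$, since $B_2\le 0$ by Lemma \ref{lem:function-bounds}. The key point is that $U$ is not just nonpositive but bounded away from zero near $\frac{2\pi}{3}$. Indeed, $B_0(\tfrac{2\pi}{3})=\frac{\pi}{3}\mathrm{Re}[\rho E_2(\rho)]$, and with $E_2(\rho)=\frac{2\sqrt3}{\pi}$ this equals $-\frac{1}{\sqrt3}\approx -0.577$ (consistent with the bound $B_2\ge -0.53$, since $\arctan(0.577)\approx 0.5236$). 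Hence $U(\tfrac{2\pi}{3})=-\frac{\pi}{3}\approx -1.047$. Near $\frac{2\pi}{3}$, say on $[\frac{2\pi}{3}-0.1,\frac{2\pi}{3}]$, we have $U\le -1.0$, from monotonicity of $B_0$ or the derivative bounds in Lemma \ref{lem:function-bounds}. This gives
\begin{align}
\theta^*_{k,i}\le \frac{2\pi}{3}-\frac{2.094}{k}-\frac{1.0}{k}+\frac{|V|}{k^2}+\frac{5}{k^3}\le \frac{2\pi}{3}-\frac{3.1}{k},
\end{align}
which holds for $k\ge1000$ using $|V|\le 1.43$. The margin is thin, so the constant must be checked carefully: $2.0944+1.047=3.141$, which leaves room of about $0.04$. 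The bound on $U$ therefore needs to be $U\le -1.02$ or so. For presample zeros farther from $\frac{2\pi}{3}$, at least $\frac{2\pi}{3}-\frac{2\pi}{3k}-\frac{2\pi}{k}$, the weaker bound $U\le 0$ already gives a margin of $8.3/k$.

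\textbf{Lower bound.} By Lemma \ref{lem:location-of-first-and-last-presample-zero}, $\theta^{**}_{k,n_k}\ge\frac{\pi}{2}+\frac{\pi}{k}$. At $\theta=\pi/2$ we have $E_2(i)=\frac{3}{\pi}$, so $B_0(\pi/2)=\frac{\pi}{3}\mathrm{Re}[iE_2(i)]=0$, and thus $U(\pi/2)=0$. Near $\pi/2$, therefore, $U(\theta)\ge -C(\theta-\frac{\pi}{2})$ with $C=2\sup|B_2'|\le 2\cdot 1.30$ roughly, by Lemma \ref{lem:function-bounds}. For $\theta^{**}_{k,n_k}\le \frac{\pi}{2}+\frac{2\pi}{k}$ this makes $|U|/k=O(1/k^2)$, giving $\theta^*\ge\frac{\pi}{2}+\frac{3.14}{k}-O(1/k^2)\ge \frac{\pi}{2}+\frac{3.1}{k}$. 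Here too the margin is small but adequate at $k\ge1000$. For larger indices, $\theta^{**}\ge \frac{\pi}{2}+\frac{3\pi}{k}$ and $|U|\le 1.05$ give ample room.

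The main obstacle is the upper bound at $i=1$ when $k\equiv 0 \bmod 6$ (so $\delta_k=0$), where $\theta^{**}_{k,1}=\frac{2\pi}{3}-\frac{2\pi}{k}$. Wait — that is actually the favorable case; the tight case is $\delta_k=2$, where $\theta^{**}_{k,1}=\frac{2\pi}{3}-\frac{2\pi}{3k}$. There one needs a sharp lower bound on $|U|$ near $\frac{2\pi}{3}$, namely $-U\ge 1.01$ within $O(1/k)$ of $\frac{2\pi}{3}$. I would obtain it from the exact value $-\frac{\pi}{3}$ together with the bound $|U'|=2|B_2'|\le 2.6$, which changes $U$ by at most $2.6\cdot\frac{2.1}{k}<0.006$.
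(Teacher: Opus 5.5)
Your final argument is correct and is essentially the paper's. Both reduce to the extreme zeros via Proposition~\ref{prop:sample-zero-locations} and Lemma~\ref{lem:location-of-first-and-last-presample-zero}, and both control $U$ near the endpoints. The paper does this through monotonicity of $U,V$ and a numerical evaluation of $U(\frac{2\pi}{3}-\frac{2\pi}{1000})$, whereas you use $U(\frac{2\pi}{3})=-\frac{\pi}{3}$, $U(\frac{\pi}{2})=0$ and $|U'|\le 2.6$ over a distance $O(1/k)$; this gives margins of roughly $3.13/k$ and $3.12/k$.

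Do discard the intermediate claim $U\le -1.0$ on $[\frac{2\pi}{3}-0.1,\frac{2\pi}{3}]$. It is false, since $U'\le -1$ forces $U(\frac{2\pi}{3}-0.1)\ge -\frac{\pi}{3}+0.1\approx -0.947$. The display built on it also does not close ($2.094+1.0<3.1$), as you noticed yourself.
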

\begin{proof}
    For $k\ge1000$,
    \begin{align}
        \theta^*_{k,i}
        &\le \theta^*_{k,1} \\
        &=
        \theta^{**}_{k,1}
            + \frac{U(\theta^{**}_{k,1})}{k} 
            + \frac{V(\theta^{**}_{k,1})}{k^2} 
            + \frac{\alpha^*_{k,1}}{k^3}
        \\
        &\le\frac{2\pi}{3}-\frac{2\pi}{3k}+\frac{U(\frac{2\pi}{3}-\frac{2\pi}{k})}{k}+\frac{V(\frac{2\pi}{3}-\frac{2\pi}{k})}{k^2}+\frac{\alpha^*_{k,1}}{k^3} \\
        &\qquad
        \left(\text{by Lemma \ref{lem:location-of-first-and-last-presample-zero} and since $U$ and $V$ are both decreasing near $\tfrac{2\pi}{3}$}\right)
        \\
        &\le\frac{2\pi}{3} + \frac{1}{k}
        \lrb{
            \frac{-2\pi}{3} + U\lrp{\frac{2\pi}{3} - \frac{2\pi}{1000}} +\frac{V\lrp{\frac{2\pi}{3} - \frac{2\pi}{1000}}}{1000}+\frac{5}{1000^2}
        }\\
        &\le\frac{2\pi}{3}-\frac{3.1}{k},
    \end{align}
    and
    \begin{align}
        \theta^*_{k,i}
        &\ge \theta^*_{k,n_k} \\
        &=
        \theta^{**}_{k,n_k}
            + \frac{U(\theta^{**}_{k,n_k})}{k} 
            + \frac{V(\theta^{**}_{k,n_k})}{k^2} 
            + \frac{\alpha^*_{k,n_k}}{k^3}
        \\
        & \ge \frac{\pi}{2}+\frac{\pi}{k}+\frac{U(\frac{\pi}{2}+\frac{2\pi}{k})}{k}+\frac{V(\frac{\pi}{2}+\frac{\pi}{k})}{k^2}+\frac{\alpha^*_{k,n_k}}{k^3}\\
        & \qquad  \left(\text{by Lemma \ref{lem:location-of-first-and-last-presample-zero} and since $U$ is decreasing and $V$ is increasing near $\tfrac{\pi}{2}$}\right)
        \\
        &\ge \frac{\pi}{2} + \frac{1}{k}
        \lrb{
            \pi + U\left(\frac{\pi}{2}+\frac{2\pi}{1000}\right) + 0 + \frac{-5}{1000^2}
        } \\
        &\ge\frac{\pi}{2}+\frac{3.1}{k}.
    \end{align}
    This completes the proof.
\end{proof}

We record the following observation that will prove useful throughout the paper. The proof is clear from the definition of $F^*_k(\theta)$; see \eqref{eq:F^*}. 
\begin{lemma} \label{lem:sample-presample-exact-expression}
    We have that
    \begin{align}
        \frac{k}{2} \theta^{**}_{k,i}  = \frac{k}{2} \theta^*_{k,i} - B_2(\theta^*_{k,i}),
    \end{align}
    or equivalently,
    \begin{align}
    \theta^*_{k,i} &= \theta^{**}_{k,i} + \frac{U(\theta^*_{k,i})}{k}.
    \end{align}
\end{lemma}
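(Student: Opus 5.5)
The identity is a direct consequence of the product form \eqref{eq:F^*} of $F^*_k$. I would first write $F^*_k(\theta) = B_1(\theta)\sin\bigl(\tfrac{k}{2}\theta - B_2(\theta)\bigr)$ and note that $B_1>0$. Hence $\theta^*_{k,i}$ is a zero of $F^*_k$ exactly when
\begin{align}
    \tfrac{k}{2}\theta^*_{k,i} - B_2(\theta^*_{k,i}) = m\pi \quad \text{for some integer } m.
\end{align}
The task is then to show that $m\pi = \tfrac{k}{2}\theta^{**}_{k,i}$. By the explicit formula \eqref{eq:pre-sample-zeros}, the quantity $\tfrac{k}{2}\theta^{**}_{k,i}$ is itself an integer multiple of $\pi$.

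To pin down $m$, I would use the localization from Lemma \ref{lem:preliminary-estimate}, which gives $|\theta^*_{k,i} - \theta^{**}_{k,i}| \le \tfrac{1.2}{k}$. I would also use the bound $-0.53 \le B_2 \le 0$ from Lemma \ref{lem:function-bounds}. Together these give
\begin{align}
    \Bigl|\tfrac{k}{2}\theta^*_{k,i} - B_2(\theta^*_{k,i}) - \tfrac{k}{2}\theta^{**}_{k,i}\Bigr|
    \le \tfrac{k}{2}\cdot\tfrac{1.2}{k} + 0.53 = 1.13 < \pi.
\end{align}
Both $m\pi$ and $\tfrac{k}{2}\theta^{**}_{k,i}$ are integer multiples of $\pi$, and they differ by less than $\pi$, so they must be equal. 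This proves the first identity.

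The second form follows by rearranging. Multiply the first identity by $\tfrac{2}{k}$ and use $U = 2B_2$ to obtain $\theta^*_{k,i} = \theta^{**}_{k,i} + \tfrac{U(\theta^*_{k,i})}{k}$.

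There is no real obstacle here. The only point needing care is the integer-selection step, and the crude bounds above already leave ample room. Alternatively, for $k \ge 1000$ one could use Proposition \ref{prop:sample-zero-locations}, which gives $\theta^*_{k,i} - \theta^{**}_{k,i} = O(1/k)$ with explicit constants, but Lemma \ref{lem:preliminary-estimate} suffices for every $k$.
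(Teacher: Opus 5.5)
Your proof is correct and matches the paper, which just says the identity is clear from the product form $F^*_k(\theta)=B_1(\theta)\sin\bigl(\tfrac{k}{2}\theta-B_2(\theta)\bigr)$ with $B_1>0$. You have also supplied the step the paper leaves implicit: the window $|\theta^*_{k,i}-\theta^{**}_{k,i}|\le \tfrac{1.2}{k}$ together with $|B_2|\le 0.53$ forces the multiple of $\pi$ to be exactly $\tfrac{k}{2}\theta^{**}_{k,i}$, and the lemmas you cite do not depend on this one, so nothing is circular.
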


As an immediate corollary, we bound the distance between consecutive sample zeros:
\begin{corollary}\label{cor:sample-zero-2pi-k-distance-bound}
We have that
\begin{align} 
    \theta^*_{k,i} - \theta^*_{k,i+1} \leq \frac{2\pi}{k}.
\end{align}
\begin{proof}
    Observe that
    \begin{align}
        \theta^*_{k,i} - \theta^{*}_{k,i+1} &= \theta^{**}_{k,i} - \theta^{**}_{k,i+1} + \frac{U(\theta^*_{k,i}) - U(\theta^*_{k,i+1})}{k} &\text{(by Lemma \ref{lem:sample-presample-exact-expression})} \\
        &= \frac{2\pi}{k} + \frac{U'(c)(\theta^*_{k,i} - \theta^*_{k,i+1})}{k} &(\text{for $c$ between $\theta^*_{k,i+1}$ and $\theta^*_{k,i}$}) \\
        &\leq \frac{2\pi}{k}, &\text{(since $U'(c)$ $\leq 0$ by Lemma \ref{lem:function-bounds})}
    \end{align}
    as desired.
\end{proof}
    
\end{corollary}

We now estimate the locations of the postsample zeros. 
\begin{proposition}
    \label{prop:postsample-sample-distance-formula}
     Assume $k\ge1000$. Let
    \begin{align}
        T(W) &:= 2 \sin\lrp{\frac{W}{2}} e^{-\tfrac{\sqrt 3}{2} W} 
        \qquad \text{for } W \in [-0.1, 0.1].
    \end{align}
   Then, the postsample zeros are given by
    \begin{align}
        \lrcb{\hat \theta_{k,i}}_{1\le i \le n_k} 
        =
        \lrcb{ 
            \theta^{*}_{k,i}
            + \frac{W_{k,i}}{k}
            + \frac{\hat \alpha_{k,i}}{k^2}
        }_{1 \le i \le n_k}\label{eq:postsample-zero-formula},
    \end{align}
    where $\left|\hat \alpha_{k,i} \right | \le 0.6$ and $W_{k,i}$ is the unique value of $W \in [-0.1, 0.1]$ such that
    \begin{align} \label{eqn:temp-postsample-term}
        T(W_{k,i})  = (-1)^{i-\delta_k} e^{- \frac{\sqrt 3}{2} s^*_{k,i}} \qquad \text{where }\  s^*_{k,i} &:= k\left(\frac{2\pi}{3} - \theta^*_{k,i}\right).
    \end{align}
\end{proposition}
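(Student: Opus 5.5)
Write $\hat F_k = B_1(\theta)\sin\big(\tfrac{k}{2}\theta - B_2(\theta)\big) + B_3(\theta)d(\theta)^{-k}$ as in \eqref{eq:F-hat-final}. The plan is a sign-change argument, in the style of the proof of Proposition \ref{prop:sample-zero-locations}. I will show that $\hat F_k$ takes opposite signs at the two points
\begin{align}
\theta_\pm := \theta^*_{k,i} + \frac{W_{k,i}}{k} \pm \frac{0.6}{k^2}.
\end{align}
Both points lie in $[\theta^{**}_{k,i}-\tfrac{1.2}{k},\theta^{**}_{k,i}+\tfrac{1.2}{k}]$, because Proposition \ref{prop:sample-zero-locations} gives $|\theta^*_{k,i}-\theta^{**}_{k,i}|\le \tfrac{1.05}{k}+O(k^{-2})$ and $|W_{k,i}|\le 0.1$. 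So the uniqueness part of Lemma \ref{lem:preliminary-estimate} identifies the resulting zero with $\hat\theta_{k,i}$.

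\textbf{Step 1: existence and uniqueness of $W_{k,i}$.} On $[-0.1,0.1]$,
\begin{align}
T'(W)=e^{-\frac{\sqrt3}{2}W}\Big(\cos\tfrac W2-\sqrt3\sin\tfrac W2\Big)>0.9,
\end{align}
so $T$ is strictly increasing, and $T(\pm0.1)\approx\pm0.0916$. Lemma \ref{lem:sample-zero-at-least-3.1/k-away-from-2pi/3} gives $s^*_{k,i}\ge 3.1$, hence $e^{-\frac{\sqrt3}{2}s^*_{k,i}}\le e^{-2.68}<0.069$. Therefore $W_{k,i}$ exists and is unique, and in fact $|W_{k,i}|\lesssim 0.076$.

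\textbf{Step 2: expand both terms at $\theta=\theta^*_{k,i}+\tfrac{x}{k}$ with $x=W_{k,i}\pm 0.6/k$.}

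For the oscillatory term, Lemma \ref{lem:sample-presample-exact-expression} and \eqref{eqn:_theta**_equiv_(i-deltak)pi} give
\begin{align}
\tfrac{k}{2}\theta - B_2(\theta) \equiv (i-\delta_k)\pi + \tfrac{x}{2} - \big(B_2(\theta)-B_2(\theta^*_{k,i})\big) \pmod{2\pi},
\end{align}
and the bracketed difference is $O(|x|/k)$ via the bound on $B_2'$ from Lemma \ref{lem:function-bounds}. Hence the first term equals $(-1)^{i-\delta_k}B_1(\theta)\sin\big(\tfrac x2+O(\tfrac1k)\big)$.

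For the exponential term, I use
\begin{align}
d(\theta)^{-k}=\exp\!\big(-k\log(2\cos\tfrac\theta2)\big).
\end{align}
Taylor expansion around $\tfrac{2\pi}{3}$ gives $\log d(\theta) = \tfrac{\sqrt3}{2}(\tfrac{2\pi}{3}-\theta)\cdot(-1)+\dots$; more precisely $\tfrac{d}{d\theta}\log d=-\tfrac12\tan\tfrac\theta2=-\tfrac{\sqrt3}{2}$ at $\tfrac{2\pi}3$. Writing $\tfrac{2\pi}3-\theta=(s^*_{k,i}-x)/k$, this yields
\begin{align}
d(\theta)^{-k}=e^{-\frac{\sqrt3}{2}(s^*_{k,i}-x)}\,e^{O(s^2/k)}.
\end{align}
I also need $B_3(\tfrac{2\pi}{3})$. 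If $\mathrm{Re}(zE_2(z))$ at $\rho$ yields $B_0(\tfrac{2\pi}3)=-\sqrt3+\dots$, then $B_3(\tfrac{2\pi}3)$ combines with $B_1$ so that, after dividing by $B_1$, the second term becomes $\pm\tfrac12 e^{-\frac{\sqrt3}{2}s^*}e^{\frac{\sqrt3}{2}x}$ times a factor. I will verify the normalization: the stated $T(W)=2\sin(W/2)e^{-\frac{\sqrt3}{2}W}$ suggests $B_3/B_1\to-1$ at $\tfrac{2\pi}3$, i.e. $B_0(\tfrac{2\pi}3)=\sqrt3$ with $B_1=2$, $B_3=-\sqrt3$. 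In general the equation $\hat F_k=0$ then reduces, up to errors, to
\begin{align}
2\sin\tfrac x2\, e^{-\frac{\sqrt3}{2}x}=(-1)^{i-\delta_k}e^{-\frac{\sqrt3}2 s^*_{k,i}},
\end{align}
which is exactly the defining relation $T(x)=\dots$.

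\textbf{Step 3: the error analysis (main obstacle).} The work is to show that all discrepancies are $O(1/k)$ with explicit small constants, uniformly in $i$, so that the shift $\pm0.6/k$ moves $T$ by about $0.9\cdot 0.6/k$, which dominates them. The discrepancies are:
\begin{itemize}
\item $B_2(\theta)-B_2(\theta^*_{k,i})$;
\item the variation of $B_1$ and $B_3$ between $\theta^*_{k,i}$ and $\tfrac{2\pi}{3}$;
\item the quadratic term in $\log d$.
\end{itemize}
The difficulty is that the last two are not small in $1/k$ when $s^*$ is large. I plan to use the decaying weight $e^{-\frac{\sqrt3}{2}s^*}$ and split into two regimes.

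\emph{Small $s^*$, say $s^*\le \log k$.} Here $|\tfrac{2\pi}3-\theta|\le (\log k)/k$. Then $(B_3/B_1)(\theta)=-1+O(s/k)$, and the quadratic log term is $O(s^2/k)$. Multiplied by $e^{-\frac{\sqrt3}{2}s}$, these give errors $\le C/k$ with $\sup_s s^2e^{-0.86s}$ bounded.

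\emph{Large $s^*$.} The whole exponential term is at most $k^{-0.86}$ times bounded factors. This is still not $O(1/k)$, so I would instead bound it using $d(\theta)^{-k}\le e^{-c s}$ with the true derivative bound from Lemma \ref{lem:function-bounds}, again giving an $O(se^{-cs}/k)$-type comparison with $T$. Indeed both sides are then tiny and $W_{k,i}\approx(-1)^{i-\delta_k}e^{-\frac{\sqrt3}{2}s^*}$, so only a relative-error estimate is needed.

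I expect the numerical constants here (to land within $0.6$) to require care with mean-value bounds on $B_0'$, $B_3'$ and $\tan\tfrac\theta2$ over the short interval, using $k\ge1000$ throughout.
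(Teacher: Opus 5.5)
Your architecture matches the paper's. You take a sign change of $\hat F_k$ at $\theta^*_{k,i}+\frac{W_{k,i}}{k}\pm\frac{0.6}{k^2}$, reduce the oscillatory factor via Lemma~\ref{lem:sample-presample-exact-expression}, and handle $d(\theta)^{-k}$ by expanding $\log d$ about $\frac{2\pi}{3}$ with a split by the size of $s$. The genuine gap is the normalization step, which you infer from the statement instead of proving, and get wrong. Balancing the two terms of $\hat F_k$ gives $T(x)\approx(-1)^{i-\delta_k}\frac{-2B_3}{B_1}e^{-\frac{\sqrt3}{2}s^*_{k,i}}$, so the proposition needs the exact identity
\[
\frac{-B_1(2\pi/3)}{2B_3(2\pi/3)}=1, \qquad\text{i.e.}\qquad \frac{B_3(2\pi/3)}{B_1(2\pi/3)}=-\frac12,
\]
not $-1$. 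Your proposed values $B_0(\frac{2\pi}{3})=\sqrt3$, $B_1=2$, $B_3=-\sqrt3$ contradict Lemma~\ref{lem:function-bounds}, and even on their own terms give ratio $-\frac{\sqrt3}{2}$. This identity is the one non-routine input. If the ratio were $1+\eta$ with $\eta\ne0$, then $k(\hat\theta_{k,i}-\theta^*_{k,i})-W_{k,i}$ would be of order $|\eta|e^{-\frac{\sqrt3}{2}s^*_{k,i}}$. That is not $O(1/k)$ when $s^*_{k,i}$ is bounded, as it is for $i=1$, so no numerical check of the ratio can replace an exact evaluation. The missing idea is quasimodularity of $E_2$ at $\rho=e^{2\pi i/3}$. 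The map $z\mapsto-1/(z+1)$ fixes $\rho$ and $(\rho+1)^2=\rho$, so $E_2(\rho)=\rho E_2(\rho)+\frac{6}{\pi i}(\rho+1)$. Hence $E_2(\rho)=\frac{2\sqrt3}{\pi}$, giving $B_0(\frac{2\pi}{3})=-\frac{\sqrt3}{3}$, $B_1(\frac{2\pi}{3})=\frac{2\sqrt3}{3}$ and $B_3(\frac{2\pi}{3})=-\frac{\sqrt3}{3}$.

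There are two secondary issues. First, the constant $0.6$ is tight in the paper's bookkeeping ($0.294$ against $0.299$). It relies on a partial cancellation: with $h:=\frac{-B_1}{2B_3}$ evaluated at $\frac{2\pi}{3}-\frac{s}{k}$, the term $-\log h\approx-\frac{\sqrt3}{2}\frac{s}{k}$ offsets part of the quadratic term $\frac{(s-W)^2}{2k}$ of $-k\log d$. This uses $B_0'(\frac{2\pi}{3})=-\frac23$, which comes from Ramanujan's identity $E_2'=\frac{\pi i}{6}(E_2^2-E_4)$ and $E_4(\rho)=0$. Bounding the drift of $B_3/B_1$ and the $\log d$ remainder separately in absolute value, as you propose, risks overshooting $0.6$. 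Second, your large-$s$ regime is not coherent as written. Cutting at $\log k$ only gives $k^{-0.87}$, and the appeal to a ``relative-error estimate'' is left unspecified. Cut at $s=2\log k$ instead, as the paper does. Beyond that point both $h\,e^{-\frac{\sqrt3}{2}(s-W)}=O(k^{-\sqrt3})$ and $d^{-k}=O(k^{-1.32})$, the latter from $\log d(\frac{2\pi}{3}-\phi)\ge0.66\phi$, so both are $O(1/k)$. Below it your Taylor analysis still applies, since $s^2/k\le4(\log k)^2/k$ is small for $k\ge1000$.
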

\begin{proof}
    First, note that $W_{k,i}$ is well-defined since $T(W)$ is monotonic with range $[T(-0.1),T(0.1)] = [-0.109...,0.091... ]$. And $(-1)^{i-\delta_k} e^{- \frac{\sqrt 3}{2} s_{k,i}^*}$ lies within this range since 
    $\lrabs{(-1)^{i-\delta_k} e^{- \frac{\sqrt 3}{2} s_{k,i}^*}} \le e^{- \frac{\sqrt 3}{2} \cdot 3.1} \le 0.07$ by Lemma \ref{lem:sample-zero-at-least-3.1/k-away-from-2pi/3}.

    For ease of notation, we will write
    $\phi = \theta^*_{k,i}$ and
    $W = W_{k,i}$ for rest of the proof.
    To prove the desired result, it suffices to show that the values
    $
        \hat F_k\lrp{\phi + \frac{W}{k} \pm \frac{0.6}{k^2}} 
    $
    have opposite signs.
    Recalling \eqref{eq:F-hat-final}, we write
    \begin{align}
        \hat F_k\lrp{\phi + \frac{W}{k}  \pm \frac{0.6}{k^2}} &= B_1\lrp{\phi + \frac{W}{k}  \pm \frac{0.6}{k^2}}\sin\lrp{\frac{k}{2} \lrp{\phi + \frac{W}{k}  \pm \frac{0.6}{k^2}} - B_2\lrp{\phi + \frac{W}{k}  \pm \frac{0.6}{k^2}} } \\
        &\quad \ + B_3\lrp{\phi+ \frac{W}{k}  \pm \frac{0.6}{k^2}} d\lrp{\phi + \frac{W}{k}  \pm \frac{0.6}{k^2}}^{-k}. 
        \label{eqn:postsample-loc-proof-fourterms}
    \end{align}
    We estimate the terms $B_1(\cdot)$, $\sin(\cdot)$, $B_3(\cdot)$, and $d(\cdot)^{-k}$ separately.

    First, we compute that
    \begin{align}
        &B_1\lrp{\phi + \frac{W}{k} \pm \frac{0.6}{k^2}} \\
        &= B_1(\phi) + B_1'(c_1) \lrp{\frac{W}{k} \pm \frac{0.6}{k^2}}  \qquad\qquad\left(\text{for some $c_1$ between $\phi$ and }\phi + \tfrac{W}{k} \pm \tfrac{0.6}{k^2}\right) \\
        &= B_1(\phi) + \frac{\alpha_1}{k}  \label{eqn:postsample-loc-proof-term1} \\
        &\text{where } |\alpha_1| = \lrabs{B_1'(c_1)\left(W \pm \frac{0.6}{k}\right)} \le 0.42 \cdot (0.1 + 0.0006) \le 0.043. \quad
        (\text{Lemma \ref{lem:function-bounds}})
    \end{align}

    Second, we compute that
    \begin{align}
        &\sin\lrp{\frac{k}{2} \lrp{\phi + \frac{W}{k}  \pm \frac{0.6}{k^2}} - B_2\lrp{\phi + \frac{W}{k}  \pm \frac{0.6}{k^2}} }  \\
        &= \sin\lrp{
            \lrp{\frac{k}{2}\phi - B_2(\phi)}
            + \frac{W}{2} \pm \frac{0.3}{k}
            - B_2'(c_2) \lrp{\frac{W}{k} \pm \frac{0.6}{k^2}} 
        } \\ &\qquad \left(\text{for some }c_2 \text{ between $\phi$ and }\phi + \tfrac{W}{k} \pm \tfrac{0.6}{k^2}\right)\\
        &= (-1)^{i-\delta_k} \sin\lrp{
            \frac{W}{2} \pm \frac{0.3}{k}
            - B_2'(c_2) \lrp{\frac{W}{k} \pm \frac{0.6}{k^2}}
        } \\
        &\qquad 
        \left(\text{since } \tfrac{k}{2}\phi - B_2(\phi) = \tfrac{k}{2} \theta_{k,i}^{**} 
        \equiv (i-\delta_k)\pi \mod{2\pi} \text{ by Lemma \ref{lem:sample-presample-exact-expression} and \eqref{eqn:_theta**_equiv_(i-deltak)pi}}\right) \\
        &= (-1)^{i-\delta_k} \sin\lrp{
            \frac{W}{2}
            + \frac{1}{k}
            \lrb{
            \pm 0.3
            - B_2'(c_2) \lrp{W \pm \frac{0.6}{k}}
            }
        } \\
        &= (-1)^{i-\delta_k}
        \lrb{
            \sin\lrp{\frac{W}{2}}
            + 
            \sin'(c)
            \frac{1}{k}
            \lrb{
            \pm 0.3
            - B_2'(c_2) \lrp{W \pm \frac{0.6}{k}}
            }
        }
        \\ &\qquad \left(\text{for some }c \text{ between $\tfrac{W}{2}$ and }\tfrac{W}{2}
            + \tfrac{1}{k}
            \lrb{
            \pm 0.3
            - B_2'(c_2) \lrp{W \pm \tfrac{0.6}{k}}}\right)\\
        &= (-1)^{i-\delta_k}
        \lrb{
            \sin\lrp{\frac{W}{2}}
            \pm 
            \frac{\beta}{k} 
            + \frac{\alpha_2}{k}
        } 
        \label{eqn:postsample-loc-proof-term2}
        \\
        &\text{where } |\alpha_2| = \lrabs{\cos(c) B_2'(c_2) \lrp{W \pm \tfrac{0.6}{k}}} \le 1 \cdot 1.3 \cdot (0.1 + \tfrac{0.6}{1000}) = 0.131, \quad (\text{Lemma \ref{lem:function-bounds}})\\
        &\text{and }\beta= 0.3\cos(c) \ge 
        0.3 \cos\lrp{\tfrac{|W|}{2} + \tfrac{1}{k} \lrb{0.3 + B_2'(c_2)(|W| + \tfrac{0.6}{k})}} \\
        &\hspace{30.mm} 
        \ge 0.3 \cos\left(\tfrac{0.1}{2} + \tfrac{1}{1000} \lrb{0.3 + 0.131}\right) \ge 0.299. \label{eqn:postsample-loc-proof-cos(c)-lb}
    \end{align}

    Third, we compute that
    \begin{align}
        &B_3\lrp{\phi + \frac{W}{k} \pm \frac{0.6}{k^2}} \\
        &= B_3(\phi) + B_3'(c_3) \lrp{\frac{W}{k} \pm \frac{0.6}{k^2}} \qquad \left(\text{for some }c_3 \text{ between $\phi$ and }\phi + \tfrac{W}{k} \pm \tfrac{0.6}{k^2}\right)\\
        &= B_3(\phi) + \frac{\alpha_3}{k} 
        \label{eqn:postsample-loc-proof-term3}
        \\
        &\text{where } |\alpha_3| = \lrabs{B_3'(c_3) \left(W \pm \tfrac{0.6}{k}\right)} \le 0.67 \cdot (0.1 + \tfrac{0.6}{1000}) \le 0.068.\quad (\text{Lemma \ref{lem:function-bounds}})
    \end{align}

    Fourth, we have from Lemma \ref{lem:b()^-k_estimate} that
    \begin{align}
        &d\,\lrp{\phi + \frac{W}{k} \pm  \frac{0.6}{k^2} }^{-k} \\
        &= \frac{-B_1(\phi)}{2B_3(\phi)} e^{-\frac{\sqrt 3}{2} (s^*_{k,i}-W)} + \frac{\alpha_4}{k} 
        \qquad\qquad \text{where } |\alpha_4| \le 0.226
        \\
        &= (-1)^{i-\delta_k} e^{-\frac{\sqrt 3}{2} s^*_{k,i}} 
        \cdot (-1)^{i-\delta_k}\frac{-B_1(\phi)}{2B_3(\phi)}e^{\frac{\sqrt 3}{2} W} + \frac{\alpha_4}{k} \\
        &= 2 \sin\lrp{\frac{W}{2}} e^{-\frac{\sqrt 3}{2} W} \cdot (-1)^{i-\delta_k}
        \frac{-B_1(\phi)}{2B_3(\phi)}
        e^{\frac{\sqrt 3}{2} W} + \frac{\alpha_4}{k}  
        \qquad \text{(by definition of $W$)} \\
        &= (-1)^{i-\delta_k} \frac{-B_1(\phi)}{B_3(\phi)} \sin\lrp{\frac{W}{2}} + \frac{\alpha_4}{k}. 
        \label{eqn:postsample-loc-proof-term4}
    \end{align}

    Then substituting \eqref{eqn:postsample-loc-proof-term1}, \eqref{eqn:postsample-loc-proof-term2}, \eqref{eqn:postsample-loc-proof-term3}, \eqref{eqn:postsample-loc-proof-term4} into \eqref{eqn:postsample-loc-proof-fourterms}, we obtain that
    \begin{align}
        &\hat F_k\lrp{\phi + \frac{W}{k}  \pm \frac{0.6}{k^2}} \\
        &= B_1\lrp{\phi + \frac{W}{k}  \pm \frac{0.6}{k^2}}\sin\lrp{\frac{k}{2} \lrp{\phi + \frac{W}{k}  \pm \frac{0.6}{k^2}} - B_2\lrp{\phi + \frac{W}{k}  \pm \frac{0.6}{k^2}} } \\
        &\quad \ + B_3\lrp{\phi + \frac{W}{k}  \pm \frac{0.6}{k^2}} d\lrp{\phi + \frac{W}{k}  \pm \frac{0.6}{k^2}}^{-k} \\
        &=
        \lrb{B_1(\phi) + \frac{\alpha_1}{k} }
        (-1)^{i-\delta_k}
        \lrb{
            \sin\lrp{\frac{W}{2}}
            \pm 
            \frac{\beta}{k} 
            + \frac{\alpha_2}{k}
        } \\
        &\quad\ +
        \lrb{B_3(\phi) + \frac{\alpha_3}{k} }
        \lrb{
            (-1)^{i-\delta_k} \frac{-B_1(\phi)}{B_3(\phi)} \sin\lrp{\frac{W}{2}} + \frac{\alpha_4}{k} 
        } \\
        &= 
        \lrb{
            (-1)^{i-\delta_k}
            B_1(\phi) \sin\lrp{\frac{W}{2}}
            - 
            (-1)^{i-\delta_k}B_1(\phi)
            \sin\lrp{\frac{W}{2}}
        } \\
        &\quad\ 
        \pm \frac{(-1)^{i-\delta_k}}{k} 
        \Big[
            B_1(\phi) \cdot \beta
        \Big]
        \\
        &\quad\ 
        + \frac{1}{k}
        \Bigg[
            (-1)^{i-\delta_k}
            \alpha_1
            \lrp{
                \sin\lrp{\frac{W}{2}}
                \pm \frac{\beta}{k} + \frac{\alpha_2}{k}
            }
            +
            (-1)^{i-\delta_k}
            \alpha_2 B_1(\phi)
        \\
        &\qquad\qquad +
            \alpha_4 \lrp{B_3(\phi) + \frac{\alpha_3}{k}}
            +
            \alpha_3 
            (-1)^{i-\delta_k} \frac{-B_1(\phi)}{B_3(\phi)} \sin\lrp{\frac{W}{2}}
        \Bigg] \\
        &=: 
        \pm \frac{(-1)^{i-\delta_k}}{k} 
        \Big[
            B_1(\phi) \cdot \beta
        \Big]
        + \frac{\alpha}{k},
        \label{eqn:postsample-loc-proof-canceledmainterm}
    \end{align}
    which have opposite signs, since
    \begin{align}
        B_1(\phi)  & \cdot \beta \ge 1 \cdot 0.299 = 0.299
        \qquad \text{by \eqref{eqn:postsample-loc-proof-cos(c)-lb} and Lemma \ref{lem:function-bounds}},   
        \\
        \text{and }\ 
        |\alpha| &\le 
        0.043 \left( \frac{0.1}{2} + \frac{0.3}{1000} + \frac{0.131}{1000} \right) + 0.131 \cdot 1.16 \\
        &\quad\ + 
        0.226 \left(0.58 + \frac{0.068}{1000}\right) + 0.068 \cdot \frac{1.16}{0.48} \cdot \frac{0.1}{2} \qquad(\text{by Lemma \ref{lem:function-bounds}})\\
        &\le 0.294.
    \end{align}
    This completes the proof.
\end{proof}

We can now replace our postsample zeros, the zeros of $\hat F_k(\theta)$, with actual zeros, the zeros of $F_k(\theta)$, since the postsample zeros are exponentially close to the actual zeros. The following corollary is immediate from Proposition \ref{prop:postsample-sample-distance-formula} and Proposition \ref{prop:actual-postsample-zero-bound}. 
\begin{corollary} \label{cor:actual-zero-sample-zero-distance-bound}
    Assume $k \geq 1000$. Then the actual zeros of $F_k(\theta)$ are given by
    \begin{align}
        \lrcb{\theta_{k,i}}_{1\le i \le n_k} 
        =
        \lrcb{ 
            \theta^{*}_{k,i}
            + \frac{W_{k,i}}{k}
            + \frac{\alpha_{k,i}}{k^2}
        }_{1 \le i \le n_k}\label{eq:actual-zero-formula},\qquad\ \text{where}\ \left |\alpha_{k,i} \right | \leq 0.60001.
    \end{align}
\end{corollary}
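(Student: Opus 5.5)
The plan is to combine Proposition \ref{prop:postsample-sample-distance-formula} with Proposition \ref{prop:actual-postsample-zero-bound}, which is referenced but not yet stated. I assume the latter gives an exponentially small bound of the form $|\theta_{k,i}-\hat\theta_{k,i}| \le C\,\rho^{k}$ for some explicit $\rho<1$, or at least $|\theta_{k,i}-\hat\theta_{k,i}|\le \frac{10^{-5}}{k^2}$ for $k\ge 1000$. The target is the bound $|\alpha_{k,i}|\le 0.60001$, obtained from the triangle inequality.

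\textbf{Step 1 (from Proposition \ref{prop:postsample-sample-distance-formula}).} For each $1\le i\le n_k$,
\begin{align}
\theta_{k,i} = \theta^*_{k,i} + \frac{W_{k,i}}{k} + \frac{\hat\alpha_{k,i}}{k^2} + (\theta_{k,i}-\hat\theta_{k,i}),
\end{align}
with $|\hat\alpha_{k,i}|\le 0.6$. So we set
\begin{align}
\alpha_{k,i} := \hat\alpha_{k,i} + k^2(\theta_{k,i}-\hat\theta_{k,i}),
\end{align}
and it remains to show that $k^2|\theta_{k,i}-\hat\theta_{k,i}|\le 0.00001$.

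\textbf{Step 2 (from Proposition \ref{prop:actual-postsample-zero-bound}).} This is where the exponential closeness enters. If that proposition is phrased as an absolute bound $|\theta_{k,i}-\hat\theta_{k,i}|\le C\rho^k$, then $k^2 C\rho^k$ is decreasing for $k\ge 1000$ whenever $\rho$ is reasonably small. For example, the natural size is $(2\cos(\pi/4+\varepsilon))^{-k}$, coming from the next Eisenstein terms $(z-1)^{-k}$ and $(2z+1)^{-k}$, whose moduli on the arc are roughly $d'(\theta)^{-k}$ with base at least $\sqrt2$ or $\sqrt3$. Evaluating at $k=1000$ then gives something astronomically smaller than $10^{-5}$.

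\textbf{Step 3 (index matching).} Both $\theta_{k,i}$ and $\hat\theta_{k,i}$ are defined as the unique zeros in the same window $[\theta^{**}_{k,i}\pm 1.2/k]$ by Lemma \ref{lem:preliminary-estimate}. Hence the index $i$ matches, and the bound in Step 2 applies to corresponding zeros.

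The only potential obstacle is the form of Proposition \ref{prop:actual-postsample-zero-bound}. If it instead bounds $|F_k-\hat F_k|$ by something exponentially small, I would convert that into a bound on zeros using the derivative lower bound from the proof of Lemma \ref{lem:preliminary-estimate}, namely $|\hat F_k'|\ge 0.007k$ on the window. By the mean value theorem this gives $|\theta_{k,i}-\hat\theta_{k,i}|\le \sup|F_k-\hat F_k|/(0.007k)$, which is still exponentially small and far below $10^{-5}/k^2$ for $k\ge1000$.
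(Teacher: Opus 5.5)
Your proof is correct and matches the paper's argument: write $\theta_{k,i}=\hat\theta_{k,i}+(\theta_{k,i}-\hat\theta_{k,i})$, apply Proposition~\ref{prop:postsample-sample-distance-formula} to $\hat\theta_{k,i}$ with $|\hat\alpha_{k,i}|\le 0.6$, and absorb the exponentially small difference into the error term. Proposition~\ref{prop:actual-postsample-zero-bound} is stated exactly as $|\hat\theta_{k,i}-\theta_{k,i}|\le 0.025\cdot 2^{-k/2}<10^{-5}/k^2$ for $k\ge 1000$; it is proved by the same derivative-lower-bound idea as your fallback, using $|\hat F_k'|>0.45k$ together with Corollary~\ref{cor:bound-for-Fk-minus-Fhatk}, so the speculation in your Step~2 is unnecessary.
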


\section{Stieltjes interlacing}

In this section, we prove Theorem \ref{thm:stieltjes-theorem}, which states that the zeros of $\vartheta_\ell(E_\ell)$ Stieltjes interlace with the zeros of $\vartheta_k(E_k)$ on the lower arc for all $\ell > k$.  
We will assume $k \ge 1000$ for most of the section. The cases where $k < 1000$ will be verified via direct computation at the end.

We first find the distance between consecutive sample zeros, which is a simple corollary of our sample zero formula. 
\begin{corollary} \label{cor:sample-zero-distances}
    For $k \geq 1000$, the distance between any two consecutive sample zeros is given by 
    \begin{align} 
        \theta^*_{k,i} - \theta^*_{k,i+1} &= \frac{2\pi}{k} + \frac{2\pi U'(\theta^{**}_{k,i})}{k^2} + \frac{\beta^{*}_{k,i}}{k^3}, \qquad\text{where}\ |\beta^{*}_{k,i}|\le 150.6.
    \end{align}
\end{corollary}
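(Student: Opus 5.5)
Taking the difference of two instances of the sample zero formula is the natural route. By Proposition \ref{prop:sample-zero-locations}, with $\phi_i := \theta^{**}_{k,i}$ and $\phi_{i+1} = \phi_i - \frac{2\pi}{k}$, we have
\begin{align}
    \theta^*_{k,i} - \theta^*_{k,i+1} = \frac{2\pi}{k} + \frac{U(\phi_i) - U(\phi_{i+1})}{k} + \frac{V(\phi_i) - V(\phi_{i+1})}{k^2} + \frac{\alpha^*_{k,i} - \alpha^*_{k,i+1}}{k^3}.
\end{align}
The last term contributes at most $10/k^3$ to $\beta^*_{k,i}$.

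For the $U$ term, I would Taylor expand $U(\phi_{i+1}) = U(\phi_i - \frac{2\pi}{k})$ about $\phi_i$ to second order with Lagrange remainder. This gives
\begin{align}
    U(\phi_i) - U(\phi_{i+1}) = \frac{2\pi}{k}U'(\phi_i) - \frac{1}{2}\lrp{\frac{2\pi}{k}}^2 U''(c)
\end{align}
for some $c \in (\phi_{i+1}, \phi_i)$. Dividing by $k$ produces the main term $\frac{2\pi U'(\phi_i)}{k^2}$ and an error of size $\frac{2\pi^2 |U''(c)|}{k^3}$.

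For the $V$ term, a first-order mean value expansion is enough: $V(\phi_i) - V(\phi_{i+1}) = \frac{2\pi}{k}V'(c')$. Dividing by $k^2$ gives an error of size $\frac{2\pi |V'(c')|}{k^3}$.

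Collecting terms,
\begin{align}
    |\beta^*_{k,i}| \le 2\pi^2 \sup|U''| + 2\pi \sup|V'| + 10,
\end{align}
with the suprema taken over $(\frac{\pi}{2}, \frac{2\pi}{3})$. Here $U'' = 2B_2''$ and $V' = U''U + (U')^2$, so everything reduces to the uniform bounds on $B_2, B_2', B_2''$ in Lemma \ref{lem:function-bounds}. From the earlier proofs these are $|U| \le 1.05$, $|B_2'| \le 1.3$ (so $|U'| \le 2.6$), and $|B_2''| \le 2.49$ (so $|U''| \le 4.98$). The only real obstacle is whether these crude suprema give exactly the constant $150.6$. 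A naive bound gives roughly $2\pi^2(4.98) + 2\pi(4.98 \cdot 1.05 + 6.76) + 10 \approx 98.3 + 75.3 + 10$, which slightly exceeds $150.6$. If so, I would tighten it by bounding $V' = U''U + (U')^2$ jointly on the interval rather than term by term, as was done for $\beta$ in Proposition \ref{prop:sample-zero-locations}, since $U''U$ and $(U')^2$ may have opposite signs. If Lemma \ref{lem:function-bounds} records a direct bound on $V'$, I would use that instead. Since $k \ge 1000$, any leftover lower-order slack is negligible.
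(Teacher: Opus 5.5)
Your proposal is correct and is essentially the paper's proof: you difference the two instances of Proposition \ref{prop:sample-zero-locations}, use a second-order Taylor expansion with Lagrange remainder for $U$ and the mean value theorem for $V$, and bound $|\alpha^*_{k,i}-\alpha^*_{k,i+1}|\le 10$. Your worry about the constant goes away because Lemma \ref{lem:function-bounds} records $0\le U''\le 4.97$ and $-4.20\le V'\le 6.75$ directly, which gives $2\pi^2(4.97)+2\pi(6.75)+10\approx 150.51\le 150.6$; note that your derived bound $|U''|\le 2\cdot 2.49=4.98$ would overshoot $150.6$ slightly, so you do need the recorded value $4.97$.
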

\begin{proof}
    Recall the sample zero formula given by Proposition \ref{prop:sample-zero-locations}. Then, it follows from a Taylor approximation of $U$ and $V$ that
    \begin{align}
        \theta^*_{k,i}-\theta^*_{k,i+1}&=\frac{2\pi}{k}+\frac{U(\theta^{**}_{k,i})}{k}-\frac{U(\theta^{**}_{k,i}-\frac{2\pi}{k})}{k}\\
        &\quad\ +\frac{V(\theta^{**}_{k,i})}{k^2}-\frac{V(\theta^{**}_{k,i}-\frac{2\pi}{k})}{k^2}+\frac{\alpha^*_{k,i}-\alpha^*_{k,i+1}}{k^3}\\
        &=\frac{2\pi}{k}+\frac{U(\theta^{**}_{k,i})}{k}-\left(\frac{U(\theta^{**}_{k,i})}{k}-\frac{2\pi U'(\theta^{**}_{k,i})}{k^2}+\frac{2\pi^2U''(c_3)}{k^3}\right)\\
        &\quad\ + \frac{V(\theta^{**}_{k,i})}{k^2} -\left(\frac{V(\theta^{**}_{k,i})}{k^2}-\frac{2\pi V'(c_4)}{k^3}\right)+\frac{\alpha^*_{k,i}-\alpha^*_{k,i+1}}{k^3}\\
        &=\frac{2\pi}{k}+\frac{2\pi U'(\theta^{**}_{k,i})}{k^2}+\frac{1}{k^3}\Bigg[-2\pi^2U''(c_3)+2\pi V'(c_4)+\alpha^*_{k,i}-\alpha^*_{k,i+1}\Bigg]\\
        &=:\frac{2\pi}{k}+\frac{2\pi U'(\theta^{**}_{k,i})}{k^2}+\frac{\beta^*_{k,i}}{k^3},
    \end{align}
    where
    \begin{align}
        \lrabs{\beta^*_{k,i}}
        &= 
        \lrabs{ 
            -2\pi^2U''(c_3)+2\pi V'(c_4)+\alpha^*_{k,i}-\alpha^*_{k,i+1}
        } 
        \\
        &\le2\pi^2\cdot 4.97+2\pi\cdot 6.75+ 5 + 5  
        \qquad\text{(by Lemma \ref{lem:function-bounds} and Proposition \ref{prop:sample-zero-locations}})
        \\
        &\le 150.6,
    \end{align}
    as desired.
\end{proof}

We note the following lemma, which deals with the first and last weight $\ell$ sample zeros. The proof is given in Appendix \ref{app:sample-zero-at-edges}.
\begin{lemma} 
\label{lem:zero-endpoint-behavior-k-ell}
    Let $\ell > k \geq 1000$. Then 
    \begin{align}
        \theta^*_{\ell,1} > \theta^*_{k,2} + \frac{3(\ell -k)}{\ell k}
        \quad  \text{and} \quad
        \theta^*_{\ell,n_\ell} <\theta^*_{k,n_k-1} - \frac{3(\ell - k)}{\ell k}. 
    \end{align}
\end{lemma}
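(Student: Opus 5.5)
The plan is to compare the sample zeros through their presample zeros, using the exact relation of Lemma \ref{lem:sample-presample-exact-expression}, $\theta^*_{k,i} = \theta^{**}_{k,i} + U(\theta^*_{k,i})/k$. The first step is to bound the presample gaps with Lemma \ref{lem:location-of-first-and-last-presample-zero}. Since presample zeros are spaced by $2\pi/k$, we have $\theta^{**}_{k,2} \le \frac{2\pi}{3} - \frac{8\pi}{3k}$, while $\theta^{**}_{\ell,1} \ge \frac{2\pi}{3} - \frac{2\pi}{\ell}$. Hence
\begin{align}
  \theta^{**}_{\ell,1} - \theta^{**}_{k,2} \ge \frac{8\pi}{3k} - \frac{2\pi}{\ell} = \frac{2\pi}{3k} + \frac{2\pi(\ell-k)}{\ell k}.
\end{align}
Similarly, $\theta^{**}_{k,n_k-1} \ge \frac{\pi}{2} + \frac{3\pi}{k}$ and $\theta^{**}_{\ell,n_\ell} \le \frac{\pi}{2} + \frac{2\pi}{\ell}$, so
\begin{align}
  \theta^{**}_{k,n_k-1} - \theta^{**}_{\ell,n_\ell} \ge \frac{\pi}{k} + \frac{2\pi(\ell-k)}{\ell k}.
\end{align}
In both cases the presample gap already contains the needed $\frac{2\pi(\ell-k)}{\ell k} > \frac{3(\ell-k)}{\ell k}$, plus a slack of order $1/k$. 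It remains to show that the $U$-corrections do not destroy this margin.

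For the left inequality, set $a = \theta^*_{\ell,1}$ and $b = \theta^*_{k,2}$. The correction is
\begin{align}
  \frac{U(a)}{\ell} - \frac{U(b)}{k} = -U(a)\,\frac{\ell-k}{\ell k} + \frac{U(a)-U(b)}{k}.
\end{align}
Near $\frac{2\pi}{3}$ we have $E_2(\rho) = \frac{2\sqrt3}{\pi}$, so $B_0(\frac{2\pi}{3}) = -\frac{1}{\sqrt3}$ and $U(\frac{2\pi}{3}) = -\frac{\pi}{3}$. Since $U$ is decreasing (Lemma \ref{lem:function-bounds}), $U$ is negative there, and the first term is a \emph{nonnegative} multiple of $\frac{\ell-k}{\ell k}$. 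For the second term, we bound $|U(a)-U(b)| \le \max|U'|\cdot|a-b|$, using $|U'| = 2|B_2'| \le 2.6$ by Lemma \ref{lem:function-bounds}. Here $|a-b| \le \frac{2\pi}{k} + O(1/k)$, using Proposition \ref{prop:sample-zero-locations} and Lemma \ref{lem:sample-zero-at-least-3.1/k-away-from-2pi/3}, so this term is $O(1/k^2)$ with an explicit constant. This error is absorbed by the slack $\frac{2\pi}{3k}$ once $k \ge 1000$. The result is $\theta^*_{\ell,1} - \theta^*_{k,2} \ge \frac{2\pi(\ell-k)}{\ell k} + \frac{2\pi}{3k} - \frac{C}{k^2} > \frac{3(\ell-k)}{\ell k}$.

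For the right inequality, set $a = \theta^*_{\ell,n_\ell}$ and $b = \theta^*_{k,n_k-1}$. The correction is
\begin{align}
  \frac{U(b)}{k} - \frac{U(a)}{\ell} = U(a)\,\frac{\ell-k}{\ell k} + \frac{U(b)-U(a)}{k}.
\end{align}
Now $U(a)$ may have the wrong sign. However, $E_2(i) = 3/\pi$ gives $B_0(\frac{\pi}{2}) = 0$, so $U(\frac{\pi}{2}) = 0$. Since $a - \frac{\pi}{2} = O(1/\ell)$, we get $|U(a)| \le 2.6\,(a - \frac{\pi}{2}) = O(1/\ell)$. So the first term is at least $-\frac{C}{\ell}\cdot\frac{\ell-k}{\ell k}$, which is negligible against the margin $(2\pi - 3)\frac{\ell-k}{\ell k}$. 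The second term is $O(1/k^2)$ by the same mean-value bound as before, and it is absorbed by the slack $\frac{\pi}{k}$.

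The main obstacle is this second endpoint. There the sign of the $U(a)$ term is unfavorable, and we must use $U(\frac{\pi}{2}) = 0$ together with explicit bounds on $U'$ and on $a - \frac{\pi}{2}$, the latter from Lemma \ref{lem:location-of-first-and-last-presample-zero} and Proposition \ref{prop:sample-zero-locations}. The numerical constants must then be checked to close with room to spare at $k = 1000$.
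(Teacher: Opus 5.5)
Your proposal is correct, but it takes a somewhat different route from the paper.

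The paper writes both sample zeros using the explicit expansion of Proposition \ref{prop:sample-zero-locations}, so it has to carry the $V/k^2$ and $\alpha^*/k^3$ terms. It then handles the $U$-terms by sign and size alone:
\begin{itemize}
\item In the first claim it discards $-U(\theta^{**}_{k,2})/k \ge 0$ and bounds $U(\theta^{**}_{\ell,1})/\ell \ge -1.05/\ell$, which fits inside the slack $\frac{2\pi}{3k}$.
\item In the second claim it discards $-U(\theta^{**}_{\ell,n_\ell})/\ell \ge 0$ and bounds $U(\theta^{**}_{k,n_k-1})/k \ge -1.05/k$ against the slack $\frac{\pi}{k}$.
\end{itemize}

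You instead use the exact identity $\theta^* = \theta^{**} + U(\theta^*)/k$ of Lemma \ref{lem:sample-presample-exact-expression}. This is a genuine simplification, since no $V$ or $\alpha^*$ terms ever appear.

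Your mean-value splitting, however, is more machinery than needed. In the second claim it is what creates the ``main obstacle''. By pulling out $U(a)\frac{\ell-k}{\ell k}$, you turn the favorably signed term $-U(a)/\ell \ge 0$ into one of unfavorable sign, which you then have to rescue with $U(\frac{\pi}{2})=0$.

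The shortest proof combines your exact identity with the crude bound $-1.05 \le U \le 0$ from Lemma \ref{lem:function-bounds}. For the first claim,
\[
\theta^*_{\ell,1}-\theta^*_{k,2}-\frac{3(\ell-k)}{\ell k} \ge \frac{2\pi}{3k}-\frac{1.05}{\ell}+\frac{(2\pi-3)(\ell-k)}{\ell k} > 0,
\]
and for the second,
\[
\theta^*_{k,n_k-1}-\theta^*_{\ell,n_\ell}-\frac{3(\ell-k)}{\ell k} \ge \frac{\pi-1.05}{k}+\frac{(2\pi-3)(\ell-k)}{\ell k} > 0.
\]
This needs no bound on $|U'|$, no values $E_2(i)$ or $E_2(\rho)$, and no estimate for $|a-b|$.
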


We can then show Stieltjes interlacing of sample zeros (with some extra error tolerance).
\begin{lemma} \label{lem:interlacing-sample-zeros-with-gap}
    For $k \ge 1000$, let $\theta^*_{k,i}$ and $\theta^*_{k,i+1}$ be any pair of consecutive sample zeros. Then there exists a weight $\ell$ sample zero $\theta^*_{\ell,j}$ such that
    \begin{align}
        \theta^*_{k,i+1} + \frac{3(\ell-k)}{\ell k}
        <
        \theta^*_{\ell,j} 
        <
        \theta^*_{k,i} - \frac{3 (\ell-k)}{\ell k}.
    \end{align}
\end{lemma}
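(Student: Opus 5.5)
The plan is to compare the gap between consecutive weight $k$ sample zeros with the gap between consecutive weight $\ell$ sample zeros, using Corollary \ref{cor:sample-zero-distances}. Set $g := \frac{3(\ell-k)}{\ell k}$ and consider the shrunken interval $J := (\theta^*_{k,i+1}+g,\ \theta^*_{k,i}-g)$. If $J$ is longer than every gap $\theta^*_{\ell,j}-\theta^*_{\ell,j+1}$ between weight $\ell$ sample zeros, and $J$ lies between $\theta^*_{\ell,n_\ell}$ and $\theta^*_{\ell,1}$, then $J$ must contain some $\theta^*_{\ell,j}$. The containment is exactly Lemma \ref{lem:zero-endpoint-behavior-k-ell}: it gives $\theta^*_{\ell,1} > \theta^*_{k,2}+g \ge \theta^*_{k,i+1}+g$ and $\theta^*_{\ell,n_\ell} < \theta^*_{k,n_k-1}-g \le \theta^*_{k,i}-g$. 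Then take the largest $j$ with $\theta^*_{\ell,j} > \theta^*_{k,i+1}+g$. Either $j=n_\ell$, in which case $\theta^*_{\ell,j}<\theta^*_{k,i}-g$ directly, or $\theta^*_{\ell,j+1}\le \theta^*_{k,i+1}+g$, in which case the gap bound forces $\theta^*_{\ell,j}<\theta^*_{k,i}-g$.

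So the main task is the length inequality
\begin{align}
\theta^*_{k,i}-\theta^*_{k,i+1} - 2g > \theta^*_{\ell,j}-\theta^*_{\ell,j+1} \quad\text{for all } j.
\end{align}
By Corollary \ref{cor:sample-zero-distances}, the left-hand side equals
\begin{align}
\frac{2\pi}{k}+\frac{2\pi U'(\theta^{**}_{k,i})}{k^2}+\frac{\beta^*_{k,i}}{k^3}-\frac{6(\ell-k)}{\ell k}.
\end{align}
For the right-hand side I would use Corollary \ref{cor:sample-zero-2pi-k-distance-bound}, which gives the cleaner bound $\le \frac{2\pi}{\ell}$. After this substitution, and since $\frac{2\pi}{k}-\frac{2\pi}{\ell}=\frac{2\pi(\ell-k)}{\ell k}$, it suffices to show
\begin{align}
\frac{(2\pi-6)(\ell-k)}{\ell k} > \frac{2\pi |U'(\theta^{**}_{k,i})|}{k^2}+\frac{150.6}{k^3}.
\end{align}

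This is where I expect the obstacle. The left side is only of size $\frac{0.28(\ell-k)}{\ell k}$, which for $\ell=k+2$ is about $\frac{0.57}{k^2}$. The $U'$ term is also of order $\frac{1}{k^2}$, with a constant that is not small. So the crude bound $\le 2\pi/\ell$ is too weak, and the $U'$ terms must cancel between weights $k$ and $\ell$. To get this cancellation, I would estimate the weight $\ell$ gap more precisely with Corollary \ref{cor:sample-zero-distances} as well:
\begin{align}
\frac{2\pi}{\ell}+\frac{2\pi U'(\theta^{**}_{\ell,j})}{\ell^2}+\frac{150.6}{\ell^3}.
\end{align}
Here $U'\le 0$ (Lemma \ref{lem:function-bounds}), and the relevant $\theta^{**}_{\ell,j}$ lies within about $\frac{2\pi}{k}$ of $\theta^{**}_{k,i}$. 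A mean value bound using $|U''|\le 4.97$ then gives $\frac{|U'(\theta^{**}_{k,i})|}{k^2}-\frac{|U'(\theta^{**}_{\ell,j})|}{\ell^2}=O\big(\frac{\ell-k}{\ell k^2}\big)+O\big(\frac{1}{k^3}\big)$.

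The plan therefore splits into two cases. When $\ell-k$ is large compared with $k$'s scale (for instance $\ell\ge 2k$), the crude bound from Corollary \ref{cor:sample-zero-2pi-k-distance-bound} together with $|U'|\le$ its bound from Lemma \ref{lem:function-bounds} should suffice. Otherwise I use the refined comparison: the leading terms give $\frac{(2\pi-6)(\ell-k)}{\ell k}$, the $U'$ difference contributes only lower order in $\frac{\ell-k}{k^2}$ and $\frac{1}{k^3}$, and $k\ge1000$ absorbs the $\frac{150.6}{k^3}$ constants. If the constant margin $2\pi-6$ turns out to be too thin against the $U''$ term, I would exploit $U'\le0$ more carefully, for example through monotonicity of $U'/k^2$ in $k$, before tuning the constants.
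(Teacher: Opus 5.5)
Your skeleton matches the paper's. You take the smallest weight-$\ell$ sample zero above $\theta^*_{k,i+1}+g$, handle existence and the case $j=n_\ell$ with Lemma~\ref{lem:zero-endpoint-behavior-k-ell}, and otherwise compare the two gaps via Corollary~\ref{cor:sample-zero-distances}. You also split the $U'$ terms as $\frac{2\pi(U'(\theta^{**}_{k,i})-U'(\theta^{**}_{\ell,j}))}{k^2}+2\pi U'(\theta^{**}_{\ell,j})\bigl(\frac{1}{k^2}-\frac{1}{\ell^2}\bigr)$.

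You diverge on the first piece, and it is not lower order at the scale that matters. For $\ell=k+2$ the main term $(2\pi-6)\frac{\ell-k}{\ell k}$ is only about $\frac{0.566}{k^2}$. A two-sided mean value bound with $|U''|\le 4.97$ and $|\theta^{**}_{k,i}-\theta^{**}_{\ell,j}|\approx\frac{2\pi}{k}$ costs about $\frac{196}{k^3}$. Take $k=1000$, $\ell=1002$:
\begin{itemize}
\item the main term is $\approx 5.65\cdot10^{-7}$;
\item subtracting the $(\frac{1}{k^2}-\frac{1}{\ell^2})$ piece ($\approx 0.65\cdot10^{-7}$) and the two $\beta^*$ errors ($\approx 3.00\cdot10^{-7}$) leaves about $2.00\cdot10^{-7}$;
\item the mean-value loss is about $1.97\cdot10^{-7}$.
\end{itemize}
So your route does close, but with a margin under one percent, and the same estimates would fail for $k$ only slightly below $1000$. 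It also requires bounding $|\theta^{**}_{k,i}-\theta^{**}_{\ell,j}|$ by roughly $(2\pi+0.11)/k$. The natural cruder estimate $(2\pi+1.05)/k$, coming from $|U|\le 1.05$, already fails. Your proposed fallback does not help. Since $U'\le 0$, monotonicity of $U'/m^2$ in $m$ only gives $-\frac{U'(\theta^{**}_{\ell,j})}{\ell^2}\le-\frac{U'(\theta^{**}_{\ell,j})}{k^2}$, which is an upper bound, the wrong direction.

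The paper makes this step robust with the ordering $\theta^{**}_{\ell,j}\le\theta^{**}_{k,i}$. It proves this by contradiction using the map $f_k(\theta)=\theta-\frac{U(\theta)}{k}$, which has three properties:
\begin{itemize}
\item $f_k$ is increasing;
\item $f_k(\theta^*_{k,i})=\theta^{**}_{k,i}$;
\item $f_k\ge f_\ell$, because $U\le 0$.
\end{itemize}
If $\theta^{**}_{\ell,j}>\theta^{**}_{k,i}$, then $f_k(\theta^*_{\ell,j+1})-f_k(\theta^*_{k,i+1})\ge\theta^{**}_{\ell,j+1}-\theta^{**}_{k,i+1}>\frac{2\pi(\ell-k)}{\ell k}$. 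Since $f_k'\le 1+\frac{2.6}{k}$, this forces $\theta^*_{\ell,j+1}>\theta^*_{k,i+1}+g$, contradicting the minimality of $\theta^*_{\ell,j}$.

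With the ordering in hand, $U''\ge 0$ gives $U'(\theta^{**}_{k,i})\ge U'(\theta^{**}_{\ell,j})$. The troublesome term is then nonnegative and can simply be dropped. What remains is $\bigl(2\pi-6-\frac{32.68}{k}\bigr)\frac{\ell-k}{\ell k}-\frac{301.2}{k^3}\ge 0.25\,\frac{\ell-k}{\ell k}-\frac{0.3012}{k^2}>0$. This holds uniformly in $\ell$, with a comfortable margin and no need for your case split at $\ell\ge 2k$.
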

\begin{proof}
    Let $\theta^*_{\ell,j}$ denote the smallest sample zero greater than $\theta^*_{k,i+1}+\frac{3(\ell-k)}{\ell k}$.
    Note that $\theta^*_{\ell,j}$ here is guaranteed to exist since, for example, 
    $\theta^*_{\ell,1} > \theta^*_{k,2}+\frac{3(\ell-k)}{\ell k} \ge \theta^*_{k,i+1}+\frac{3(\ell-k)}{\ell k}$ by
    Lemma \ref{lem:zero-endpoint-behavior-k-ell}.
    
    If $j = n_\ell$, the result follows immediately from Lemma \ref{lem:zero-endpoint-behavior-k-ell}. So we will assume that $j < n_\ell$. Then we need to show
    \begin{align}
        \theta^*_{k,i}-\frac{3(\ell-k)}{\ell k}>\theta^*_{\ell,j}.
    \end{align}
    
    We first show $\theta^{**}_{\ell,j} \leq \theta^{**}_{k,i}$, a necessary fact for our proof. Let $f_k(\theta):= \theta - \frac{U(\theta)}{k}$, which satisfies $f_k(\theta^*_{k,i}) = \theta^{**}_{k,i}$ by Lemma \ref{lem:sample-presample-exact-expression}. Suppose for contradiction $\theta^{**}_{\ell,j} > \theta^{**}_{k,i}$. Then
    \begin{align}
        f_k(\theta^*_{\ell,j+1}) - f_k(\theta^*_{k,i+1}) &\geq f_\ell(\theta^*_{\ell,j+1}) - f_k(\theta^*_{k,i+1}) \\& = \theta_{\ell,j+1}^{**}-\theta_{k,i+1}^{**}
    =
    \left(\theta_{\ell,j}^{**}-\theta_{k,i}^{**}\right)
    +\frac{2\pi}{k}-\frac{2\pi}{\ell}
    >
    \frac{2\pi(\ell-k)}{\ell k}.
    \end{align}
    Since $f_k(\theta)$ is increasing, we must have $\theta^*_{\ell,j+1} > \theta^*_{k,i+1}$. Thus by the mean value theorem, for $c$ between $\theta^*_{\ell, j+1} $ and $ \theta^*_{k,i+1}$, using $f_k'(\theta) \leq 1 + \frac{2.6}{k}$,
    \begin{align}
        \theta^*_{\ell, j+1} - \theta^*_{k,i+1} = \frac{f_k(\theta^*_{\ell,j+1}) - f_k(\theta^*_{k,i+1})}{f'_k(c)} \geq \frac{2\pi(\ell -k)}{\ell k} \cdot \frac{1}{1 + \tfrac{2.6}{k}} > \frac{3(\ell - k)}{\ell k},
    \end{align}
    contradicting that $\theta^*_{\ell,j}$ is the smallest sample zero greater than $\theta^*_{k,i+1}+\frac{3(\ell-k)}{\ell k}$. Thus $\theta^{**}_{\ell,j} \leq \theta^{**}_{k,i}$.

    Now,
    \begin{align}
      &\theta_{k,i}^* - \frac{3(\ell-k)}{\ell k}-\theta_{\ell,j}^*\\
       &= \theta_{k,i+1}^* + \frac{3(\ell-k)}{\ell k}+\left(\theta_{k,i}^* - \theta_{k,i+1}^*-\frac{6(\ell-k)}{\ell k}\right) - \theta_{\ell,j+1}^* - (\theta_{\ell,j}^* - \theta_{\ell,j+1}^*) \\
        &\ge \left(\theta_{k,i}^* - \theta_{k,i+1}^*-\frac{6(\ell-k)}{\ell k}\right) - (\theta_{\ell,j}^* - \theta_{\ell,j+1}^*) \quad\ \qquad\qquad\text{(since $\theta^*_{\ell,j+1} \le \theta^*_{k,i+1} + \tfrac{3(\ell-k)}{\ell k}$)} \\
        &= \lrp{\frac{2\pi}{k} + \frac{2\pi U'(\theta_{k,i}^{**})}{k^2} -\frac{6(\ell-k)}{\ell k} +\frac{\beta^*_{k,i}}{k^3}} - 
        \lrp{\frac{2\pi}{\ell} + \frac{2\pi U'(\theta_{\ell,j}^{**})}{\ell^2} + \frac{\beta^*_{\ell,j}}{\ell^3}}  \quad(\text{by Corollary \ref{cor:sample-zero-distances}})\\
        &= \frac{2\pi}{k} - \frac{2\pi}{\ell}+ \frac{2\pi U'(\theta_{k,i}^{**}) - 2\pi U'(\theta_{\ell,j}^{**})}{k^2} + 2\pi U'(\theta_{\ell,j}^{**}) \lrp{\frac{1}{k^2} - \frac{1}{\ell^2}} \\
        &\quad\ -\frac{6(\ell-k)}{\ell k}+ \frac{\beta^*_{k,i}}{k^3} -\frac{\beta^*_{\ell,j}}{\ell^3} \\
        &\ge \frac{2\pi}{k} - \frac{2\pi}{\ell} - 16.34 \lrp{\frac{1}{k^2} - \frac{1}{\ell^2}} -\frac{6(\ell-k)}{\ell k} +\frac{\beta^*_{k,i}}{k^3} -\frac{\beta^*_{\ell,j}}{\ell^3} \\
        &\qquad \text{(since $2\pi U'(\theta)$ is increasing, $\theta^{**}_{\ell,j} \le \theta^{**}_{k,i}$, and $2\pi U'(\theta) \ge -16.34$\ \text{by Lemma \ref{lem:function-bounds})}}\\
        &=2\pi\frac{\ell-k}{\ell k}-6\frac{\ell-k}{\ell k}
        - 16.34 \frac{\ell+k}{\ell k} \frac{\ell-k}{\ell k}
        + \frac{\beta^*_{k,i}}{k^3} -\frac{\beta^*_{\ell,j}}{\ell^3}
        \\
        &\ge \lrp{2\pi -6- \frac{32.68}{k}} \frac{\ell-k}{\ell k}  - \frac{2 \cdot 150.6}{k^3} 
        \qquad (\text{by Corollary} \ \ref{cor:sample-zero-distances} )\\ 
        &\ge 0.25 \frac{\ell-k}{\ell k}  -\frac{0.3012}{k^2} \qquad \qquad\  (\text{since}\ k\ge 1000 ) \\
        &\ge 0.25 \frac{2}{(k+2) k}  -\frac{0.3012}{k^2} \qquad \quad (\text{since } \ell \ge k+2), \\
        &> 0,
    \end{align}
    completing the proof.
\end{proof}

The following proposition then implies Theorem \ref{thm:stieltjes-theorem} when $k \ge 1000$. In light of our location estimates for $\theta_{k,i}$ from Corollary \ref{cor:actual-zero-sample-zero-distance-bound}, the proof is relatively straightforward. In short, we show that if the inequality below is not immediate, then $W_{k,i}$ and $W_{\ell,j}$ are either very close to each other or exponentially small; in either case, this ensures the actual zeros maintain the ordering given by Lemma \ref{lem:interlacing-sample-zeros-with-gap}.

However, the calculations involved are rather lengthy, so we have relegated the complete details of the proof to Appendix \ref{sec:proof_of_prop:stieltjes-interlacing-large-k}.
\begin{proposition}\label{prop:stieltjes-interlacing-large-k}
    For $k \ge 1000$, let
    \begin{align}
        \theta^*_{k,i+1} + \frac{3(\ell-k)}{\ell k}
        <
        \theta^*_{\ell,j} 
        <
        \theta^*_{k,i} - \frac{3 (\ell-k)}{\ell k}
    \end{align}
    be the sample zeros from Lemma \ref{lem:interlacing-sample-zeros-with-gap}. Then
    \begin{align}
        \theta_{k,i+1} 
        <
        \theta_{\ell,j} 
        <
        \theta_{k,i}.
    \end{align}
\end{proposition}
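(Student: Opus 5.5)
By symmetry it suffices to prove the right inequality $\theta_{\ell,j} < \theta_{k,i}$; the left inequality is entirely analogous, with signs reversed. By Corollary \ref{cor:actual-zero-sample-zero-distance-bound} we can write
\begin{align}
    \theta_{k,i} - \theta_{\ell,j}
    = \lrp{\theta^*_{k,i} - \theta^*_{\ell,j}} + \lrp{\frac{W_{k,i}}{k} - \frac{W_{\ell,j}}{\ell}} + \frac{\alpha_{k,i}}{k^2} - \frac{\alpha_{\ell,j}}{\ell^2},
\end{align}
where the first bracket exceeds $\frac{3(\ell-k)}{\ell k}$ by hypothesis, and the last two terms are bounded by $\frac{1.2}{k^2}$ in absolute value. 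It therefore suffices to show
\begin{align}
    \frac{W_{\ell,j}}{\ell} - \frac{W_{k,i}}{k} < \frac{3(\ell-k)}{\ell k} - \frac{1.2}{k^2}.
\end{align}

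\textbf{Case 1: $\ell - k$ large relative to $k$.} If, say, $\ell-k \ge k/10$, then the gap $\frac{3(\ell-k)}{\ell k}$ is of order $1/k$. Since $|W| \le 0.1$, the $W$-terms contribute at most $0.1/k + 0.1/\ell$, which is comfortably below this gap after the crude bound. Actually $0.2/k$ against $3(\ell-k)/(\ell k)$ requires only $(\ell-k)/\ell \ge 0.07$ or so, so I would pick the threshold accordingly.

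\textbf{Case 2: $\ell$ close to $k$.} The key tool here is the structure of $W$ from Proposition \ref{prop:postsample-sample-distance-formula}. On $[-0.1,0.1]$, $T$ is smooth and strictly increasing with $T'(W)\approx 1$, so $|W| \le 1.1\, e^{-\frac{\sqrt3}{2}s^*}$ and $W$ is Lipschitz in $e^{-\frac{\sqrt3}{2}s^*}$. I would split on the sizes of $s^*_{k,i}$ and $s^*_{\ell,j}$.

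\emph{Subcase 2a: the zeros are far from $2\pi/3$.} If $s^*_{\ell,j} \ge C\log k$, then both $W$'s are $O(k^{-2})$-small, indeed below $\frac{0.1}{k}$, so the $W$-terms are at most $O(k^{-3})$. Then it remains to check that $\frac{3(\ell-k)}{\ell k} \ge \frac{6}{(k+2)k}$ beats $\frac{1.2}{k^2}$ plus this tiny error, which holds since $6 > 1.2$. In fact this inequality also shows that the $\alpha$-errors are always harmless, so the whole difficulty lies in the $W$-terms.

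\emph{Subcase 2b: both zeros are near $2\pi/3$.} Here $s^*$ is bounded, and I would compare $s^*_{k,i}$ with $s^*_{\ell,j}$. Using the hypothesis $\theta^*_{\ell,j} < \theta^*_{k,i}$, we get $s^*_{\ell,j} = \ell(\tfrac{2\pi}{3}-\theta^*_{\ell,j}) > \frac{\ell}{k}s^*_{k,i} \ge s^*_{k,i}$. Hence $e^{-\frac{\sqrt3}{2}s^*_{\ell,j}} < e^{-\frac{\sqrt3}{2}s^*_{k,i}}$. However, signs matter, because the right-hand side of \eqref{eqn:temp-postsample-term} carries the factor $(-1)^{i-\delta_k}$. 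I would handle the parities separately:
\begin{itemize}
    \item If $(-1)^{j-\delta_\ell} = -1$, then $W_{\ell,j} < 0$, and the left side of the target inequality is at most $|W_{k,i}|/k$ when $W_{k,i}<0$, or negative otherwise.
    \item If $(-1)^{j-\delta_\ell} = +1$, then $W_{\ell,j} > 0$. If $W_{k,i} > 0$, monotonicity of $T$ gives $W_{\ell,j} \le W_{k,i}$, so $\frac{W_{\ell,j}}{\ell} - \frac{W_{k,i}}{k} \le 0$. If $W_{k,i}<0$, then we are forced to use the more precise Lemma \ref{lem:sample-presample-exact-expression} relation between $\theta^*$ and the presample grid.
\end{itemize}

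\textbf{Main obstacle.} The hard point is the mixed-sign situation in Subcase 2b, with $W_{\ell,j}>0>W_{k,i}$ near $2\pi/3$. Here the $W$-difference is of order $1/k$, while the gap $\frac{3(\ell-k)}{\ell k}$ may be only of order $1/k^2$. I would try first to show that this configuration forces the sample zeros to be far apart. The parities of $i-\delta_k$ and $j-\delta_\ell$ are tied to the offsets $\frac{k}{2}\theta^{**}$ modulo $2\pi$, so opposite signs should mean that $\frac{k}{2}\theta^*_{k,i}$ and $\frac{\ell}{2}\theta^*_{\ell,j}$ differ by roughly $\pi$ in phase. Then $\theta^*_{k,i} - \theta^*_{\ell,j} \gtrsim \frac{c}{k}$, with $c$ substantially larger than $0.2$, which absorbs the $W$-terms. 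I expect that making this phase argument quantitative, using $s^*_{\ell,j}-s^*_{k,i}$ and $B_2$ evaluated near $2\pi/3$, is the lengthy calculation deferred to the appendix.
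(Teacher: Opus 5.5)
There is a genuine gap in your Case 2b, and it is wider than the one you flag. Your reduction, Case 1, and the sign cases where the $W$-difference is nonpositive are fine. The problem starts in your first bullet. When $W_{\ell,j}<0$ and $W_{k,i}<0$, the left side equals $|W_{k,i}|/k-|W_{\ell,j}|/\ell$. This is \emph{positive}, because $s^*_{\ell,j}>s^*_{k,i}$ forces $W_{k,i}<W_{\ell,j}<0$. Bounding it by $|W_{k,i}|/k$ proves nothing: that quantity can be of order $0.07/k$, while the target $\frac{3(\ell-k)}{\ell k}-\frac{1.2}{k^2}$ is about $4.8/k^2$ when $\ell=k+2$. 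So the both-negative configuration is just as dangerous as the mixed-sign one.

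Second, the phase heuristic you propose for the mixed-sign case is false as stated. Write $k(\theta^*_{k,i}-\theta^*_{\ell,j})=(s^*_{\ell,j}-s^*_{k,i})-\frac{\ell-k}{\ell}s^*_{\ell,j}$. Opposite parity controls only the first term, which exceeds $2.078$ by Lemma \ref{lemma:s-inequalities}. The offset $\frac{\ell-k}{\ell}s^*_{\ell,j}$ is not small in your Subcase 2b, where you only know $s^*\le C\log k$ and $\ell-k$ may be as large as $0.07k$. So opposite signs do not force $\theta^*_{k,i}-\theta^*_{\ell,j}\ge c/k$, and splitting at $s^*\approx C\log k$ is not the right dichotomy. (A minor point: in Subcase 2a the largeness condition should be imposed on the smaller value $s^*_{k,i}$.)

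The missing idea is to case on whether $s^{**}_{k,i}=s^{**}_{\ell,j}$. Every $s^{**}_{k,i}=\frac{2\pi}{3}(3i-\delta_k)$ lies in $\frac{2\pi}{3}\mathbb{Z}$ and determines the sign $(-1)^{i-\delta_k}$.

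If they coincide, the signs agree, and Lemma \ref{lem:sample-presample-exact-expression} gives $|s^*_{k,i}-s^*_{\ell,j}|=|U(\theta^*_{k,i})-U(\theta^*_{\ell,j})|\le 2.6\cdot\frac{2\pi}{k}$. The Lipschitz dependence of $W$ on $s^*$, which you already noted, then gives $|W_{k,i}-W_{\ell,j}|<3.7/k$. Hence $|\frac{W_{k,i}}{k}-\frac{W_{\ell,j}}{\ell}|\le 0.1\frac{\ell-k}{\ell k}+\frac{3.7}{\ell k}$. Together with the $\alpha$-terms this stays below $\frac{3(\ell-k)}{\ell k}$ for $\ell-k\ge 2$, which settles the both-negative case with equal $s^{**}$.

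If they differ, $s^{**}_{k,i}>s^{**}_{\ell,j}$ would contradict $\theta^*_{\ell,j}<\theta^*_{k,i}$, so $s_{\ell,j}-s_{k,i}>1.876$. Now argue by contradiction:
\begin{itemize}
\item $\theta_{\ell,j}\ge\theta_{k,i}$ means $ks_{\ell,j}\le\ell s_{k,i}$, hence $(\ell-k)s_{k,i}>1.876k$.
\item Therefore $s^*_{\ell,j}>s^*_{k,i}>\frac{1.876k}{\ell-k}-0.1007$.
\item By Lemma \ref{lem:exponential-W-bound}, both $W$'s are at most $1.1e^{-\frac{\sqrt{3}}{2}(\frac{1.876k}{\ell-k}-0.1007)}$.
\item This is exponentially small in $k/(\ell-k)\ge 10$, so it is negligible against $\frac{3(\ell-k)}{\ell k}$, giving the contradiction.
\end{itemize}
This handles the mixed-sign case and every remaining both-negative case uniformly, with no $\log k$ threshold.

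Finally, the lower bound is not literally symmetric, since $s^*_{\ell,j}$ and $s^*_{k,i+1}$ can come in either order. The same trichotomy still works there, and the case $s^{**}_{k,i+1}>s^{**}_{\ell,j}$ is immediate.
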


Finally, we verify Theorem \ref{thm:stieltjes-theorem} for $k < 1000$ via direct computation; see \cite{ross-code} for the code. For each of these values of $k$, we only need to check $k < \ell < 2.3k$ in light of Proposition \ref{prop:actual-interlace-ell-ge-2.3k}. The proof of this proposition only uses the preliminary location estimates of Lemma \ref{lem:preliminary-estimate}. 
\begin{proposition}\label{prop:actual-interlace-ell-ge-2.3k}
    For all $\ell \ge 2.3 k$, the actual zeros of $F_\ell(\theta)$ Stieltjes interlace with the actual zeros of $F_k(\theta)$. 
\end{proposition}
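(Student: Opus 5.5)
We only use Lemma \ref{lem:preliminary-estimate}: each actual zero $\theta_{k,i}$ lies in the window $I_{k,i}:=\big[\theta^{**}_{k,i}-\tfrac{1.2}{k},\,\theta^{**}_{k,i}+\tfrac{1.2}{k}\big]$, and each $\theta_{\ell,j}$ lies in $I_{\ell,j}$, defined in the same way with $\ell$ in place of $k$. (Lemma \ref{lem:preliminary-estimate} was proved for weights at least $10$; for the few smallest $k$ we would check by hand, or simply note that $n_k\le 1$ and then there is nothing to prove.) Fix consecutive zeros $\theta_{k,i+1}<\theta_{k,i}$. Then
\begin{align}
\theta_{k,i+1}\le \theta^{**}_{k,i+1}+\tfrac{1.2}{k}
\qquad\text{and}\qquad
\theta_{k,i}\ge \theta^{**}_{k,i}-\tfrac{1.2}{k}.
\end{align}
Hence the open interval $(\theta_{k,i+1},\theta_{k,i})$ contains the interval
\begin{align}
J:=\Big(\theta^{**}_{k,i+1}+\tfrac{1.2}{k},\ \theta^{**}_{k,i}-\tfrac{1.2}{k}\Big),
\end{align}
whose length is $\tfrac{2\pi-2.4}{k}\approx\tfrac{3.883}{k}$. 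It therefore suffices to find a $j$ with $I_{\ell,j}\subset J$, because then $\theta_{\ell,j}\in J$.

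Next we place the weight $\ell$ windows. The points $\theta^{**}_{\ell,j}$ are exactly the multiples of $\tfrac{2\pi}{\ell}$ lying in $\big[\tfrac{\pi}{2}+\tfrac{\pi}{\ell},\,\tfrac{2\pi}{3}-\tfrac{2\pi}{3\ell}\big]$, so consecutive ones are spaced $\tfrac{2\pi}{\ell}$ apart. We want a $\theta^{**}_{\ell,j}$ in the shrunken interval
\begin{align}
J':=\Big(\theta^{**}_{k,i+1}+\tfrac{1.2}{k}+\tfrac{1.2}{\ell},\ \theta^{**}_{k,i}-\tfrac{1.2}{k}-\tfrac{1.2}{\ell}\Big),
\end{align}
since then $I_{\ell,j}\subset J$. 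The length of $J'$ is $\tfrac{2\pi-2.4}{k}-\tfrac{2.4}{\ell}$. Using $k\le \ell/2.3$, this is at least
\begin{align}
\frac{2.3(2\pi-2.4)-2.4}{\ell}\approx\frac{6.53}{\ell}>\frac{2\pi}{\ell}.
\end{align}
So $J'$ is longer than the spacing and must contain a multiple of $\tfrac{2\pi}{\ell}$. This is the reason for the threshold $2.3$: the condition needed is $\ell/k>\tfrac{2\pi+2.4}{2\pi-2.4}\approx 2.24$.

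The main obstacle is checking that the multiple of $\tfrac{2\pi}{\ell}$ found in $J'$ is actually a presample zero, that is, that it lies in the admissible range $\big[\tfrac{\pi}{2}+\tfrac{\pi}{\ell},\,\tfrac{2\pi}{3}-\tfrac{2\pi}{3\ell}\big]$. We handle this with Lemma \ref{lem:location-of-first-and-last-presample-zero}. On the right, $J'$ lies below $\theta^{**}_{k,i}\le\tfrac{2\pi}{3}-\tfrac{2\pi}{3k}<\tfrac{2\pi}{3}-\tfrac{2\pi}{3\ell}$. On the left, $J'$ lies above $\theta^{**}_{k,i+1}\ge\tfrac{\pi}{2}+\tfrac{\pi}{k}>\tfrac{\pi}{2}+\tfrac{\pi}{\ell}$. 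So every multiple of $\tfrac{2\pi}{\ell}$ in $J'$ is some $\theta^{**}_{\ell,j}$. The corresponding $\theta_{\ell,j}$ then lies in $I_{\ell,j}\subset J\subset(\theta_{k,i+1},\theta_{k,i})$, which proves Stieltjes interlacing. The remaining work is only the elementary numerical inequality above, which holds uniformly in $k$.
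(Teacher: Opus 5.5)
Your proof is correct. It uses the same inputs as the paper (Lemma \ref{lem:preliminary-estimate} and Lemma \ref{lem:location-of-first-and-last-presample-zero}), and even the same numerical inequality $\frac{2\pi+2.4}{\ell}<\frac{2\pi-2.4}{k}$, but the argument is organized differently.

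The paper argues globally. It shows that every gap between consecutive weight-$\ell$ actual zeros is shorter than every gap between consecutive weight-$k$ actual zeros. It then needs the separate endpoint Lemma \ref{lem:first-and-last-weight-ell-zeros_when-ell-le-2.3k}, namely $\theta_{\ell,1}>\theta_{k,2}$ and $\theta_{\ell,n_\ell}<\theta_{k,n_k-1}$. This guarantees that a weight-$k$ gap containing no weight-$\ell$ zero would have to lie inside a single weight-$\ell$ gap, which contradicts the gap comparison.

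You argue locally instead. You fit a whole window $I_{\ell,j}$ into the interval $J$ between the windows $I_{k,i+1}$ and $I_{k,i}$. In your version the endpoint issue reduces to the observation that $J'\subset(\theta^{**}_{k,i+1},\theta^{**}_{k,i})\subset(\frac{\pi}{2},\frac{2\pi}{3})$. So every multiple of $\frac{2\pi}{\ell}$ in $J'$ is automatically a weight-$\ell$ presample zero, and no analogue of the endpoint lemma is needed.

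Your route is shorter and constructive, since it identifies which $\theta^{**}_{\ell,j}$ produces the interlacing zero. The paper's route mirrors the ``compare consecutive distances, then treat the first and last zeros'' structure it uses for $k\ge 1000$ (Lemmas \ref{lem:zero-endpoint-behavior-k-ell} and \ref{lem:interlacing-sample-zeros-with-gap}).

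Your treatment of small weights is also fine. Stieltjes interlacing is vacuous unless $n_k\ge 2$, which forces $k\ge 22$ and hence $\ell\ge 2.3k>50$. So Lemma \ref{lem:preliminary-estimate}, whose proof needs weight at least $10$, applies to both $k$ and $\ell$.
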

\begin{proof}
    Note that we can assume $n_k \ge 2$ (and thus $k \ge 22$) since otherwise, Stieltjes interlacing holds vacuously.
    By Lemma \ref{lem:preliminary-estimate}, we have
    \begin{align}
        -\frac{1.2}{k} \leq \theta_{k,i} - \theta^{**}_{k,i} \leq \frac{1.2}{k},
    \end{align}
    so using the fact that $\theta^{**}_{k,i} - \theta^{**}_{k,i+1} = \frac{2\pi}{k}$, we have
    \begin{align}
        \frac{2\pi - 2.4}{k} &\le \theta_{k,i} - \theta_{k,i+1} \le \frac{2\pi + 2.4}{k}, \quad \text{and similarly,}\\
         \quad 
        \frac{2\pi - 2.4}{\ell} &\le \theta_{\ell,j} - \theta_{\ell,j+1} \le \frac{2\pi + 2.4}{\ell}.
    \end{align}
    Then since $\ell \geq 2.3k$, we have
    \begin{align}
        \theta_{\ell,j} - \theta_{\ell,j+1} \le \frac{2\pi+2.4}{\ell} \leq \frac{2\pi+2.4}{2.3k} < \frac{2\pi -2.4}{k} \le \theta_{k,i} - \theta_{k,i+1}.
    \end{align}
    In other words, this shows that the distance between consecutive weight $\ell$ actual zeros must be strictly smaller than the distance between consecutive weight $k$ actual zeros. Together with Lemma \ref{lem:first-and-last-weight-ell-zeros_when-ell-le-2.3k}, this implies the desired result.
\end{proof}

Lemma \ref{lem:first-and-last-weight-ell-zeros_when-ell-le-2.3k} deals with the first and last weight $\ell$ zeros.
\begin{lemma}
    \label{lem:first-and-last-weight-ell-zeros_when-ell-le-2.3k}
    For all $k$ such that $n_k \ge 2$ and $\ell \ge 2.3k$, we have that
    \begin{align}
        \theta_{\ell,1} > \theta_{k,2} 
        \quad  \text{and} \quad
        \theta_{\ell,n_\ell} <\theta_{k,n_k-1}. 
    \end{align}
\end{lemma}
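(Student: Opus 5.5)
The plan uses only the preliminary estimate of Lemma \ref{lem:preliminary-estimate} together with the explicit endpoint locations of Lemma \ref{lem:location-of-first-and-last-presample-zero}. The two inequalities are symmetric, so I will treat the right-hand one, $\theta_{\ell,1} > \theta_{k,2}$, in detail and then adapt it to the left-hand one.

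\textbf{Right endpoint.} Start from the chain
\begin{align}
    \theta_{\ell,1} \ge \theta^{**}_{\ell,1} - \frac{1.2}{\ell} \ge \frac{2\pi}{3} - \frac{2\pi+1.2}{\ell},
\end{align}
which uses Lemma \ref{lem:preliminary-estimate} and then Lemma \ref{lem:location-of-first-and-last-presample-zero}. On the other side, Lemma \ref{lem:location-of-first-and-last-presample-zero} gives $\theta^{**}_{k,2} = \theta^{**}_{k,1} - \frac{2\pi}{k} \le \frac{2\pi}{3} - \frac{2\pi}{3k} - \frac{2\pi}{k}$. Combining this with Lemma \ref{lem:preliminary-estimate} yields
\begin{align}
    \theta_{k,2} \le \frac{2\pi}{3} - \frac{8\pi/3 - 1.2}{k}.
\end{align}
It therefore suffices to have $\frac{2\pi+1.2}{\ell} < \frac{8\pi/3-1.2}{k}$. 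Since $\ell \ge 2.3k$, the left side is at most $\frac{7.48}{2.3k} \approx \frac{3.25}{k}$, while the right side is about $\frac{7.18}{k}$. So this case has ample room.

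\textbf{Left endpoint.} By Lemma \ref{lem:location-of-first-and-last-presample-zero} and Lemma \ref{lem:preliminary-estimate},
\begin{align}
    \theta_{\ell,n_\ell} \le \frac{\pi}{2} + \frac{2\pi+1.2}{\ell}.
\end{align}
Similarly, $\theta^{**}_{k,n_k-1} = \theta^{**}_{k,n_k} + \frac{2\pi}{k} \ge \frac{\pi}{2} + \frac{3\pi}{k}$, so
\begin{align}
    \theta_{k,n_k-1} \ge \frac{\pi}{2} + \frac{3\pi - 1.2}{k}.
\end{align}
We then need $\frac{7.48}{2.3k} < \frac{8.22}{k}$, which clearly holds.

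The hypothesis $n_k \ge 2$ guarantees that the indices $2$ and $n_k - 1$ make sense. We also need $n_\ell \ge 1$, which follows since $\ell \ge 2.3k \ge 50$. One caveat is that Lemma \ref{lem:preliminary-estimate}, and the bounds it depends on, are stated for $k \ge 10$. This is satisfied here: $n_k \ge 2$ forces $k \ge 22$.

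The only point I expect to need care is confirming that the endpoint bounds of Lemma \ref{lem:location-of-first-and-last-presample-zero} are used in the right direction for each index. In particular, the index $2$ (respectively $n_k - 1$) should be handled by shifting the index-$1$ (respectively index-$n_k$) bound by exactly $\frac{2\pi}{k}$. After that, everything reduces to the numerical comparisons above, which hold with a wide margin.
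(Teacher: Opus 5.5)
Your proposal is correct and follows essentially the same argument as the paper. Like the paper, you combine the $\pm\frac{1.2}{k}$ window of Lemma \ref{lem:preliminary-estimate} with the endpoint bounds of Lemma \ref{lem:location-of-first-and-last-presample-zero}, shifted by $\frac{2\pi}{k}$ for the indices $2$ and $n_k-1$, which reduces everything to the comparisons $\frac{2\pi+1.2}{2.3} < \frac{8\pi}{3}-1.2$ and $\frac{2\pi+1.2}{2.3} < 3\pi-1.2$.
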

\begin{proof}
    First, recall by Lemma \ref{lem:location-of-first-and-last-presample-zero} that
    \begin{align}
        \theta^{**}_{\ell, 1}  &\geq \frac{2\pi}{3} - \frac{2\pi}{\ell}, \quad \theta^{**}_{k, 2} \leq \frac{2\pi}{3} - \frac{8\pi}{3k} \\
        \text{and} \quad
        \theta^{**}_{\ell, n_\ell} &\leq \frac{\pi}{2} + \frac{2\pi}{\ell}, \quad \theta^{**}_{k, n_k - 1} \geq \frac{\pi}{2} + \frac{3\pi}{k}.
    \end{align}
    This then implies with Lemma \ref{lem:preliminary-estimate} that
    \begin{align}
        \theta_{\ell, 1} \geq \theta^{**}_{\ell, 1} - \frac{1.2}{\ell} &\geq \frac{2\pi}{3} - \frac{2\pi + 1.2}{\ell} \\
        &\geq \frac{2\pi}{3} - \frac{2\pi + 1.2}{2.3k} \\
        &> \frac{2\pi}{3} - \frac{8\pi}{3k} + \frac{1.2}{k} \geq \theta^{**}_{k, 2} + \frac{1.2}{k} \geq \theta_{k,2}
    \end{align}
    and 
    \begin{align}
        \theta_{\ell, n_\ell} \leq \theta^{**}_{\ell, n_\ell} + \frac{1.2}{\ell}
        &\leq \frac{\pi}{2} +  \frac{2\pi + 1.2}{\ell} \\
        &\leq  \frac{\pi}{2} + \frac{2\pi + 1.2}{2.3k} \\ 
        &< \frac{\pi}{2} + \frac{3\pi}{k} - \frac{1.2}{k} \leq \theta^{**}_{k, n_k-1} - \frac{1.2}{k} \leq \theta_{k,n_k -1},
    \end{align}
    as desired.
\end{proof}

\section{Standard interlacing} \label{sec:standard}

In this section, we classify precisely when the zeros of $F_\ell(\theta)$ (standard) interlace with the zeros of $F_k(\theta)$. The following is an immediate corollary of Stieltjes interlacing, established in Theorem \ref{thm:stieltjes-theorem}.

\begin{corollary}\label{cor:perfect-interlacing-condition-theta}
    Let $n_k \geq 1$, $\ell > k$ (and so $n_\ell \ge n_k - 1$). 
    \begin{enumerate}
        \item Suppose $n_\ell = n_k - 1$. Then we have interlacing.
        \item Suppose $n_\ell = n_k$. Then we have interlacing iff $\theta_{\ell,1} > \theta_{k,1}$ or $\theta_{\ell,n_\ell} < \theta_{k, n_k}$.
        \item Suppose $n_\ell = n_k+1$. Then we have interlacing iff\ $\theta_{\ell,1} > \theta_{k,1}$ and $\theta_{\ell,n_\ell} < \theta_{k, n_k}$.
        \item Suppose $n_\ell \ge n_k+2$. Then we do not have interlacing.
    \end{enumerate}
\end{corollary}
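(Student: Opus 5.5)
The whole argument is combinatorial. It rests only on Theorem \ref{thm:stieltjes-theorem} and on the fact that both zero sets are finite, strictly ordered (decreasing) lists in $(\frac{\pi}{2},\frac{2\pi}{3})$, with $n_k$ and $n_\ell$ elements respectively, and no common elements. I will treat that last point as part of the hypotheses, or deduce it from Stieltjes interlacing in the strict form given by Proposition \ref{prop:stieltjes-interlacing-large-k} together with the computations for small $k$.

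First consequence of Stieltjes interlacing: each of the $n_k-1$ open gaps $(\theta_{k,i+1},\theta_{k,i})$ contains at least one weight $\ell$ zero. These gaps are disjoint, so $n_\ell\ge n_k-1$, which is the parenthetical claim. Write $a$ for the number of weight $\ell$ zeros to the right of $\theta_{k,1}$, $b$ for the number to the left of $\theta_{k,n_k}$, and $m_i\ge 1$ for the number in the $i$-th gap. Then
\begin{align}
n_\ell = a+b+\sum_{i=1}^{n_k-1} m_i \ge a+b+n_k-1 .
\end{align}

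Next I unpack Definition \ref{defn:interlacing-defn}. The first half of the condition, exactly one weight $\ell$ zero between consecutive weight $k$ zeros, means all $m_i=1$. The second half, exactly one weight $k$ zero between consecutive weight $\ell$ zeros, forces $a\le 1$ and $b\le 1$. Indeed, two weight $\ell$ zeros both to the right of $\theta_{k,1}$ with nothing from weight $k$ between them would violate the condition, and likewise on the left. Conversely, if all $m_i=1$ and $a,b\le1$, the merged sequence alternates and both halves hold. So interlacing is equivalent to
\begin{align}
n_\ell = n_k-1+a+b \quad\text{with } a,b\in\{0,1\}.
\end{align}
The reason is that $n_\ell=n_k-1+a+b+\sum(m_i-1)$ and $a,b\le1$, so the displayed identity forces every $m_i=1$.

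Now the four cases. If $n_\ell=n_k-1$, then $a=b=0$ and every $m_i=1$, so we have interlacing. If $n_\ell\ge n_k+2$, then $a+b\le2$ would force some $m_i\ge2$, so interlacing fails. If $n_\ell=n_k$, interlacing holds iff $a+b=1$ with $a,b\le1$. If $n_\ell=n_k+1$, it holds iff $a=b=1$.

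These conditions have to be translated into statements about the extreme zeros. We have $a\ge1$ iff $\theta_{\ell,1}>\theta_{k,1}$, and $b\ge1$ iff $\theta_{\ell,n_\ell}<\theta_{k,n_k}$. In case (3), $n_\ell=n_k+1$ already implies $a+b\ge 2-\sum(m_i-1)$, so one has to argue that $a\ge1$ and $b\ge1$ together give $a=b=1$ and all $m_i=1$. This follows from the count $n_k+1\ge a+b+n_k-1$. In case (2), the condition ``$a\ge1$ or $b\ge1$'' forces $a+b=1$ and all $m_i=1$ by the same count. Conversely, if $a=b=0$, then $\sum m_i=n_k$ over $n_k-1$ gaps, so some $m_i=2$ and interlacing fails.

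The main obstacle is bookkeeping rather than analysis. The points to watch are that the case $n_k=1$ (no gaps) is handled by the same count, and that strictness holds, meaning no weight $\ell$ zero coincides with a weight $k$ zero, so that the regions counted by $a$, $b$ and the $m_i$ partition the weight $\ell$ zeros.
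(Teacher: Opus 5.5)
Your counting argument is correct and is exactly the elaboration of what the paper calls an immediate consequence of Theorem \ref{thm:stieltjes-theorem}; the paper writes out no further details. One caveat: Proposition \ref{prop:stieltjes-interlacing-large-k} places only one chosen weight $\ell$ zero strictly inside each gap, so it does not rule out some other weight $\ell$ zero coinciding with a weight $k$ zero; you should keep distinctness as an implicit hypothesis (as the paper does), and note that such a coincidence would in any case break interlacing (with strict betweenness) whenever $n_\ell \ge 2$.
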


Now, observe that
\begin{align}
    \theta^{**}_{k,1} &= \frac{2\pi}{3} - \frac{\gamma_k}{3} \frac{2\pi}{k} \qquad \text{where } \gamma_k := 
    \begin{cases}
        3 & \text{if } k \equiv 0 \mod 6 \\
        2 & \text{if } k \equiv 2 \mod 6 \\
        1 & \text{if } k \equiv 4 \mod 6
    \end{cases}, 
    \label{eq:endpoint_theta_1}
    \\
    \theta^{**}_{k,n_k} &= \frac{\pi}{2} + \frac{\gamma'_k}{2} \frac{2\pi}{k} \qquad \text{where } \gamma'_k := 
    \begin{cases}
        2 & \text{if } k \equiv 0 \mod 4  \\
        1 & \text{if } k \equiv 2 \mod 4
    \end{cases}.
    \label{eq:endpoint_theta_2}
\end{align}
If $n_\ell = n_k$ or $n_k+1$, we must have $\ell \le k+26$; we can thus apply Lemma \ref{lem:perfect-interlacing}, which states that for $k \geq 1000$,
\begin{align}
    \theta_{\ell,1} > \theta_{k,1} \quad \text{iff} \quad \gamma_\ell  \le \gamma_k 
    \qquad \text{and} \qquad
    \theta_{\ell,n_\ell} < \theta_{k,n_k} \quad \text{iff} \quad \gamma'_\ell  \le \gamma'_k. 
\end{align}
Hence, we can restate Corollary \ref{cor:perfect-interlacing-condition-theta} (for $k \ge 1000$) as the following:
\begin{corollary}
    \label{cor:perfect-interlacing-condition-gamma}
    Let $\ell > k \geq 1000$ (and so $n_\ell \ge n_k - 1$). 
    \begin{enumerate}
        \item Suppose $n_\ell = n_k - 1$. Then we have interlacing.
        \item Suppose $n_\ell = n_k$. Then we have interlacing iff $\gamma_\ell \le \gamma_k$ or $\gamma'_\ell \le \gamma'_k$.
        \item Suppose $n_\ell = n_k+1$. Then we have interlacing iff $\gamma_\ell \le \gamma_k$ and $\gamma'_\ell \le \gamma'_k$.
        \item 
        Suppose $n_\ell \ge n_k+2$. Then we do not have interlacing.
    \end{enumerate}
\end{corollary}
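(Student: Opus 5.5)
This is a direct substitution into Corollary \ref{cor:perfect-interlacing-condition-theta}. The plan is to check, case by case, that the hypotheses of Lemma \ref{lem:perfect-interlacing} are met, so that the endpoint conditions on actual zeros can be replaced by conditions on $\gamma$ and $\gamma'$. Take $\ell > k \ge 1000$.

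Case (1), $n_\ell = n_k - 1$, and case (4), $n_\ell \ge n_k + 2$, do not involve the endpoint zeros at all. They carry over verbatim from Corollary \ref{cor:perfect-interlacing-condition-theta}, which applies because Theorem \ref{thm:stieltjes-theorem} holds for all $\ell > k \ge 4$. The hypothesis $n_k \ge 1$ is automatic, since $k \ge 1000$ forces $n_k \ge 83$.

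In cases (2) and (3) we have $n_\ell \in \{n_k, n_k+1\}$. First I would verify that $\ell \le k + 26$, so that Lemma \ref{lem:perfect-interlacing} is applicable. Since $n_k = \lfloor (k+2)/12 \rfloor - 1_{k \equiv 0 \bmod 12}$, we have $k = 12 n_k + r_k$ with $r_k \in \{-2, 2, 4, 6, 8, 12\}$, so $r_k \in [-2, 12]$. Then
\[
\ell - k = 12(n_\ell - n_k) + r_\ell - r_k \le 12 + 12 - (-2) = 26.
\]
With $\ell \le k + 26$ and $k \ge 1000$, Lemma \ref{lem:perfect-interlacing} gives the two equivalences
\[
\theta_{\ell,1} > \theta_{k,1} \iff \gamma_\ell \le \gamma_k, \qquad \theta_{\ell,n_\ell} < \theta_{k,n_k} \iff \gamma'_\ell \le \gamma'_k.
\]
Substituting these into items (2) and (3) of Corollary \ref{cor:perfect-interlacing-condition-theta} turns the "or" and "and" conditions on actual zeros into the stated conditions on $\gamma$ and $\gamma'$, which finishes those cases.

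There is no real obstacle at this stage. The only point that needs care is confirming $\ell - k \le 26$ over all residue combinations; the difficult analytic work is already contained in Lemma \ref{lem:perfect-interlacing}.
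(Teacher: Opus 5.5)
Your proposal is correct and follows essentially the same approach as the paper: it notes that $n_\ell\in\{n_k,n_k+1\}$ forces $\ell\le k+26$, applies Lemma \ref{lem:perfect-interlacing} to convert the endpoint conditions on actual zeros into conditions on $\gamma,\gamma'$, and substitutes these into Corollary \ref{cor:perfect-interlacing-condition-theta}. Your explicit check of $\ell-k\le 26$ via $k=12n_k+r_k$ with $r_k\in[-2,12]$ supplies a detail the paper only asserts.
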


This allows us to prove Theorem \ref{thm:perfect-interlacing-theorem}.
{
\renewcommand{\thetheorem}{\ref{thm:perfect-interlacing-theorem}}
\addtocounter{theorem}{-1}
\begin{theorem} 
    For all even integers $\ell > k \ge 4$, the zeros of $\vartheta_\ell(E_\ell)$ interlace with the zeros of $\vartheta_k(E_k)$ on the lower arc if and only if $\ell - k \in \{2,4,6,8,12\}$ or $(k,\ell)$ is one of the exceptional pairs given in Table \ref{table:exceptional-pairs-(k,ell)}.
\end{theorem}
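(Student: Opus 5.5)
The plan is to split into large $k$ (where Corollary \ref{cor:perfect-interlacing-condition-gamma} applies) and small $k$ (handled by computer), and in both cases reduce to a finite check.

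\textbf{Large $k$ ($k \ge 1000$).} I will reduce Corollary \ref{cor:perfect-interlacing-condition-gamma} to arithmetic modulo $12$.
\begin{enumerate}
\item Write $d = \ell - k$. From $n_k = \lfloor \frac{k+2}{12}\rfloor - 1_{k\equiv 0 \bmod 12}$ one checks that $n_\ell - n_k \ge 2$ whenever $d \ge 28$. For $d = 26$ and $d = 24$ the same conclusion holds: $d = 24$ adds exactly $2$, and $d = 26$ adds at least $2$. So case (4) of the corollary rules out every $d \ge 24$.
\item For $2 \le d \le 22$, observe that $\gamma_k$ depends only on $k \bmod 6$ and $\gamma'_k$ only on $k \bmod 4$. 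Hence $(\gamma_k, \gamma'_k)$ depends only on $k \bmod 12$. Likewise $n_\ell - n_k$ depends only on $k \bmod 12$ and $d$, via $k = 12n_k + r_k$ with $r_k \in \{-2, 12, 2, 4, 6, 8\}$.
\item The criterion of Corollary \ref{cor:perfect-interlacing-condition-gamma} therefore becomes a finite table: six residue classes of $k$ against the eleven values $d = 2, 4, \dots, 22$.
\item I will tabulate, for each entry, the value $n_\ell - n_k \in \{-1, 0, 1, 2\}$ together with the comparisons $\gamma_\ell \le \gamma_k$ and $\gamma'_\ell \le \gamma'_k$.
\item The goal is to verify that the criterion holds for every residue exactly when $d \in \{2, 4, 6, 8, 12\}$, and fails for every residue when $d \in \{10, 14, 16, 18, 20, 22\}$.
\end{enumerate}

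Two entries are quick sanity checks. For $d = 12$ we have $n_\ell = n_k + 1$ while $\gamma$ and $\gamma'$ are unchanged, so case (3) holds. For $d = 2$, $n_\ell$ increases only when $k \equiv 8 \pmod{12}$ or $k \equiv 10 \pmod{12}$, and there one checks both inequalities directly. This shows that no exceptional pairs occur with $k \ge 1000$.

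\textbf{Small $k$ ($4 \le k < 1000$).} For $\ell - k \ge 28$, the count argument of step 1 still gives $n_\ell \ge n_k + 2$. This uses only the formula for $n_k$, so it is valid for all $k$. Case (4) of Corollary \ref{cor:perfect-interlacing-condition-theta} then excludes interlacing, and that corollary is available for all $k$ since Theorem \ref{thm:stieltjes-theorem} holds for all $k$.

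There remain finitely many pairs with $\ell \le k + 26$ and $k < 1000$. For these I will compute the actual zeros $\theta_{k,i}$ numerically, as was done for Stieltjes interlacing \cite{ross-code}, and apply the endpoint criteria of Corollary \ref{cor:perfect-interlacing-condition-theta}. The pairs where the outcome disagrees with the rule $\ell - k \in \{2,4,6,8,12\}$ constitute Table \ref{table:exceptional-pairs-(k,ell)}. Since certified sign changes suffice, the numerics can be made rigorous using the preliminary intervals of Lemma \ref{lem:preliminary-estimate}.

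\textbf{Main obstacle.} The delicate input is Lemma \ref{lem:perfect-interlacing}, which converts the comparisons of first and last zeros into comparisons of $\gamma$ and $\gamma'$; it is assumed here. Given it, the hardest remaining step is organizing the mod-$12$ table so that the dichotomy is uniform across residues for every $d$, in particular confirming that $d = 10$ fails in all six classes. I would first verify the borderline values $d \in \{10, 14\}$ and only then fill in the rest of the table.
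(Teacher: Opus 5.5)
Your architecture matches the paper's: a machine check for $k<1000$ with $\ell\le k+26$, and for $k\ge 1000$, Corollary~\ref{cor:perfect-interlacing-condition-gamma} plus a mod-$12$ table. However, the bookkeeping of $n_\ell-n_k$, which is the entire content of the large-$k$ step, is wrong in two places. Both errors come from overlooking that $n_k=\dim S_{k+2}$ can \emph{drop}. In fact $n_{k+2}-n_k$ equals $+1$ for $k\equiv 0,8 \pmod{12}$, $-1$ for $k\equiv 10 \pmod{12}$, and $0$ otherwise.

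\textbf{The case $d=26$.} Step 1 is false here. If $k\equiv 10\pmod{12}$, then $n_{k+26}=n_{k+2}+2=n_k+1$. For instance $n_{10}=1$ and $n_{36}=2$. Indeed $(10,36)$ appears in Table~\ref{table:exceptional-pairs-(k,ell)} as an interlacing pair, which $n_\ell\ge n_k+2$ would forbid. So for $k\ge1000$ the pairs $(k,k+26)$ with $k\equiv 10\pmod{12}$ fall under case (3), not case (4). Your eleven-column table ($d\le 22$) never examines them.

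\textbf{The case $d=2$.} Your sanity check has the residues wrong. For $k\equiv 10$ one has $n_\ell=n_k-1$, so case (1) applies and interlacing is automatic. If you treat it as case (3), as written, the test $(\gamma_\ell,\gamma'_\ell)=(3,2)\le(1,1)$ fails, and you would wrongly conclude that $d=2$ does not always interlace. Meanwhile the class $k\equiv 0$, which genuinely needs case (3), is omitted; there $(2,1)\le(3,2)$ holds.

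Both errors are repairable without new ideas.
\begin{itemize}
\item Add the column $d=26$. Only $k\equiv 10$ needs a check: there $(\gamma_k,\gamma'_k)=(1,1)$ and $(\gamma_\ell,\gamma'_\ell)=(3,2)$, so case (3) fails. The verdict is still ``no interlacing,'' consistent with $26\notin\{2,4,6,8,12\}$.
\item Redo the $d=2$ column with the correct jumps.
\end{itemize}
This is exactly why Lemma~\ref{lem:perfect-interlacing} is stated for $\ell\le k+26$. It is also why the paper indexes its table by $n_\ell-n_k\in\{0,1\}$ and the residues of $k,\ell$, rather than by a range of $d$. That organization automatically produces the entry $d=26$ and isolates $d=2$, $k\equiv 10$ as the separate case $n_\ell=n_k-1$. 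A minor point: $n_\ell-n_k$ can also equal $3$ for $d\le 22$ (e.g.\ $k\equiv 0$, $d=22$), which is harmless since case (4) covers it.
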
 
}

\begin{mytable}
    \label{table:exceptional-pairs-(k,ell)}
    The following table gives the complete list of exceptional pairs $(k,\ell)$ from Theorem \ref{thm:perfect-interlacing-theorem}.
    
    \centering
    \begin{tabular}{|c|c|c|c|c|c|c|c|c|}
        \hline
        (4, 14) &  (4, 18) &  (4, 20) &  (4, 24) &  (6, 16) &  (6, 20) &  (6, 24) &  (8, 18) &  (8, 24)  
        \\
        \hline
        (10, 20) &  (10, 24) &  (10, 26) &  (10, 28) &  (10, 30) &  (10, 32) &  (10, 36) &  (14, 24) &  (14, 30)  
        \\
        \hline
        (14, 32) &  (14, 36) &  (16, 32) &  (20, 30) &  (20, 36) &  (22, 38) &  (26, 42) &  (32, 48) &  (38, 54)
        \\
        \hline
    \end{tabular}
\end{mytable}

\begin{proof} 
    First, we computationally check when interlacing holds for all $k < 1000$; see \cite{ross-code} for the code. Note that for each of these values of $k$, we only need to check $k < \ell \le 26$. Otherwise, we will have $n_\ell \ge n_k+2$ and so interlacing cannot occur, as predicted (c.f. Case 1 below). This computation produces the finite list of exceptions given in Table \ref{table:exceptional-pairs-(k,ell)}. For the rest of the proof, we will assume that $k \geq 1000$.
    
    Throughout the remainder of the proof, recall that $k=12n_k + r_k$, where $r_k \in \{-2, 12, 2, 4, 6, 8\}$.
    
    \textbf{Case 1: $n_\ell \ge n_k+2$. \\}
    In this case, we do not have interlacing. Additionally,
    we have that
    \begin{align}
        \ell-k &= 12 (n_\ell - n_k) + (r_{\ell}-r_k) \ge 12 \cdot 2 + (-14) = 10, \\
        \text{and}\quad
        \ell-k &= 12 (n_\ell - n_k) + (r_{\ell}-r_k) \ne 12. \qquad (\text{since $r_\ell - r_k \neq -12$})
    \end{align}
    Hence $\ell - k \notin \{2,4,6,8,12\}$, as predicted.

    \textbf{Case 2: $n_\ell = n_k-1$. \\}
    In this case, we have interlacing. Additionally,
    \begin{align}
        \ell-k &= 12 (n_\ell - n_k) + (r_{\ell}-r_k) = -12 + (r_\ell - r_k). 
    \end{align}
    Since $\ell > k$, this then implies that $\ell - k = -12 + 14 = 2 \in \{2,4,6,8,12\}$, as predicted.

    \textbf{Case 3: $n_\ell = n_k$ or $n_\ell = n_k+1$. \\}
    In the following table, we consider each possible equivalence class modulo $12$ for $k$ and $\ell$.
    In the left column, we list the form of $k$, as well as $(\gamma_k, \gamma'_k)$. 
    On the top row, we list the form of $\ell$, as well as $(\gamma_\ell, \gamma'_\ell)$.
    For each entry in the table, we list the two values of $\ell-k$ for $n_\ell = n_k$, and $n_\ell = n_k+1$. Also, for these two values of $\ell-k$, we give a corresponding symbol: \cmark, \xmark, or $\namark$. This symbol indicates whether or not the condition of Corollary \ref{cor:perfect-interlacing-condition-gamma} is satisfied for $(\gamma_k,\gamma'_k)$ and $(\gamma_\ell,\gamma'_\ell)$.
    \begin{itemize}
        \item $\cmark$: The condition of Corollary \ref{cor:perfect-interlacing-condition-gamma} is satisfied (so we do have interlacing).
        \item $\xmark$: The condition of Corollary \ref{cor:perfect-interlacing-condition-gamma} is not satisfied (so we do not have interlacing).
        \item $\namark$: This scenario is impossible since $\ell - k \le 0$.
    \end{itemize}

\begin{center}
\begin{tabular}{c||c|c|c|c|c|c|}
\diagbox[width=7em,height=5.5em]
   {\theading{
        k\vspace{-0.7mm}
    }{
        (\gamma_k,\gamma'_k)\vspace{1mm}
    }}
    {\theading{
        \ell \vspace{-0.7mm}
    }{
        (\gamma_\ell,\gamma'_\ell)
    }}
&
\theading{12n_\ell-2}{(1,1)}
&
\theading{12n_\ell+12}{(3,2)}
&
\theading{12n_\ell+2}{(2,1)}
&
\theading{12n_\ell+4}{(1,2)}
&
\theading{12n_\ell+6}{(3,1)}
&
\theading{12n_\ell+8}{(2,2)}
\\ \hline\hline

\theading{12n_k-2}{(1,1)}
&
\tentry{0}{\namark}{12}{\cmark}
&
\tentry{14}{\xmark}{26}{\xmark}
&
\tentry{4}{\cmark}{16}{\xmark}
&
\tentry{6}{\cmark}{18}{\xmark}
&
\tentry{8}{\cmark}{20}{\xmark}
&
\tentry{10}{\xmark}{22}{\xmark}
\\ \hline

\theading{12n_k+12}{(3,2)}
&
\tentry{-14}{\namark}{-2}{\namark}
&
\tentry{0}{\namark}{12}{\cmark}
&
\tentry{-10}{\namark}{2}{\cmark}
&
\tentry{-8}{\namark}{4}{\cmark}
&
\tentry{-6}{\namark}{6}{\cmark}
&
\tentry{-4}{\namark}{8}{\cmark}
\\ \hline

\theading{12n_k+2}{(2,1)}
&
\tentry{-4}{\namark}{8}{\cmark}
&
\tentry{10}{\xmark}{22}{\xmark}
&
\tentry{0}{\namark}{12}{\cmark}
&
\tentry{2}{\cmark}{14}{\xmark}
&
\tentry{4}{\cmark}{16}{\xmark}
&
\tentry{6}{\cmark}{18}{\xmark}
\\ \hline

\theading{12n_k+4}{(1,2)}
&
\tentry{-6}{\namark}{6}{\cmark}
&
\tentry{8}{\cmark}{20}{\xmark}
&
\tentry{-2}{\namark}{10}{\xmark}
&
\tentry{0}{\namark}{12}{\cmark}
&
\tentry{2}{\cmark}{14}{\xmark}
&
\tentry{4}{\cmark}{16}{\xmark}
\\ \hline

\theading{12n_k+6}{(3,1)}
&
\tentry{-8}{\namark}{4}{\cmark}
&
\tentry{6}{\cmark}{18}{\xmark}
&
\tentry{-4}{\namark}{8}{\cmark}
&
\tentry{-2}{\namark}{10}{\xmark}
&
\tentry{0}{\namark}{12}{\cmark}
&
\tentry{2}{\cmark}{14}{\xmark}
\\ \hline

\theading{12n_k+8}{(2,2)}
&
\tentry{-10}{\namark}{2}{\cmark}
&
\tentry{4}{\cmark}{16}{\xmark}
&
\tentry{-6}{\namark}{6}{\cmark}
&
\tentry{-4}{\namark}{8}{\cmark}
&
\tentry{-2}{\namark}{10}{\xmark}
&
\tentry{0}{\namark}{12}{\cmark}
\\ \hline
\end{tabular}
\end{center}

The desired result is then immediate, because one can observe that for each entry in the table with $\ell - k > 0$, we have 
$\begin{cases}
    \cmark& \text{if } \ell-k \in \{2,4,6,8,12\} \\
    \xmark & \text{if } \ell-k \notin \{2,4,6,8,12\}
\end{cases}$.
\end{proof}

To end the discussion we would like to point out that in certain special cases (specifically, when the weights are multiples of $12$), Markov's result mentioned in the introduction concerning zeros of polynomials can be used to show interlacing  for zeros of $\vartheta_k(E_k)$ in the current setting. 

Recall that the modular discriminant $\Delta$ and $j$-invariant are given by
\begin{align*}
    \Delta=\frac{E_4^3-E_6^2}{1728},\qquad j=\frac{E_4^3}{\Delta},
\end{align*}
with derivatives given by
\begin{align*}
    D(\Delta)=E_2\Delta, \qquad D(j)=-\frac{E_6}{E_4} j.
\end{align*}
(See \cite[paragraph above Theorem 1]{Kaneko-Zagier-1998} and \cite[formula above (38)]{Kaneko-Zagier-1998}.)

Now, suppose $f\in M_k$ for $k=12n$. Then we can write
\begin{align*}
    f=\Delta^n P_f(j),
\end{align*}
where $P_f(x)$ is a real coefficient polynomial of degree (at most) $n$ in $x$. Thus
\begin{align} 
\vartheta_k(f)
&= Df - \frac{k}{12} E_2 f \\
&= Df - nf \frac{D\Delta}{\Delta} \\
&=\Delta^n D(f\Delta^{-n}) \\
&=\Delta^n D(P_f(j))\\
&=\Delta^n { P_f'(j)} D(j)\\
&=\Delta^n P_f'(j) \frac{-E_6}{E_4}\frac{E_4^3}{\Delta}\\
&= -\Delta^{n-1}E_4^2E_6 P_f'(j).\label{eq:Serre-polynomial}
\end{align}

Recall that zeros of $E_k$ for $k=12n$ are all simple and have $j$-invariants in $(0,1728)$ (corresponding to the lower arc), and that the zeros of $E_{k+12}$ and $E_k$ are interlacing by \cite{nozaki-separation}. Hence combining Markov's result and \eqref{eq:Serre-polynomial} shows that the zeros of $\vartheta_{k+12}(E_{k+12})$ and $\vartheta_k(E_k)$ also interlace.

\section{Interlacing of the zeros of \texorpdfstring{$\vartheta_k(E_k)$ and $E_{k+2}$}{Serre derivative of Ek and Ek+2}}

In this section, we prove that the zeros of $F_{k}(\theta)$ interlace with the zeros of $F_k^\sharp(\theta)$, where
\begin{align}
    F^\sharp_{k}(\theta) := -\frac{1}{2}\mathrm{Re}\left(z^{\tfrac{k+2}{2}}E_{k+2}(z)\right) = -\cos\left(\frac{k+2}{2}\theta\right) - \frac{1}{2}d(\theta)^{-k-2} + R_k(\theta),
\end{align}
for $z = e^{i\theta}$. $F^\sharp_{k}(\theta)$ is a real-valued function with the same zeros as $E_{k+2}\left(e^{i\theta}\right)$.

Analogously to our definition of $\hat F_k(\theta)$, we define
\begin{align}
    \hat F^\sharp_{k}(\theta) := -\cos\left(\frac{k+2}{2}\theta\right) - \frac{1}{2}d(\theta)^{-k-2}.\label{eq:F-hat-sharp}
\end{align}
We define $\hat \theta^\sharp_{k,i}$ and $\theta^\sharp_{k,i}$ to be the zeros of $\hat F^\sharp_{k}(\theta)$ and $F^\sharp_{k}(\theta)$ respectively corresponding to $\theta^*_{k,i}$. More precisely, these are the unique zeros of $\hat F^\sharp_{k}(\theta)$ and $F^\sharp_{k}(\theta)$ in the interval $\left(\theta^*_{k,i} - \tfrac{0.25}{k}, \theta^*_{k,i} + \frac{0.25}{k}\right)$. We will show this definition is well-posed in Lemma \ref{lem:theta-star-F-sharp-interval}. Note that these intervals are disjoint because sample zeros are spaced out by at least $\frac{2\pi - 2.4}{k}$ by Lemma \ref{lem:preliminary-estimate}.

By the valence formula,
the number of non-elliptic zeros of $E_{k+2}$ is
$n_k = \dim S_{k+2}$. Moreover, all these zeros lie on the lower arc $\mathcal{A}$, by the work of Rankin and Swinnerton-Dyer \cite{rankin1970zeros}. Therefore, $F^\sharp_{k}(\theta)$ has exactly $n_k$ zeros on $\mathcal{A}$, which is precisely the same number as sample zeros and actual zeros on $\mathcal{A}$. Thus, the zeros $\{\theta^\sharp_{k,i}\}_{1 \leq i \leq n_k}$ constructed above constitute all of the non-elliptic zeros of $F^\sharp_k(\theta)$.

\subsection{Preliminary location estimates}

We can restate Lemma \ref{lem:sample-presample-exact-expression} as
\begin{align}
         \quad \frac{k+2}{2}\theta^*_{k,i}  = \frac{k}{2}\theta^{**}_{k,i} + \frac{\pi}{2} + B_4(\theta^*_{k,i}),\label{eq:theta-star-close-to-mpi-pi-2}
\end{align}
where
\begin{align}
    B_4(\theta) := \theta + B_2(\theta) - \tfrac{\pi}{2}, \quad -0.06 \leq B_4(\theta) \leq 0. \label{eq:b-4-defn}
\end{align}
This will be useful for proving the results in this section. We first find a preliminary estimate for the zeros of $\hat F^\sharp(\theta)$ and $F^\sharp(\theta):$
\begin{lemma}\label{lem:theta-star-F-sharp-interval}
    For all sample zeros $\theta^*_{k,i}$ there is a unique zero of $\hat F^\sharp_{k}(\theta)$ and $F^\sharp_{k}(\theta)$ in the interval $\left(\theta^*_{k,i} - \tfrac{0.25}{k}, \theta^*_{k,i} + \frac{0.25}{k}\right)$.
\end{lemma}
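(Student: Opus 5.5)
We follow the template of Lemma \ref{lem:preliminary-estimate}: existence by a sign change at the two endpoints, and uniqueness by a derivative bound for $\hat F^\sharp_k$ and by counting zeros for $F^\sharp_k$. Throughout, set $\theta = \theta^*_{k,i} + \frac{t}{k}$ with $|t| \le 0.25$. We work with $k\ge 1000$, as in the rest of this section.

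\textbf{Step 1 (the cosine at the endpoints).} By \eqref{eq:theta-star-close-to-mpi-pi-2} and \eqref{eqn:_theta**_equiv_(i-deltak)pi},
\begin{align}
\frac{k+2}{2}\theta \equiv (i-\delta_k)\pi + \frac{\pi}{2} + B_4(\theta^*_{k,i}) + \frac{t}{2} + \frac{t}{k} \pmod{2\pi}.
\end{align}
Hence
\begin{align}
-\cos\left(\frac{k+2}{2}\theta\right) = (-1)^{i-\delta_k}\sin\left(B_4(\theta^*_{k,i}) + \frac{t}{2}+\frac{t}{k}\right).
\end{align}
Since $-0.06\le B_4\le 0$ by \eqref{eq:b-4-defn}, the argument of the sine lies near $+0.125$ at $t=0.25$ (at least $0.065$) and near $-0.125$ at $t=-0.25$ (at most $-0.125$). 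So the main term has opposite signs at the two endpoints, with magnitudes at least about $\sin(0.0648)\approx 0.065$ and $\sin(0.125)\approx 0.125$ respectively.

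\textbf{Step 2 (the error terms).} We must show that $\frac12 d(\theta)^{-k-2}$, and in addition $|R_k(\theta)|$ for $F^\sharp_k$, are smaller than these magnitudes. By Lemma \ref{lem:sample-zero-at-least-3.1/k-away-from-2pi/3}, $\theta \le \frac{2\pi}{3} - \frac{2.85}{k}$. Near $\frac{2\pi}{3}$ we have $d(\theta)^{-k}\approx e^{-\frac{\sqrt3}{2}s}$, where $s=k(\frac{2\pi}{3}-\theta)\ge 2.85$, so $\frac12 d^{-k-2}\lesssim \frac12 e^{-2.47}\approx 0.042$. The remainder $R_k$ is exponentially small, of size roughly $(2\cos(\theta/3))^{-k}$-type terms, as in the Rankin--Swinnerton-Dyer bound. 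Both errors are therefore below $0.065$, and existence follows for both functions.

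\textbf{Step 3 (uniqueness).} This is the main obstacle. The margin on the right endpoint is tight: when $B_4$ is close to $-0.06$ and $s$ is small, $0.065$ competes with $0.042$. So I will bound $d^{-k-2}$ using the actual value of $B_4$ in the region where $B_4$ is most negative (near $\frac{2\pi}{3}$). If that is not enough, I will pair the sign of $d^{-k}$ with the parity $(-1)^{i-\delta_k}$, using the postsample structure from Proposition \ref{prop:postsample-sample-distance-formula}.

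For $\hat F^\sharp_k$, differentiate:
\begin{align}
\frac{d}{d\theta}\left[-\cos\left(\frac{k+2}{2}\theta\right)\right] = \frac{k+2}{2}\sin\left(\frac{k+2}{2}\theta\right),
\end{align}
which has magnitude $\frac{k+2}{2}\cos(B_4+\frac t2+\frac tk)\ge \frac{k}{2}\cos(0.19)\approx 0.49k$. The derivative of $\frac12 d^{-k-2}$ is at most
\begin{align}
\frac{k+2}{4}\tan\left(\frac{\theta}{2}\right) d^{-k-2} \le \frac{k}{4}\cdot 1.74\cdot 0.085 \approx 0.04k.
\end{align}
So $\hat F^\sharp_k$ is strictly monotone on the interval, and its zero there is unique.

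For $F^\sharp_k$, uniqueness follows by counting. The intervals are disjoint, each contains at least one zero, and $F^\sharp_k$ has exactly $n_k$ non-elliptic zeros by the valence formula. Hence each interval contains exactly one zero.
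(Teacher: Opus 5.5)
Your argument is correct for $k\ge 1000$ and largely follows the paper's proof: the phase identity from \eqref{eq:theta-star-close-to-mpi-pi-2}, the uniform bound $d(\theta)^{-k-2}\le 0.0851$ coming from Lemma \ref{lem:sample-zero-at-least-3.1/k-away-from-2pi/3}, a derivative bound (as in Lemma \ref{lem:f-k-hat-sharp-deriv-bound}) for uniqueness of the $\hat F^\sharp_k$ zero, and counting via the valence formula for $F^\sharp_k$.

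The one real difference is existence for $F^\sharp_k$. You get it by a direct sign change at $\theta^*_{k,i}\pm\frac{0.25}{k}$, absorbing $R_k$ into the margin. The paper instead first locates the $\hat F^\sharp_k$ zero in the narrower window of radius $\frac{0.24}{k}$, and then transfers it to $F^\sharp_k$ via Proposition \ref{prop:zero-f-sharp-near-f-sharp-hat}. Your route is more self-contained for this lemma alone. The paper's route costs slightly more, but it also yields $|\hat\theta^\sharp_{k,i}-\theta^\sharp_{k,i}|\le 0.006\cdot 2^{-k/2}$, which it needs anyway for Corollary \ref{cor:actual-zero-sharp-zero-gap}. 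To make your version rigorous, cite a concrete bound such as Nozaki's $|F^\sharp_k-\hat F^\sharp_k|=|R_k|\le 2.5\cdot 2^{-k/2}$. Your description of $R_k$ via $(2\cos(\theta/3))^{-k}$-type terms is not accurate. The leading remainder comes from $(c,d)=\pm(1,-1)$ and has size $\frac12(2\sin(\theta/2))^{-k-2}\le 2^{-(k+4)/2}$.

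Your Step 3 ``main obstacle'' is not an obstacle and should be deleted. Your own numbers already finish existence, since $\sin(0.065)\approx 0.0650>0.0426\ge \frac12 d(\theta)^{-k-2}$; the paper even works with the smaller margin $\sin(0.06)\ge 0.0599$ at radius $\frac{0.24}{k}$. The premise of your proposed refinement is also false. In fact $B_4(\frac{\pi}{2})=B_4(\frac{2\pi}{3})=0$; at $\frac{2\pi}{3}$ this is because $B_0(\frac{2\pi}{3})=-\frac{\sqrt3}{3}$ gives $B_2(\frac{2\pi}{3})=-\frac{\pi}{6}$. So $B_4$ is most negative in the interior of the arc, where $d(\theta)^{-k}$ is exponentially small, and the scenario of $B_4\approx-0.06$ together with small $s$ never occurs.

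Finally, the lemma is stated for all $k$, but the inputs you use (Lemma \ref{lem:sample-zero-at-least-3.1/k-away-from-2pi/3} and the constant $0.0851$) are only available for $k\ge 1000$. The paper handles $k<1000$ by direct computation; you need to do the same or restrict the statement.
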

\begin{proof}
    We first verify the lemma for all $k < 1000$ by computation; see \cite{ross-code} for the code. We can then assume $k \geq 1000$ for the rest of the proof. 
    
    \textbf{Existence of zeros of $\hat F^\sharp_k(\theta)$: \\}
    We will actually prove existence of a zero of $\hat F_k^\sharp(\theta)$ within $\frac{0.24}{k}$ of $\theta^*_{k,i}$ to ease our existence proof for $F_k^\sharp(\theta)$. We have
    \begin{align}
         d\left(\theta^*_{k,i} \pm \frac{0.24}{k}\right)^{-k-2} &\leq d\left(\frac{2\pi}{3} - \frac{3.1}{k} + \frac{0.24}{k}\right)^{-k-2 }  &\hspace{-4mm}\text{(by Lemma \ref{lem:sample-zero-at-least-3.1/k-away-from-2pi/3}, since $d(\theta)$ is decreasing)} \\
         &\leq d\left(\frac{2\pi}{3} - \frac{2.86}{k} \right)^{-k}&\text{(since $d$($\theta$) $\geq$ 1)} \\
         &\le d\left(\frac{2\pi}{3} - \frac{2.86}{1000}\right)^{-1000} \\
         &\leq 0.0851.  \label{eq:d-theta-k-2-bound-cuspidal}
    \end{align}
    Then,
    \begin{align}
        & \hat F_{k}^\sharp\left(\theta^*_{k,i} \pm \frac{0.24}{k}\right) \\
        &= -\cos\left(\frac{k+2}{2}\left(\theta^*_{k,i} \pm \frac{0.24}{k}\right)\right) - \frac{1}{2}d\left(\theta^*_{k,i} \pm \frac{0.24}{k}\right)^{-k-2}\\
        &= -\cos\left(\frac{k+2}{2}\theta^*_{k,i} \pm \frac{0.24(k+2)}{2k}\right) - \frac{1}{2}d\left(\theta^*_{k,i} \pm \frac{0.24}{k}\right)^{-k-2} \qquad \\
        &= -\cos\left(\frac{k}{2}\theta^{**}_{k,i} + \frac{\pi}{2} + B_4(\theta^*_{k,i}) \pm \frac{0.24(k+2)}{2k}\right) - \frac{1}{2}d\left(\theta^*_{k,i} \pm \frac{0.24}{k}\right)^{-k-2} \quad\text{(by \eqref{eq:theta-star-close-to-mpi-pi-2})}\\
        &= -(-1)^{i- \delta_k}\cos\left(\frac{\pi}{2} + B_4(\theta^*_{k,i}) \pm \frac{0.24(k+2)}{2k}\right) - \frac{1}{2}d\left(\theta^*_{k,i} \pm \frac{0.24}{k}\right)^{-k-2} \\
        &\quad \left(\text{since }\tfrac{k}{2} \theta_{k,i}^{**} \equiv (i-\delta_k)\pi \mod{2\pi} \text{ by \eqref{eqn:_theta**_equiv_(i-deltak)pi}}\right)\\
        &= (-1)^{i- \delta_k}\sin\left(B_4(\theta^*_{k,i}) \pm \frac{0.24(k+2)}{2k}\right) - \frac{1}{2}d\left(\theta^*_{k,i} \pm \frac{0.24}{k}\right)^{-k-2} \\
        &= \pm(-1)^{i- \delta_k}\sin\left(\frac{0.24(k+2)}{2k} \pm B_4(\theta^*_{k,i})\right) - \frac{1}{2}d\left(\theta^*_{k,i} \pm \frac{0.24}{k}\right)^{-k-2},
    \end{align}
    which has opposite signs since
    \begin{align}
        \sin\left(\frac{0.24(k+2)}{2k}\pm B_4(\theta^*_{k,i})\right) &\geq \sin\left(0.12-0.06\right) \geq 0.0599, \quad \text{and} \\ 
        \left |\frac{1}{2}d\left(\theta^*_{k,i} \pm \frac{0.24}{k}\right)^{-k-2} \right | 
        &\le \frac{0.0851}{2} = 0.04255 
         \qquad \qquad \text{(by \eqref{eq:d-theta-k-2-bound-cuspidal}).}
    \end{align}
    Thus, there must exist a zero $\hat \theta^\sharp$ of $\hat F_{k}^\sharp(\theta)$ in the interval
    \begin{align} \label{eqn:temp-theta-hat-sharp-interval}
        \hat \theta^\sharp \in
        \left(\theta^*_{k,i} - \tfrac{0.24}{k}, \theta^*_{k,i} + \frac{0.24}{k}\right) \subseteq \left(\theta^*_{k,i} - \tfrac{0.25}{k}, \theta^*_{k,i} + \frac{0.25}{k}\right),
    \end{align}
    as desired. \\

    \textbf{Uniqueness of zeros of $\hat F^\sharp_k(\theta)$: \\} By Lemma \ref{lem:f-k-hat-sharp-deriv-bound}, $\hat F^\sharp_k(\theta)$ is monotonic on the interval, so the located zero must be unique. \\

    \textbf{Existence of zeros of $ F^\sharp_k(\theta)$: \\}
    By Proposition \ref{prop:zero-f-sharp-near-f-sharp-hat}, there is a zero $\theta^\sharp$ of $F^\sharp_{k}(\theta)$ such that $ \left|\hat\theta^\sharp_{k,i}-\theta^\sharp\right| < \frac{0.00001}{k^2} < \frac{0.01}{k}$. Combining this with \eqref{eqn:temp-theta-hat-sharp-interval}, we have $\theta^\sharp \in \left(\theta^*_{k,i} - \tfrac{0.25}{k}, \theta^*_{k,i} + \frac{0.25}{k}\right)$, as desired.
    \\

    \textbf{Uniqueness of zeros of $F^\sharp_k(\theta)$: \\}
    Because we located $n_k$ zeros in $n_k$ disjoint intervals, and $F_k^\sharp(\theta)$ has precisely $n_k$ zeros (by the valence formula), there must be exactly one zero in each $\left(\theta^*_{k,i} - \tfrac{0.25}{k}, \theta^*_{k,i} + \frac{0.25}{k}\right)$. This completes the proof and justifies our indexing $\hat \theta^\sharp_{k,i}$ and $\theta^\sharp_{k,i}$.
\end{proof}
We now show the derivative bound used in the proof of Lemma \ref{lem:theta-star-F-sharp-interval}.
\begin{lemma}\label{lem:f-k-hat-sharp-deriv-bound}
    For $k\geq 1000$ and $\theta \in (\theta^{*}_{k,i} - \frac{0.26}{k}, \theta^*_{k,i} + \frac{0.26}{k}),$
    \begin{align}
        \left|\frac{d}{d\theta}\hat F_k^\sharp(\theta) \right| > 0.45k.
    \end{align}
\end{lemma}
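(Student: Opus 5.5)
Differentiate \eqref{eq:F-hat-sharp} directly:
\begin{align}
    \frac{d}{d\theta}\hat F^\sharp_k(\theta) = \frac{k+2}{2}\sin\left(\frac{k+2}{2}\theta\right) - \frac{k+2}{2}\sin\left(\frac{\theta}{2}\right) d(\theta)^{-k-3},
\end{align}
using $d'(\theta) = -\sin(\theta/2)$. The plan is to show that the first term has absolute value at least roughly $0.49k$, while the second is at most about $0.03k$.

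\textbf{Main term.} Write $\theta = \theta^*_{k,i} + \frac{t}{k}$ with $|t| < 0.26$. By \eqref{eq:theta-star-close-to-mpi-pi-2} and \eqref{eqn:_theta**_equiv_(i-deltak)pi},
\begin{align}
    \frac{k+2}{2}\theta \equiv (i-\delta_k)\pi + \frac{\pi}{2} + B_4(\theta^*_{k,i}) + \frac{(k+2)t}{2k} \mod 2\pi .
\end{align}
Hence
\begin{align}
    \left|\sin\left(\tfrac{k+2}{2}\theta\right)\right| = \left|\cos\left(B_4(\theta^*_{k,i}) + \tfrac{(k+2)t}{2k}\right)\right| .
\end{align}
Since $-0.06 \le B_4 \le 0$ by \eqref{eq:b-4-defn}, and $\left|\frac{(k+2)t}{2k}\right| \le 0.13\cdot 1.002$ for $k \ge 1000$, the argument has absolute value at most about $0.191$. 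So the cosine is at least $\cos(0.191) \ge 0.981$, and the first term is at least $\frac{k+2}{2}\cdot 0.981 \ge 0.49k$.

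\textbf{Error term.} This is the only point needing care: the interval can sit near $\frac{2\pi}{3}$, where $d(\theta)^{-k}$ is not small. Lemma \ref{lem:sample-zero-at-least-3.1/k-away-from-2pi/3} gives $\theta \le \frac{2\pi}{3} - \frac{3.1}{k} + \frac{0.26}{k} = \frac{2\pi}{3} - \frac{2.84}{k}$. Because $d$ is decreasing and $d \ge 1$ on the arc, exactly as in \eqref{eq:d-theta-k-2-bound-cuspidal},
\begin{align}
    d(\theta)^{-k-3} \le d\left(\tfrac{2\pi}{3} - \tfrac{2.84}{k}\right)^{-k} .
\end{align}
This is decreasing in $k$, so it is bounded by its value at $k=1000$. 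Asymptotically $d\left(\frac{2\pi}{3} - \frac{s}{k}\right)^{-k} \approx e^{-\frac{\sqrt3}{2}s}$, which gives about $e^{-2.46} \approx 0.086$; a direct numerical evaluation should give roughly $0.0855$. With $\sin(\theta/2) \le \frac{\sqrt3}{2}$, the second term is at most $\frac{k+2}{2}\cdot 0.866 \cdot 0.086 \le 0.038k$.

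\textbf{Conclusion.} Combining the two bounds,
\begin{align}
    \left|\frac{d}{d\theta}\hat F^\sharp_k(\theta)\right| \ge 0.49k - 0.038k > 0.45k .
\end{align}
The only real obstacle is this numerical verification of the endpoint bound for $d(\theta)^{-k-3}$ at $k=1000$, together with confirming that it decreases in $k$.
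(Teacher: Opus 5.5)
Your proposal is correct and follows essentially the same route as the paper: the phase bound $\bigl|\frac{k+2}{2}\theta-\frac{k}{2}\theta^{**}_{k,i}-\frac{\pi}{2}\bigr|\le 0.191$ comes from \eqref{eq:theta-star-close-to-mpi-pi-2} and $-0.06\le B_4\le 0$, the bound $d(\theta)^{-k-2}\le d\bigl(\frac{2\pi}{3}-\frac{2.84}{k}\bigr)^{-k}\le 0.086$ comes from Lemma \ref{lem:sample-zero-at-least-3.1/k-away-from-2pi/3}, and your error term $\frac{k+2}{2}\sin(\frac{\theta}{2})d(\theta)^{-k-3}$ is exactly the paper's $\frac{k+2}{4}\tan(\frac{\theta}{2})d(\theta)^{-k-2}$. (One small numerical correction: at $k=1000$ the value is about $0.0858$, not $0.0855$, because the second-order term contributes a factor $e^{s^2/(2k)}>1$; it is still below $0.086$, so your final estimate holds.)
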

\begin{proof}
    By \eqref{eq:theta-star-close-to-mpi-pi-2},
    \begin{align}
        \left |\frac{k+2}{2}\theta  - \frac{k}{2}\theta^{**}_{k,i} - \frac{\pi}{2}\right | &= \left | B_4(\theta^*_{k,i}) + \frac{k+2}{2}\left(\theta - \theta^*_{k,i}\right) \right | \\&\leq 0.06 + \frac{k+2}{2}\left(\frac{0.26}{k}\right) \leq 0.191.\label{eq:deriv-bound-f-sharp-phase}
    \end{align}
    We also obtain by the same argument as for \eqref{eq:d-theta-k-2-bound-cuspidal} that
    \begin{align}
        d(\theta)^{-k-2} \leq d\left(\frac{2\pi}{3} - \frac{3.1}{k} + \frac{0.26}{k}\right)^{-k} \leq 0.086. \label{eq:deriv-bound-f-sharp-d-theta-k-2}
    \end{align}
    Thus
    \begin{align}
        \left|\frac{d}{d\theta}\hat F_k^\sharp(\theta) \right| &= \left |\frac{k+2}{2}\sin\left(\frac{k+2}{2}\theta\right) - \frac{k+2}{4}\tan\left(\frac{\theta}{2}\right)d(\theta)^{-k-2}\right | \\
        &\geq \frac{k+2}{2}\sin\left(\frac{\pi}{2} + 0.191\right) - \frac{k+2}{4}\sqrt{3}\cdot 0.086 &\text{(by \eqref{eq:deriv-bound-f-sharp-phase} and \eqref{eq:deriv-bound-f-sharp-d-theta-k-2})} \\
        &>0.45k,
    \end{align}
    as desired.
\end{proof}

Finally, we prove the zero distance bound we assumed for the $F_k^\sharp(\theta)$ part of Lemma \ref{lem:theta-star-F-sharp-interval}. We will assume only the $\hat F_k^\sharp(\theta)$ part of Lemma \ref{lem:theta-star-F-sharp-interval} (and in particular use the notation $\hat \theta^\sharp_{k,i}$) in the proof.
\begin{proposition}\label{prop:zero-f-sharp-near-f-sharp-hat}
    For $k\ge 1000$, there is a zero $\theta^\sharp$ of $F^\sharp_k(\theta)$ such that
    \begin{align}
        \left|\hat\theta^\sharp_{k,i}-\theta^\sharp\right|\leq\frac{0.006}{2^{\frac{k}{2}}}<\frac{0.00001}{k^2}.
    \end{align}
\end{proposition}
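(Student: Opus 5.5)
Since $\hat F^\sharp_k(\hat\theta^\sharp_{k,i})=0$, the plan is to look at $F^\sharp_k$ at the two points $\hat\theta^\sharp_{k,i}\pm\varepsilon$ with $\varepsilon:=\frac{0.006}{2^{k/2}}$ and show they have opposite signs; the intermediate value theorem then gives the zero $\theta^\sharp$. Write
\begin{align}
F^\sharp_k(\theta)=\hat F^\sharp_k(\theta)+R_k(\theta).
\end{align}

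\textbf{Step 1 (monotonicity of $\hat F^\sharp_k$).} From the proof of Lemma \ref{lem:theta-star-F-sharp-interval}, $\hat\theta^\sharp_{k,i}$ lies within $\frac{0.24}{k}$ of $\theta^*_{k,i}$. Hence $\hat\theta^\sharp_{k,i}\pm\varepsilon$ lies in $(\theta^*_{k,i}-\frac{0.26}{k},\theta^*_{k,i}+\frac{0.26}{k})$ for $k\ge1000$. On that interval Lemma \ref{lem:f-k-hat-sharp-deriv-bound} gives $|\hat F^{\sharp\prime}_k|>0.45k$, and the derivative has constant sign there, because it is continuous and never vanishes. The mean value theorem then gives
\begin{align}
\hat F^\sharp_k(\hat\theta^\sharp_{k,i}\pm\varepsilon)=\pm\sigma\cdot c_\pm,\qquad c_\pm\ge 0.45k\varepsilon,
\end{align}
with a fixed sign $\sigma\in\{\pm1\}$.

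\textbf{Step 2 (bound on the remainder).} This is the main obstacle: we need $|R_k(\theta)|<0.45k\varepsilon=0.0027\,k\,2^{-k/2}$ on the arc. Here $R_k$ collects the terms of $-\tfrac12 z^{(k+2)/2}E_{k+2}(z)$ with $(c,d)$ other than $(0,\pm1)$, $(\pm1,0)$, $(\pm1,\pm1)$ (those pairs produce the two retained terms). This is the same tail bound as in Rankin--Swinnerton-Dyer and in the estimates for $F_k-\hat F_k$ in Corollary \ref{cor:bound-for-Fk-minus-Fhatk}.

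On $|z|=1$ with $\theta\in(\frac{\pi}{2},\frac{2\pi}{3})$, every remaining pair satisfies $|cz+d|\ge\sqrt2$. The pairs with $c^2+d^2$ small are $(1,-1)$ with $|z-1|\ge\sqrt2$, and $(1,\pm2)$, $(2,\pm1)$ with moduli $\ge\sqrt3$. So these contribute $O(2^{-(k+2)/2})$ with explicit small constants, namely
\begin{align}
\tfrac12|z-1|^{-k-2}\le \tfrac12\, 2^{-(k+2)/2}.
\end{align}
The pairs with $c^2+d^2$ larger are bounded by the Rankin--Swinnerton-Dyer tail estimate, giving something like $\sum_{N\ge5} 2N\,(N/2)^{-(k+2)/2}$, which is exponentially smaller than $2^{-k/2}$ for $k\ge1000$.

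This should yield $|R_k|\le C\cdot 2^{-k/2}$ with $C$ around $0.25$. Then $0.45k\varepsilon\ge 0.0027\cdot1000\cdot 2^{-k/2}=2.7\cdot2^{-k/2}$, which exceeds $|R_k|$. If the constant is tight, I would first tighten $|z-1|$ using the bound $\theta\le\frac{2\pi}{3}-\frac{2.8}{k}$, which keeps $\theta$ away from $\frac{\pi}{2}$ where $|z-1|=\sqrt2$. That case actually does not occur, since sample zeros satisfy $\theta\ge\frac{\pi}{2}+\frac{3.1}{k}$ by Lemma \ref{lem:sample-zero-at-least-3.1/k-away-from-2pi/3}, giving slightly more room.

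\textbf{Step 3 (conclusion).} Combining Steps 1 and 2, $F^\sharp_k(\hat\theta^\sharp_{k,i}\pm\varepsilon)$ has the sign $\pm\sigma$, so $F^\sharp_k$ has a zero within $\varepsilon$ of $\hat\theta^\sharp_{k,i}$. Finally, $0.006\cdot 2^{-k/2}<0.00001\,k^{-2}$ is immediate for $k\ge1000$, since $2^{-500}k^2$ is tiny.
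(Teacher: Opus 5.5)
Your proposal is correct and is the same argument as the paper's. Evaluate $F^\sharp_k$ at $\hat\theta^\sharp_{k,i}\pm\varepsilon$, apply the mean value theorem with the derivative bound of Lemma~\ref{lem:f-k-hat-sharp-deriv-bound}, and absorb $F^\sharp_k-\hat F^\sharp_k$ using $0.45k\varepsilon\ge 2.7\cdot 2^{-k/2}$. The only difference is that the paper simply cites Nozaki's bound $|F^\sharp_k-\hat F^\sharp_k|\le 2.5\cdot 2^{-k/2}$, whereas you estimate the lattice tail directly. You get about $\tfrac14\cdot 2^{-k/2}$, dominated by the $\pm(1,-1)$ terms, which leaves ample room. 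One small slip: the retained pairs are $\pm(0,1),\pm(1,0),\pm(1,1)$, not all four $(\pm1,\pm1)$, though your own treatment of $(1,-1)$ already reflects this.
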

\begin{proof}
Let $\Delta:=\frac{0.006}{2^{\frac{k}{2}}}<\frac{0.00001}{k^2}.$ Note that 
\begin{align}
    &\left(\hat\theta^\sharp_{k,i}-\Delta,\hat\theta^\sharp_{k,i}+\Delta\right)\subseteq\left(\theta^*_{k,i}-\frac{0.26}{k},\theta^*_{k,i}+\frac{0.26}{k}\right), \quad \text{since}\\
    &\left|(\hat\theta^\sharp_{k,i}+\Delta)-\theta^*_{k,i}\right|\leq\frac{0.25}{k}+\frac{0.00001}{k^2}<\frac{0.26}{k}\qquad(\text{by the $\hat F_k^\sharp(\theta)$ part of Lemma \ref{lem:theta-star-F-sharp-interval}}).
\end{align}
It follows from \cite[Section 2]{nozaki-separation} that $\left |F^\sharp_k(\theta) - \hat F^\sharp_k(\theta) \right | \leq\tfrac{2.5}{2^\frac{k}{2}}$. (Note that our $F^\sharp_k(\theta)$ and $\hat F^\sharp_k(\theta)$ are Nozaki's $F_{k+2}(\theta)$ and $F_{k+2}(\theta) - R_{k+2}(\theta)$ scaled by $-\frac{1}{2}$.) Then observe that
\begin{align}
    F^\sharp_k(\hat\theta^\sharp_{k,i}\pm\Delta)&=\hat F^\sharp_k(\hat\theta^\sharp_{k,i}\pm\Delta)+\left(F^\sharp_k(\hat\theta^\sharp_{k,i}\pm\Delta)-\hat F^\sharp_k(\hat\theta^\sharp_{k,i}\pm\Delta)\right)\\
    &=\hat F^\sharp_k(\hat\theta^\sharp_{k,i}\pm\Delta)+\alpha_1\qquad\left(\text{where }|\alpha_1|\leq\tfrac{2.5}{2^\frac{k}{2}}\ \right)\\
    &=\pm\hat F_k^{\sharp}{'}(c^\pm)\Delta+\alpha_1\qquad \left(\text{where}\ c^\pm \text{ lies between} \ \hat\theta^\sharp_{k,i}\text{ and} \ \hat\theta^\sharp_{k,i}\pm\Delta\right),
\end{align}
which has opposite signs since
\begin{align}
    \left|\hat F_k^{\sharp}{'}(c^\pm)\Delta\right| > 0.45k \Delta\ge\frac{2.5}{2^\frac{k}{2}}\ge|\alpha_1|.\qquad \text{for } k \geq 1000 \ \ \text{(by Lemma \ref{lem:f-k-hat-sharp-deriv-bound})}
\end{align}
   Hence there must be at least one zero in the interval $\left(\hat\theta^\sharp_{k,i}-\Delta,\hat\theta^\sharp_{k,i}+\Delta\right)$, completing the proof.
\end{proof}

\subsection{Precise location estimates}

In this subsection, we obtain location estimates on $\hat F^\sharp_{k}(\theta)$ and $ F^\sharp_{k}(\theta)$ that are needed for our interlacing proof.

We will utilize the following lemma, which bounds $\hat F_k(\hat\theta^\sharp_{k,i})$. The proof is rather technical, so we have relegated it to Appendix \ref{sec:proof-of-lem-f-k-theta-sharp-bound}. 
\begin{lemma}\label{lem:f-k-theta-sharp-bound}
    Let $k\ge1000$. Then,
    \begin{align}
        \frac{0.3}{k}<(-1)^{i-\delta_k}\hat F_k(\hat\theta^\sharp_{k,i})<0.1.
    \end{align}
\end{lemma}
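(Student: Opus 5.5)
Throughout I write $\sigma:=(-1)^{i-\delta_k}$. The plan is to parametrize the zero of $\hat F^\sharp_k$ relative to the sample zero, use the equation $\hat F^\sharp_k(\hat\theta^\sharp_{k,i})=0$ to pin down that offset, and substitute into the formula \eqref{eq:F-hat-final} for $\hat F_k$.

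\textbf{Step 1: parametrize the zero.} Write $\hat\theta^\sharp_{k,i}=\theta^*_{k,i}+\frac{t}{k}$. By the proof of Lemma \ref{lem:theta-star-F-sharp-interval} we have $|t|<0.24$. Put $t':=\frac{(k+2)t}{2k}$. By \eqref{eq:theta-star-close-to-mpi-pi-2} and \eqref{eqn:_theta**_equiv_(i-deltak)pi},
\begin{align}
    0=\hat F^\sharp_k(\hat\theta^\sharp_{k,i}) = \sigma\sin\bigl(B_4(\theta^*_{k,i})+t'\bigr)-\tfrac12 d(\hat\theta^\sharp_{k,i})^{-k-2}.
\end{align}
Setting $x:=\tfrac12\sigma\, d(\hat\theta^\sharp_{k,i})^{-k-2}$, this says $t'=-B_4(\theta^*_{k,i})+\arcsin x$. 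Note that $|x|\le 0.043$ by \eqref{eq:d-theta-k-2-bound-cuspidal}.

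\textbf{Step 2: evaluate $\hat F_k$ at the zero.} By Lemma \ref{lem:sample-presample-exact-expression}, $\frac{k}{2}\theta^*_{k,i}-B_2(\theta^*_{k,i})=\frac k2\theta^{**}_{k,i}$. Taylor-expanding $B_2$ as in Proposition \ref{prop:postsample-sample-distance-formula} gives
\begin{align}
    \sigma \hat F_k(\hat\theta^\sharp_{k,i}) = B_1(\hat\theta^\sharp_{k,i})\sin\Bigl(\tfrac t2 - B_2'(c)\tfrac tk\Bigr) + \sigma B_3(\hat\theta^\sharp_{k,i})\, d(\hat\theta^\sharp_{k,i})^{-k}.
\end{align}
Substituting $t=\frac{2k}{k+2}\bigl(-B_4+\arcsin x\bigr)$, and using $\sigma d^{-k}=2d^2x$, yields
\begin{align}
    \sigma\hat F_k(\hat\theta^\sharp_{k,i}) \approx B_1\sin\!\Bigl(\tfrac{-B_4(\theta^*_{k,i})+\arcsin x}{2}\Bigr)+2B_3d^2x,
\end{align}
with error $O(1/k)$ times small explicit constants. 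The error terms will be bounded using Lemma \ref{lem:function-bounds}.

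\textbf{Step 3: the upper bound.} This is the easy half. We have $0\le -B_4\le 0.06$ by \eqref{eq:b-4-defn}, and $|x|\le 0.043$. Together with $|B_1|\le 1.16$ and $|B_3|\le 0.58$, the right-hand side is below $0.1$ once the $O(1/k)$ terms are absorbed.

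\textbf{Step 4: the lower bound, split into two regimes.} The main term $B_1\sin(-B_4/2)$ is nonnegative. It vanishes only as $\theta\to\pi/2$, where $B_0(\pi/2)=0$ and so $B_4(\pi/2)=0$. I split according to the size of $s^*_{k,i}=k(\frac{2\pi}{3}-\theta^*_{k,i})$.

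\emph{(a) Zeros away from $\frac{2\pi}{3}$.} Here $x$ is exponentially small in $s^*_{k,i}$ and hence negligible. Near $\pi/2$ I use Lemma \ref{lem:sample-zero-at-least-3.1/k-away-from-2pi/3} ($\theta^*_{k,i}\ge\frac\pi2+\frac{3.1}{k}$) together with a lower bound $B_4'\ge c_0>0$ on $[\frac\pi2,\frac\pi2+\epsilon]$. This gives $-B_4(\theta^*_{k,i})\ge 3.1c_0/k$, and hence a main term $\ge 0.3/k$ after subtracting the $O(1/k)$ errors. Making this work with honest constants is delicate: the $O(1/k)$ Taylor errors from Step 2 must be tracked carefully, preferably as $O(t/k)$ so that they are themselves small when $t$ is small.

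\emph{(b) Zeros near $\frac{2\pi}{3}$.} Here $-B_4$ is essentially its value near $2\pi/3$, bounded below by an absolute constant (about $0.05$). The risk is that $2B_3d^2x$, together with the $x$-dependence inside the sine, is negative when $\sigma x<0$ with $\sigma$ unfavorable. I would first combine the $x$-terms into a single coefficient $x\bigl(\tfrac{B_1}{2}+2B_3d^2\bigr)$, using $\arcsin x\approx x$ and $\sin$ near-linear. I would then bound the magnitude of this coefficient at $\theta$ near $2\pi/3$ using Lemma \ref{lem:function-bounds}. By Lemma \ref{lem:sample-zero-at-least-3.1/k-away-from-2pi/3}, $|x|\le\frac12 e^{-\frac{\sqrt3}{2}(3.1-0.24)}\approx 0.04$, and the goal is to show this times the coefficient is smaller than $\frac{B_1}{2}\cdot0.05$.

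\textbf{Main obstacle.} I expect case (b) to be the hardest step. If the crude bound fails, the fallback is to use $d^{-k}\approx e^{-\frac{\sqrt3}{2}s}$ (as in Lemma \ref{lem:b()^-k_estimate}) and compare the resulting explicit function of $s$ against $-B_4$.
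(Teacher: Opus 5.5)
There is a genuine gap. It sits in Step 4(b), and it also affects Step 3.

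\textbf{A slip in Step 2, and a false premise in (b).} Since $t=\frac{2k}{k+2}t'$, the sine's argument is $\frac t2=\frac{k}{k+2}\bigl(-B_4+\arcsin x\bigr)$, not half of $-B_4+\arcsin x$. So the main term is $B_1\sin(-B_4+\arcsin x)$, and your combined coefficient of $x$ should be $B_1\cos B_4+2B_3d^2$ rather than $\frac{B_1}{2}+2B_3d^2$.

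More seriously, the premise of (b) is false. From $B_0(\frac{2\pi}{3})=-\frac{\sqrt3}{3}$ we get $B_2(\frac{2\pi}{3})=-\frac{\pi}{6}$, hence $B_4(\frac{2\pi}{3})=0$, with $B_4'(\frac{2\pi}{3})=\frac12$. So the main term vanishes at \emph{both} endpoints. Near $\frac{2\pi}{3}$ it is only about $\frac{1}{\sqrt3}(\frac{2\pi}{3}-\theta)=\frac{s}{\sqrt3\,k}$. By contrast, $|x|\approx\frac12e^{-\frac{\sqrt3}{2}s}$ does not shrink with $k$ when $s$ is bounded.

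Consequently, any bound of the form $|x|\cdot C$, with $C$ an absolute constant from Lemma \ref{lem:function-bounds}, cannot be absorbed by a main term of order $1/k$. The plan in (b) therefore fails for large $k$. Your fallback, comparing $e^{-\frac{\sqrt3}{2}s}$ with $-B_4$, fails for the same reason.

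Step 3 is not easy either. With the corrected main term and $x$ of unfavorable sign, the crude bounds give about $1.16\sin(0.06)+0.58\cdot0.0851\approx0.12>0.1$.

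\textbf{The missing idea.} The net coefficient of $x$ \emph{also} vanishes at $\frac{2\pi}{3}$. To see this cleanly, do not Taylor-expand around $\theta^*_{k,i}$. Instead, at $\theta=\hat\theta^\sharp_{k,i}$:
\begin{enumerate}
\item write $\frac k2\theta-B_2(\theta)=\frac{k+2}{2}\theta-\frac\pi2-B_4(\theta)$;
\item use the zero condition for $\hat F^\sharp_k$;
\item expand $\sin(\arcsin x-B_4)$ exactly;
\item use $B_1\cos B_2=1$ and $B_1\sin B_2=B_0$.
\end{enumerate}
This gives the exact identity
\[
(-1)^{i-\delta_k}\hat F_k(\theta)=\sqrt{1-x^2}\,B_1(\theta)\sin\bigl(-B_4(\theta)\bigr)-\bigl(\sin\theta+B_0(\theta)(2+\cos\theta)\bigr)x .
\]
The last bracket equals $4(1+\cos\theta)Q(\theta)$, and $Q(\frac{2\pi}{3})=0$. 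This is exactly the identity the paper derives, in the form $\sin(\frac{k+2}{2}\theta)B_1\sin(-B_4)-Q\,d^{-k}$.

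Now use $B_1\sin(-B_4)\ge\frac15\min\{\theta-\frac\pi2,\frac{2\pi}{3}-\theta\}$, $0\le Q\le\frac35(\frac{2\pi}{3}-\theta)$, and $d(\theta)^{-k}\le e^{-0.66k(\frac{2\pi}{3}-\theta)}$. Both terms are then of size $s/k$. The lower bound reduces to $\frac1{5k}\bigl(0.999\cdot2.85-3\cdot2.85\,e^{-0.66\cdot2.85}\bigr)>\frac{0.3}{k}$. The upper bound becomes $1.16\sin(0.06)+\frac35\cdot\frac\pi6\cdot0.0851<0.1$.

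Exactness matters here. Separately bounded remainders for $\arcsin x\approx x$ and $\sin u\approx u$ are of size $|x|^3$ and do not decay with $k$. With the exact identity, the $O(|t|/k)$ Taylor errors of your Step 2 also disappear entirely.

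Minor: near $\frac\pi2$ you need $B_4'\le-c_0<0$, not $B_4'\ge c_0$, since $B_4$ decreases from $B_4(\frac\pi2)=0$.
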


We now show that each $\hat \theta_{k,i}^\sharp$ lies just to the right of $\hat \theta_{k,i}$.

\begin{proposition}\label{prop:theta-sharp-postsample-separation}
    For $k \geq 1000$,
    \begin{align}
        \hat \theta_{k,i} +\frac{0.47}{k^2} < \hat \theta_{k,i}^\sharp < \hat \theta_{k,i} + \frac{0.23}{k}. 
    \end{align}
\end{proposition}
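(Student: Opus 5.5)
We will locate $\hat\theta^\sharp_{k,i}$ relative to $\hat\theta_{k,i}$ by evaluating $\hat F_k$ at $\hat\theta^\sharp_{k,i}$ and using the mean value theorem together with a lower bound on $|\hat F_k'|$ near $\hat\theta_{k,i}$.

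\textbf{Step 1: both points lie in a common short window.}
By Lemma \ref{lem:theta-star-F-sharp-interval} (in its sharper form from the existence proof), $|\hat\theta^\sharp_{k,i}-\theta^*_{k,i}|<\frac{0.24}{k}$. By Proposition \ref{prop:postsample-sample-distance-formula}, $|\hat\theta_{k,i}-\theta^*_{k,i}|\le \frac{0.1}{k}+\frac{0.6}{k^2}$. Hence both points lie in an interval of length $O(1/k)$ around $\theta^*_{k,i}$. In particular this interval is contained in $[\theta^{**}_{k,i}-\frac{1.2}{k},\theta^{**}_{k,i}+\frac{1.2}{k}]$, since $|U|\le 1.05$ by Lemma \ref{lem:function-bounds} and Lemma \ref{lem:sample-presample-exact-expression} gives $\theta^*_{k,i}=\theta^{**}_{k,i}+U(\theta^*_{k,i})/k$.

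\textbf{Step 2: the sign of $\hat F_k$ near its zero.}
The derivative computation in the uniqueness part of Lemma \ref{lem:preliminary-estimate} shows that on this window
\[
(-1)^{i-\delta_k}\hat F_k'(\theta)\ge ck
\]
for an explicit constant $c$. To get the sign right, note that the dominant term is $(-1)^{i-\delta_k}\frac{k}{2}\cos(t/2)$, so $(-1)^{i-\delta_k}\hat F_k$ is increasing. It therefore vanishes exactly at $\hat\theta_{k,i}$.

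Lemma \ref{lem:f-k-theta-sharp-bound} gives $(-1)^{i-\delta_k}\hat F_k(\hat\theta^\sharp_{k,i})>\frac{0.3}{k}>0$, so $\hat\theta^\sharp_{k,i}>\hat\theta_{k,i}$.

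\textbf{Step 3: the two quantitative bounds.}
By the mean value theorem,
\[
\hat\theta^\sharp_{k,i}-\hat\theta_{k,i}=\frac{(-1)^{i-\delta_k}\hat F_k(\hat\theta^\sharp_{k,i})}{(-1)^{i-\delta_k}\hat F_k'(c)}.
\]
\begin{itemize}
\item \emph{Lower bound.} Use the numerator lower bound $\frac{0.3}{k}$ and an upper bound $|\hat F_k'|\le Ck$ with $C\le 0.3/0.47\approx 0.638$. The derivative formula gives $|\hat F_k'|\le \frac{k}{2}|\cos(t/2)+B_0\sin(t/2)|+|B_0'|+0.322+0.241k$. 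The crude bound $\frac{k}{2}(1+0.58\cdot\sin)$ plus $0.241k$ may exceed $0.638k$. So I would instead refine using the fact that near $\hat\theta_{k,i}$ we have $t/2\approx U/2-\text{small}$. On the window the phase is within about $0.2$ of the zero, so $\frac{k}{2}B_1(\phi)\cos(\cdot)\le \frac{k}{2}\cdot 1.16$. The $d^{-k}$ derivative term also needs a better bound: about $\frac{k}{2}\tan(\theta/2)|B_3|d^{-k}$, which is $\lesssim\frac{k}{2}\sqrt3\cdot0.58\cdot0.085$ using $s\ge 2.8$ as in \eqref{eq:d-theta-k-2-bound-cuspidal}.
\item \emph{Upper bound.} Use the numerator bound $<0.1$ and the derivative lower bound $|\hat F_k'|\ge ck$ with $c\ge 0.1/0.23\approx 0.435$. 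This should follow from $\frac{k}{2}B_1\cos(\text{phase}\le 0.2)\approx 0.49k$ minus the $d^{-k}$ contribution, bounded as above.
\end{itemize}

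\textbf{Main obstacle.} The main obstacle is getting constants tight enough for both inequalities simultaneously, especially the upper bound on $|\hat F_k'|$ needed for $\frac{0.47}{k^2}$. I would first try writing
\[
\hat F_k=B_1\sin\!\Big(\frac{k}{2}\theta-B_2\Big)+B_3d^{-k},
\]
so that $\hat F_k'=\frac{k}{2}B_1\cos(\cdot)+O(1)+(d^{-k}\text{ term})$. I would then use $s\ge 3.1-0.3$ from Lemma \ref{lem:sample-zero-at-least-3.1/k-away-from-2pi/3} to make the $d^{-k}$ term about $e^{-\frac{\sqrt3}{2}s}$, of size $\approx 0.09$. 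If needed, I would fall back on Lemma \ref{lem:b()^-k_estimate}.
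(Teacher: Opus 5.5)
Your proposal is correct and matches the paper's argument: the mean value theorem quotient $\hat F_k(\hat\theta^\sharp_{k,i})/\hat F_k'(c)$, with numerator bounds $\frac{0.3}{k}$ and $0.1$ from Lemma \ref{lem:f-k-theta-sharp-bound}, and derivative bounds on the window $\theta^*_{k,i}\pm\frac{0.25}{k}$ obtained exactly as you sketch in Step 3 (phase within about $0.125$ of $(i-\delta_k)\pi$, $1\le B_1\le 1.16$, and $d(\theta)^{-k}\le 0.0851$ from $s\ge 2.85$). The paper records these as Lemma \ref{lem:f-hat-prime-lb}, $0.45k<(-1)^{i-\delta_k}\hat F_k'(\theta)<0.63k$, which gives $\frac{0.1}{0.45k}<\frac{0.23}{k}$ and $\frac{0.3/k}{0.63k}>\frac{0.47}{k^2}$. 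The only slip is in Steps 1--2: $U$ reaches about $-1.05$ near $\frac{2\pi}{3}$, so the window $\theta^*_{k,i}\pm\frac{0.24}{k}$ need not lie inside $\theta^{**}_{k,i}\pm\frac{1.2}{k}$, and the uniqueness computation of Lemma \ref{lem:preliminary-estimate} only yields a constant near $0.007$ in any case; the sign and monotonicity should therefore come from the refined Step 3 bound rather than from that lemma.
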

\begin{proof}
    We have
    \begin{align}
        \hat F_k(\hat\theta^\sharp_{k,i})-\hat F_k(\hat\theta_{k,i}) = \hat F_k'(c)\left(\hat\theta^\sharp_{k,i}-\hat\theta_{k,i} \right) \qquad \text{for some $c$ between $\hat \theta_{k,i}$ and $\hat\theta^\sharp_{k,i}$}.
    \end{align}
    So,
    \begin{align}
         \hat \theta^\sharp_{k,i} - \hat \theta_{k,i}  =  \frac{ \hat F_k(\hat\theta^\sharp_{k,i})-\hat F_k(\hat\theta_{k,i})}{\hat F_k'(c)}  
        =  \frac{\hat F_k(\hat \theta^\sharp_{k,i})}{\hat F_k'(c)}, 
    \end{align}
    which will always be positive since $\hat F_k(\hat \theta^\sharp_{k,i})$ and $\hat F_k'(c)$ both have sign $(-1)^{i - \delta_k}$ (the second fact coming from Lemma \ref{lem:f-hat-prime-lb}, since $\hat \theta^\sharp_{k,i}$ and $\hat \theta_{k,i}$ are within $\tfrac{0.25}{k}$ of $\theta^*_{k,i}$). So, we have by Lemmas \ref{lem:f-k-theta-sharp-bound} and \ref{lem:f-hat-prime-lb} that
\begin{align}
    \hat \theta^\sharp_{k,i} - \hat \theta_{k,i}&\leq\frac{0.1}{0.45k}<\frac{0.23}{k}, \quad\text{and}\\
     \hat \theta^\sharp_{k,i} - \hat \theta_{k,i}&\ge \frac{\frac{0.3}{k}}{0.63k}>\frac{0.47}{k^2},
\end{align}
as desired.
\end{proof}
\begin{corollary}\label{cor:actual-zero-sharp-zero-gap}
    For $k \geq 1000$,
    \begin{align}
        \theta_{k,i} +\frac{0.46}{k^2} < \theta_{k,i}^\sharp < \theta_{k,i} + \frac{0.24}{k}. 
    \end{align}
\end{corollary}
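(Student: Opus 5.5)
The corollary follows from Proposition \ref{prop:theta-sharp-postsample-separation} by replacing each postsample zero with the corresponding actual zero, on both sides, and absorbing the two exponentially small errors into the slack of the constants. I would use the triangle inequality in the form
\begin{align}
    \theta^\sharp_{k,i} - \theta_{k,i} = \big(\hat\theta^\sharp_{k,i} - \hat\theta_{k,i}\big) + \big(\theta^\sharp_{k,i} - \hat\theta^\sharp_{k,i}\big) + \big(\hat\theta_{k,i} - \theta_{k,i}\big),
\end{align}
and bound the three terms separately.

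\textbf{First term.} Proposition \ref{prop:theta-sharp-postsample-separation} gives
\begin{align}
    \frac{0.47}{k^2} < \hat\theta^\sharp_{k,i} - \hat\theta_{k,i} < \frac{0.23}{k}.
\end{align}

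\textbf{Second term.} Proposition \ref{prop:zero-f-sharp-near-f-sharp-hat} produces a zero $\theta^\sharp$ of $F^\sharp_k$ with $|\hat\theta^\sharp_{k,i} - \theta^\sharp| < \frac{0.00001}{k^2}$. In the existence part of the proof of Lemma \ref{lem:theta-star-F-sharp-interval} it was shown that this $\theta^\sharp$ lies in $\left(\theta^*_{k,i} - \frac{0.25}{k}, \theta^*_{k,i} + \frac{0.25}{k}\right)$. Uniqueness in that lemma then forces $\theta^\sharp = \theta^\sharp_{k,i}$, so $|\theta^\sharp_{k,i} - \hat\theta^\sharp_{k,i}| < \frac{0.00001}{k^2}$.

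\textbf{Third term.} I would invoke Proposition \ref{prop:actual-postsample-zero-bound}, which says that the postsample zeros are exponentially close to the actual zeros. This is stated later in the paper, but the authors already rely on it for Corollary \ref{cor:actual-zero-sample-zero-distance-bound}. Comparing the constant $0.60001$ there with the $0.6$ in Proposition \ref{prop:postsample-sample-distance-formula} indicates that the bound is at most $\frac{0.00001}{k^2}$. One must confirm that the actual zero this proposition produces is the indexed one $\theta_{k,i}$. This follows from Lemma \ref{lem:preliminary-estimate}: both $\hat\theta_{k,i}$ and $\theta_{k,i}$ lie in the window of half-width $\frac{1.2}{k}$ about $\theta^{**}_{k,i}$, and $F_k$ has a unique zero there. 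If $\hat\theta_{k,i}$ is not strictly interior to the window, a slight enlargement of the window works equally well.

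\textbf{Combining.} The two error terms together contribute at most $\frac{0.00002}{k^2}$. For the lower bound, $0.47 - 0.00002 > 0.46$. For the upper bound, $\frac{0.23}{k} + \frac{0.00002}{k^2} < \frac{0.24}{k}$, using $k \ge 1000$.

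The main obstacle is making sure the zero supplied by each closeness proposition is the correctly indexed zero, rather than just some zero of $F_k$ or $F^\sharp_k$. Once both identifications are made through the uniqueness statements as above, the remaining steps are arithmetic.
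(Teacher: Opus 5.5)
Your proposal is correct and follows the paper's proof essentially verbatim. Start from Proposition \ref{prop:theta-sharp-postsample-separation}, replace $\hat\theta_{k,i}$ and $\hat\theta^\sharp_{k,i}$ by $\theta_{k,i}$ and $\theta^\sharp_{k,i}$ using Propositions \ref{prop:actual-postsample-zero-bound} and \ref{prop:zero-f-sharp-near-f-sharp-hat} (each error below $\frac{0.00001}{k^2}$), and absorb the combined $\frac{0.00002}{k^2}$ into the constants. Your extra check that the zeros produced by those two propositions are the correctly indexed ones, via the uniqueness and valence-counting statements, is a step the paper passes over silently, and your argument for it is sound.
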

\begin{proof}
    By Proposition \ref{prop:theta-sharp-postsample-separation},
    \begin{align}
        \hat \theta_{k,i} +\frac{0.47}{k^2} < \hat \theta_{k,i}^\sharp < \hat \theta_{k,i} + \frac{0.23}{k},
    \end{align}
    which is equivalent to
    \begin{align}
        \theta_{k,i} + (\hat \theta_{k,i} - \theta_{k,i}) - (\hat \theta^\sharp_{k,i} - \theta^\sharp_{k,i}) +\frac{0.47}{k^2} < \theta_{k,i}^\sharp < \theta_{k,i} + (\hat \theta_{k,i} - \theta_{k,i}) - (\hat \theta^\sharp_{k,i} - \theta^\sharp_{k,i})+ \frac{0.23}{k}.
    \end{align}
    By our zero distance bounds from Propositions \ref{prop:actual-postsample-zero-bound} and \ref{prop:zero-f-sharp-near-f-sharp-hat}, this implies
    \begin{align}
        \theta_{k,i} + \frac{0.47}{k^2} - 2\cdot \frac{0.00001}{k^2} < \theta^\sharp_{k,i} < \theta_{k,i}  + \frac{0.23}{k} + 2\cdot \frac{0.00001}{k^2},
    \end{align}
    which implies the desired inequality.
\end{proof}

\subsection{Interlacing}

We are finally ready to prove Theorem \ref{thm:interlacing_thetakEk_Ek+2}.
{
\renewcommand{\thetheorem}{\ref{thm:interlacing_thetakEk_Ek+2}}
\addtocounter{theorem}{-1}
\begin{theorem} 
       For all even integers $k \ge 4$, the zeros of $\vartheta_k(E_k)$ interlace with the zeros of $E_{k+2}$ on the lower arc.
\end{theorem}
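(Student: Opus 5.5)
For $k < 1000$ I will verify the statement by direct computation, using the code in \cite{ross-code} as elsewhere in the paper. From now on assume $k \ge 1000$.

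Both $F_k(\theta)$ and $F^\sharp_k(\theta)$ have exactly $n_k$ zeros on $\big(\frac{\pi}{2}, \frac{2\pi}{3}\big)$. For $F_k$ this is the valence formula together with Lemma~\ref{lem:preliminary-estimate}. For $F^\sharp_k$ it follows from \cite{rankin1970zeros} together with Lemma~\ref{lem:theta-star-F-sharp-interval}. Both families are indexed in decreasing order, $\theta_{k,1} > \theta_{k,2} > \cdots$ and $\theta^\sharp_{k,1} > \theta^\sharp_{k,2} > \cdots$, since they lie in the disjoint intervals around the $\theta^*_{k,i}$.

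With equal numbers of zeros, standard interlacing reduces to the chain
\begin{align}
\theta_{k,n_k} < \theta^\sharp_{k,n_k} < \theta_{k,n_k-1} < \theta^\sharp_{k,n_k-1} < \cdots < \theta_{k,1} < \theta^\sharp_{k,1}.
\end{align}
It therefore suffices to prove, for each $i$,
\begin{align}
\theta_{k,i} < \theta^\sharp_{k,i} \qquad \text{and} \qquad \theta^\sharp_{k,i+1} < \theta_{k,i}.
\end{align}

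The first inequality is the left half of Corollary~\ref{cor:actual-zero-sharp-zero-gap}, which gives $\theta^\sharp_{k,i} > \theta_{k,i} + \frac{0.46}{k^2}$.

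For the second inequality, the right half of Corollary~\ref{cor:actual-zero-sharp-zero-gap} gives $\theta^\sharp_{k,i+1} < \theta_{k,i+1} + \frac{0.24}{k}$. It then suffices to show
\begin{align}
\theta_{k,i} - \theta_{k,i+1} > \frac{0.24}{k}.
\end{align}
Lemma~\ref{lem:preliminary-estimate} gives the cruder bound $\theta_{k,i} - \theta_{k,i+1} \ge \frac{2\pi - 2.4}{k} > \frac{3.8}{k}$, which is far larger than $\frac{0.24}{k}$. This is the same spacing observation used in the proof of Proposition~\ref{prop:actual-interlace-ell-ge-2.3k}.

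All the quantitative work has already been done in Proposition~\ref{prop:theta-sharp-postsample-separation} and Lemma~\ref{lem:f-k-theta-sharp-bound}, so the main obstacle was the sign analysis there. What remains here is bookkeeping: confirm that the indexing of $\theta^\sharp_{k,i}$ (defined via intervals around $\theta^*_{k,i}$) matches that of $\theta_{k,i}$ (defined via intervals around $\theta^{**}_{k,i}$). This holds because Proposition~\ref{prop:sample-zero-locations} places $\theta^*_{k,i}$ within $O(1/k)$ of $\theta^{**}_{k,i}$, and the spacing between different indices is about $\frac{2\pi}{k}$.
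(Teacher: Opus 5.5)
Your proposal is correct and follows the paper's proof: computational verification for $k<1000$, and for $k\ge 1000$ the two-sided bound $\theta_{k,i}+\frac{0.46}{k^2}<\theta^\sharp_{k,i}<\theta_{k,i}+\frac{0.24}{k}$ from Corollary \ref{cor:actual-zero-sharp-zero-gap}, combined with the spacing $\theta_{k,i}-\theta_{k,i+1}\ge\frac{2\pi-2.4}{k}$ from Lemma \ref{lem:preliminary-estimate}. The only difference is that you write out explicitly the chain of inequalities that the paper calls ``immediate.''
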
 
}
\begin{proof}
    We verify the cases where $k < 1000$ by computation; see \cite{ross-code} for the code. For $k \geq 1000$, Corollary \ref{cor:actual-zero-sharp-zero-gap}
    states that each $\theta^\sharp_{k,i}$ appears just to the right of $\theta_{k,i}$; namely,
    \begin{align}
        \theta_{k,i} +\frac{0.46}{k^2} < \theta_{k,i}^\sharp < \theta_{k,i} + \frac{0.24}{k}
    \end{align}
    for all $1 \le i \le n_k$.
    Since the $\{\theta_{k,i}\}_{1\le i \le n_k}$ are spaced apart by $\theta_{k,i} - \theta_{k,i+1} \ge \frac{2\pi - 2.4}{k}$, this immediately implies that the $\{\theta_{k,i}^\sharp\}_{1 \le i \le n_k}$ interlace with the $\{\theta_{k,i}\}_{1 \le i \le n_k}$, as desired.   
\end{proof}

\subsection{An application to the cuspidal projection of \texorpdfstring{$\vartheta_k(E_k)$}{thetak(Ek)}}

As an application of Theorem \ref{thm:interlacing_thetakEk_Ek+2}, we show that the zeros of the cuspidal projection of $\vartheta_k(E_k)$ lie on the lower arc $\mathcal{A}$. Observe that for $k\notin\{4,6,8,12\}$, the cuspidal projection of $\vartheta_k(E_k)$
\begin{align}
    \operatorname{cuspproj}\!\big(\vartheta_k(E_k)\big) = \vartheta_k(E_k) + \frac{k}{12}E_{k+2}
\end{align}
is a nonzero cusp form of weight $k + 2$. For $n_k \geq 2$, the nonzero property follows immediately from the fact that $\vartheta_k(E_k) $ and $ \frac{k}{12}E_{k+2}$ have distinct zeros by Theorem \ref{thm:interlacing_thetakEk_Ek+2}. And it is straightforward to check the nonzero property for the remaining cases of  $n_k \leq 1$ by examining Fourier coefficients. Then,
\begin{align}
    F^\text{cproj}_k(\theta) &:= \frac{2\pi}{k}e^{\tfrac{i\theta(k+2)}{2}}\left(\vartheta_k(E_k)(e^{i\theta})+ \frac{k}{12}E_{k+2}(e^{i\theta})\right)\\
    &=\frac{2\pi}{k}e^{\tfrac{i\theta(k+2)}{2}}\vartheta_k(E_k)(e^{i\theta})+\frac{\pi}{6}\cdot e^{\tfrac{i\theta(k+2)}{2}}E_{k+2}(e^{i\theta})\\
    &=F_k(\theta)-\frac{\pi}{3}F^\sharp_k(\theta)\qquad(\text{by definition of $F_k(\theta)$ and $F^\sharp_k(\theta)$})
\end{align}
is a real-valued function with the same zeros as $\operatorname{cuspproj}\!\big(\vartheta_k(E_k)\big)(e^{i\theta})$.

{
\renewcommand{\thetheorem}{\ref{cor:cuspidal-proj}}
\addtocounter{theorem}{-1}
\begin{corollary}
    For all even integers $k \ge 4$ with $k\notin\{4,6,8,12\}$, every (non-cuspidal, non-elliptic) zero of the cuspidal projection $\operatorname{cuspproj}\!\big(\vartheta_k(E_k)\big)$ lies on the lower arc $\mathcal{A}$.
\end{corollary}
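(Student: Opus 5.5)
The plan is an intermediate value argument on the lower arc, using the interlacing of Theorem \ref{thm:interlacing_thetakEk_Ek+2}, followed by a valence count. First I fix the number of zeros to find. The cusp form $\operatorname{cuspproj}(\vartheta_k(E_k))$ has weight $k+2$ and vanishes at $\infty$. By the valence formula it has at most $\dim S_{k+2}-1 = n_k-1$ non-cuspidal, non-elliptic zeros in $\mathcal F$, counted with multiplicity; the elliptic zeros are forced by $k+2 \bmod 12$, exactly as for $\vartheta_k(E_k)$. So it suffices to exhibit $n_k-1$ distinct zeros of $F^{\mathrm{cproj}}_k(\theta)=F_k(\theta)-\frac{\pi}{3}F^\sharp_k(\theta)$ in $\big(\frac{\pi}{2},\frac{2\pi}{3}\big)$. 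When $n_k\le 1$ there is nothing to prove.

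Next I evaluate $F^{\mathrm{cproj}}_k$ at the zeros $\theta_{k,i}$ of $F_k$. There $F^{\mathrm{cproj}}_k(\theta_{k,i})=-\frac{\pi}{3}F^\sharp_k(\theta_{k,i})$, so it is enough to show that $F^\sharp_k(\theta_{k,i})$ alternates in sign as $i$ runs from $1$ to $n_k$. By Theorem \ref{thm:interlacing_thetakEk_Ek+2}, the zeros of $F_k$ and $F^\sharp_k$ interlace on the arc, and both sets are simple with $n_k$ elements. Hence exactly one zero of $F^\sharp_k$ lies strictly between $\theta_{k,i+1}$ and $\theta_{k,i}$; for $k\ge 1000$ this is $\theta^\sharp_{k,i+1}$ by Corollary \ref{cor:actual-zero-sharp-zero-gap}.

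Since that zero is simple and no other zero of $F^\sharp_k$ lies in between, $F^\sharp_k$ changes sign exactly once on $(\theta_{k,i+1},\theta_{k,i})$. Here I use simplicity of the zeros of $E_{k+2}$ from Rankin and Swinnerton-Dyer. Also $F^\sharp_k(\theta_{k,i})\neq 0$, because interlacing gives distinct zeros. So $F^\sharp_k(\theta_{k,i})$ and $F^\sharp_k(\theta_{k,i+1})$ have opposite signs, and therefore so do the values of $F^{\mathrm{cproj}}_k$ at these points.

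By the intermediate value theorem, $F^{\mathrm{cproj}}_k$ then has a zero in each of the $n_k-1$ disjoint open intervals $(\theta_{k,i+1},\theta_{k,i})$, all inside $\mathcal A$. This matches the upper bound $n_k-1$ from the valence formula. So these are all the non-cuspidal, non-elliptic zeros; each is simple and lies on the lower arc.

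The main obstacle is the valence bookkeeping rather than the analysis. I must check carefully that the cusp at $\infty$ uses up exactly one of the $n_k=\dim S_{k+2}$ available non-elliptic zeros. I also need the nonvanishing of the cuspidal projection, which the paper already noted, so that the valence formula applies. Finally, the excluded weights $k\in\{4,6,8,12\}$ give $S_{k+2}=0$, so the projection vanishes there, and they must be kept out of the count.
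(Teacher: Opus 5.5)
Your proof is correct and follows the paper's argument. The interlacing of the zeros of $F_k$ and $F^\sharp_k$ yields $n_k-1$ zeros of $F^{\mathrm{cproj}}_k=F_k-\frac{\pi}{3}F^\sharp_k$ on $\mathcal{A}$, and the valence formula, with the cusp using up one zero, caps the count at $n_k-1$. The only difference is that the paper cites Lemma 3 of Xue and Zhu for the first step, whereas you prove it directly: you show that $F^\sharp_k(\theta_{k,i})$ alternates in sign (using simplicity of the zeros of $E_{k+2}$) and then apply the intermediate value theorem.
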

}
\begin{proof}
    In Theorem \ref{thm:interlacing_thetakEk_Ek+2}, we showed that the zeros of $F_k$ interlace with the zeros of $F^\sharp_k$. By \cite[Lemma 3]{Xue-Zhu-2021}, this implies that $F^\text{cproj}_k(\theta) = F_k(\theta)-\frac{\pi}{3}F^\sharp_k(\theta)$ has at least $n_k-1$ zeros, so in particular, $\operatorname{cuspproj}\!\big(\vartheta_k(E_k)\big)$ has at least $n_k-1$ zeros on the lower arc.
    
    On the other hand, $\operatorname{cuspproj}\!\big(\vartheta_k(E_k)\big)$ has at most $n_k - 1$ non-cuspidal non-elliptic zeros by the valence formula. (Note that the ``$-1$" comes from the zero of $\operatorname{cuspproj}\!\big(\vartheta_k(E_k)\big)$ at the cusp $i\infty$.) Hence, the $n_k-1$ zeros constructed above must constitute all the non-cuspidal, non-elliptic zeros of $\operatorname{cuspproj}\!\big(\vartheta_k(E_k)\big)$, completing the proof.
\end{proof}

\appendix

\section{Numerical bounds for certain functions of \texorpdfstring{$\theta$}{theta}}

The following bounds can be easily verified via direct computation.
\begin{lemma} \label{lem:function-bounds} 
For $\theta \in \left(\frac{\pi}{2}, \frac{2\pi}{3}\right)$,
    \begin{align}
-0.58&\le B_0(\theta)\le0  \qquad \qquad -1.30\le B_0'(\theta)\le-0.66 \qquad \qquad 0\le B_0''(\theta)\le2.93\\
1&\le B_1(\theta)\le1.16 \qquad \qquad \ \ \  \hspace{.6mm}0\le B_1'(\theta)\le0.42 \qquad\qquad\\
-0.53&\leq B_2(\theta)\hspace{.15mm}\leq0\qquad\qquad-1.30\leq B_2'(\theta)\leq-0.50\qquad\qquad 0\leq B_2''(\theta)\leq2.49\\
-0.58&\leq B_3(\theta)\leq -0.48\qquad-0.67\le B_3'(\theta)\le 0.15\\
-1.05&\leq \hspace{.65mm}U(\theta)\hspace{.65mm}\leq 0 \qquad\qquad \ \hspace{-1mm} -2.60\le U'(\theta)\le-1\qquad 
\qquad\quad\ 0\le U''(\theta)\le 4.97\\
0&\le\hspace{.5mm} V(\theta)\hspace{.5mm}\le1.43\qquad \quad\hspace{-.4mm} -4.20\le V'(\theta)\le6.75.
\end{align}
\end{lemma}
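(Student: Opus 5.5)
The plan is to reduce every quantity in the lemma to finitely many explicit derivatives of the single real-analytic function $B_0(\theta)=\frac{\pi}{3}\mathrm{Re}[zE_2(z)]$, $z=e^{i\theta}$, and then certify each bound by a truncated $q$-expansion combined with a grid and Lipschitz argument.

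\emph{Reduction to $B_0$.} The functions $B_1=\sqrt{1+B_0^2}$ and $B_2=\arctan B_0$ are elementary in $B_0$. So are $B_3=-\frac12(\tan\frac{\theta}{2}+B_0)$, $U=2B_2$ and $V=UU'$. By the chain rule, all required derivatives are polynomial or rational expressions in $B_0,B_0',B_0'',B_0'''$ and in $\tan(\theta/2)$ and its derivatives. For example,
\begin{align}
B_2'=\frac{B_0'}{1+B_0^2},\qquad B_2''=\frac{B_0''}{1+B_0^2}-\frac{2B_0(B_0')^2}{(1+B_0^2)^2},\qquad V'=(U')^2+UU''.
\end{align}
Hence it suffices to have rigorous enclosures of $B_0^{(m)}(\theta)$ for $0\le m\le 3$ on $[\frac{\pi}{2},\frac{2\pi}{3}]$, plus one order higher to control the discretization error. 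In particular, $U''$ needs $B_0''$, while $V'$ and the Lipschitz step for $U''$ need $B_0'''$.

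\emph{Enclosing $B_0$ and its derivatives.} I will use $E_2(z)=1-24\sum_{n\ge1}\sigma_1(n)q^n$ with $q=e^{2\pi i z}$. On the arc, $|q|=e^{-2\pi\sin\theta}\le e^{-\pi\sqrt3}<0.0044$. Therefore
\begin{align}
zE_2(z)=z-24\sum_{n\ge1}\sigma_1(n)\,z\,e^{2\pi i n z},
\end{align}
and $\frac{d^m}{d\theta^m}$ of the $n$-th term is bounded by roughly $24\,\sigma_1(n)(2\pi n+1)^m|q|^n$. Truncating after $N\approx 6$ terms leaves a geometric tail below $10^{-10}$ for $m\le4$, uniformly on the arc. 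As a sanity check, the transformation law $E_2(-1/z)=z^2E_2(z)+\frac{6z}{\pi i}$ together with $-1/z=-\bar z$ on $|z|=1$ gives $\mathrm{Im}(zE_2(z))=3/\pi$. This pins down the imaginary part exactly, and it gives, for instance, $B_0(\pi/2)=0$. With it I will check the endpoint values: $B_0(\frac{\pi}{2})=0$ and $B_0(\frac{2\pi}{3})=-\frac{\pi}{3}\cdot\frac{\sqrt3}{\pi}=-\frac{\sqrt 3}{3}\approx-0.577$, using that $\rho E_2(\rho)=2\sqrt3 i/\pi$ at $\rho=e^{2\pi i/3}$. These match the stated ranges $[-0.58,0]$ for $B_0$ and $[-0.53,0]$ for $B_2$.

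\emph{Certification and main obstacle.} On a uniform grid of $[\frac{\pi}{2},\frac{2\pi}{3}]$ with mesh $h$, I will evaluate the truncated expressions in interval arithmetic. Each stated bound for a quantity $G$ then holds on the whole interval if the grid values satisfy it with margin $\ge h\cdot\sup|G'|$. Here $\sup|G'|$ comes from the crude $q$-series bound on the next derivative, which is why $B_0''''$ is needed. Several bounds are attained or nearly attained at the endpoints. Examples are $B_0\le 0$, $B_1\ge1$ and $U\le 0$ at $\theta=\pi/2$, and $B_0''\ge0$ there as well. These have zero margin, so the Lipschitz argument fails at those points. The hardest step will be handling these tight endpoint bounds. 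There I would first try to prove monotonicity, or a sign of the next derivative, on a small neighborhood of the endpoint, and combine it with the exact endpoint value. For instance, $B_0'<0$ throughout together with $B_0(\pi/2)=0$ gives $B_0\le0$, hence $B_2\le0$, $U\le0$ and $B_1\ge1$. Likewise, $B_0''\ge0$ near $\pi/2$ follows from a sign check of $B_0'''$ together with the endpoint value, which is a sum of rapidly convergent explicit series. The remaining non-tight bounds, such as $V'\le 6.75$, are then routine grid computations.
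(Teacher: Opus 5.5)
Your route is the paper's (the paper only says these bounds follow by direct computation), carried out rigorously. The reduction to $B_0,\dots,B_0''''$, the $q$-expansion enclosures, the identity $\mathrm{Im}(zE_2(z))=3/\pi$ on the arc, and the sign argument ($B_0'<0$ with $B_0(\pi/2)=0$ gives $B_0\le0$, hence $B_2,U\le 0$) are sound. The same sign reasoning also settles the other bounds that are tight at $\pi/2$ but which you did not list: $B_1'\ge0$, $V\ge0$, and, once $B_0''\ge0$ is known, $B_2''\ge0$ and $U''\ge0$.

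\textbf{The gap.} One step fails as written. You treat everything except the $\theta=\pi/2$ bounds as routine grid checks, but $B_2'\le-0.50$ and $U'\le-1$ hold with \emph{equality} at $\theta=2\pi/3$. Since $B_0(2\pi/3)=-1/\sqrt3$ and $B_0'(2\pi/3)=-2/3$,
\[
B_2'\big(\tfrac{2\pi}{3}\big)=\frac{-2/3}{1+1/3}=-\frac12,\qquad U'\big(\tfrac{2\pi}{3}\big)=-1 .
\]
So no grid/Lipschitz certificate built from numerical enclosures can close these two bounds near the right endpoint. Your exact evaluations at $\rho$ give only $B_0(2\pi/3)$, not $B_0'(2\pi/3)$. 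The missing ingredient is the exact value $B_0'(2\pi/3)=-2/3$. It follows from Ramanujan's identity $E_2'=\frac{\pi i}{6}(E_2^2-E_4)$ together with $E_4(\rho)=0$; this is the second part of the proof of Lemma~\ref{lem:g-and-h-deriv-values}. With it, $B_2''(2\pi/3)=\tfrac34B_0''(2\pi/3)+\tfrac{\sqrt3}{6}\approx2.48>0$. Hence $B_2'<-\tfrac12$ on a left neighborhood of $2\pi/3$, by the same monotonicity device you planned for $\pi/2$.

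\textbf{Smaller points.} The formula $\rho E_2(\rho)=2\sqrt3\,i/\pi$ is wrong, and it contradicts your own identity $\mathrm{Im}=3/\pi$. In fact $E_2(\rho)=2\sqrt3/\pi$, so $\rho E_2(\rho)=-\sqrt3/\pi+3i/\pi$; this is what gives your (correct) value $B_0(2\pi/3)=-\sqrt3/3$.

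The zero values $B_0(\pi/2)=0$ and $B_0''(\pi/2)=0$ must be obtained exactly, not by summing a series numerically, since a numerical enclosure around $0$ cannot certify a sign. Both follow from the oddness $B_0(\pi-\theta)=-B_0(\theta)$, which is a consequence of $E_2(-\bar z)=\overline{E_2(z)}$.

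Finally, several non-tight bounds have very thin margins:
\[
B_3'(\pi/2)=-\tfrac12\big(1+B_0'(\pi/2)\big)\approx0.1491 \text{ vs.\ } 0.15,\quad B_0'(\pi/2)\approx-1.2982 \text{ vs.\ } -1.30,\quad V'(2\pi/3)\approx-4.198 \text{ vs.\ } -4.20.
\]
Your enclosures near the endpoints therefore need accuracy around $10^{-4}$, which the $q$-series supplies easily.
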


\section{Bounding the difference between \texorpdfstring{$F_k(\theta)$ and $\hat F_k(\theta)$}{F and F hat}}
In this section, we show that the postsample zeros are exponentially close to the actual zeros. Recall that for $z=e^{i\theta}$ with $\theta \in \left(\frac{\pi}{2}, \frac{2\pi}{3}\right)$, 
\begin{align}
    F_k(\theta) &:= \frac{2 \pi}{k} \text{Re} \lrb{ z^\frac{k+2}{2} (D(E_k) - \frac{k}{12}  E_2 E_k) } \\
    \hat F_k(\theta) &:= \frac{2\pi}{k} \text{Re} \lrb{ z^\frac{k+2}{2} (D(\hat E_k) - \frac{k}{12}  E_2 \hat E_k) }.  
\end{align}
Then by the triangle inequality,
\begin{align}
    \left|F_k(\theta)-\hat F_k(\theta)\right|&\leq \left\lvert \frac{2\pi}{k} \textnormal{Re}\lrb{z^\frac{k+2}{2}D(E_k)} - \frac{2\pi}{k} \textnormal{Re}\lrb{ z^\frac{k+2}{2}D(\hat E_k)} \right\lvert + \left\lvert \frac{2\pi}{k} \textnormal{Re}\lrb{\frac{-kz^\frac{k+2}{2}}{12}E_2(E_k-\hat{E_k})}\right\lvert\\
    &\leq \left\lvert \frac{2\pi}{k} \lrb{z^\frac{k+2}{2}D(E_k)} - \frac{2\pi}{k} \lrb{ z^\frac{k+2}{2}D(\hat E_k)} \right\lvert + \left\lvert \frac{2\pi}{k} \lrb{\frac{-kz^\frac{k+2}{2}}{12}E_2(E_k-\hat{E_k})}\right\lvert.
\end{align}
To easily bound these separate terms, we split them into two propositions: Proposition \ref{lem:derivative-estimate-bound} and Proposition \ref{lem:E2Ek-estimate-bound}.

\begin{proposition}\label{lem:derivative-estimate-bound} For $k \geq 22$ and $z=e^{i\theta}$ with $\theta \in \left(\frac{\pi}{2}, \frac{2\pi}{3}\right)$,
    \begin{align}
        \left\lvert \frac{2\pi}{k} \lrb{z^\frac{k+2}{2}D(E_k)} - \frac{2\pi}{k} \lrb{ z^\frac{k+2}{2}D(\hat E_k)} \right\lvert \leq \frac{3.8}{2^\frac{k}{2}}.
    \end{align}
\end{proposition}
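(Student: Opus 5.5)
The plan is to bound the tail of the Eisenstein series directly, after differentiating term by term. Write $E_k(z) = \frac12\sum_{\gcd(c,d)=1}(cz+d)^{-k}$. The pairs $(c,d)$ with $c^2+d^2\le 2$ (up to sign) give exactly $1 + z^{-k} + (z+1)^{-k}$, which is $\hat E_k$. The remaining pair $(1,-1)$ is also needed; we treat it as part of the tail and use $|z-1| = 2\sin(\theta/2) \ge \sqrt2$ on the arc. Hence
\begin{align}
E_k(z)-\hat E_k(z) = \frac12\sum_{\substack{\gcd(c,d)=1\\ c^2+d^2\ge 5 \text{ or } (c,d)=\pm(1,-1)}} (cz+d)^{-k}.
\end{align}
The series converges absolutely and locally uniformly, so $D$ can be applied termwise. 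Since $D(cz+d)^{-k} = \frac{-kc}{2\pi i}(cz+d)^{-k-1}$, we get
\begin{align}
\frac{2\pi}{k}\, z^{\frac{k+2}{2}}\bigl(D(E_k)-D(\hat E_k)\bigr) = \frac{i}{2}\, z^{\frac{k+2}{2}}\sum c\,(cz+d)^{-k-1}.
\end{align}
Because $|z|=1$, it suffices to bound $\frac12\sum |c|\,|cz+d|^{-k-1}$ over the tail.

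For $z=e^{i\theta}$ we have $|cz+d|^2 = c^2+d^2+2cd\cos\theta$. For $\theta\in(\frac{\pi}{2},\frac{2\pi}{3})$, $\cos\theta\in(-\frac12,0)$, so $|cz+d|^2 \ge c^2+d^2-|cd| \ge \frac12(c^2+d^2)$.

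The pair $(1,-1)$ (and its negative) contributes $|z-1|^{-k-1}\le 2^{-(k+1)/2}$, which is already of size $2^{-k/2}$ with a constant $1/\sqrt2$. Pairs with $c^2+d^2 = 5$, such as $(1,\pm2)$ and $(2,\pm1)$, need their exact moduli. For example, $|2z+1|^2 = 5+4\cos\theta \ge 3$ and $|z+2|^2 = 5+4\cos\theta\ge 3$. Since $3>2$, these terms decay like $3^{-k/2}$ and are tiny relative to $2^{-k/2}$ once we factor it out; the same holds for $(1,-2)$ and $(2,-1)$, where $5-4\cos\theta\ge5$.

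For the general tail with $N=c^2+d^2\ge 5$, we use $|cz+d|\ge\sqrt{N/2}$ and the count of at most $4\sqrt{N}+4$ lattice points with $c^2+d^2=N$, together with $|c|\le\sqrt N$. This gives a bound of
\begin{align}
\sum_{N\ge 8}(4\sqrt N+4)\sqrt N\,(N/2)^{-(k+1)/2},
\end{align}
where the terms with $N\ge 8$ are $\le 4^{-(k+1)/2}\cdot(\text{polynomial})$. The cases $N=5$ are handled separately as above, and $N=2$ corresponds to $\pm(1,1)$ and $\pm(1,-1)$; the pair $(1,1)$ belongs to $\hat E_k$. Multiplying through by $2^{k/2}$, each piece becomes $(2/3)^{k/2}$ times a constant, $(1/2)^{k/2}$ times a polynomial sum, and so on. For $k\ge22$ these are explicitly small, and the total should come out under $3.8$.

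The main obstacle is the bookkeeping near the lower end $k=22$. The crude bound $|cz+d|^2\ge N/2$ is too weak for small $N$ such as $N=5,10,13$. Those values must be handled with the sharper estimate $|cz+d|^2\ge N-|cd|$, and the tail should be summed with an integral comparison to give an explicit constant. I would first try checking numerically that the constant is at most $3.8$ at $k=22$, and then argue that every term multiplied by $2^{k/2}$ is decreasing in $k$.
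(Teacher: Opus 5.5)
Your proposal is correct and follows essentially the paper's argument (adapted from Saha): split off the $\pm(1,-1)$ term, bounded by $2^{-(k+1)/2}$ via $|z-1|\ge\sqrt2$, and control the tail $c^2+d^2=N\ge5$ using $|cz+d|^2\ge N/2$, the lattice-point count, $|c|\le\sqrt N$, and an integral comparison. Your concern that the crude bound is too weak for small $N$ is unfounded: the paper applies $|cz+d|^2\ge N/2$ to every $N\ge5$ and still gets $2^{-(k+1)/2}+\frac{56}{(k-3)2^{k/2}}\le \frac{3.8}{2^{k/2}}$ for $k\ge22$, so treating $N=5$ separately with exact moduli only sharpens the constant and is not needed.
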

\begin{proof}
    We follow the argument of \cite[Proof of Lemma 1]{saha-interlacing}. 
    For clearer notation,  for a function $f(c,d)$ on $\mathbb{Z}^2$ we define 
    \begin{align}
        \sideset{}{^*}\sum  f(c,d) &\ := \sum_{\substack{\gcd(c,d)=1 \\ (c,d) \neq \pm(1,0),\\ \pm(0, 1), \pm(1, 1)}} f(c,d).
    \end{align}
    Then,
    \begin{align}
        &\left \lvert\frac{2\pi}{k}\lrb{z^\frac{k+2}{2}D(E_k-\hat E_k)} \right\lvert \\
        &\leq \left \lvert 2\pi\frac{z^\frac{k+2}{2}}{k}D\left(\frac{1}{2}\sideset{}{^*}\sum \frac{1}{(cz+d)^k}\right)\right\lvert \\
        &= \left \lvert \frac{z^\frac{k+2}{2}}{2}\sideset{}{^*}\sum \frac{c}{(cz+d)^{k+1}}\right\lvert \\
        &= \left \lvert \frac{e^{\frac{i\theta}{2}}}{2}\sideset{}{^*}\sum \frac{c}{\left(ce^{\frac{i\theta}{2}}+de^{\frac{-i\theta}{2}}\right)^{k+1}}\right\lvert.
        \\
        &\leq  \frac{1}{2} \left \lvert \sum_{(c, d) = \pm(1,-1)} \frac{c}{\left(ce^{\frac{i\theta}{2}}+de^{\frac{-i\theta}{2}}\right)^{k+1}}\right\lvert + \frac{1}{2}\left \lvert \sum_{N\geq 5} 
        \sum_{\substack{c^2+d^2=N \\ \text{gcd}(c,d)=1}} \frac{c}{\left(ce^{\frac{i\theta}{2}} + de^{\frac{-i\theta}{2}}\right)^{k+1}}
        \right\lvert\label{eq:A.1-2-sums}.
    \end{align}

    We bound the two terms of \eqref{eq:A.1-2-sums} separately. The first term is
    \begin{align} 
       \frac{1}{2} \left \lvert \sum_{(c, d) = \pm(1,-1)} \frac{c}{\left(ce^{\frac{i\theta}{2}}+de^{\frac{-i\theta}{2}}\right)^{k+1}}\right\lvert= \left \lvert \frac{1}{\left(e^{\frac{i\theta}{2}}-e^{\frac{-i\theta}2}\right)^{k+1}}\right\lvert \leq \left \lvert \frac{1}{e^{\frac{i\pi}4} - e^{\frac{-i \pi}4}}\right \rvert^{k+1} = \left(\frac{1}{\sqrt{2}}\right)^{k+1}.
    \end{align}
    Then, using
    \begin{align} 
        \left\lvert ce^{\frac{i\theta}2} + de^{\frac{-i\theta}{2}}\right\lvert^2 \geq \frac{c^2 + d^2}{2}, \quad \#\{(c,d): c^2 + d^2 = N\} \leq 4N^{\frac{1}2} + 2, \quad |c| \leq N^{\frac{1}{2}},
    \end{align}
    we obtain for the second term
    \begin{align}
        \frac{1}{2} \left \lvert \sum_{N \geq 5} \sum_{\substack{\text{gcd}(c,d)=1 \\ c^2 + d^2 = N}} \frac{c}{\left(ce^{\frac{i\theta}{2}} + de^{-\frac{i\theta}{2}}\right)^{k+1}}\right \lvert
        &\leq \frac{1}{2}\sum_{N \geq 5} \frac{N^{\frac{1}2}}{\left(\frac{N}{2}\right)^{\frac{k+1}2}}\left(4N^{\frac{1}2}+2\right)  \\ 
        & \leq 2^{\frac{k+1}{2}}\sum_{N \geq 5} \left (\frac{2}{N^{\frac{k-1}{2}}} + \frac{1}{N^{\frac{k-1}{2}}\sqrt{5}}\right)  \qquad(\text{since } N \geq 5) \\
        &\leq 2^{\frac{k+1}{2}}\left(2 + \frac{1}{\sqrt{5}}\right) \int_4^\infty x^{\frac{1-k}{2}} \ dx \\
        &= 2^{\frac{k+1}{2}} \left(2 + \frac{1}{\sqrt{5}} \right) \frac{2^{4-k}}{k-3} \\
        &\leq \frac{56}{(k-3)2^{\frac{k}{2}}}.
    \end{align}
    Thus, we have the upper bound
    \begin{align}
        \left \lvert\frac{2\pi}{k}\lrb{z^\frac{k+2}{2}D(E_k-\hat E_k)} \right\lvert \le\frac{1}{2^{\frac{k+1}{2}}} + \frac{56}{(k-3)2^{\frac{k}{2}}} \leq \frac{3.8}{2^{\frac{k}{2}}} \quad \text{for } k \geq 22,
    \end{align}
    as desired.
\end{proof}

\begin{proposition}\label{lem:E2Ek-estimate-bound}  For $k \geq 22$ and $z=e^{i\theta}$ with $\theta \in \left(\frac{\pi}{2}, \frac{2\pi}{3}\right)$,
    \begin{align}
        \left\lvert \frac{2\pi}{k} \lrb{\frac{-kz^{\frac{k+2}{2}}}{12}E_2(E_k-\hat{E_k})}\right\lvert \le \frac{6.7}{2^{\frac{k}2}}.
    \end{align}
\end{proposition}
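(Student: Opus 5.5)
The plan is to factor out the elementary pieces and reduce to the tail estimate already carried out for the derivative term. Write the quantity as
\begin{align}
\frac{\pi}{6}\,\big|E_2(z)\big|\cdot \big|z^{\frac{k}{2}}(E_k(z)-\hat E_k(z))\big|,
\end{align}
using $|z|=1$ so that $|z^{\frac{k+2}{2}}|=|z^{\frac k2}|$. Two ingredients are needed: a uniform bound for $|E_2(z)|$ on the arc, and an exponentially small bound for $|z^{k/2}(E_k-\hat E_k)|$.

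\textbf{Bounding $E_2$.} I would not use any lattice sum, since $E_2$ is only conditionally convergent. Instead I would use the $q$-expansion $E_2(z)=1-24\sum_{n\ge1}\sigma_1(n)q^n$ with $|q|=e^{-2\pi\sin\theta}\le e^{-\pi\sqrt3}\approx 0.0043$ for $\theta\in(\frac{\pi}{2},\frac{2\pi}{3})$. This gives $|E_2(z)|\le 1+24\sum\sigma_1(n)|q|^n\le 1.11$, and the tail is bounded crudely by $\sigma_1(n)\le n^2$.

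\textbf{Bounding the tail of $E_k$.} Here I would follow the proof of Proposition \ref{lem:derivative-estimate-bound} almost verbatim, with $c/(\cdot)^{k+1}$ replaced by $1/(\cdot)^{k}$. First,
\begin{align}
z^{\frac k2}(E_k-\hat E_k)=\frac12\sideset{}{^*}\sum \big(ce^{\frac{i\theta}{2}}+de^{-\frac{i\theta}{2}}\big)^{-k}.
\end{align}
The pair $(c,d)=\pm(1,-1)$ contributes $|2\sin(\theta/2)|^{-k}\le 2^{-k/2}$. The remaining pairs satisfy $c^2+d^2=N\ge5$, and using $|ce^{i\theta/2}+de^{-i\theta/2}|^2\ge N/2$ together with the count $4\sqrt N+2$ gives
\begin{align}
\frac12\sum_{N\ge5}(4\sqrt N+2)(N/2)^{-k/2}\le 2^{\frac k2}\Big(2+\tfrac{1}{\sqrt5}\Big)\int_4^\infty x^{\frac{1-k}{2}}\,dx,
\end{align}
which is $O(2^{-k/2}/k)$ and hence negligible for $k\ge22$. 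Altogether $|z^{k/2}(E_k-\hat E_k)|\le 2^{-k/2}(1+\epsilon_k)$, with $\epsilon_k$ tiny.

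\textbf{Combining.} Multiplying gives $\frac{\pi}{6}\cdot 1.11\cdot(1+\epsilon)\,2^{-k/2}$, which is well under $6.7/2^{k/2}$. The generous constant suggests the authors used an even cruder bound, perhaps a lattice-type bound for $E_2$, so there is slack to spare.

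\textbf{Main obstacle.} The only genuine subtlety is handling $E_2$ correctly, which means using its $q$-expansion and not the nonabsolutely convergent Eisenstein sum. The rest consists of routine constant-checking at $k=22$.
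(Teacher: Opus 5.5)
Your proposal is correct and is essentially the paper's proof. It uses the same reduction to $\frac{\pi}{6}|E_2|\cdot|z^{k/2}(E_k-\hat E_k)|$ and a $q$-expansion bound for $E_2$. The paper quotes \cite[Lemma 2.3]{imamoglu2014estimates}, which is exactly $1+24\sum_n\binom{n+1}{2}|q|^n$, though with the cruder $\sin\theta\ge\sqrt2/3$, giving $\frac{\pi}{6}|E_2|\le 1.29$. It also uses the same Saha-style split into the $\pm(1,-1)$ term and the $N\ge5$ tail.

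One correction: your integral bound for the tail is about $39.2/((k-3)2^{k/2})$, which is roughly $2.06\cdot 2^{-k/2}$ at $k=22$, so $\epsilon_k$ is not tiny there. The total is still only about $1.8\cdot 2^{-k/2}$, comfortably below $6.7\cdot 2^{-k/2}$, so the argument stands.
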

\begin{proof}
We have
 \begin{align}
        \left\lvert \frac{2\pi}{k} \lrb{\frac{-kz^\frac{k+2}{2}}{12}E_2(E_k-\hat{E_k})}\right\lvert\leq \frac{\pi}{6}\left\lvert {z^\frac{k}{2}E_2(E_k-\hat E_k)}\right\rvert
        =\frac{\pi}{6} \left\lvert E_2\right\rvert   \left\lvert z^\frac{k}{2}(E_k-\hat E_k) \right\rvert.
    \end{align}
    Now, by \cite[Lemma 2.3]{imamoglu2014estimates}, we have
    \begin{align}
        \frac{\pi}{6}\left\lvert E_2(e^{\textit{i}\theta}) \right\rvert &\leq \frac{4\pi e^{-2\pi\sin(\theta)}}{(1-e^{-2\pi\sin(\theta)})^3}+\frac{\pi}{6}\\
        &\leq \frac{4\pi e^{\frac{-2\pi\sqrt{2}}{3}}}{(1-e^{\frac{-2\pi\sqrt{2}}{3}})^3}+\frac{\pi}{6}\qquad(\text{since $\tfrac{\pi}{2}<\theta<\tfrac{2\pi}{3}$})\\
        &\leq1.29.\label{eq:B.2-pt-1}
    \end{align}
    
Next, for $N\geq1$ define
    \begin{align}
        \sigma_N(\theta):=\frac{1}{2}\sideset{}{^*}\sum \frac{1}{\left(ce^{\frac{i\theta}{2}}+de^{-\frac{i\theta}{2}}\right)^k}.
    \end{align}
    and observe that
    \begin{align}
        \sigma_1(\theta)&=0, \quad \sigma_2(\theta)=\frac{(-1)^{\frac{k}{2}}}{(2\sin(\frac{\theta}{2}))^k} \\
        \left\lvert \sum_{N\geq5}\sigma_N(\theta) \right\rvert&\leq \left( 2+\frac{1}{\sqrt5}\right)\left(\frac{2^\frac{k+2}{2}}{k-3}\right)\left(\frac{1}{4}\right)^\frac{k-3}{2} \qquad \text{(by \cite[Proof of Lemma 1]{saha-interlacing})}.
    \end{align}

Then, using a similar argument as in the proof of Proposition \ref{lem:derivative-estimate-bound}, we obtain
    \begin{align}
        \left\lvert z^\frac{k}{2}(E_k-\hat{E_k})\right\rvert&=\left\lvert \frac{z^\frac{k}{2}}{2}\sideset{}{^*}\sum \frac{1}{(cz+d)^k}\right\rvert\\
        &=\left\lvert \frac{1}{2}\sideset{}{^*}\sum \frac{1}{\left(ce^{\frac{i\theta}{2}}+de^{-\frac{i\theta}{2}}\right)^k}\right\rvert\\
        &= \left\lvert \frac{(-1)^{\frac{k}{2}}}{\left(2\sin\left(\frac{\theta}{2}\right)\right)^k}+\sum_{N\geq5}\sigma_N(\theta) \right\rvert\\
       &\leq\left(\frac{1}{2^\frac{k}{2}}\right)+ \left( 2+\frac{1}{\sqrt5}\right)\left(\frac{2^\frac{k+2}{2}}{k-3}\right)\left(\frac{1}{4}\right)^\frac{k-3}{2}. \label{eq:B.2-pt-2}
    \end{align}
   
    Combining \eqref{eq:B.2-pt-1} and \eqref{eq:B.2-pt-2}, we obtain 
    \begin{align}
        \frac{\pi}{6} \left\lvert E_2\right\rvert   \left\lvert z^\frac{k}{2}(E_k-\hat{E_k}) \right\rvert&\leq 1.29\left[\left(\frac{1}{2^\frac{k}{2}}\right)+ \left( 2+\frac{1}{\sqrt5}\right)\left(\frac{2^{{\frac{k+2}{2}}}}{k-3}\right)\left(\frac{1}{4}\right)^{\frac{k-3}{2}}\right]\\
        &\leq1.29\left[\frac{1}{2^\frac{k}{2}}+\frac{78.32}{\left(2^{\frac{k}{2}}\right)(k-3)}\right]\\
        &\leq \frac{6.7}{2^\frac{k}{2}} \quad \text{for } k \geq 22,
    \end{align}
    as desired.
\end{proof}

Propositions \ref{lem:derivative-estimate-bound} and \ref{lem:E2Ek-estimate-bound} then immediately imply the following corollary.
\begin{corollary}\label{cor:bound-for-Fk-minus-Fhatk} For $k \geq 10$ and $\theta \in \left(\frac{\pi}{2}, \frac{2\pi}{3}\right)$,
    \begin{align}
        \left \lvert F_k(\theta) - \hat F_k(\theta) \right \lvert \leq \min\lrp{
            \frac{10.5}{2^{\frac{k}2}},
            0.016
        }
    \end{align}
\end{corollary}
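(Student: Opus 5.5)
The plan is to add the two bounds of Propositions \ref{lem:derivative-estimate-bound} and \ref{lem:E2Ek-estimate-bound}. Since $|\mathrm{Re}(w)| \le |w|$, the triangle-inequality decomposition at the start of this appendix gives
\begin{align}
    \left|F_k(\theta)-\hat F_k(\theta)\right| \le \frac{3.8}{2^{k/2}} + \frac{6.7}{2^{k/2}} = \frac{10.5}{2^{k/2}}
\end{align}
for every $k \ge 22$ and $\theta \in \left(\frac{\pi}{2},\frac{2\pi}{3}\right)$. This is the first half of the minimum.

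For the constant $0.016$, I will treat the large and small weights separately. When $k \ge 22$, the quantity $\frac{10.5}{2^{k/2}}$ is decreasing in $k$ and equals $10.5/2^{11} \approx 0.0051 < 0.016$ at $k=22$, so the same estimate also gives the bound $0.016$. This leaves the finitely many even weights $10 \le k \le 20$. Here $\frac{10.5}{2^{k/2}}$ exceeds $0.016$ (for example, it is $\approx 0.33$ at $k=10$), so the minimum is $0.016$ and I need a separate argument.

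For each weight $k\in\{10,12,\dots,20\}$ I will bound $|F_k-\hat F_k|$ directly, either numerically over $\theta$ or by redoing the estimates of Propositions \ref{lem:derivative-estimate-bound} and \ref{lem:E2Ek-estimate-bound} more carefully. For a more careful estimate, I will treat the $N=2$ terms $(c,d)=\pm(1,-1)$ and the next shells $N=5,10,13,\dots$ exactly, using the true value $|ce^{i\theta/2}+de^{-i\theta/2}|^2 = c^2+d^2+2cd\cos\theta$ rather than the crude lower bound $\frac{c^2+d^2}{2}$. I will keep the exact $E_2$ factor instead of the bound $1.29$, and use the tail estimate only for large $N$.

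The main obstacle is these small weights, especially $k=10$. The crude constants $56/(k-3)$ and $78.32/(k-3)$ are far too lossy there, so a sharper tail estimate or a rigorous computation, in the style of the paper's code \cite{ross-code}, is required. I expect a direct interval-arithmetic check of $\sup_\theta |F_k-\hat F_k|$ for these six weights to be the cleanest route. Note that $F_k(\theta)$ can be evaluated to high accuracy from the $q$-expansions, since $|q| = e^{-2\pi\sin\theta} \le e^{-\pi\sqrt2}$ on the arc.
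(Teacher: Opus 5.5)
This is the paper's proof: for $k\ge 22$, Propositions \ref{lem:derivative-estimate-bound} and \ref{lem:E2Ek-estimate-bound} give $(3.8+6.7)/2^{k/2}$, which is already below $0.016$, and the weights $10\le k\le 20$ are settled by direct computation \cite{ross-code}. Two small points: at $k=20$ the minimum is $10.5/2^{10}\approx 0.0103$, not $0.016$, so that smaller constant is the target there; and at $k=10$ the bound is nearly sharp, because the signed $\pm(1,-1)$ contribution $-\tfrac12(2\sin\tfrac{\theta}{2})^{-10}\bigl(\cot\tfrac{\theta}{2}-B_0(\theta)\bigr)$ tends to $-1/64$ as $\theta\to\pi/2^{+}$ while the true difference there is about $-0.0152$, so a version of your fallback that bounds the shells by moduli (even after combining the derivative and $E_2$ pieces term by term this gives about $0.0163$) fails, and the signed numerical check is genuinely needed.
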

\begin{proof}
    The desired bound for $k \ge 22$ comes directly from Propositions \ref{lem:derivative-estimate-bound} and \ref{lem:E2Ek-estimate-bound}. And we have verified this result by direct computation for $10 \le k \le 20$; see \cite{ross-code} for the code.
\end{proof}

\section{Distance between postsample and actual zeros}

We now obtain a lower bound on the derivative of $\hat F_k(\theta)$ for points near its zero $\hat \theta_{k,i}$. We assume Lemma \ref{lem:d-theta-k-deriv-bound-appendix}, which we will prove shortly.
\begin{lemma}\label{lem:f-hat-prime-lb}
    For $k \geq 1000$ and $\theta \in \left(\theta^*_{k,i} - \frac{0.25}{k}, \theta^*_{k,i} + \frac{0.25}{k} \right)$, we have
    \begin{align}
        0.45k < (-1)^{i - \delta_k}\hat F'_k(\theta) < 0.63k
    \end{align}
\end{lemma}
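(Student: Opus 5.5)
We will differentiate the expression \eqref{eq:F-hat-second} directly and show that the derivative has sign $(-1)^{i-\delta_k}$ with size between $0.45k$ and $0.63k$. Writing
\begin{align}
    \hat F_k'(\theta) = \frac{k}{2}\Big(\cos\big(\tfrac{k}{2}\theta\big) + B_0(\theta)\sin\big(\tfrac{k}{2}\theta\big)\Big) - B_0'(\theta)\cos\big(\tfrac{k}{2}\theta\big) + \frac{d}{d\theta}\Big[B_3(\theta)d(\theta)^{-k}\Big],
\end{align}
we combine the first two terms as $\frac{k}{2}B_1(\theta)\cos\big(\frac{k}{2}\theta - B_2(\theta)\big)$. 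This is the derivative of the phase-shifted form \eqref{eq:F^*} up to an $O(1)$ term coming from $B_2'$ and $B_1'$.

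\textbf{Phase.} For $\theta = \theta^*_{k,i} + \frac{t}{k}$ with $|t| < 0.25$, Lemma \ref{lem:sample-presample-exact-expression} and \eqref{eqn:_theta**_equiv_(i-deltak)pi} give
\begin{align}
    \tfrac{k}{2}\theta - B_2(\theta) \equiv (i-\delta_k)\pi + \tfrac{t}{2} - \big(B_2(\theta) - B_2(\theta^*_{k,i})\big) \pmod{2\pi},
\end{align}
where the last difference is at most $1.30 \cdot 0.25/k$ by Lemma \ref{lem:function-bounds}. Hence the cosine equals $(-1)^{i-\delta_k}\cos(\psi)$ with $|\psi| \le 0.125 + 0.0004$. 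This gives
\begin{align}
    (-1)^{i-\delta_k}\,\tfrac{k}{2}B_1\cos(\cdots) \in \big[\tfrac{k}{2}\cos(0.126),\ \tfrac{k}{2}\cdot 1.16\big] \approx [0.496k,\ 0.58k].
\end{align}
The bounded terms $B_0'\cos$ and the $B_1', B_2'$ corrections contribute $O(1)$, at most about $1.3 + 0.42 + \dots$, which is negligible against $k \ge 1000$.

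\textbf{Exponential term (main obstacle).} The term is
\begin{align}
    B_3'(\theta)d(\theta)^{-k} + \tfrac{k}{2}\tan\big(\tfrac{\theta}{2}\big)B_3(\theta)d(\theta)^{-k}.
\end{align}
Its $k$-linear piece is not small a priori, so we need a good bound on $d(\theta)^{-k}$. This is what the deferred Lemma \ref{lem:d-theta-k-deriv-bound-appendix} should supply. By Lemma \ref{lem:sample-zero-at-least-3.1/k-away-from-2pi/3}, $\theta \le \frac{2\pi}{3} - \frac{2.85}{k}$, and monotonicity of $d^{-k}$ gives
\begin{align}
    d(\theta)^{-k} \le d\big(\tfrac{2\pi}{3} - \tfrac{2.85}{1000}\big)^{-1000} \approx e^{-\frac{\sqrt3}{2}\cdot 2.85} \approx 0.085.
\end{align}
With $\tan(\theta/2) \le \sqrt3$ and $|B_3| \le 0.58$, the $k$-term is at most $\frac{k}{2}\cdot 1.733\cdot 0.58\cdot 0.085 \approx 0.043k$.

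The sign helps for the upper bound. Since $B_3 < 0$, the exponential term's sign is fixed (negative), while the main term carries sign $(-1)^{i-\delta_k}$. If the crude margin fails for either inequality, we can exploit this sign: for the lower bound, $0.496k - 0.043k - O(1) > 0.45k$; for the upper bound, $0.58k + 0.043k + O(1) < 0.63k$ for $k \ge 1000$.

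Both margins are thin, so the hardest step is certifying these numerical constants, in particular the bound on $d^{-k}$ uniformly in $k \ge 1000$, via monotonicity in $k$ exactly as in Lemma \ref{lem:d-theta-k-endpoint-bounds-cos}. If needed, we would sharpen $B_1 \le 1.16$ locally near $\frac{2\pi}{3}$, where $d^{-k}$ is largest.
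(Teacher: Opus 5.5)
Your argument is correct and is essentially the paper's proof: the same phase estimate $|\psi|\le 0.1254$ from Lemma \ref{lem:sample-presample-exact-expression}, the same bound $d(\theta)^{-k}\le 0.0851$ from $\theta\le \frac{2\pi}{3}-\frac{2.85}{k}$ plus monotonicity in $k$, and hence the same $0.043k+O(1)$ control of the exponential term as in Lemma \ref{lem:d-theta-k-deriv-bound-appendix}. The only differences are minor: the paper differentiates \eqref{eq:F-hat-final} rather than \eqref{eq:F-hat-second}, which only changes which harmless $O(1)$ terms appear, and your crude two-sided bounds already close both inequalities for $k\ge 1000$, so the remark about exploiting the sign of $B_3$ is unnecessary (it could not help uniformly anyway, since $(-1)^{i-\delta_k}$ decides which inequality that fixed-sign term works against).
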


\begin{proof}
    For $\theta \in \left(\theta^*_{k,i} - \frac{0.25}{k}, \theta^*_{k,i} + \frac{0.25}{k} \right)$, 
    \begin{align}
        \frac{k}{2}\theta - B_2(\theta)
        &= \frac{k}{2}\theta^*_{k,i} + \frac{k}{2} (\theta - \theta^*_{k,i}) 
        - B_2(\theta^*_{k,i})
        - B'_2(c_1)(\theta - \theta^*_{k,i} ) 
        \qquad\quad\ \,\\&\qquad \text{(for some $c_1$ between $\theta^*_{k,i} $ and $\theta$)} \\
        &=  \frac{k}{2}\theta^{**}_{k,i} + (\theta - \theta^*_{k,i})\left(\frac{k}{2} - B'_2(c_1)\right)
        \qquad  \ \
        (\text{by Lemma \ref{lem:sample-presample-exact-expression}}).
    \end{align}
    This implies that for $k \geq 1000$,
    \begin{align}
        \lrabs{\lrp{ \frac{k}{2}\theta - B_2(\theta) } - \frac{k}{2}\theta^{**}_{k,i}}
        &\le
        \lrabs{
        (\theta - \theta^*_{k,i})\left(\frac{k}{2} - B'_2(c_1)\right)
        } \\
        &\le
        \left(\frac{0.25}{k}\right) 
        \left(\frac{k}{2} + 1.3 \right) \leq 0.1254 \quad \text{(by Lemma \ref{lem:function-bounds})}.
        \label{eq:k-2-b-2-term-}
    \end{align}
    We then have $\frac{k}{2}\theta - B_2(\theta) =\frac{k}{2}\theta^{**}_{k,i} + \upsilon$, $\left | \upsilon\right | \leq 0.1254$, so
    \begin{align}
        \hat F_k'(\theta)
        &= \frac{d}{d\theta}\lrb{B_1(\theta)\sin\left(\frac{k}{2}\theta - B_2(\theta)\right) + B_3(\theta)d(\theta)^{-k}} \\
        &=  B_1(\theta)\left(\frac{k}{2} - B'_2(\theta)\right)\cos\left(\frac{k}{2}\theta - B_2(\theta)\right) + B_1'(\theta)\sin\left(\frac{k}{2}\theta - B_2(\theta)\right) + \frac{d}{d\theta}\lrb{B_3(\theta)d(\theta)^{-k}} \\
        &=  B_1(\theta)\left(\frac{k}{2} - B'_2(\theta)\right)\cos\left(\frac{k}{2}\theta^{**}_{k,i} +\upsilon\right) + B_1'(\theta)\sin\left((i- \delta_k)\pi +\upsilon\right) + \frac{d}{d\theta}\lrb{B_3(\theta)d(\theta)^{-k}} \\
        &= (-1)^{i - \delta_k}B_1(\theta)\left(\frac{k}{2} - B'_2(\theta)\right)\cos\left(\upsilon\right) + (-1)^{i-\delta_k}B_1'(\theta)\sin(\upsilon) + \frac{d}{d\theta}\lrb{B_3(\theta)d(\theta)^{-k}} \\
        &\qquad \text{(since $\frac{k}{2}\theta^{**}_{k,i} \equiv (i - \delta_k)\pi$ mod 2$\pi$ by \eqref{eqn:_theta**_equiv_(i-deltak)pi})}.
    \end{align}
    Thus
    \begin{align}
        (-1)^{i - \delta_k}\hat F'_k(\theta) &\geq B_1(\theta)\left(\frac{k}{2} - B'_2(\theta)\right) \cos(0.1254 ) - B_1'(\theta)\sin(0.1254) - (0.043k + 0.0571)\\ &\qquad \text{(by Lemma \ref{lem:d-theta-k-deriv-bound-appendix})}\\
        &\geq \frac{k}{2}\cos\left(0.1254\right) -  0.42\sin\left(0.1254\right) - (0.043k+0.0571) \qquad (\text{by Lemma \ref{lem:function-bounds})} \\
        &> 0.45k
    \end{align}
    and
    \begin{align}
        (-1)^{i - \delta_k}\hat F'_k(\theta) & \leq B_1(\theta)\left(\frac{k}{2} - B'_2(\theta)\right) \cdot 1 + B_1'(\theta)\sin(0.1254) + (0.043k + 0.0571) \\ &\qquad \text{(by Lemma \ref{lem:d-theta-k-deriv-bound-appendix})}\\
        &\leq 1.16\left(\frac{k}{2} + 1.3\right) + 0.42\sin\left(0.1254\right) + 0.043k + 0.0571 \\
        &\qquad(\text{by Lemma \ref{lem:function-bounds})} \\
        &<0.63k,
    \end{align}
completing the proof.
\end{proof}
We now prove the lemma we assumed.
\begin{lemma}\label{lem:d-theta-k-deriv-bound-appendix}
     For $k \geq 1000$ and $\theta \in \left( \theta^*_{k,i} - \frac{0.25}{k}, \theta^*_{k,i} + \frac{0.25}{k} \right)$, 
    \begin{align}
        \left | \frac{d}{d\theta} \lrb{B_3(\theta)d(\theta)^{-k}}\right | \leq 0.043k+0.0571.
    \end{align}
\end{lemma}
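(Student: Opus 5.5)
The plan is to differentiate the product directly and bound each piece on the interval. By the product rule, and since $d'(\theta) = -\sin(\theta/2)$,
\begin{align}
    \frac{d}{d\theta}\lrb{B_3(\theta)d(\theta)^{-k}} = B_3'(\theta)d(\theta)^{-k} + \frac{k}{2}\tan\lrp{\frac{\theta}{2}} B_3(\theta) d(\theta)^{-k}.
\end{align}
The triangle inequality then gives
\begin{align}
    \left|\frac{d}{d\theta}\lrb{B_3(\theta)d(\theta)^{-k}}\right| \le \lrp{|B_3'(\theta)| + \frac{k}{2}\tan\lrp{\frac{\theta}{2}}|B_3(\theta)|}\, d(\theta)^{-k}.
\end{align}
So it suffices to find one uniform upper bound $M$ for $d(\theta)^{-k}$ on the interval and combine it with the bounds of Lemma \ref{lem:function-bounds}. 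Those bounds are $|B_3'| \le 0.67$ and $|B_3| \le 0.58$, together with $\tan(\theta/2) \le \tan(\pi/3) = \sqrt{3}$.

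To bound $d(\theta)^{-k}$, I use that $d(\theta)^{-k}$ is increasing in $\theta$. By Lemma \ref{lem:sample-zero-at-least-3.1/k-away-from-2pi/3}, $\theta \le \theta^*_{k,i} + \frac{0.25}{k} \le \frac{2\pi}{3} - \frac{2.85}{k}$, so
\begin{align}
    d(\theta)^{-k} \le d\lrp{\tfrac{2\pi}{3} - \tfrac{2.85}{k}}^{-k}.
\end{align}
Next I write $d(2\pi/3 - x/k) = 2\cos(\pi/3 - x/(2k)) = 1 + \frac{\sqrt3}{2}\frac{x}{k} + O(x^2/k^2)$. Then $d^{-k}$ behaves like $e^{-\frac{\sqrt3}{2}x}$, and for fixed $x$ it is decreasing in $k$; this is the same monotonicity used in Lemma \ref{lem:d-theta-k-endpoint-bounds-cos} and in \eqref{eq:d-theta-k-2-bound-cuspidal}. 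So it can be evaluated at $k=1000$, giving roughly $e^{-\frac{\sqrt3}{2}\cdot 2.85} \approx 0.085$. This matches the constant $0.0851$ in \eqref{eq:d-theta-k-2-bound-cuspidal}, which used essentially the same point. With $M \approx 0.0852$, the two contributions are
\begin{align}
    \tfrac{k}{2}\sqrt3 \cdot 0.58 \cdot M \approx 0.0428\,k \qquad \text{and} \qquad 0.67 \cdot M \approx 0.0571.
\end{align}
These are exactly the constants in the statement, which confirms the choice of evaluation point.

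The main obstacle is the numerical margin. The target $0.043k$ leaves very little slack, so the bound on $d^{-k}$ must be essentially sharp. I would first check this step carefully: confirm rigorously that $k \mapsto d(2\pi/3 - 2.85/k)^{-k}$ is decreasing for $k \ge 1000$, for instance via $\log$ and the concavity of $\log\cos$. I would then compute its value at $k = 1000$ to enough digits to ensure $0.58 \cdot \frac{\sqrt3}{2}\cdot M \le 0.043$ and $0.67 M \le 0.0571$. If the margin fails, I would replace the crude bound $\tan(\theta/2) \le \sqrt3$ by its value at the actual right endpoint, which is slightly smaller.
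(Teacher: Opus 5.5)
Your proposal is correct and follows the paper's proof essentially step for step. Both use the product rule, the bounds $|B_3'|\le 0.67$, $|B_3|\le 0.58$ and $\tan(\theta/2)\le\sqrt3$, and the uniform bound $d(\theta)^{-k}\le d\bigl(\tfrac{2\pi}{3}-\tfrac{2.85}{1000}\bigr)^{-1000}\le 0.0851$ obtained via Lemma \ref{lem:sample-zero-at-least-3.1/k-away-from-2pi/3}. The exact value is about $0.08508$, so the margins you were worried about do hold, and your concavity-of-$\log\cos$ argument for monotonicity in $k$ justifies a step the paper leaves implicit.
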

\begin{proof}
    We have by Lemma \ref{lem:sample-zero-at-least-3.1/k-away-from-2pi/3} that
    \begin{align}
        \theta = \theta^*_{k,i} + (\theta - \theta^*_{k,i})\leq \frac{2\pi}{3} - \frac{3.1}{k} + \frac{0.25}{k}= \frac{2\pi}{3} - \frac{2.85}{k}.\label{eq:max-value-theta-1-k2-away-postsample}
    \end{align}
    Then,
    \begin{align}
         d\left(\theta^*_{k,i} \pm \frac{0.25}{k}\right)^{-k}
         &\leq d\left(\frac{2\pi}{3} - \frac{2.85}{k}\right)^{-k}&\text{(since $d$($\theta$) is decreasing)}\\
         &\leq d\left(\frac{2\pi}{3} - \frac{2.85}{1000}\right)^{-1000}\\ 
         &\leq 0.0851. \qquad &\qquad \label{eq:d-theta-k-bound}
    \end{align}
    Thus,
    \begin{align}
        \left | \frac{d}{d\theta} \lrb{ B_3(\theta)d(\theta)^{-k}}\right | &= \left |B_3'(\theta)d(\theta)^{-k} + k\sin\left(\frac{\theta}{2}\right)B_3(\theta)d(\theta)^{-k-1} \right | \\
        &= \left |B_3'(\theta)d\left(\theta\right)^{-k} + \frac{k}{2}\tan\left(\frac{\theta}{2}\right)B_3(\theta)d\left(\theta\right)^{-k}\right | \\
        &\leq 0.67d(\theta)^{-k} + \frac{k}{2}\sqrt{3}(0.58)d(\theta)^{-k} &(\text{by Lemma \ref{lem:function-bounds})} \\
        &\leq 0.0851\left(0.67 + \frac{k}{2}\sqrt{3}(0.58)\right) &\text{(by \eqref{eq:d-theta-k-bound})} \\
        &\leq 0.043k+0.0571,  \label{eq:appendix-deriv-bound-b3-dtheta}
    \end{align}
    as desired.
\end{proof}
We finally bound the difference between postsample and actual zeros.
\begin{proposition}\label{prop:actual-postsample-zero-bound}
    For $k\geq 1000$, we have
    \begin{align}
        \left \lvert \hat\theta_{k, i} - \theta_{k,i}\right \lvert \leq \frac{0.025}{2^\frac{k}{2}} < \frac{0.00001}{k^2}.
    \end{align}
\end{proposition}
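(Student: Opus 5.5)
The plan mirrors the proof of Proposition \ref{prop:zero-f-sharp-near-f-sharp-hat}. We locate a zero of $F_k$ within $\Delta := \frac{0.025}{2^{k/2}}$ of $\hat\theta_{k,i}$ by a sign change, and then identify that zero with $\theta_{k,i}$ through the uniqueness statement of Lemma \ref{lem:preliminary-estimate}.

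\textbf{Step 1: the interval lies in the derivative window.} We first check that $(\hat\theta_{k,i}-\Delta,\hat\theta_{k,i}+\Delta)$ lies inside $\left(\theta^*_{k,i}-\frac{0.25}{k},\theta^*_{k,i}+\frac{0.25}{k}\right)$, which is the range where Lemma \ref{lem:f-hat-prime-lb} applies. By Proposition \ref{prop:postsample-sample-distance-formula} we have
\begin{align}
|\hat\theta_{k,i}-\theta^*_{k,i}| \le \frac{|W_{k,i}|}{k}+\frac{0.6}{k^2} \le \frac{0.1}{k}+\frac{0.6}{k^2}.
\end{align}
Since $\Delta$ is exponentially small, the interval sits well inside the window. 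Lemma \ref{lem:f-hat-prime-lb} then gives $(-1)^{i-\delta_k}\hat F_k'(\theta) > 0.45k$ throughout the interval.

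\textbf{Step 2: the sign change.} Write
\begin{align}
F_k(\hat\theta_{k,i}\pm\Delta) = \hat F_k(\hat\theta_{k,i}\pm\Delta) + \alpha_1, \qquad |\alpha_1| \le \frac{10.5}{2^{k/2}},
\end{align}
where the bound on $\alpha_1$ is Corollary \ref{cor:bound-for-Fk-minus-Fhatk}. Since $\hat F_k(\hat\theta_{k,i})=0$, the mean value theorem gives
\begin{align}
\hat F_k(\hat\theta_{k,i}\pm\Delta) = \pm\hat F_k'(c^\pm)\,\Delta.
\end{align}
This main term has magnitude greater than $0.45k\cdot\frac{0.025}{2^{k/2}}$, and its sign is $\pm(-1)^{i-\delta_k}$. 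For $k\ge1000$ we have $0.45k\cdot 0.025 \ge 11.25 > 10.5$, so the main term dominates $\alpha_1$. Hence $F_k$ takes opposite signs at the two endpoints and has a zero $\theta'$ with $|\theta'-\hat\theta_{k,i}|<\Delta$. The comparison $\Delta<\frac{0.00001}{k^2}$ for $k\ge1000$ is immediate, since $2^{k/2}$ dwarfs $k^2$.

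\textbf{Step 3: identify $\theta'$ with $\theta_{k,i}$.} The point $\theta'$ lies within $\frac{0.1}{k}+\frac{0.6}{k^2}+\Delta < \frac{1.2}{k}$ of $\theta^*_{k,i}$. We then relate $\theta^*_{k,i}$ to $\theta^{**}_{k,i}$ using Proposition \ref{prop:sample-zero-locations}:
\begin{align}
|\theta^*_{k,i}-\theta^{**}_{k,i}| \le \frac{1.05}{k}+\frac{1.43}{k^2}+\frac{5}{k^3}.
\end{align}
Adding this to the previous bound could exceed $\frac{1.2}{k}$, which is the delicate point of the argument. We avoid it by using signs. From Lemma \ref{lem:function-bounds}, $U\le 0$, so $\theta^*_{k,i}\le\theta^{**}_{k,i}+\frac{V}{k^2}+\frac{5}{k^3}$. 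We also need $W_{k,i}$ to be small, and we get this from $|W|\lesssim 2|T(W)|\le 2e^{-\frac{\sqrt3}{2}s^*}$, which gives $|W_{k,i}|\le 0.1$ with better bounds when $U$ is large.

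If this bookkeeping is too tight, there is a simpler fallback that does not use the $\frac{1.2}{k}$ window at all. By Lemma \ref{lem:f-hat-prime-lb}, $F_k$ has derivative of one sign on the whole window $\left(\theta^*_{k,i}-\frac{0.25}{k},\theta^*_{k,i}+\frac{0.25}{k}\right)$, up to an exponentially small error. Alternatively, the valence formula caps the number of zeros of $F_k$ at $n_k$. Steps 1 and 2 produce $n_k$ distinct zeros, one near each $\hat\theta_{k,i}$ and spaced about $\frac{2\pi}{k}$ apart. Their ordering then matches the ordering of the $\theta_{k,i}$, so the zero found near $\hat\theta_{k,i}$ must be $\theta_{k,i}$. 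We expect Step 3 to be the main obstacle, and we would use this counting argument there.
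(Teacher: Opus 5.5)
Your Steps 1 and 2 are exactly the paper's proof: the same $\Delta$, the same containment in the $\frac{0.25}{k}$-window of Lemma \ref{lem:f-hat-prime-lb}, and the same comparison $0.45k\Delta>\frac{10.5}{2^{k/2}}$; your Step 3 only makes explicit the identification of the zero found with $\theta_{k,i}$, which the paper leaves implicit. That step is easier than you feared, because Lemmas \ref{lem:sample-presample-exact-expression} and \ref{lem:function-bounds} give $|\theta^*_{k,i}-\theta^{**}_{k,i}|=\frac{|U(\theta^*_{k,i})|}{k}\le\frac{1.05}{k}$, so the zero lies within $\frac{1.151}{k}<\frac{1.2}{k}$ of $\theta^{**}_{k,i}$ and the uniqueness part of Lemma \ref{lem:preliminary-estimate} finishes (your counting argument also works, but your other fallback claim that $F_k'$ has one sign does not follow from Lemma \ref{lem:f-hat-prime-lb}, which controls only $\hat F_k'$).
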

\begin{proof}
    Let
    $\Delta :=  \frac{0.025}{2^\frac{k}{2}} < \frac{0.00001}{k^2}$. Note that
    \begin{align}
    \left(\hat \theta_{k,i} - \Delta, \hat \theta_{k,i} + \Delta\right) &\subseteq \left(\theta^*_{k,i} - \frac{0.25}{k}, \theta^*_{k,i} + \frac{0.25} {k}\right), \quad \text{since} \\
        \left|(\hat \theta_{k,i} + \Delta) - \theta^*_{k,i} \right| &\leq \frac{0.1}{k} + \frac{0.6}{k^2} + \Delta\leq \frac{0.25}{k} &\text{(for $k \geq 1000$ by Proposition \ref{prop:postsample-sample-distance-formula})}.
    \end{align}
    Then observe that
    \begin{align}
        F_k(\hat \theta_{k,i} \pm \Delta) 
        &= \hat F_k(\hat \theta_{k,i} \pm \Delta) + \lrp{F_k(\hat \theta_{k,i} \pm \Delta) - \hat F_k(\hat \theta_{k,i} \pm \Delta)} \\
        &= \hat F_k(\hat \theta_{k,i} \pm \Delta) + \alpha_1 \qquad \text{where } |\alpha_1| \le\tfrac{10.5}{2^\frac{k}{2}}\ \text{ by Corollary \ref{cor:bound-for-Fk-minus-Fhatk}}\\
        &= \pm \hat F_k'(c^{\pm}) \Delta  + \alpha_1, \qquad \text{where $c^{\pm}$ lies between $\hat \theta_{k,i}$ and $\hat \theta_{k,i} \pm \Delta$} 
    \end{align}
    which has opposite signs since 
    \begin{align}
        \lrabs{\hat F_k'(c^{\pm}) \Delta} \ge 0.45k\Delta > \frac{10.5}{2^\frac{k}{2}} \geq \left |\alpha_1 \right | 
        \qquad \text{(by Lemma \ref{lem:f-hat-prime-lb})}.
    \end{align}
    This proves that $\theta_{k,i} \in \lrb{\hat \theta_{k,i} - \Delta, ~ \hat \theta_{k,i} + \Delta}$, completing the proof. 
\end{proof}

\section{Proof of Lemma \ref{lem:f-k-theta-sharp-bound}}\label{sec:proof-of-lem-f-k-theta-sharp-bound}
We first prove two function identities, then prove Lemma \ref{lem:f-k-theta-sharp-bound}.
\begin{lemma}\label{lem:B-function-identities}
    We have
    \begin{align}
        B_3(\theta) &= -\tfrac{1}{2}\left(\tfrac{\sin(\theta)}{1 + \cos(\theta)} + B_0(\theta)\right)\quad \text{and} \quad
        \cos\left(\theta\right) - B_0(\theta)\sin(\theta) = B_1(\theta)\sin(-B_4(\theta)).
    \end{align}
\end{lemma}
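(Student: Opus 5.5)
The two identities are elementary trigonometry, so I will prove each directly from the definitions of $B_0,\dots,B_4$; no modular input is needed.

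\textbf{First identity.} By definition $B_3(\theta) = -\tfrac12\left(\tan\tfrac{\theta}{2} + B_0(\theta)\right)$. It therefore suffices to recall the half-angle formula
\begin{align}
\tan\frac{\theta}{2} = \frac{\sin\theta}{1+\cos\theta},
\end{align}
which holds whenever $1+\cos\theta \neq 0$, and in particular on $\left(\frac{\pi}{2},\frac{2\pi}{3}\right)$. To verify it, multiply numerator and denominator by $2\cos\tfrac{\theta}{2}$ and use $\sin\theta = 2\sin\tfrac{\theta}{2}\cos\tfrac{\theta}{2}$ together with $1+\cos\theta = 2\cos^2\tfrac{\theta}{2}$.

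\textbf{Second identity.} Since $B_4(\theta) = \theta + B_2(\theta) - \tfrac{\pi}{2}$, we have $-B_4 = \tfrac{\pi}{2} - \theta - B_2$. Hence
\begin{align}
\sin(-B_4) = \cos(\theta + B_2) = \cos\theta\cos B_2 - \sin\theta \sin B_2 .
\end{align}
Because $B_2 = \arctan(B_0)$ and $B_1 = \sqrt{1+B_0^2}$, we get $\cos B_2 = 1/B_1$ and $\sin B_2 = B_0/B_1$. Here $\cos B_2 > 0$ since $B_2 \in \left(-\tfrac{\pi}{2},\tfrac{\pi}{2}\right)$. Multiplying through by $B_1$ gives
\begin{align}
B_1 \sin(-B_4) = \cos\theta - B_0\sin\theta,
\end{align}
which is the claim.

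The only points needing care are the sign conventions: checking that $-B_4 = \tfrac{\pi}{2} - (\theta + B_2)$ indeed turns $\sin$ into $\cos$, and that the branch of $\arctan$ gives a positive cosine. Neither presents a real obstacle.
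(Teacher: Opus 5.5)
Your proof is correct and follows essentially the same route as the paper: the first identity comes from the half-angle formula $\tan(\theta/2) = \sin\theta/(1+\cos\theta)$. The second comes from writing $\sin(-B_4) = \cos(\theta + B_2)$, expanding, and substituting $\cos B_2 = 1/B_1$ and $\sin B_2 = B_0/B_1$.
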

\begin{proof}
    The first claim is immediate from the definition of $B_3(\theta)$ and the fact that $\tan\left(\frac{\theta}{2}\right) = \frac{\sin(\theta)}{1 + \cos(\theta)}$.

    It remains to prove the second claim. Observe that
    \begin{align}
        B_1(\theta)\sin(-B_4(\theta)) &= B_1(\theta)\sin\left(\frac{\pi}{2} - \theta - B_2(\theta)\right) \\
        &= B_1(\theta)\cos(\theta + B_2(\theta)) \\
        &= B_1(\theta)\left(\cos(\theta)\cos(B_2(\theta)) - \sin(\theta)\sin(B_2(\theta))\right) \\
        &= B_1(\theta)\left(\cos(\theta)\cdot \frac{1}{\sqrt{1 + B_0(\theta)^2}} - \sin(\theta)\frac{B_0(\theta)}{\sqrt{1 + B_0(\theta)^2}}\right) \\
        &= B_1(\theta)\left(\frac{\cos(\theta)}{B_1(\theta)} - \frac{\sin(\theta)B_0(\theta)}{B_1(\theta)}\right) \\
        &= \cos(\theta) - B_0(\theta)\sin(\theta),
    \end{align}
    as desired.
\end{proof}
    We now bound $(-1)^{i-\delta_k}\hat F_k(\hat \theta^\sharp_{k,i})$ in Lemma \ref{lem:f-k-theta-sharp-bound}.
{
\renewcommand{\thetheorem}{\ref{lem:f-k-theta-sharp-bound}}
\addtocounter{theorem}{-1}
\begin{lemma}
     Let $k\ge1000$. Then,
    \begin{align}
        \frac{0.3}{k}<(-1)^{i-\delta_k}\hat F_k(\hat\theta^\sharp_{k,i})<0.1.
    \end{align}
\end{lemma}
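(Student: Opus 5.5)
The plan is to evaluate $\hat F_k$ at $\hat\theta^\sharp_{k,i}$ by using the defining relation $\hat F^\sharp_k(\hat\theta^\sharp_{k,i})=0$ to eliminate the exponential term, and then read off a main term whose sign and size are controlled by Lemma \ref{lem:function-bounds}.

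Write $\theta=\hat\theta^\sharp_{k,i}$ and set $\psi:=\frac{k+2}{2}\theta-\frac{k}{2}\theta^{**}_{k,i}-\frac{\pi}{2}$. By \eqref{eq:theta-star-close-to-mpi-pi-2} and Lemma \ref{lem:theta-star-F-sharp-interval}, we have $|\psi|\le 0.06+\frac{k+2}{2}\cdot\frac{0.25}{k}\le 0.19$. The relation $\hat F^\sharp_k(\theta)=0$ gives
\[
d(\theta)^{-k}=-2d(\theta)^2\cos\left(\tfrac{k+2}{2}\theta\right)=2d(\theta)^2(-1)^{i-\delta_k}\sin\psi,
\]
where we used $\frac{k}{2}\theta^{**}_{k,i}\equiv(i-\delta_k)\pi$. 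Next, in \eqref{eq:F-hat-second} we write $\frac{k}{2}\theta=\frac{k+2}{2}\theta-\theta$, so that $\sin(\frac{k}{2}\theta)=(-1)^{i-\delta_k}\cos(\psi-\theta)$ and $\cos(\frac{k}{2}\theta)=(-1)^{i-\delta_k}\sin(\psi-\theta)$ (up to the obvious phase bookkeeping). Expanding and using Lemma \ref{lem:B-function-identities}, we get
\[
(-1)^{i-\delta_k}\hat F_k(\theta)=\cos\psi\,\big(\cos\theta-B_0\sin\theta\big)+\sin\psi\,\big(\sin\theta+B_0\cos\theta+2d^2B_3\big).
\]
The first bracket equals $B_1\sin(-B_4)$. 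For the second bracket, $d^2=2(1+\cos\theta)$ and the formula for $B_3$ in Lemma \ref{lem:B-function-identities} give $2d^2B_3=-2\sin\theta-2B_0(1+\cos\theta)$, so the bracket simplifies to $-(\sin\theta+B_0(2+\cos\theta))$. Using $B_1\sin(-B_4)=\cos\theta-B_0\sin\theta$, I expect the whole expression to collapse to something like
\[
B_1\sin(-B_4-\psi)+(\text{small correction}\propto\sin\psi).
\]
I will carry out this trigonometric identity carefully; it is routine.

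The main obstacle is the lower bound $0.3/k$. The quantity $-B_4(\theta)\ge 0$ can be near $0$ when $\theta$ is near $\pi/2$. Moreover, $\psi$ is determined implicitly, since $\sin\psi=(-1)^{i-\delta_k}d^{-k}/(2d^2)$ is small and carries the sign $(-1)^{i-\delta_k}$, so its sign matters. To handle this I will locate $\psi$ more precisely. From the zero condition, $\psi=\arcsin\big((-1)^{i-\delta_k}d^{-k-2}/2\big)$ modulo the constraint $|\psi|\le0.19$. The shift of $\theta$ away from $\theta^*_{k,i}$ is $\frac{2}{k+2}(\psi-B_4(\theta^*))$, which is $O(1/k)$.

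I will then split into two regimes. When $-B_4(\theta)$ is bounded below, the main term is of order $1$ and positive. Near $\theta=\pi/2$, where $B_4\to0$, I will use $\theta\ge\frac{\pi}{2}+\frac{3.1}{k}-\frac{0.25}{k}$ (Lemma \ref{lem:sample-zero-at-least-3.1/k-away-from-2pi/3}) together with $B_4'=1+B_2'\le -0$ bounded away from zero. This should give $-B_4(\theta)\gtrsim 0.5\cdot\frac{2.85}{k}$, and after the cosine factor and subtraction of the $\psi$-contribution (which is exponentially small there, since $d^{-k}$ is tiny away from $2\pi/3$), it should still exceed $0.3/k$. Near $2\pi/3$, $\psi$ is no longer negligible. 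There I will rely on the explicit sign relation: the $\psi$-term has sign $(-1)^{i-\delta_k}\cdot(-1)^{i-\delta_k}\times$(sign of its coefficient). I expect the coefficient $-(\sin\theta+B_0(2+\cos\theta))$ to make this contribution nonnegative or small relative to $-B_4\approx0.05$.

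For the upper bound $0.1$, the bounds $|B_4|\le0.06$, $|\psi|\le0.19$, and $B_1\le1.16$ together with crude estimates of the second bracket should suffice. If they fall short, I will sharpen the bound on $|\psi|$ using $d^{-k-2}\le0.0851$, which gives $|\psi|\le0.043$.
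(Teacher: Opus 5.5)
Your algebra is sound, and after simplification it gives exactly the paper's identity. Since $(-1)^{i-\delta_k}\sin(\frac{k+2}{2}\theta)=\cos\psi$ and $\sin\theta+B_0(\theta)(2+\cos\theta)=4(1+\cos\theta)Q(\theta)$, with $Q$ as in the paper, your formula reads
\[
(-1)^{i-\delta_k}\hat F_k(\theta)=\cos\psi\,B_1(\theta)\sin(-B_4(\theta))-(-1)^{i-\delta_k}Q(\theta)\,d(\theta)^{-k}.
\]
The gap is in the lower bound near $\frac{2\pi}{3}$, where your plan rests on a false premise. $B_4$ vanishes at \emph{both} endpoints: $B_0(\frac{2\pi}{3})=-\frac{\sqrt3}{3}$ gives $B_2(\frac{2\pi}{3})=-\frac{\pi}{6}$, hence $B_4(\frac{2\pi}{3})=0$. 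Moreover $B_4'(\frac{2\pi}{3})=1+\frac{B_0'}{1+B_0^2}=\frac12$, so $B_4'$ is not negative on the whole arc; that only holds near $\frac{\pi}{2}$, where the slope is about $-0.3$. Thus near the top of the arc the main term is $\approx\frac{1}{\sqrt3}(\frac{2\pi}{3}-\theta)=\frac{x}{\sqrt3\,k}$ with $x=k(\frac{2\pi}{3}-\theta)\ge 2.85$. It is of size $1/k$, not $\approx 0.05$. In exactly this range $d^{-k}\approx e^{-\frac{\sqrt3}{2}x}$ is as large as about $0.085$. The correction $-(-1)^{i-\delta_k}Qd^{-k}$, with $Q\ge0$, is negative for every $i$ with $i-\delta_k$ even, so no favorable sign is available. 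Your proposed ``collapse'' makes matters worse: the correction is $-2B_0\sin\psi$, which is not small, and near $\frac{2\pi}{3}$ it cancels the $\psi$-part of $B_1\sin(-B_4-\psi)$ to leading order.

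The missing ingredient is that the coefficient of $d^{-k}$ also vanishes at $\frac{2\pi}{3}$. Indeed $\sin\frac{2\pi}{3}+B_0(\frac{2\pi}{3})(2+\cos\frac{2\pi}{3})=\frac{\sqrt3}{2}-\frac{\sqrt3}{3}\cdot\frac32=0$, which is what gives $0\le Q(\theta)\le\frac35(\frac{2\pi}{3}-\theta)$. Combined with $d(\theta)^{-k}\le e^{-0.66x}$ (from $\log g(\phi)\ge 0.66\phi$), the correction is at most $\frac{3}{5k}xe^{-0.66x}$. On the other side, $\cos\psi\ge0.999$ and $B_1\sin(-B_4)\ge\frac15\min\{\theta-\frac{\pi}{2},\frac{2\pi}{3}-\theta\}\ge\frac{2.85}{5k}$. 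Since $xe^{-0.66x}$ decreases for $x\ge2.85$, one gets $\frac{1}{5k}(0.999\cdot2.85-3\cdot2.85\,e^{-0.66\cdot2.85})>\frac{0.3}{k}$. Without the vanishing of $Q$ at $\frac{2\pi}{3}$, an $O(1)$ coefficient times $d^{-k}\approx0.085$ would swamp the $O(1/k)$ main term, so this fact is essential rather than a detail.

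The same pairing is needed for the upper bound. Your fallback bounds $|\sin\psi|\le0.0426$ and the bracket $\sin\theta+B_0(2+\cos\theta)$ separately; the bracket's supremum is $1$, approached at $\frac{\pi}{2}$. That gives about $0.0696+0.0426>0.1$. Keeping the product together as $Qd^{-k}\le\frac35\cdot\frac{\pi}{6}\cdot0.0851\approx0.027$ gives about $0.096<0.1$.
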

}

\begin{proof}
    Using \eqref{eq:F-hat-second}, we write
    \begin{align}
        &\hat F_k(\hat \theta^\sharp_{k,i})\\ &= \sin\left(\frac{k+2}{2}\hat \theta^\sharp_{k,i}-\hat \theta^\sharp_{k,i}\right)-B_0(\hat \theta^\sharp_{k,i})\cos\left(\frac{k+2}{2}\hat \theta^\sharp_{k,i}-\hat \theta^\sharp_{k,i}\right)+B_3(\hat \theta^\sharp_{k,i})d(\hat \theta^\sharp_{k,i})^{-k} \\
        &= \sin\left(\frac{k+2}{2}\hat \theta^\sharp_{k,i}\right)\left(\cos\left(\hat \theta^\sharp_{k,i}\right) - B_0(\hat \theta^\sharp_{k,i})\sin\left(\hat \theta^\sharp_{k,i}\right)\right) \\ &\quad + \frac{1}{2}d(\hat \theta^\sharp_{k,i})^{-k-2}\left(\sin\left(\hat \theta^\sharp_{k,i} \right) + B_0(\hat \theta^\sharp_{k,i})\cos\left(\hat \theta^\sharp_{k,i}\right)\right) + B_3(\hat \theta^\sharp_{k,i})d(\hat \theta^\sharp_{k,i})^{-k} \\
        &\qquad \left(\text{by expanding $\sin$ and $\cos$, using }-\cos\left(\tfrac{k+2}{2}\hat \theta^\sharp_{k,i}\right) = \tfrac{1}{2}d(\hat \theta^\sharp_{k,i})^{-k-2} \text{ from def. of $\hat F^\sharp_{k}(\theta)$}\right) \\
        &= \sin\left(\frac{k+2}{2}\hat \theta^\sharp_{k,i}\right)\left(\cos\left(\hat \theta^\sharp_{k,i}\right) - B_0(\hat \theta^\sharp_{k,i})\sin\left(\hat \theta^\sharp_{k,i}\right)\right) \\ &\quad + \frac{d(\hat \theta^\sharp_{k,i})^{-k}}{4\left(1 + \cos\left(\hat \theta^\sharp_{k,i}\right)\right)}\left(\sin\left(\hat \theta^\sharp_{k,i} \right) + B_0(\hat \theta^\sharp_{k,i})\cos\left(\hat \theta^\sharp_{k,i}\right)\right) 
         - \frac{1}{2}\left(\frac{\sin(\hat \theta^\sharp_{k,i})}{1 + \cos(\hat \theta^\sharp_{k,i})} + B_0(\hat \theta^\sharp_{k,i})\right)d(\hat \theta^\sharp_{k,i})^{-k} \\[-0.35cm]
        &\qquad \qquad  \left(\text{using $d(\theta)^2 = 2(1 + \cos(\theta))$, and Lemma \ref{lem:B-function-identities}}\right) \\
        &= \sin\left(\frac{k+2}{2}\hat \theta^\sharp_{k,i}\right)\left(\cos\left(\hat \theta^\sharp_{k,i}\right) - B_0(\hat \theta^\sharp_{k,i})\sin\left(\hat \theta^\sharp_{k,i}\right)\right) \\ &\quad - \left(-\frac{\left(\sin\left(\hat \theta^\sharp_{k,i} \right) + B_0(\hat \theta^\sharp_{k,i})\cos\left(\hat \theta^\sharp_{k,i}\right)\right)}{4\left(1 + \cos\left(\hat \theta^\sharp_{k,i}\right)\right)} 
         + \frac{1}{2}\left(\frac{\sin(\hat \theta^\sharp_{k,i})}{1 + \cos(\hat \theta^\sharp_{k,i})} + B_0(\hat \theta^\sharp_{k,i})\right)\right)d(\hat \theta^\sharp_{k,i})^{-k} \\
        &=: \sin\left(\frac{k+2}{2}\hat \theta^\sharp_{k,i}\right)\left(\cos\left(\hat \theta^\sharp_{k,i}\right) - B_0(\hat \theta^\sharp_{k,i})\sin\left(\hat \theta^\sharp_{k,i}\right)\right) - Q(\hat \theta^\sharp_{k,i})d(\hat \theta^\sharp_{k,i})^{-k} \\
        & \qquad \qquad \qquad \left( \text{where } Q(\theta) := \tfrac{\sin\left(\theta\right) + B_0(\theta)(2 + \cos(\theta))}{4(1 + \cos(\theta))} \right) \\
        &= \sin\left(\frac{k+2}{2}\hat \theta^\sharp_{k,i}\right)B_1(\hat \theta^\sharp_{k,i})\sin\left(-B_4(\hat \theta^\sharp_{k,i})\right) -  Q(\hat \theta^\sharp_{k,i})d(\hat \theta^\sharp_{k,i})^{-k}. \qquad \left(\text{by Lemma \ref{lem:B-function-identities}}\right)
    \end{align}
    Thus, by assuming \eqref{eq:sin-theta-sharp-lb}, \eqref{eq:B-sin-B-4-bounds}, and \eqref{eq:Q-bounds} (which we prove subsequently), we have
    \begin{align}
        &(-1)^{i - \delta_k}\hat F_k(\hat \theta^\sharp_{k,i})\\ &= (-1)^{i - \delta_k}\sin\left(\frac{k+2}{2}\hat \theta^\sharp_{k,i}\right)B_1(\hat \theta^\sharp_{k,i})\sin\left(-B_4(\hat \theta^\sharp_{k,i})\right) -  (-1)^{i-\delta_k}Q(\hat \theta^\sharp_{k,i})d(\hat \theta^\sharp_{k,i})^{-k} \\
        &\geq 0.999 \cdot \frac{1}{5}\min \left \{\hat \theta^\sharp_{k,i} - \frac{\pi}{2}, \frac{2\pi}{3} - \hat \theta^\sharp_{k,i}\right \} - \frac{3}{5}\left(\frac{2\pi}{3} - \hat \theta^\sharp_{k,i}\right) \exp\left(-0.66k\left(\frac{2\pi}{3} - \hat \theta^\sharp_{k,i} \right)\right) \\
        &\qquad(\text{using \eqref{eq:sin-theta-sharp-lb}, \eqref{eq:B-sin-B-4-bounds}, \eqref{eq:Q-bounds}, and Lemma \ref{lem:g-and-h-deriv-values})}\\
        &\geq 0.999 \cdot \frac{1}{5}\cdot\frac{3.1 - 0.25}{k} - \frac{3}{5}\left(\frac{2\pi}{3} - \hat \theta^\sharp_{k,i}\right) \exp\left(-0.66k\left(\frac{2\pi}{3} - \hat \theta^\sharp_{k,i}\right)\right)  \\
        &\qquad \text{(by Lemmas \ref{lem:sample-zero-at-least-3.1/k-away-from-2pi/3} and \ref{lem:theta-star-F-sharp-interval}}) \\
        &= \frac{1}{5k}\left(0.999 \cdot 2.85 - 3 \cdot k\left(\frac{2\pi}{3} - \hat \theta^\sharp_{k,i}\right) \exp\left(-0.66k\left(\frac{2\pi}{3} - \hat \theta^\sharp_{k,i}\right)\right)\right) \\
        &\geq \frac{1}{5k}\left(0.999 \cdot 2.85  - 3\left(2.85\exp\left(-0.66\cdot 2.85\right)\right)\right) \\
    &\qquad \left(\text{since the map } x \mapsto x \exp\left(-0.66x\right) \text{ is decreasing for } x \geq 2.85 \right) \\
        &> \frac{0.3}{k}. \label{eq:f-hat-expansion-at-theta-sharp}
    \end{align}
    Similarly, we obtain
    \begin{align}
        &(-1)^{i - \delta_k}\hat F_k(\hat \theta^\sharp_{k,i}) \\&= (-1)^{i - \delta_k}\sin\left(\frac{k+2}{2}\hat \theta^\sharp_{k,i}\right)B_1(\hat \theta^\sharp_{k,i})\sin\left(-B_4(\hat \theta^\sharp_{k,i})\right) -  (-1)^{i-\delta_k}Q(\hat \theta^\sharp_{k,i})d(\hat \theta^\sharp_{k,i})^{-k} \\ 
        &\leq 1 \cdot 1.16 \sin\left(0.06\right) + \frac{3}{5}\left(\frac{2\pi}{3} - \theta\right)d(\hat \theta^\sharp_{k,i})^{-k} \qquad \text{(by Lemma \ref{lem:function-bounds}, \eqref{eq:b-4-defn}, and \eqref{eq:Q-bounds})}
        \\&\leq 1 \cdot 1.16\sin(0.06) + \frac{3}{5}\left(\frac{2\pi}{3} - \frac{\pi}{2}\right) \cdot 0.0851
        \qquad \quad \ \ \qquad \text{(by Lemma \ref{lem:theta-star-F-sharp-interval} and \eqref{eq:d-theta-k-bound})} \\
        &\leq 0.1.
    \end{align}
    We now prove our assumptions. It follows from \eqref{eq:theta-star-close-to-mpi-pi-2} and Lemma \ref{lem:theta-star-F-sharp-interval} that $(-1)^{i - \delta_k}\sin\left(\frac{k+2}{2}\hat \theta^\sharp_{k,i}\right)$ 
    is positive, and can be bounded below by
    \begin{align}
        (-1)^{i-\delta_k} \sin\left(\frac{k+2}{2}\hat \theta^\sharp_{k,i}\right) &= \sqrt{1 - \cos^2\left(\frac{k+2}{2}\hat \theta^\sharp_{k,i}\right)} \\ 
        &=  \sqrt{1 - \frac{1}{4}d(\hat \theta^\sharp_{k,i})^{-2(k+2)}} &\text{(by definition of $\hat F_k^\sharp(\theta)$)}\\
        &\geq \sqrt{1-\frac{1}{4}0.0851^2} &\text{(using \eqref{eq:d-theta-k-2-bound-cuspidal})} \\
        &> 0.999 \label{eq:sin-theta-sharp-lb}.
    \end{align}
    It can also be easily shown through direct computation that
    \begin{align}
        B_1(\theta)\sin\left(-B_4(\theta)\right) &\geq \frac{1}{5}\min \left \{\theta - \frac{\pi}{2}, \frac{2\pi}{3} - \theta\right \} > 0, \label{eq:B-sin-B-4-bounds} \\
        \text{and}\ \ \ \frac{3}{5}\left(\frac{2\pi}{3} - \theta\right) &\geq Q(\theta) \geq 0. \label{eq:Q-bounds}
    \end{align}
    This completes the proof.
\end{proof}

\section{Proof of Lemma \ref{lem:zero-endpoint-behavior-k-ell}}\label{app:sample-zero-at-edges}

{
\renewcommand{\thetheorem}{\ref{lem:zero-endpoint-behavior-k-ell}}
\addtocounter{theorem}{-1}
\begin{lemma} 
    Let $\ell > k \geq 1000$. Then 
    \begin{align}
        \theta^*_{\ell,1} > \theta^*_{k,2} + \frac{3(\ell -k)}{\ell k}
        \quad  \text{and} \quad
        \theta^*_{\ell,n_\ell} <\theta^*_{k,n_k-1} - \frac{3(\ell - k)}{\ell k}. 
    \end{align}
\end{lemma}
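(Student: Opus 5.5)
We handle the two inequalities separately, writing each sample zero through Lemma \ref{lem:sample-presample-exact-expression} as $\theta^*_{k,i}=\theta^{**}_{k,i}+U(\theta^*_{k,i})/k$ (or via Proposition \ref{prop:sample-zero-locations}). The presample endpoint positions come from Lemma \ref{lem:location-of-first-and-last-presample-zero}. The quantity $\frac{3(\ell-k)}{\ell k}=3(\frac1k-\frac1\ell)$ is always less than $\frac{3}{k}$. The plan is to show that the gap between the two sides is at least of order $\frac{c}{k}$ with $c>3$, after accounting for the $U$-corrections.

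\textbf{Right endpoint.} We have $\theta^{**}_{\ell,1}\ge \frac{2\pi}{3}-\frac{2\pi}{\ell}$ and $\theta^{**}_{k,2}=\theta^{**}_{k,1}-\frac{2\pi}{k}\le \frac{2\pi}{3}-\frac{8\pi}{3k}$. So
\begin{align}
\theta^*_{\ell,1}-\theta^*_{k,2}\ \ge\ \frac{8\pi}{3k}-\frac{2\pi}{\ell}+\frac{U(\theta^*_{\ell,1})}{\ell}-\frac{U(\theta^*_{k,2})}{k}.
\end{align}
The $U$-terms are the delicate part, since $U\in[-1.05,0]$ and $U$ is decreasing. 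Both arguments are near $\frac{2\pi}{3}$, and $\theta^*_{\ell,1}>\theta^*_{k,2}$ is what we expect. Hence $U(\theta^*_{\ell,1})\le U(\theta^*_{k,2})$, which works against us, although $U(\theta^*_{\ell,1})$ is divided by the larger $\ell$. I would write $\frac{U(\theta^*_{\ell,1})}{\ell}-\frac{U(\theta^*_{k,2})}{k}\ge -\frac{1.05}{\ell}+\frac{|U(\theta^*_{k,2})|}{k}\ge-\frac{1.05}{\ell}$. It then remains to show
\begin{align}
\frac{8\pi}{3k}-\frac{2\pi+1.05}{\ell}>\frac{3}{k}-\frac{3}{\ell},
\quad\text{i.e.}\quad
\frac{8\pi/3-3}{k}>\frac{2\pi+1.05-3}{\ell}.
\end{align}
This reads $5.378/k>4.33/\ell$, which holds for all $\ell>k$. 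If the numbers turned out tighter, I would instead use the values of $U$ near $\frac{2\pi}{3}$, evaluated at $\frac{2\pi}{3}-O(1/k)$, from Lemma \ref{lem:function-bounds} and the monotonicity of $U$.

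\textbf{Left endpoint.} We have $\theta^{**}_{\ell,n_\ell}\le\frac{\pi}{2}+\frac{2\pi}{\ell}$ and $\theta^{**}_{k,n_k-1}\ge \frac{\pi}{2}+\frac{3\pi}{k}$. The gap is therefore at least
\begin{align}
\frac{3\pi}{k}-\frac{2\pi}{\ell}+\frac{U(\theta^*_{k,n_k-1})}{k}-\frac{U(\theta^*_{\ell,n_\ell})}{\ell}.
\end{align}
Near $\frac{\pi}{2}$ we have $B_0(\pi/2)=\frac{\pi}{3}\mathrm{Re}[iE_2(i)]=0$, since $E_2(i)=3/\pi$ is real, so $U$ is close to $0$. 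Using $U'\ge-2.6$, we get $|U|\le 2.6\cdot\frac{4\pi}{k}$ there, which is $O(1/k)$. The $U$-terms are thus $O(1/k^2)$, negligible for $k\ge1000$. We then need $\frac{3\pi-3}{k}>\frac{2\pi-3}{\ell}$ plus an $O(k^{-2})$ error, which is clearly true. In the worst case one can simply use $U\in[-1.05,0]$: the term $U(\theta^*_{k,n_k-1})/k\ge -1.05/k$ gives $\frac{3\pi-4.05}{k}>\frac{2\pi-3}{\ell}$, which still holds.

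The main obstacle is the right endpoint. There the offset $U$ is of size about $1$ and could eat the margin, so the key step is to justify the sign/monotonicity argument for the $U$-terms carefully. Alternatively, one can bound them crudely by $\pm1.05/k$ and check that $\frac{8\pi}{3}-3-1.05>2\pi-3$ fails, so the cruder route is insufficient. This is why the monotonicity argument, or the ordering from Proposition \ref{prop:sample-zero-locations}, is needed.
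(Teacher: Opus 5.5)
Your argument is correct and is essentially the paper's proof. Like the paper, you take the presample endpoint bounds from Lemma \ref{lem:location-of-first-and-last-presample-zero} and control the $U$-terms using only $-1.05\le U\le 0$, dropping the favorable term. The one difference is that you use the exact identity of Lemma \ref{lem:sample-presample-exact-expression} instead of the expansion of Proposition \ref{prop:sample-zero-locations}, which is cleaner because it spares you the $V$ and $\alpha^*$ error terms.

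Your closing paragraph is mistaken, though:
\begin{itemize}
\item No monotonicity of $U$ is actually used in your bound.
\item Ordering the $U$-values via $\theta^*_{\ell,1}>\theta^*_{k,2}$ would be circular, since that is the conclusion.
\item The ``crude'' inequality you say fails in fact holds: $\tfrac{8\pi}{3}-3-1.05\approx 4.33>2\pi-3\approx 3.28$.
\end{itemize}
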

}
\begin{proof}
We begin with the first claim of the lemma. By Proposition \ref{prop:sample-zero-locations}, we have
    \begin{align}
        \theta^*_{\ell,1} - \theta^*_{k,2} = \ &\theta^{**}_{\ell,1} - \theta^{**}_{k, 2}
                + \frac{U(\theta^{**}_{\ell,1})k - U(\theta^{**}_{k,2})\ell}{\ell k} + \frac{V(\theta^{**}_{\ell,1})k^2 - V(\theta^{**}_{k,2})\ell^2}{\ell^2k^2}
                + \frac{\alpha^*_{\ell,1}k^3 - \alpha^*_{k,2}\ell^3}{\ell^3k^3}.
    \end{align}
From \eqref{eq:endpoint_theta_1}, and noting that $\gamma_k\geq1$ and $\gamma_\ell\leq3$, we have 
    \begin{align}
        \theta^{**}_{\ell, 1} - \theta^{**}_{k, 2}=
        \theta^{**}_{\ell, 1} - \left(\theta^{**}_{k, 1}-\frac{2\pi}{k}\right)
        &=\left(\frac{2\pi}{3} - \frac{\gamma_\ell}{3}\frac{2\pi}{\ell}\right)-\left(\frac{2\pi}{3} - \frac{\gamma_k}{3}\frac{2\pi}{k}-\frac{2\pi}{k}\right)\\
        &\geq \left(\frac{2\pi}{3} - \frac{6\pi}{3\ell} \right) - \left(\frac{2\pi}{3} - \frac{8\pi}{3k}\right) = \frac{8\pi\ell -6\pi k}{3\ell k}.
    \end{align}
    By Lemma \ref{lem:function-bounds} we obtain
    \begin{align}
        \frac{U(\theta^{**}_{\ell,1})k - U(\theta^{**}_{k,2})\ell}{\ell k} &\geq \frac{U(\theta^{**}_{\ell,1})}{\ell} \geq -\frac{1.05}{\ell}, \\
        \frac{V(\theta^{**}_{\ell,1})k^2 - V(\theta^{**}_{k,2})\ell^2}{\ell^2k^2} &\geq -\frac{V(\theta^{**}_{k,2})}{k^2}\geq -\frac{1.43}{k^2}, \\
        \frac{\alpha^*_{\ell,1}k^3 - \alpha^*_{k,2}\ell^3}{\ell^3k^3} &\geq -5\left(\frac{1}{k^3} + \frac{1}{\ell^3}\right).
    \end{align}
    Then
    \begin{align}
        \theta^*_{\ell, 1} - \theta^*_{k, 2} - \frac{3(\ell - k)}{\ell k}&\geq \frac{(8\pi - 9)\ell -(6\pi -9)k}{3\ell k} - \frac{1.05}{\ell} - \frac{1.43}{k^2} - 5\left( \frac{1}{k^3} + \frac{1}{\ell^3}\right) \\ 
        &\geq \frac{8\pi - 9}{3 k} - \frac{6\pi -9}{3(k+2)} - \frac{1.05}{k+2} - \frac{1.43}{k^2} - 5\left(\frac{1}{k^3} + \frac{1}{(k+2)^3}\right) \\
        &> 0 \quad \text{for }k \geq1000,
    \end{align}
    verifying the first claim of the lemma.
    
We now show the second claim. By Proposition \ref{prop:sample-zero-locations}, we have
    \begin{align}
        &\theta^*_{k,n_{k}-1} -\theta^*_{\ell, n_\ell} \\
        &= \theta^{**}_{k,n_k-1} - \theta^{**}_{\ell, n_\ell}
        + \frac{U(\theta^{**}_{k,n_k-1})\ell - U(\theta^{**}_{\ell,n_\ell})k}{\ell k}
        + \frac{V(\theta^{**}_{k,n_k-1})\ell^2 - V(\theta^{**}_{\ell,n_\ell})k^2}{\ell^2k^2}+ \frac{\alpha^*_{k,n_k-1}\ell^3 - \alpha^*_{\ell,n_\ell}k^3}{\ell^3k^3}.
    \end{align}
    From \eqref{eq:endpoint_theta_2}, and noting that $\gamma'_k \ge 1$ and $\gamma'_\ell\le2$, we have
    \begin{align}
        \theta^{**}_{k, n_k-1} - \theta^{**}_{\ell, n_\ell} = \theta^{**}_{k, n_k} +\frac{2\pi}{k} - \theta^{**}_{\ell, n_\ell}&=\left(\frac{\pi}{2}+\frac{\gamma'_k}{2}\frac{2\pi}{k}+\frac{2\pi}{k}\right)-\left(\frac{\pi}{2}+\frac{\gamma'_\ell}{2}\frac{2\pi}{\ell}\right) \\
        &\ge 
        \left(\frac{\pi}{2}+\frac{3\pi}{k}\right)-\left(\frac{\pi}{2}+\frac{2\pi}{\ell}\right)
        =
        \frac{3\pi\ell - 2\pi k}{\ell k}.
    \end{align} 
    By Lemma \ref{lem:function-bounds} we obtain the following bounds:
    \begin{align}
        \frac{U(\theta^{**}_{k,n_k-1})\ell - U(\theta^{**}_{\ell,n_\ell})k}{\ell k} &\geq
        \frac{U(\theta^{**}_{k,n_k-1})\ell}{\ell k}
        \ge
        - \frac{1.05}{k}, &\text{(since $U(\theta) \leq 0$)}\\
        \frac{V(\theta^{**}_{k,n_k-1})\ell^2 - V(\theta^{**}_{\ell,n_\ell})k^2}{\ell^2k^2} &\geq \frac{- V(\theta^{**}_{\ell,n_\ell})k^2}{\ell^2k^2} \geq - \frac{1.43}{\ell^2}, &(\text{since $V(\theta) \geq 0$})\\
        \frac{\alpha^*_{k,n_k-1}\ell^3 - \alpha^*_{\ell,n_\ell}k^3}{\ell^3k^3} &\geq -5 \left( \frac{1}{k^3} + \frac{1}{\ell^3}\right).
    \end{align}
    Thus
    \begin{align}
        \theta^*_{k,n_k-1} -\theta^*_{\ell, n_\ell} - \frac{3(\ell - k)}{\ell k} &\geq \frac{(3\pi - 3)\ell - (2\pi - 3) k}{\ell k} - \frac{1.05}{k} - \frac{1.43}{\ell^2} - 5\left(\frac{1}{k^3} + \frac{1}{\ell^3}\right) \\
        &\geq \frac{3\pi - 3}{k}- \frac{2\pi-3}{k+2} - \frac{1.05}{k} - \frac{1.43}{(k+2)^2} - 5\left(\frac{1}{k^3} + \frac{1}{(k+2)^3}\right) \\
        &> 0 \quad \text{for } k \geq 1000,
    \end{align}
    verifying the second claim of the Lemma.

\end{proof}

\section{Proof of Lemma \ref{lem:b()^-k_estimate}}

\begin{lemma} \label{lem:g-and-h-deriv-values}
    For $\phi \in [0, \frac{\pi}{6}]$, let
    \begin{align}
        g(\phi) &:= d\lrp{\frac{2\pi}{3}-\phi} \qquad
        \text{and} \qquad
        h(\phi) := \frac{-B_1\lrp{\frac{2\pi}{3}-\phi}}{2B_3\lrp{\frac{2\pi}{3}-\phi }}.
    \end{align}
    Then 
    \begin{align}
        &\log g(0) = 0, \qquad  (\log g)'(0) = \frac{\sqrt 3}{2}, \qquad 
        (\log g)''(0) = -1, \qquad 
        |(\log g)'''(\phi)| \le 1.74, \\
        &\log h(0) = 0, \qquad (\log h)'(0) = \frac{\sqrt 3}{2}, \qquad |(\log h)''(\phi)| \le 5.3, \qquad\\
        &\lrabs{h(\phi)} \le 1.09,\qquad \ h(\phi)\leq1.0116,\  \text{for }0\le\phi\le\frac{2\log1000}{1000}.
    \end{align}
    Additionally,
    \begin{align}
        \log g(\phi) \ge 0.66\, \phi 
        \quad \text{so that} \quad
        d(\theta)^{-k} \leq \exp\left(-0.66k\left(\frac{2\pi}{3} - \theta\right)\right) \quad \text{for}\quad \frac{\pi}{2} \le \theta \le \frac{2\pi}{3}.
    \end{align}
\end{lemma}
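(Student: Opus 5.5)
The plan is to treat the two functions separately, computing exact values and low-order derivatives at $\phi=0$ in closed form, and then obtaining the uniform bounds on $[0,\pi/6]$ by explicit formulas combined with a short interval computation.

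\textbf{The function $g$.} Since $d(\theta)=2\cos(\theta/2)$, we have $g(\phi)=2\cos\left(\frac{\pi}{3}-\frac{\phi}{2}\right)$. Hence
\begin{align}
\log g(\phi)=\log 2+\log\cos\left(\tfrac{\pi}{3}-\tfrac{\phi}{2}\right).
\end{align}
Differentiating gives
\begin{align}
(\log g)'(\phi)=\tfrac12\tan\left(\tfrac{\pi}{3}-\tfrac{\phi}{2}\right),\qquad (\log g)''(\phi)=-\tfrac14\sec^2\left(\tfrac{\pi}{3}-\tfrac{\phi}{2}\right),
\end{align}
and
\begin{align}
(\log g)'''(\phi)=\tfrac14\sec^2(\cdot)\tan(\cdot).
\end{align}
At $\phi=0$ these give $\log g(0)=0$, $(\log g)'(0)=\frac{\sqrt3}{2}$ and $(\log g)''(0)=-\frac14\cdot 4=-1$. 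The argument $\frac{\pi}{3}-\frac{\phi}{2}$ lies in $[\pi/4,\pi/3]$, where $\sec^2\tan$ is increasing, so its maximum is $4\sqrt3$ and $|(\log g)'''|\le\sqrt3\le 1.74$.

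For the inequality $\log g(\phi)\ge 0.66\phi$, note that $\log g$ is concave, since its second derivative is negative. It therefore suffices to check the two endpoints: $\log g(0)=0$, and $\log g(\pi/6)=\log\sqrt2\approx0.3466\ge0.66\cdot\pi/6\approx0.3456$. Substituting $\phi=\frac{2\pi}{3}-\theta$ then gives $d(\theta)^{-k}=e^{-k\log g(\phi)}\le e^{-0.66k\phi}$.

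\textbf{The function $h$.} First I need the values $B_0(2\pi/3)$ and $B_0'(2\pi/3)$. Use $E_2(-1/z)=z^2E_2(z)+\frac{6z}{\pi i}$. At $\rho=e^{2\pi i/3}$ one has $E_2(\rho)=\frac{2\sqrt3}{\pi}$, the classical value obtained from $E_4(\rho)=0$ and the Ramanujan identity. Then $B_0(2\pi/3)=\frac{\pi}{3}\mathrm{Re}\left[\rho\cdot\frac{2\sqrt3}{\pi}\right]=-\frac{1}{\sqrt3}$.

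This gives $B_1=\frac{2}{\sqrt3}$. Also $B_3=-\frac12\left(\sqrt3-\frac1{\sqrt3}\right)=-\frac1{\sqrt3}$, so $h(0)=\frac{2/\sqrt3}{2/\sqrt3}=1$ and $\log h(0)=0$.

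For $(\log h)'(0)$, differentiate: $(\log h)'=-(\log B_1)'+(\log|B_3|)'$, with the $\theta$-derivative carrying a minus sign. Here $B_1'=B_0B_0'/B_1$ and $B_3'=-\frac12\left(\frac12\sec^2(\theta/2)+B_0'\right)$. These require $B_0'(2\pi/3)$, which I would compute from the Ramanujan identity $DE_2=(E_2^2-E_4)/12$ with $E_4(\rho)=0$:
\begin{align}
\tfrac{d}{d\theta}\left[zE_2(z)\right]=iz\left(E_2+z\cdot 2\pi i\,DE_2\right).
\end{align}
Substituting and simplifying should yield $(\log h)'(0)=\frac{\sqrt3}{2}$ exactly. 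The matching of the linear coefficients of $\log g$ and $\log h$ is the whole point of the lemma, since it is what makes $d^{-k}\approx h\,e^{-\frac{\sqrt3}{2}s}$ in Lemma \ref{lem:b()^-k_estimate}.

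\textbf{Uniform bounds and the main obstacle.} The bounds $|(\log h)''|\le5.3$, $|h|\le1.09$, and $h\le1.0116$ on the short range $\phi\le 2\log1000/1000$ are not closed-form. I would verify them by direct computation, in the same way as Lemma \ref{lem:function-bounds}. The plan is to evaluate $E_2$ and its derivatives on the arc via rapidly convergent $q$-series with $|q|\le e^{-\pi}$, truncate with explicit tail bounds, and use a fine grid together with derivative bounds (Lipschitz control). The short-range bound $h\le1.0116$ follows from $h(\phi)\le\exp\left(\frac{\sqrt3}{2}\phi+\frac{5.3}{2}\phi^2\right)$ at $\phi=0.0138$, which gives about $1.0123$. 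That is slightly too large, so I would instead use a sharper second-derivative bound on the small interval, or compute $h$ directly.

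The main obstacle is the exact identity $(\log h)'(0)=\frac{\sqrt3}{2}$, which needs careful bookkeeping of $E_2(\rho)$, $DE_2(\rho)$, and the chain-rule signs. I would first sanity-check it numerically, and then do the symbolic simplification.
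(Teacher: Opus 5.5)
Your proposal is correct and follows essentially the paper's route. Everything except $\log h(0)=0$ and $(\log h)'(0)=\frac{\sqrt3}{2}$ is direct computation; your closed-form formulas and the concavity argument for $\log g(\phi)\ge 0.66\phi$ are a clean way to handle the $g$ part. Those two identities then come from $E_2(\rho)=\frac{2\sqrt3}{\pi}$ together with Ramanujan's identity and $E_4(\rho)=0$.

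Two small fixes are needed:
\begin{itemize}
\item $E_2(\rho)$ is not determined by $E_4(\rho)=0$ and Ramanujan's identity. It comes from the transformation law evaluated at $\rho$: either your $S$-law combined with $-1/\rho=\rho+1$ and periodicity, or the paper's map $z\mapsto -1/(z+1)$, which fixes $\rho$.
\item The simplification you left as ``should yield'' does close. From $B_0'(\frac{2\pi}{3})=-\frac23$ you get $B_1'(\frac{2\pi}{3})=\frac13$ and $B_3'(\frac{2\pi}{3})=-\frac23$, hence $(\log h)'(0)=-\frac{\sqrt3}{6}+\frac{2\sqrt3}{3}=\frac{\sqrt3}{2}$.
\end{itemize}
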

\begin{proof}
All the results of Lemma \ref{lem:g-and-h-deriv-values} are immediate by direct calculation, except for $\log h(0) = 0$ and $(\log h)'(0) = \frac{\sqrt 3}{2}$.

To show these last two results, we will prove that $B_0(\frac{2\pi}{3}) = -\frac{\sqrt 3}{3}$ and $B_0'(\frac{2\pi}{3}) = -\frac{2}{3}$. These two facts then imply that
\begin{align}
    B_1\left(\frac{2\pi}{3}\right)=\frac{2\sqrt{3}}{3},\qquad 
    B'_1\left(\frac{2\pi}{3}\right)=\frac{1}{3},
    \qquad 
    B_3\left(\frac{2\pi}{3}\right)=-\frac{\sqrt{3}}{3},
    \qquad
    B'_3\left(\frac{2\pi}{3}\right)=-\frac{2}{3},
\end{align}
yielding
\begin{align}
    \log h(0) &=\log\frac{-B_1(\frac{2\pi}{3})}{2B_3(\frac{2\pi}{3})}= \log 1 = 0, \\
    \text{and}\quad (\log h)'(0) &= \frac{h'(0)}{h(0)} = \frac{B_1'(\frac{2\pi}{3})B_3(\frac{2\pi}{3})-B_1(\frac{2\pi}{3})B'_3(\frac{2\pi}{3}
    )}{2B_3(\frac{2\pi}{3})^2h(0)}=\frac{\sqrt 3}{2},
\end{align}
as desired.

\textbf{Part 1: $B_0(\frac{2\pi}{3}) = -\frac{\sqrt 3}{3}$. \\}
Let
$$
\rho=e^{2\pi i/3}
\qquad\text{and}\qquad
\gamma z=-\frac{1}{z+1}.
$$
Since $\gamma\rho=\rho$ and $(\rho+1)^2=\rho$, the quasimodular
transformation law of \(E_2\) gives
\begin{align}
E_2(\rho)
&= E_2(\gamma\rho) \notag\\
&= (\rho+1)^2E_2(\rho)
   + \frac{6}{\pi i}(\rho+1) \notag\\
&= \rho E_2(\rho)
   + \frac{6}{\pi i}(\rho+1).
\end{align}
Hence,
\begin{align}
E_2(\rho)
&= \frac{6(\rho+1)}{\pi i(1-\rho)}
 = \frac{2\sqrt{3}}{\pi},
 \label{eq:E2(rho)}\\
B_0\left(\frac{2\pi}{3}\right)
&= \frac{\pi}{3}\mathrm{Re}\!\left[\rho E_2(\rho)\right] \notag\\
&= \frac{2\sqrt{3}}{3}\mathrm{Re}(\rho)
 = -\frac{\sqrt{3}}{3}.
\end{align}

\textbf{Part 2: $B_0'(\frac{2\pi}{3}) = -\frac{2}{3}$. \\}
Using
Ramanujan's differential identity
\begin{align}
   E_2'(z)=\frac{\pi i}{6}\left(E_2(z)^2-E_4(z)\right),
   \label{eq:Ramanujan_diff_id}
\end{align}
we have
\begin{align}
B_0'\left(\frac{2\pi}{3}\right)
&=\frac{\pi}{3}
  \mathrm{Re}\left[
      i\rho E_2(\rho)+i\rho^2E_2'(\rho)
  \right] \qquad\big(\text{since for $z=e^{i\theta}$, $\frac{dz}{d\theta}=iz$}\big)
  \notag
  \\
&=\frac{\pi}{3}
  \mathrm{Re}\left[
      i\rho E_2(\rho)
      -\frac{\pi}{6}\rho^2
      \left(E_2(\rho)^2-E_4(\rho)\right)
  \right]  \qquad\big(\text{by \eqref{eq:Ramanujan_diff_id}}\big)
  \notag
  \\
&=\frac{\pi}{3}
  \mathrm{Re}\left[
      i\rho\frac{2\sqrt{3}}{\pi}
      -\frac{\pi}{6}\rho^2
      \left(\frac{2\sqrt{3}}{\pi}\right)^2
  \right] \notag\\
&\qquad\big(\text{by \eqref{eq:E2(rho)} and since  $E_4(\rho)=0$ by \cite[Proposition 5.6.5]{cohen2017modular}}\big)\\
&=\frac{\pi}{3}
  \left[
      -\frac{\sqrt{3}}{2}\cdot\frac{2\sqrt{3}}{\pi}
      -\frac{\pi}{6}
       \left(-\frac12\right)\frac{12}{\pi^2}
  \right] \notag\\
&=\frac{\pi}{3}
  \left(-\frac{3}{\pi}+\frac{1}{\pi}\right)
 =-\frac{2}{3},
\end{align}
as desired.
\end{proof}

\begin{lemma} \label{lem:b()^-k_estimate}
    Let $-0.1 \le W \le 0.1$, and $k \ge 1000$. Then
    \begin{align}
        d \lrp{\theta + \frac{W}{k} \pm \frac{0.6}{k^2}}^{-k} = 
        \frac{-B_1(\theta)}{2B_3(\theta)} e^{-\frac{\sqrt 3}{2} (k(\frac{2\pi}{3}-\theta)-W)} + \frac{\alpha}{k} \\
        \text{where}\qquad 
        \lrabs{\alpha} \le 0.226 
        \qquad \text{for}\qquad 
        \frac{\pi}{2} + \frac{3.1}{k} \le \theta \le \frac{2\pi}{3}-\frac{3.1}{k}.
    \end{align}
\end{lemma}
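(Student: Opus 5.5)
We write $\phi := \frac{2\pi}{3}-\theta$, so that $\phi \in [\frac{3.1}{k}, \frac{\pi}{6}-\frac{3.1}{k}]$, and $\varepsilon := \frac{W}{k}\pm\frac{0.6}{k^2}$, so that $|\varepsilon|\le \frac{0.1006}{k}$. With $g,h$ as in Lemma \ref{lem:g-and-h-deriv-values}, the quantity to estimate is
\begin{align}
    d(\theta+\varepsilon)^{-k} = \exp\bigl(-k\log g(\phi-\varepsilon)\bigr), \qquad \frac{-B_1(\theta)}{2B_3(\theta)} = h(\phi).
\end{align}
The claim is therefore that $\exp(-k\log g(\phi-\varepsilon))$ and $h(\phi)\exp(-\frac{\sqrt3}{2}(k\phi-W))$ differ by at most $0.226/k$. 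We split into two regimes according to the size of $\phi$, with the threshold $\phi_0 := \frac{2\log k}{k}$. This is the range in which the bounds $|h|\le 1.09$ and $h\le 1.0116$ of Lemma \ref{lem:g-and-h-deriv-values} are stated.

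\textbf{Regime $\phi \ge \phi_0$.} Both terms are tiny here. By $\log g(\psi)\ge 0.66\psi$ we get $d(\theta+\varepsilon)^{-k}\le e^{-0.66k\phi+0.067}\le 1.07\,k^{-1.32}$. Similarly, using the crude bound $|h|\le 1.09\cdot$(a bound on all of $[0,\pi/6]$, obtained by direct computation from Lemma \ref{lem:function-bounds}, e.g. $1.16/(2\cdot 0.48)$) together with $e^{-\frac{\sqrt3}{2}k\phi}\le k^{-\sqrt3}$, the second term is also $O(k^{-1.3})$. For $k\ge 1000$ both are well below $0.1/k$, so their difference is below $0.226/k$.

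\textbf{Regime $\frac{3.1}{k}\le \phi\le \phi_0$.} Here we Taylor expand $\log g$ at $0$ using the values $\log g(0)=0$, $(\log g)'(0)=\frac{\sqrt3}{2}$, $(\log g)''(0)=-1$ and the third-derivative bound $1.74$. This gives
\begin{align}
    -k\log g(\phi-\varepsilon) = -\tfrac{\sqrt3}{2}(k\phi - k\varepsilon) + \tfrac{k}{2}(\phi-\varepsilon)^2 + O\bigl(0.29\,k(\phi+|\varepsilon|)^3\bigr),
\end{align}
where $k\varepsilon = W\pm 0.6/k$. Similarly $\log h(\phi) = \frac{\sqrt3}{2}\phi + O(2.65\phi^2)$, so $h(\phi)=e^{\frac{\sqrt3}{2}\phi+O(\phi^2)}$. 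The two exponents then differ by
\begin{align}
    \delta := \tfrac{k}{2}(\phi-\varepsilon)^2 - \tfrac{\sqrt3}{2}\phi \pm \tfrac{0.52}{k} + O(k\phi^3) + O(\phi^2).
\end{align}
Using $|e^{a+\delta}-e^{a}|\le e^{a}\,|\delta|\,e^{|\delta|}$ with $a=-\frac{\sqrt3}{2}(k\phi-W)+\log h(\phi)$, the error is at most $h(\phi)e^{-\frac{\sqrt3}{2}(k\phi-W)}|\delta|e^{|\delta|}$. Writing $x=k\phi\ge 3.1$, the dominant contribution is
\begin{align}
    \frac{1}{k}\,e^{-\frac{\sqrt3}{2}x}\Bigl(\tfrac{x^2}{2}+\tfrac{\sqrt3}{2}x+0.52\Bigr)\cdot 1.0116\,e^{0.0866},
\end{align}
and the remaining terms are $O(\log^3 k/k^2)$, hence negligible for $k\ge1000$.

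The main obstacle is making this numerically sharp enough to reach $0.226$. I would maximize $x\mapsto e^{-\frac{\sqrt3}{2}x}(\frac{x^2}{2}+0.87x+0.52)$ over $x\ge 3.1$. The function is decreasing there, so the maximum is at $x=3.1$, giving roughly $0.068\cdot 8.0\approx 0.55$. That is too large, so a more careful treatment is needed. First, $\phi^2$ enters only through $\frac{k}{2}\phi^2$ multiplied against $e^{-\frac{\sqrt3}{2}x}$, so one must keep the signs: $\frac{x^2}{2k}$ and $-\frac{\sqrt3}{2}\frac{x}{k}$ partially cancel, giving $\frac{1}{k}\bigl(\frac{x^2}{2}-0.866x\bigr)\approx \frac{2.1}{k}$ at $x=3.1$. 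Second, $\log h$ may have a quadratic coefficient that cancels further. If the constant still exceeds $0.226$, I would check whether the $\frac{k}{2}\phi^2$ term is absorbed because $\log h$ has second derivative near $1$ at $0$. Failing that, I would fall back on a direct numerical verification over the compact range $x\in[3.1,\,2\log k]$, uniformly in $k\ge1000$ via monotonicity in $k$, as the paper does for other bounds.
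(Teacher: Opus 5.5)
Your route is the paper's: the same split at $k\phi=2\log k$, the same Taylor data for $\log g$ and $\log h$ from Lemma~\ref{lem:g-and-h-deriv-values}, and the same comparison of exponents. Your $k\delta$ is exactly the paper's $\alpha_3$. The regime $\phi\ge\phi_0$ is fine; one slip is that $1.07k^{-1.32}\approx 0.117/k$ at $k=1000$, not below $0.1/k$, which is harmless since $0.117+0.01<0.226$.

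The gap is in the regime $3.1\le k\phi\le 2\log k$. There the explicit constant $0.226$ is the entire content of the lemma, and you never establish it. Your triangle-inequality estimate gives about $0.55$, and the repairs you list are either not carried out or would not work.
\begin{itemize}
\item The $\phi^2$ coefficient of $\log h$ cannot cancel $\frac{k}{2}\phi^2$. $\log h(\phi)$ is not multiplied by $k$, so its quadratic term is $O(x^2/k^2)$, while $\frac{k}{2}\phi^2=x^2/(2k)$.
\item Calling the remaining terms ``$O(\log^3k/k^2)$, hence negligible'' is not a valid step for an explicit bound at $k=1000$. Unweighted, the cubic term $0.29\,k\phi^3$ alone can contribute about $0.29(2\log 1000)^3/1000\approx0.76$ to $\alpha$. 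These terms are small only because of the factor $e^{-\frac{\sqrt3}{2}x}$, so they must stay inside the function you maximize.
\end{itemize}

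What closes the argument, and what the paper does, is to carry your ``keep the signs'' idea through completely.
\begin{itemize}
\item For $s\ge3.1$ you have $\frac12(s-W\mp\frac{0.6}{k})^2-\frac{\sqrt3}{2}s\ge\frac12(s-0.1006)^2-\frac{\sqrt3}{2}s\ge0$. Hence $|\alpha_3|\le p(s):=\frac12(s+0.1006)^2-\frac{\sqrt3}{2}s+\frac{0.6\sqrt3}{2}+0.00265s^2+0.00029(s+0.1006)^3$.
\item Then $|\alpha|\le\sup h_k\cdot\sup_{t}e^{-\frac{\sqrt3}{2}(t-0.1)}\bigl(e^{p(t)/k}-1\bigr)k$.
\item This last expression is decreasing in $k$, and a derivative check shows it is decreasing in $t$ for $t\ge4$. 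A numerical check on $[3.1,4]$ at $k=1000$ gives $0.223$, hence $|\alpha|\le1.012\cdot0.223\le0.226$.
\end{itemize}

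The margin is very thin. At $s=3.1$ the cross term with $W$ adds about $0.32$, and the $\pm0.52$ term cannot cancel, so $p(3.1)\approx2.99$ rather than your $2.1$. The final value is about $0.2257$. The $\alpha_1$, $\alpha_2$ and exponential-nonlinearity contributions (about $0.003$ together) must therefore be accounted for explicitly, and they fit only just. Your sketch, as written, neither computes this nor could conclude it.
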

\begin{proof}
    For $\phi \in [0, \frac{\pi}{6}]$, recall that
    \begin{align}
        g(\phi) &:= d\lrp{\frac{2\pi}{3}-\phi}, 
        &
        h(\phi) 
        &:= \frac{-B_1\lrp{\frac{2\pi}{3}-\phi}}{2B_3\lrp{\frac{2\pi}{3}-\phi }}, \\
        \text{and define}\quad
        g_k(s) &:= g\lrp{\frac{s}{k}},
        &
        h_k(s) 
        &:= h\lrp{\frac{s}{k}}.
    \end{align}
    Then it suffices to prove that
    \begin{align}
        g_k\lrp{s - W \mp \frac{0.6}{k}}^{-k} = h_k(s)\,e^{-\frac{\sqrt 3}{2} (s - W)} + \frac{\alpha}{k} \\
        \text{where}\qquad
        \lrabs{\alpha} \le 0.226
        \qquad \text{for}\qquad 
        3.1 \le s \le \tfrac{\pi}{6} k - 3.1.
    \end{align}
    We break into two cases.

    \textbf{Case 1: $s > 2 \log k$ \\}
    In this case, observe that
    \begin{align}
        h_k(s)\,e^{-\frac{\sqrt 3}{2} (s - W)}
        &\le 1.09\,e^{-\frac{\sqrt{3}}{2}(2 \log k-0.1)} = 1.09 \, k^{-\sqrt 3} e^{0.05 \sqrt 3} \le \frac{0.01}{k},
    \end{align}
   by Lemma \ref{lem:g-and-h-deriv-values} and since $k \ge 1000$.

    Additionally, we have that
    \begin{align}
        g_k\lrp{s-W\mp\frac{0.6}{k}}^{-k} 
        &\le g_k(2 \log k - 0.1006)^{-k} 
        &\text{since $k \ge 1000$}
        \\
        &= \exp\lrp{-k \log g\lrp{\frac{2 \log k - 0.1006}{k}}} \\
        &\le \exp\lrp{-k \cdot 0.66 \tfrac{2 \log k - 0.1006}{k}} &
        \text{by Lemma \ref{lem:g-and-h-deriv-values}}
        \\
        &= \exp\lrp{-1.32 \log k + 0.066396} \\
        &= e^{0.066396} k^{-1.32} \\
        &\le \frac{0.12}{k},&\text{since } k \ge 1000.
    \end{align}

    These two upper bounds then imply that  
    \begin{align}
        g_k\lrp{s - W \mp \frac{0.6}{k}}^{-k} = h_k(s)\,e^{-\frac{\sqrt 3}{2} (s - W)} + \frac{\alpha}{k}  \qquad \text{where}\quad
        \lrabs{\alpha} \le 0.12 + 0.01 \le 0.226,
    \end{align}
    as desired.

    \textbf{Case 2: $s \le 2 \log k$ \\}
    Recall that Lemma \ref{lem:g-and-h-deriv-values} states
    \begin{align}
        &\log g(0) = 0, \qquad  (\log g)'(0) = \frac{\sqrt 3}{2}, \qquad 
        (\log g)''(0) = -1, \qquad 
        |(\log g)'''(\phi)| \le 1.74, 
        \\
        &\text{so that} \quad
        \log g_k(t) = \frac{\sqrt 3}{2k} t - \frac{1}{2k^2} t^2 + \frac{\alpha_1}{k^3} t^3  \quad \text{over $t \in \lrb{0, \tfrac{\pi}{6}k}$}, \quad \text{where $\lrabs{\alpha_1} \le 0.29$}.
    \end{align}
    Additionally, recall that Lemma \ref{lem:g-and-h-deriv-values} states
    \begin{align}
        &\log h(0) = 0, \qquad (\log h)'(0) = \frac{\sqrt 3}{2}, \qquad |(\log h)''(t)| \le 5.3 
        \\
        &\text{so that} \quad
        \log h_k(t) = \frac{\sqrt 3}{2k} t + \frac{\alpha_2}{k^2} t^2 
        \quad \text{over $t \in \lrb{0, \tfrac{\pi}{6}k}$}, \quad
        \text{where} \quad \lrabs{\alpha_2} \le 2.65.
    \end{align}

    Then evaluating $\log g_k(t)$ and $\log h_k(t)$ at $t=s-W\mp \frac{0.6}{k}$ and $t=s$, respectively, we obtain that
    \begin{align}
        &- \log h_k(s) - k \log g_k\lrp{s-W \mp \frac{0.6}{k}}  \\
        &= - \frac{\sqrt 3}{2k} s -  \frac{\alpha_2}{k^2} s^2 
        - k 
        \lrp{
            \frac{\sqrt 3}{2k} 
            \lrp{s-W \mp \frac{0.6}{k}}
            - \frac{1}{2k^2}
            \lrp{s- W \mp \frac{0.6}{k}}^2
            + \frac{\alpha_1}{k^3}
            \lrp{s- W \mp \frac{0.6}{k}}^3
        } \\
        &= - \frac{\sqrt 3}{2} \big(s-W\big)
        + 
        \frac{1}{k}
        \lrb{
            - \frac{\sqrt 3}{2} s - \frac{\alpha_2}{k} s^2 \pm \frac{0.6\sqrt 3}{2}
            + \frac{1}{2} \lrp{s-W \mp \frac{0.6}{k}}^2
            - \frac{\alpha_1}{k} \lrp{s-W \mp \frac{0.6}{k}}^3
        } \\
        &=: - \frac{\sqrt 3}{2} \big(s-W\big)
        + 
        \frac{\alpha_3}{k}, \label{eqn:temp-estimate-for-log-b()^(-k)}
    \end{align}
    where
    \begin{align}
        \lrabs{\alpha_3} &= 
        \lrabs{
            - \frac{\sqrt 3}{2} s - \frac{\alpha_2}{k} s^2 \pm \frac{0.6\sqrt 3}{2}
            + \frac{1}{2} \lrp{s-W \mp \frac{0.6}{k}}^2
            - \frac{\alpha_1}{k} \lrp{s-W \mp \frac{0.6}{k}}^3
        } \\
        &\le
        \lrabs{ 
            \frac{1}{2} \lrp{s-W \mp \frac{0.6}{k}}^2
            - \frac{\sqrt 3}{2} s
        }
        +
        \lrabs{
            \pm \frac{0.6\sqrt 3}{2}
            - \frac{\alpha_2}{k} s^2
            - \frac{\alpha_1}{k} \lrp{s-W \mp \frac{0.6}{k}}^3
        } \\
        &\le 
        \frac{1}{2} (s+0.1006)^2 - \frac{\sqrt 3}{2} s
        +
        \frac{0.6\sqrt 3}{2} +
        0.00265 s^2 + 0.00029 \lrp{s+0.1006}^3 \\
        &\qquad\Big(\text{since $\tfrac{1}{2} \lrp{s-W \mp \tfrac{0.6}{k}}^2-\tfrac{\sqrt 3}{2} s \ge \tfrac{1}{2} \lrp{s-0.1006}^2-\tfrac{\sqrt 3}{2} s \ge 0$}  \\
        &\qquad\quad\, \text{and $\tfrac{1}{2} \lrp{s-W \mp \tfrac{0.6}{k}}^2-\tfrac{\sqrt 3}{2} s \le \tfrac{1}{2} \lrp{s+0.1006}^2-\tfrac{\sqrt 3}{2} s$ for $s \ge 3.1$} \Big) \\
        &=: p(s).
    \end{align}

    Then exponentiating \eqref{eqn:temp-estimate-for-log-b()^(-k)}, we obtain that
    \begin{align}
        g_k\lrp{s-W\mp \frac{0.6}{k}}^{-k} 
        &= h_k(s) \,e^{-\frac{\sqrt 3}{2} (s-W) + \frac{\alpha_3}{k} } \\
        &= h_k(s) \, e^{-\frac{\sqrt 3}{2} (s-W) } + h_k(s) \, e^{-\frac{\sqrt 3}{2} (s-W)} 
        \lrp{e^{\frac{\alpha_3}{k}} - 1} \\
        &=: h_k(s)\, e^{-\frac{\sqrt 3}{2} (s-W) }  + \frac{\alpha}{k},
    \end{align}
    where
    \begin{align}
        \lrabs{\alpha} 
        &= \lrabs{ h_k(s)\,e^{-\frac{\sqrt 3}{2} (s-W)} 
        \lrp{e^{\frac{\alpha_3}{k} }-1} k} \\
        &\le\, \sup_{3.1\le t\le 2\log k}h_k(t)\cdot
        \sup_{3.1 \le t \le 4}
        e^{-\frac{\sqrt 3}{2} (t-0.1)}
        \lrp{ 
        e^{
            \frac{1}{k} 
            p(t)
        }
        -1
        } k 
        \qquad (\text{by \eqref{eq:d/dt>0}})
        \\
        &\le 
         1.012 \sup_{3.1 \le t \le 4} e^{-\frac{\sqrt 3}{2} (t-0.1)}
        \lrp{ 
        e^{
            \frac{1}{1000} 
            p(t)
        }
        -1
        } \cdot 1000 \\
        & \qquad\big(\text{by Lemma \ref{lem:g-and-h-deriv-values} and since $\lrp{e^{
            \frac{1}{k} 
            p(t)
        }
        -1
        } k$ is decreasing in $k$ for each $t \in [3.1,\, 4]$}\big)
        \\
        &\le 1.012 \cdot 0.223 \le 0.226,
    \end{align}
    as desired.
    
    Here, we used the fact that for $t \in [4,\, 2\log k]$ 
    \begin{align}
        &\frac{d}{dt} \lrb{
            e^{-\frac{\sqrt 3}{2} (t-0.1)}
            \lrp{ 
            e^{
                \frac{1}{k} 
                p(t)
            }
            -1
            }k
        } \\
        &=e^{-\frac{\sqrt 3}{2} (t-0.1)} 
        \lrb{
            -\frac{\sqrt 3}{2} \lrp{e^{\frac{1}{k} p(t) }-1}k + e^{\frac{1}{k} p(t) } p'(t)
        } 
        \\
        &\le e^{-\frac{\sqrt 3}{2} (t-0.1)} 
        \lrb{
            -\frac{\sqrt 3}{2} p(t) + e^{\frac{p(2 \log k)}{k} } p'(t)
        } 
        \qquad \text{since $t \le 2 \log k$ and $p'(t) \ge 0$}
        \\
        &\le e^{-\frac{\sqrt 3}{2} (t-0.1)} 
        \lrb{
            -\frac{\sqrt 3}{2} p(t) + e^{0.0867} p'(t)
        } \qquad\quad\! \text{since } k \ge 1000
        \\
        &< 0 \qquad \text{since $t \ge 4$}\label{eq:d/dt>0}.
    \end{align}
    This completes the proof.
\end{proof}

\section{Proof of Lemma \ref{lem:perfect-interlacing}}

Recall that
\begin{align}
    \theta^{**}_{k,1} &= \frac{2\pi}{3} - \frac{\gamma_k}{3} \frac{2\pi}{k} \qquad \text{where } \gamma_k := 
    \begin{cases}
        3 & \text{if } k \equiv 0 \mod 6 \\
        2 & \text{if } k \equiv 2 \mod 6 \\
        1 & \text{if } k \equiv 4 \mod 6
    \end{cases}, 
    \\
    \theta^{**}_{k,n_k} &= \frac{\pi}{2} + \frac{\gamma'_k}{2} \frac{2\pi}{k} \qquad \text{where } \gamma'_k := 
    \begin{cases}
        2 & \text{if } k \equiv 0 \mod 4 \\
        1 & \text{if } k \equiv 2 \mod 4
    \end{cases}.
\end{align}

\begin{lemma}\label{lem:perfect-interlacing}
    For $k\geq1000$, suppose that $k < \ell \le k+26$. Then 
    \begin{align}
        \theta_{\ell,1} > \theta_{k,1} \quad \text{iff} \quad \gamma_\ell  \le \gamma_k 
        \qquad \text{and} \qquad
        \theta_{\ell,n_\ell} < \theta_{k,n_k} \quad \text{iff} \quad \gamma'_\ell  \le \gamma'_k. 
    \end{align}
\end{lemma}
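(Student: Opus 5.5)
The plan is to work with the rescaled positions $s_{k,1}:=k\big(\tfrac{2\pi}{3}-\theta_{k,1}\big)$ and $t_{k}:=k\big(\theta_{k,n_k}-\tfrac{\pi}{2}\big)$. I will show that each is a fixed constant depending only on $\gamma_k$ (respectively $\gamma'_k$), plus an explicitly bounded $O(1/k)$ correction. Since $k<\ell\le k+26$, we have $\tfrac1k-\tfrac1\ell=\tfrac{\ell-k}{k\ell}\in\big[\tfrac{2}{k\ell},\tfrac{26}{k\ell}\big]$. Hence, when $\gamma_\ell\ne\gamma_k$, the gap between the constants dominates and decides the sign of $\theta_{\ell,1}-\theta_{k,1}$. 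When $\gamma_\ell=\gamma_k$, the term $c\,(\tfrac1k-\tfrac1\ell)$ decides it, and I must check that it beats the $O(1/k^2)$ errors.

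\textbf{Left endpoint.} Combine Corollary \ref{cor:actual-zero-sample-zero-distance-bound} with Proposition \ref{prop:sample-zero-locations} at $i=1$, using $\theta^{**}_{k,1}=\tfrac{2\pi}{3}-\tfrac{2\pi\gamma_k}{3k}$ from \eqref{eq:endpoint_theta_1}. Taylor expanding $U$ at $\tfrac{2\pi}{3}$, this gives
\begin{align}
\theta_{k,1}=\frac{2\pi}{3}-\frac{1}{k}\Big(\frac{2\pi\gamma_k}{3}-U\big(\tfrac{2\pi}{3}\big)-W_{k,1}\Big)+\frac{\epsilon_k}{k^2},
\end{align}
where $|\epsilon_k|$ is bounded by an explicit constant. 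The constant comes from $U'$ (Lemma \ref{lem:function-bounds}), $V$, and $|\alpha_{k,1}|\le 0.60001$.

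By Lemma \ref{lem:g-and-h-deriv-values} (Part 1 of its proof), $B_0(\tfrac{2\pi}{3})=-\tfrac{\sqrt3}{3}$, so $U(\tfrac{2\pi}{3})=2\arctan(-\tfrac{1}{\sqrt3})=-\tfrac{\pi}{3}$. Thus $s^*_{k,1}=\tfrac{2\pi\gamma_k}{3}+\tfrac{\pi}{3}+O(1/k)\in\{\pi,\tfrac{5\pi}{3},\tfrac{7\pi}{3}\}+O(1/k)$.

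Next I pin down $W_{k,1}$. The relation $\delta_k=3-\gamma_k$ fixes the sign $(-1)^{1-\delta_k}$. Since $T(W)\approx W$, we get $|W_{k,1}|\lesssim e^{-\sqrt3\pi/2}\approx0.066$, and $W_{k,1}=w_{\gamma_k}+O(1/k)$ with $|dW/ds|\lesssim 0.06$. Therefore $s_{k,1}=c_{\gamma_k}+O(1/k)$, where $c_\gamma:=\tfrac{2\pi\gamma}{3}+\tfrac{\pi}{3}-w_\gamma$ is strictly increasing in $\gamma$ with consecutive gaps of about $2$.

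\textbf{Left endpoint, sign of the difference.} We have
\begin{align}
\theta_{\ell,1}-\theta_{k,1}=\frac{s_{k,1}}{k}-\frac{s_{\ell,1}}{\ell}.
\end{align}
If $\gamma_\ell<\gamma_k$, this is at least $\tfrac{2}{k}-O(1/k^2)>0$. If $\gamma_\ell>\gamma_k$, it is negative for the same reason, using $c_3\cdot 26/k^2\ll 2/k$.

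If $\gamma_\ell=\gamma_k$, the difference equals $c_\gamma\tfrac{\ell-k}{k\ell}$ plus error terms. The smooth parts of the corrections ($U'$, $V$, and the $s$-dependence of $W$) differ between $k$ and $\ell$ by $O\big(\tfrac{\ell-k}{k^3}\big)$. The non-smooth remainders contribute at most $\tfrac{2(0.60001)}{k^2}$ together with the $\alpha^*/k^3$ terms. Since $c_\gamma\ge c_1\approx\pi+0.07$ and $\ell-k\ge2$, the main term is at least about $\tfrac{6}{k^2}$, which beats $\tfrac{1.2}{k^2}$. This equal-$\gamma$ bookkeeping, in particular controlling $W_{\ell,1}-W_{k,1}$ via the monotonicity of $T$ on $[-0.1,0.1]$, is the step I expect to be the main obstacle.

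\textbf{Right endpoint.} Near $\tfrac{\pi}{2}$ the situation is easier. We have $d(\theta)^{-k}\le 2^{-k/2}$ up to a constant factor there, so $\hat F_k-F^*_k$ is exponentially small. The proof of Proposition \ref{prop:postsample-sample-distance-formula}, redone with this bound (or a direct sign-change argument as in Proposition \ref{prop:actual-postsample-zero-bound}), gives $|\theta_{k,n_k}-\theta^*_{k,n_k}|$ exponentially small. Also $E_2(i)=\tfrac{3}{\pi}$ gives $B_0(\tfrac{\pi}{2})=0$, hence $U(\tfrac{\pi}{2})=0$.

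With $\theta^{**}_{k,n_k}=\tfrac{\pi}{2}+\tfrac{\pi\gamma'_k}{k}$ from \eqref{eq:endpoint_theta_2}, Proposition \ref{prop:sample-zero-locations} yields
\begin{align}
\theta_{k,n_k}=\frac{\pi}{2}+\frac{\pi\gamma'_k}{k}+\frac{\pi\gamma'_kU'(\frac{\pi}{2})}{k^2}+O(1/k^3).
\end{align}
If $\gamma'_\ell\ne\gamma'_k$, the leading terms differ by about $\tfrac{\pi}{k}$ and settle the sign. If $\gamma'_\ell=\gamma'_k$, then $\theta_{\ell,n_\ell}-\theta_{k,n_k}=\pi\gamma'\big(\tfrac1\ell-\tfrac1k\big)\big(1+O(1/k)\big)+O(1/k^3)<0$. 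This gives $\theta_{\ell,n_\ell}<\theta_{k,n_k}$ exactly when $\gamma'_\ell\le\gamma'_k$.
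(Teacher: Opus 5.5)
Your proposal is correct. Its skeleton matches the paper's: the same inputs (Proposition \ref{prop:sample-zero-locations}, Corollary \ref{cor:actual-zero-sample-zero-distance-bound}, Lemma \ref{lem:function-bounds}, and monotonicity of $T$ to compare $W_{k,1}$ with $W_{\ell,1}$) and the same split into $\gamma_\ell=\gamma_k$ versus $\gamma_\ell\neq\gamma_k$. The difference lies in where you expand.

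The paper expands about the presample zeros and takes $\gamma_k\frac{2\pi(\ell-k)}{3\ell k}+(\gamma_k-\gamma_\ell)\frac{2\pi}{3\ell}$ as the main term. It treats every correction, including $\frac{U(\theta^{**}_{k,1})}{k}-\frac{U(\theta^{**}_{\ell,1})}{\ell}$, as an unsigned error bounded only through Lemma \ref{lem:function-bounds}. In the equal-$\gamma$ case this leaves the fairly thin margin $1.764<\frac{2\pi}{3}$.

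You instead expand about $\frac{2\pi}{3}$ and use the exact value $U(\frac{2\pi}{3})=-\frac{\pi}{3}$. This absorbs into $c_\gamma$ both the limiting value of $W$ and the $U$-correction. That correction actually contributes about $+\frac{\pi}{3}\frac{\ell-k}{k\ell}$, a favorable sign the paper discards. As a result, the equal-$\gamma$ comparison is much less delicate: for $\gamma=1$ your constant is $c_1\approx\pi+0.06$ against the paper's $\frac{2\pi}{3}$, and the $U$-term no longer sits among the errors. You also get the cleaner asymptotic picture $s_{k,1}\to c_{\gamma_k}$. The cost is that you need the exact endpoint values of $B_0$ (from the proof of Lemma \ref{lem:g-and-h-deriv-values} and from $E_2(i)=\frac{3}{\pi}$), which the paper's cruder bookkeeping avoids.

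Two small remarks. First, $\gamma_\ell=\gamma_k$ forces $\ell\equiv k\pmod 6$, so in fact $\ell-k\ge 6$ there, and $\gamma'_\ell=\gamma'_k$ forces $\ell-k\ge 4$; this gives you more room than the $\ell-k\ge 2$ you used. Second, at the right endpoint you need not re-prove that $\theta_{k,n_k}$ is exponentially close to $\theta^*_{k,n_k}$. Corollary \ref{cor:actual-zero-sample-zero-distance-bound} together with Lemma \ref{lem:exponential-W-bound} already gives $|\theta_{k,n_k}-\theta^*_{k,n_k}|\le\frac{0.60001}{k^2}$ plus an exponentially small term, and likewise for $\ell$. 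This is well below your main term $\pi\gamma'\frac{\ell-k}{k\ell}\ge\frac{4\pi}{k\ell}$.
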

\begin{proof}
Since $k\geq1000$ and $\ell\leq k+26$, we have $\ell\leq1.026k$. We will just show $\theta_{\ell,1} > \theta_{k,1} \ \  \text{iff} \ \  \gamma_\ell  \le \gamma_k$, and the proof of the other equivalence is nearly identical.

First, observe that 
    \begin{align}
        \theta_{\ell,1} - \theta_{k,1} &=
        \lrp{\theta^{**}_{\ell,1} - \theta^{**}_{k,1}}
        +
        \lrp{\theta_{\ell,1} - \theta^{**}_{\ell,1}}
        -
        \lrp{\theta_{k,1} - \theta^{**}_{k,1}} \\
        &= \lrp{\frac{\gamma_k}{3} \frac{2\pi}{k} - \frac{\gamma_\ell}{3} \frac{2\pi}{\ell}}
        +
        \lrp{\theta_{\ell,1} - \theta^{**}_{\ell,1}}
        -
        \lrp{\theta_{k,1} - \theta^{**}_{k,1}} \\
        &= \lrp{
            \gamma_k \frac{2\pi (\ell-k)}{3\ell k} 
            + 
            (\gamma_k - \gamma_\ell)
            \frac{2\pi}{3\ell}
        }
        +
        \lrp{\theta_{\ell,1} - \theta^{**}_{\ell,1}}
        -
        \lrp{\theta_{k,1} - \theta^{**}_{k,1}}.
    \end{align}

When $\gamma_k=\gamma_\ell$, the second explicit term vanishes, so the
first one is the primary term. When $\gamma_k\neq\gamma_\ell$, the
second explicit term is the primary term, while the first one must be
absorbed into the error term. Since $\lrabs{\gamma_k \frac{2\pi(\ell-k)}{3\ell k}} \le 3 \frac{2\pi \cdot 26}{3000 \ell} \le \frac{0.164}{\ell}$, it suffices to prove that
    \begin{align}
        \lrabs{
            \lrp{\theta_{k,1} - \theta^{**}_{k,1}}
            -
            \lrp{\theta_{\ell,1} - \theta^{**}_{\ell,1}}
        }
        <
        \left\{\begin{array}{cc}
        \frac{2\pi (\ell-k)}{3\ell k}     & \mbox{if }\gamma_k=\gamma_\ell, \\
         \frac{2\pi}{3\ell} - \frac{0.164}{\ell}   & \mbox{if }\gamma_k\neq\gamma_\ell.
        \end{array}
        \right.
        \label{eq:theta_error_bound}
    \end{align}

Note that
    \begin{align}
        &\lrabs{\lrp{\theta_{k,1} - \theta^{**}_{k,1}}
        -
        \lrp{\theta_{\ell,1} - \theta^{**}_{\ell,1}}}\\
        =\ &\lrabs{\lrp{\theta_{k,1} - \theta^{*}_{k,1}} +
        \lrp{\theta^{*}_{k,1} - \theta^{**}_{k,1}}
        -
        \lrp{\theta_{\ell,1} - \theta^{*}_{\ell,1}}
        -
        \lrp{\theta^{*}_{\ell,1} - \theta^{**}_{\ell,1}}}\\
        =\ &\lrabs{\frac{U(\theta^{**}_{k,1})}{k}+\frac{V(\theta^{**}_{k,1})}{k^2}+\frac{\alpha^*_{k,1}}{k^3}+\frac{W_{k,1}}{k}+\frac{\alpha_{k,1}}{k^2}-\frac{U(\theta^{**}_{\ell,1})}{\ell}-\frac{V(\theta^{**}_{\ell,1})}{\ell^2}-\frac{\alpha^*_{\ell,1}}{\ell^3}-\frac{W_{\ell,1}}{\ell}-\frac{\alpha_{\ell,1}}{\ell^2}}\\
        &\qquad\left(\text{by Proposition \ref{prop:sample-zero-locations} and Corollary \ref{cor:actual-zero-sample-zero-distance-bound}}.\right)\\
        \le\ &\lrabs{\frac{U(\theta^{**}_{k,1})}{k}-\frac{U(\theta^{**}_{\ell,1})}{\ell}}+\lrabs{\frac{V(\theta^{**}_{k,1})}{k^2}-\frac{V(\theta^{**}_{\ell,1})}{\ell^2}}+\lrabs{\frac{W_{k,1}}{k}-\frac{W_{\ell,1}}{\ell}}+\lrabs{\frac{l^2\alpha_{k,1}-k^2\alpha_{l,1}}{k^2l^2}}+\lrabs{\frac{l^3\alpha^*_{k,1}-k^3\alpha^*_{l,1}}{k^3l^3}}\\
        \le\ &\lrabs{\frac{U(\theta^{**}_{k,1})}{k}-\frac{U(\theta^{**}_{\ell,1})}{\ell}}+\lrabs{\frac{V(\theta^{**}_{k,1})}{k^2}-\frac{V(\theta^{**}_{\ell,1})}{\ell^2}}+\lrabs{\frac{W_{k,1}}{k}-\frac{W_{\ell,1}}{\ell}}+0.60501\left(
        \frac{1}{k^2}+\frac{1}{\ell^2}\right)\\
        &\qquad\left(\text{since $|\alpha_{k,1}|,|\alpha_{\ell,1}|\le0.60001$, $|\alpha^*_{k,1}|,|\alpha^*_{\ell,1}|\leq5$ and $k,\ell\ge1000$}.\right)\\
        \le\ &\lrabs{\frac{U(\theta^{**}_{k,1})}{k}-\frac{U(\theta^{**}_{\ell,1})}{\ell}}+\lrabs{\frac{V(\theta^{**}_{k,1})}{k^2}-\frac{V(\theta^{**}_{\ell,1})}{\ell^2}}+\lrabs{\frac{W_{k,1}}{k}-\frac{W_{\ell,1}}{\ell}}+\frac{0.613(\ell-k)}{\ell k}.\\
        &\qquad\left(\text{since $\ell\leq1.026k$ and $\ell-k\geq2$}\right)
    \end{align}
We now estimate each term separately.
Observe that
\begin{align}
    \theta^{**}_{\ell,1}-\theta^{**}_{k,1}=
    \left\{\begin{array}{cc}
        \gamma_k \frac{2\pi (\ell-k)}{3\ell k} & \mbox{if }\gamma_k=\gamma_\ell,\\
        \frac{2\pi}{3\ell}(\gamma_k-\gamma_\ell+\epsilon_{k,\ell}) & \mbox{if }\gamma_k\neq\gamma_\ell,
    \end{array}
        \right.
\end{align}
where $\epsilon_{k,\ell}=\frac{\gamma_k(\ell-k)}{k}$. Since $\gamma_k\le3$, $\ell-k\le26$ and $k\geq1000$,
\begin{align}
    \epsilon_{k,\ell}\le0.078.
    \label{ineq:epsilon}
\end{align}
Hence, we have
    \begin{align}
        \lrabs{\frac{U(\theta^{**}_{k,1})}{k}-\frac{U(\theta^{**}_{\ell,1})}{\ell}}
        &=\frac{\lrabs{\ell U(\theta^{**}_{k,1})-kU(\theta^{**}_{\ell,1})}}{\ell k}\\
        &\le\frac{\lrabs{(\ell-k)U(\theta^{**}_{k,1})}}{\ell k}+
       \frac{\lrabs{k(U(\theta^{**}_{k,1})-U(\theta^{**}_{\ell,1}))}}{\ell k}\\
       &=\frac{\lrabs{(\ell-k)U(\theta^{**}_{k,1})}}{\ell k}+\frac{\lrabs{U'(\xi)(\theta^{**}_{k,1}-\theta^{**}_{\ell,1})}}{\ell}\\
       &\qquad\left(\text{where $\xi$ lies between $\theta^{**}_{k,1}$ and $\theta^{**}_{\ell,1}$}\right)\\
       &\le1.05\frac{\ell-k}{\ell k}+\frac{2.60\lrabs{\theta^{**}_{k,1}-\theta^{**}_{\ell,1}}}{\ell}\\
       &\qquad\left(\text{by Lemma \ref{lem:function-bounds}, $|U|\leq1.05$ and $|U'|\le2.60$}\right)\\
       &\le\left\{\begin{array}{cc}
        1.05\frac{\ell-k}{\ell k}    +\frac{2.60\cdot2\pi\gamma_k(\ell-k)}{3\ell^2k}& \mbox{if }\gamma_k=\gamma_\ell \\
        1.05\frac{\ell-k}{\ell k} + \frac{2.60\cdot2\pi(|\gamma_k-\gamma_\ell|+\epsilon_{k,\ell})}{3\ell^2}    & \mbox{if }\gamma_k\neq\gamma_\ell
       \end{array}
       \right.\\
       &\le\left\{\begin{array}{l}
        1.07\frac{\ell-k}{\ell k}    \qquad\mbox{if }\gamma_k=\gamma_\ell, \\
        \qquad\left(\text{since $\ell\ge1000$ and  $\gamma_k\le3$}\right)\\
        6.71\frac{\ell-k}{\ell k}  \qquad\mbox{if }\gamma_k\neq\gamma_\ell.\\
        \qquad\left(\text{by \eqref{ineq:epsilon} and since $\ell-k\ge2$ and $|\gamma_k-\gamma_\ell|\leq2$}\right)
       \end{array}
       \right.
    \end{align}

We now divide into two cases.

\textbf{Case 1: $\gamma_k\neq\gamma_\ell$ \\}
In this case, observe that by Lemmas \ref{lem:sample-zero-at-least-3.1/k-away-from-2pi/3} and \ref{lem:exponential-W-bound}, $|W_{k,1}|, |W_{\ell,1}| \leq e^{-3.1\cdot\frac{\sqrt{3}}{2}}<0.076$, and by Lemma \ref{lem:function-bounds}, $|V|<1.43$, and also noting that $k\geq1000$ and $\ell-k\leq26$, we have
    \begin{align}
        &\lrabs{\lrp{\theta_{k,1} - \theta^{**}_{k,1}}
        -
        \lrp{\theta_{\ell,1} - \theta^{**}_{\ell,1}}}\\
        \le\ &\lrabs{\frac{U(\theta^{**}_{k,1})}{k}-\frac{U(\theta^{**}_{\ell,1})}{\ell}}+\lrabs{\frac{V(\theta^{**}_{k,1})}{k^2}}+\lrabs{\frac{V(\theta^{**}_{\ell,1})}{\ell^2}}+\lrabs{\frac{W_{k,1}}{k}}+\lrabs{\frac{W_{\ell,1}}{\ell}}+\frac{0.613(\ell-k)}{\ell k}\\
        \le\ &6.71\frac{\ell-k}{\ell k}+\frac{1.43}{k^2}+\frac{1.43}{\ell^2}+\frac{0.076}{k}+\frac{0.076}{\ell}+\frac{0.613(\ell-k)}{\ell k}\\
        \le\ &\frac{0.35}{\ell}<\frac{2\pi}{3\ell} - \frac{0.164}{\ell},
    \end{align}
therefore \eqref{eq:theta_error_bound} holds.

\textbf{Case 2: $\gamma_k =\gamma_\ell$ \\}
Noting that by Lemma \ref{lem:function-bounds}, $|V'|\le6.75$, we have
    \begin{align}
        \lrabs{\frac{V(\theta^{**}_{k,1})}{k}-\frac{V(\theta^{**}_{\ell,1})}{\ell}}
        &=\frac{\lrabs{\ell V(\theta^{**}_{k,\ell})-kV(\theta^{**}_{\ell,1})}}{\ell k}\\
        &\le\frac{\lrabs{(\ell-k)V(\theta^{**}_{k,1})}}{\ell k}+
       \frac{\lrabs{k(V(\theta^{**}_{k,1})-V(\theta^{**}_{\ell,1}))}}{\ell k}\\
       &=\frac{\lrabs{(\ell-k)V(\theta^{**}_{k,1})}}{\ell k}+\frac{\lrabs{V'(\xi)(\theta^{**}_{k,1}-\theta^{**}_{\ell,1})}}{\ell}\\
       &\qquad\left(\text{where $\xi$ lies between $\theta^{**}_{k,1}$ and $\theta^{**}_{\ell,1}$}\right)\\
       &\le1.43\frac{\ell-k}{\ell k}+\frac{6.75\lrabs{\theta^{**}_{k,1}-\theta^{**}_{\ell,1}}}{\ell}\\
       &\le1.43\frac{\ell-k}{\ell k}    +\frac{6.75\cdot2\pi\gamma_k(\ell-k)}{3\ell^2k}\\
       &\le1.48\frac{\ell-k}{\ell k}.
    \end{align}
Also since $\gamma_k=\gamma_\ell$, we have 
\begin{align}
    s^{**}_{k,1}=k\left(\frac{2\pi}{3}-\theta^{**}_{k,1}\right)=\frac{2\pi\gamma_k}{3}=\frac{2\pi\gamma_\ell}{3}=\ell\left(\frac{2\pi}{3}-\theta^{**}_{\ell,1}\right)=s^{**}_{\ell,1}.
\end{align}
Hence, letting $T^{-1}$ denote the inverse function of $T(W)$ (defined in Proposition \ref{prop:postsample-sample-distance-formula}), we have
    \begin{align}
        &\lrabs{W_{k,1}-W_{\ell,1}}\\
       =\ &\lrabs{T^{-1}\left((-1)^{1+\delta_k}e^{-\frac{\sqrt{3}}{2}s^*_{k,1}}\right)-T^{-1}\left((-1)^{1+\delta_\ell}e^{-\frac{\sqrt{3}}{2}s^*_{\ell,1}}\right)}\\
       =\ &(T^{-1})'(\xi)\lrabs{e^{-\frac{\sqrt{3}}{2}s^*_{k,1}}-e^{-\frac{\sqrt{3}}{2}s^*_{\ell,1}}}\\
       &\qquad\left(\text{where $\xi$ lies between $(-1)^{1+\delta_k}e^{-\frac{\sqrt{3}}{2}s^*_{k,1}}$ and $(-1)^{1+\delta_k}e^{-\frac{\sqrt{3}}{2}s^*_{\ell,1}}$}.\right)\\
       =\ &(T^{-1})'(\xi)e^\eta\frac{\sqrt{3}}{2}|s^*_{k,1}-s^*_{\ell,1}|\\
       &\qquad\left(\text{where $\eta$ lies between ${-\frac{\sqrt{3}}{2}s^*_{k,1}}$ and ${-\frac{\sqrt{3}}{2}s^*_{\ell,1}}$}.\right)\\
       =\ &(T^{-1})'(\xi)e^\eta\frac{\sqrt{3}}{2}\lrabs{\left(s^{**}_{k,1}-{U(\theta^{**}_{k,1})}-\frac{V(\theta^{**}_{k,1})}{k}-\frac{\alpha^*_{k,1}}{k^2}\right)-\left(s^{**}_{\ell,1}-{U(\theta^{**}_{\ell,1})}-\frac{V(\theta^{**}_{\ell,1})}{\ell}-\frac{\alpha^*_{\ell,1}}{\ell^2}\right)}\\
       \le\ &(T^{-1})'(\xi)e^\eta\frac{\sqrt{3}}{2}\left(\lrabs{{U(\theta^{**}_{k,1})}-U(\theta^{**}_{\ell,1})}+\lrabs{\frac{V(\theta^{**}_{k,1})}{k}-\frac{V(\theta^{**}_{\ell,1})}{\ell}}+\lrabs{\frac{\alpha^*_{k,1}}{k^2}-\frac{\alpha^*_{\ell,1}}{\ell^2}}\right)\\
       \le\ &0.084\cdot\left(\frac{2.60\cdot2\pi\gamma_k(\ell-k)}{3\ell k}+\frac{1.48(\ell-k)}{\ell k}+\frac{5}{k^2}+\frac{5}{\ell^2}\right)\\
       &\qquad\bigg(\text{since $(T^{-1})'(\xi)=\tfrac{1}{T'(T^{-1}(\xi))}\le\tfrac{1}{0.84}<1.21$ by inspection of $T'(\cdot)$,}\\
       &\qquad\ \ \text{and $e^\eta\leq e^{-3.1\tfrac{\sqrt{3}}{2}}<0.069$ 
       since  $\eta\le-\tfrac{\sqrt{3}}{2}\min\{s^*_{k,1},s^*_{\ell,1}\}\le-3.1\tfrac{\sqrt{3}}{2}$ by Lemma \ref{lem:sample-zero-at-least-3.1/k-away-from-2pi/3}}\bigg)\\
       \le\ &0.084\cdot\left(\frac{2.60\cdot2\pi(\ell-k)}{\ell k}+\frac{1.48(\ell-k)}{\ell k}+\frac{5.065(\ell-k)}{\ell k}\right)\\
       \le\ &\frac{2(\ell-k)}{\ell k}.
    \end{align}
So, we obtain,
    \begin{align}
        \lrabs{\frac{W_{k,1}}{k}-\frac{W_{\ell,1}}{\ell}}
        &\le\lrabs{\frac{(\ell-k)W_{k,1}}{\ell k}}+
        \lrabs{\frac{k(W_{k,1}-W_{\ell,1})}{\ell k}}\\
       &\le1.1\cdot e^{-\frac{\sqrt{3}}{2}s^*_{k,1}}\frac{\ell-k}{\ell k}+\frac{2(\ell-k)}{\ell^2k}\\
       &\le0.076\frac{\ell-k}{\ell k}+\frac{2(\ell-k)}{\ell^2k}\\
       &\le0.078\frac{\ell-k}{\ell k}.
    \end{align}
Furthermore,

    \begin{align}
        \lrabs{\frac{V(\theta^{**}_{k,1})}{k^2}-\frac{V(\theta^{**}_{\ell,1})}{\ell^2}}
        &\le\lrabs{\frac{(\ell^2-k^2)V(\theta^{**}_{k,1})}{\ell^2k^2}}+\lrabs{\frac{k^2(V(\theta^{**}_{k,1})-V(\theta^{**}_{\ell,1}))}{\ell^2k^2}}\\
        &\le\frac{1.43\cdot2.026(\ell-k)}{\ell^2 k}+\lrabs{\frac{V'(\xi)(\theta^{**}_{k,1}-\theta^{**}_{\ell,1})}{\ell^2}}\\
        &\qquad\left(\text{where $\xi$ lies between $\theta^{**}_{k,1}$ and $\theta^{**}_{\ell,1}$}\right)\\&\le\frac{1.43\cdot2.026(\ell-k)}{\ell^2 k}+\frac{6.75\cdot2\pi(\ell-k)}{\ell^3k}\\
        &\le0.003\frac{\ell-k}{\ell k}.
    \end{align}
Then combining all of the above, we finally obtain
    \begin{align}
        &\lrabs{\lrp{\theta_{k,1} - \theta^{**}_{k,1}}
        -
        \lrp{\theta_{\ell,1} - \theta^{**}_{\ell,1}}}\\
        \le\ &\lrabs{\frac{U(\theta^{**}_{k,1})}{k}-\frac{U(\theta^{**}_{\ell,1})}{\ell}}+\lrabs{\frac{V(\theta^{**}_{k,1})}{k^2}-\frac{V(\theta^{**}_{\ell,1})}{\ell^2}}+\lrabs{\frac{W_{k,1}}{k}-\frac{W_{\ell,1}}{\ell}}+\frac{0.613(\ell-k)}{\ell k}\\
        \le\ &1.07\frac{\ell-k}{\ell k}+0.003\frac{\ell-k}{\ell k}+0.078\frac{\ell-k}{\ell k}+0.613\frac{\ell-k}{\ell k}\\
        =\ &1.764\frac{\ell-k}{\ell k}<\frac{2\pi (\ell-k)}{3\ell k},
    \end{align}
verifying \eqref{eq:theta_error_bound}. 
\end{proof}

\section{Proof of Proposition \ref{prop:stieltjes-interlacing-large-k}}
\label{sec:proof_of_prop:stieltjes-interlacing-large-k}

\subsection{Preliminary Lemmas}

The following coordinates will simplify our work.
\begin{definition}
    Define
    \begin{align}
        s^{**}_{k,i} &:= k\left(\frac{2\pi}{3} - \theta^{**}_{k,i}\right), 
        \qquad
        s^*_{k,i}:= k\left(\frac{2\pi}{3} - \theta_{k,i}^*\right), 
        \qquad
        s_{k,i} := k\left(\frac{2\pi}{3} - \theta_{k,i}\right).
    \end{align}
\end{definition}

We record the following observations:
\begin{lemma}\label{lemma:s-inequalities}
    Let $s^{**}_{k,i}$ and $s^{**}_{\ell,j}$ be such that $s^{**}_{k,i} \neq s^{**}_{\ell,j}$ and $\left | \theta^*_{k,i} - \theta^*_{\ell,j}\right |\le\frac{2\pi}{k}$. Then for $k \geq 1000,$
    \begin{align}
        \left |s^{**}_{k,i} - s^{**}_{\ell,j} \right | \geq \frac{2\pi}{3}, 
        \qquad
        |s^*_{k,i} - s^*_{\ell, j}| > 2.078,
        \qquad
        |s_{k,i} - s_{\ell,j}| &> 1.876,
    \end{align}
    with the sign of each difference the same as the sign of $s^{**}_{k,i} - s^{**}_{\ell,j}$.
\end{lemma}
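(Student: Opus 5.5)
The plan is to pass successively from the presample coordinates to the sample and then the actual coordinates. At each stage the new perturbation should be too small to overturn the gap already present.

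\textbf{Step 1 (presample).} By \eqref{eq:pre-sample-zeros}, $\theta^{**}_{k,i} = \frac{2\pi}{3} - \frac{2\pi}{k}\lrp{i - \frac{\delta_k}{3}}$, so
\begin{align}
    s^{**}_{k,i} = 2\pi\lrp{i - \tfrac{\delta_k}{3}} \in \tfrac{2\pi}{3}\mathbb{Z},
\end{align}
and likewise $s^{**}_{\ell,j} \in \frac{2\pi}{3}\mathbb{Z}$. Two distinct elements of $\frac{2\pi}{3}\mathbb{Z}$ differ by at least $\frac{2\pi}{3}$ in absolute value, which gives the first inequality. The hypothesis $|\theta^*_{k,i}-\theta^*_{\ell,j}|\le\frac{2\pi}{k}$ is not needed for this step.

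\textbf{Step 2 (sample).} By Lemma \ref{lem:sample-presample-exact-expression}, $\theta^*_{k,i} = \theta^{**}_{k,i} + U(\theta^*_{k,i})/k$. Multiplying by $k$ gives $s^*_{k,i} = s^{**}_{k,i} - U(\theta^*_{k,i})$, and similarly for weight $\ell$. Hence
\begin{align}
    s^*_{k,i} - s^*_{\ell,j} = \lrp{s^{**}_{k,i} - s^{**}_{\ell,j}} - \lrp{U(\theta^*_{k,i}) - U(\theta^*_{\ell,j})}.
\end{align}
By the mean value theorem and $|U'|\le 2.60$ (Lemma \ref{lem:function-bounds}), together with the hypothesis $|\theta^*_{k,i}-\theta^*_{\ell,j}|\le \frac{2\pi}{k}$ and $k\ge 1000$, the $U$-difference is at most $2.60\cdot\frac{2\pi}{1000} < 0.0164$ in absolute value. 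Since $\frac{2\pi}{3} - 0.0164 > 2.078$, the second inequality follows, and the sign agrees with that of $s^{**}_{k,i} - s^{**}_{\ell,j}$.

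\textbf{Step 3 (actual).} By Corollary \ref{cor:actual-zero-sample-zero-distance-bound}, $\theta_{k,i} = \theta^*_{k,i} + W_{k,i}/k + \alpha_{k,i}/k^2$. Multiplying by $k$ gives $s_{k,i} = s^*_{k,i} - W_{k,i} - \alpha_{k,i}/k$, and similarly for $\ell$. Proposition \ref{prop:postsample-sample-distance-formula} gives $|W_{k,i}|, |W_{\ell,j}| \le 0.1$, and the corollary gives $|\alpha|\le 0.60001$. The perturbation of the difference is therefore at most
\begin{align}
    0.2 + 0.60001\lrp{\tfrac{1}{k}+\tfrac{1}{\ell}} \le 0.2 + 0.0013.
\end{align}
Since $2.078 - 0.2013 > 1.876$, the third inequality follows, again with sign preserved. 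Sharper bounds on $W$, such as $|W|\le 0.076$ from Lemma \ref{lem:sample-zero-at-least-3.1/k-away-from-2pi/3}, are not needed.

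There is no real obstacle here; the only care needed is in Step 2, where the hypothesis $|\theta^*_{k,i}-\theta^*_{\ell,j}|\le\frac{2\pi}{k}$ is used to bound the $U$-difference by $O(1/k)$ rather than just $2\sup|U|\approx 2.1$. The latter bound would destroy the gap.
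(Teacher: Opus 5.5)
Your argument is the paper's proof, step for step. You note that $s^{**}_{k,i}\in\frac{2\pi}{3}\mathbb{Z}$, then write $s^*_{k,i}=s^{**}_{k,i}-U(\theta^*_{k,i})$ and use the mean value bound $|U'|\le 2.60$ with the hypothesis $|\theta^*_{k,i}-\theta^*_{\ell,j}|\le\frac{2\pi}{k}$, and finally write $s_{k,i}=s^*_{k,i}-W_{k,i}-\alpha_{k,i}/k$ with $|W|\le 0.1$ and $|\alpha|\le 0.60001$.

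There is one arithmetic error, in Step 2. Rounding $2.60\cdot\frac{2\pi}{1000}\approx 0.016336$ up to $0.0164$ makes your stated inequality false: $\frac{2\pi}{3}-0.0164\approx 2.077995<2.078$. The constant $2.078$ leaves only about $6\times10^{-5}$ of slack, so keep the unrounded value, as the paper does. That gives $\frac{2\pi}{3}-2.60\cdot\frac{2\pi}{1000}\approx 2.07806>2.078$, and Step 3 then goes through exactly as you wrote it.
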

\begin{proof}
    Since every presample zero is of the form $\frac{2\pi}{3} - \frac{2\pi}{k} \lrp{i - \frac{\delta_k}{3}}$,
    \begin{align}
        s^{**}_{k,i} &= k\left(\frac{2\pi}{k}\left(\frac{3i -\delta_{k}}{3}\right)\right) = \frac{2\pi}{3}(3i - \delta_k).
    \end{align}
    Since $\delta_k$ is an integer, $s^{**}_{k,i}$ is a multiple of $\frac{2\pi}{3}$. Hence if $s^{**}_{k,i} \neq s^{**}_{\ell, j}$, then $\left |s^{**}_{k,i} - s^{**}_{\ell, j} \right | \geq \frac{2\pi}{3}$. This proves the first claim.
    
    It's immediate from Lemma \ref{lem:sample-presample-exact-expression} that
    \begin{align}
        s^*_{k,i} = s^{**}_{k,i} - U(\theta^*_{k,i}). \label{eq:s-star-exact-expression}
    \end{align}
    so we have
      \begin{align}
          \left | s^{*}_{k,i} - s^{*}_{\ell, j} \right | 
          &= \left |\lrp{s^{**}_{k,i} - U(\theta^*_{k,i})} - \left(s^{**}_{\ell, j} - U(\theta^*_{\ell, j})\right) \right|
          &\left(\text{by \eqref{eq:s-star-exact-expression}}\right)
          \\
          &\geq \left |s^{**}_{k,i} - s^{**}_{\ell, j} \right | - \left |U(\theta^{*}_{k,i}) - U(\theta^*_{\ell, j}) \right | 
          \\
          &\geq \frac{2\pi}{3} - 2.60\left |\theta^*_{k,i} - \theta^*_{\ell,j}\right | \quad &(\text{by Lemma \ref{lem:function-bounds})} \\
          &\geq \frac{2\pi}{3} - 2.60\left(\frac{2\pi}{1000}\right) \quad &\left(\text{since $\left | \theta^*_{k,i} - \theta^*_{\ell,j}\right |\le\tfrac{2\pi}{k}$}\right)\\
          &> 2.078. \label{eq:k-ell-s-star-inequality-sign}
      \end{align}
      Since $\tfrac{2\pi}{3} > 2.60\tfrac{2\pi}{1000}$, $s^*_{k,i} - s^*_{\ell, j}$ has the same sign as $s^{**}_{k,i} - s^{**}_{\ell, j}.$ This proves the second claim.

      Finally, observe that
      \begin{align}
          s_{k,i} - s_{\ell,j} = (s^{*}_{k,i} - s^{*}_{\ell, j}) - (s^*_{k,i} - s_{k,i} ) - (s_{\ell, j} - s^*_{\ell,j}) 
      \end{align}
      so since 
      \begin{align}
          s^*_{k,i} - s_{k,i} &= k(\theta_{k,i} - \theta^*_{k,i}) = W_{k,i} + \frac{\alpha_{k,i}}{k} &\text{(by Corollary \ref{cor:actual-zero-sample-zero-distance-bound})},
      \end{align}
      we have
      \begin{align}
          \left | s_{k,i} - s_{\ell, j}\right | &\geq \left | s^{*}_{k,i} - s^{*}_{\ell, j}\right | - \left | s_{k,i} - s^*_{k,i}\right | - \left | s_{\ell, j} - s^*_{\ell,j}\right | \\
          &> 2.078 - \left|W_{k,i} + \tfrac{\alpha_{k,i}}{k} \right| - \left|W_{\ell, j} + \tfrac{\alpha_{\ell,j}}{\ell} \right|
          \\&> 2.078 - 2(0.1007)  &\text{(by bounds from Corollary \ref{cor:actual-zero-sample-zero-distance-bound})}\\
          &> 1.876, \label{eq:l-ks-hat-inequality-sign}
      \end{align}
        with $s_{k,i} - s_{\ell, j}$ the same sign as $s^{*}_{k,i} - s^{*}_{\ell, j}$ (since $2.078 > 2(0.1007)$), which has the same sign as $s^{**}_{k,i} - s^{**}_{\ell,j}$. This proves the third claim.
\end{proof}

We now bound the difference $W_{k,i} - W_{\ell,j}$.
\begin{lemma}\label{lem:W-diff-bound}
    Suppose that $\theta^*_{k,i+1} < \theta^*_{\ell, j} < \theta^*_{k,i}$ and $s^{**}_{k,i} = s^{**}_{\ell,j}$. Then 
    \begin{align}
        \left |W_{k,i} - W_{\ell,j} \right | < \frac{3.7}{k}.
    \end{align}
\end{lemma}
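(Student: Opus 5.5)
The plan is to run the same mean value argument as in Case 2 of the proof of Lemma \ref{lem:perfect-interlacing}. Since $W_{k,i}$ and $W_{\ell,j}$ are defined through $T^{-1}$ in Proposition \ref{prop:postsample-sample-distance-formula}, it suffices to show three things: the two arguments of $T^{-1}$ have the same sign, their exponents $s^*_{k,i}$ and $s^*_{\ell,j}$ are $O(1/k)$ apart, and both exponentials are small.

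\textbf{Step 1 (matching signs).} By the proof of Lemma \ref{lemma:s-inequalities}, $s^{**}_{k,i}=\frac{2\pi}{3}(3i-\delta_k)$ and $s^{**}_{\ell,j}=\frac{2\pi}{3}(3j-\delta_\ell)$. The hypothesis $s^{**}_{k,i}=s^{**}_{\ell,j}$ gives $3i-\delta_k=3j-\delta_\ell$. Since $\delta_k,\delta_\ell\in\{0,1,2\}$, this forces $\delta_k=\delta_\ell$ and $i=j$, so $(-1)^{i-\delta_k}=(-1)^{j-\delta_\ell}$. Hence
\begin{align}
W_{k,i}-W_{\ell,j}=T^{-1}\!\left(\epsilon e^{-\frac{\sqrt3}{2}s^*_{k,i}}\right)-T^{-1}\!\left(\epsilon e^{-\frac{\sqrt3}{2}s^*_{\ell,j}}\right), \qquad \epsilon=(-1)^{i-\delta_k}.
\end{align}

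\textbf{Step 2 (mean value theorem twice).} Applying the mean value theorem first to $T^{-1}$ and then to the exponential gives
\begin{align}
|W_{k,i}-W_{\ell,j}|\le \sup|(T^{-1})'|\cdot e^{\eta}\cdot\tfrac{\sqrt3}{2}\,|s^*_{k,i}-s^*_{\ell,j}|,
\end{align}
where $\eta\le -\frac{\sqrt3}{2}\min\{s^*_{k,i},s^*_{\ell,j}\}$. By Lemma \ref{lem:sample-zero-at-least-3.1/k-away-from-2pi/3}, both $s^*$ values are at least $3.1$, so $e^\eta\le e^{-3.1\sqrt3/2}<0.069$. As in Lemma \ref{lem:perfect-interlacing}, inspecting $T'$ on $[-0.1,0.1]$ gives $T'\ge 0.84$ there, so $(T^{-1})'<1.21$.

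\textbf{Step 3 (bounding $|s^*_{k,i}-s^*_{\ell,j}|$).} By \eqref{eq:s-star-exact-expression}, $s^*=s^{**}-U(\theta^*)$. Since the $s^{**}$ values agree,
\begin{align}
|s^*_{k,i}-s^*_{\ell,j}|=|U(\theta^*_{k,i})-U(\theta^*_{\ell,j})|\le 2.60\,|\theta^*_{k,i}-\theta^*_{\ell,j}|
\end{align}
by Lemma \ref{lem:function-bounds}. The hypothesis $\theta^*_{k,i+1}<\theta^*_{\ell,j}<\theta^*_{k,i}$, together with Corollary \ref{cor:sample-zero-2pi-k-distance-bound}, gives $|\theta^*_{k,i}-\theta^*_{\ell,j}|\le\theta^*_{k,i}-\theta^*_{k,i+1}\le\frac{2\pi}{k}$. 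Therefore $|s^*_{k,i}-s^*_{\ell,j}|\le\frac{16.34}{k}$.

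\textbf{Conclusion and main obstacle.} Combining the three steps gives $|W_{k,i}-W_{\ell,j}|\le 1.21\cdot0.069\cdot0.866\cdot\frac{16.34}{k}<\frac{1.2}{k}<\frac{3.7}{k}$, leaving ample room. The only points needing care are the sign matching in Step 1 (otherwise the mean value theorem cannot be applied across $0$) and the numerical check that $T'\ge 0.84$ on $[-0.1,0.1]$. The latter follows from $T'(W)=e^{-\frac{\sqrt3}{2}W}\big(\cos\frac W2-\sqrt3\sin\frac W2\big)$, whose minimum on the interval is attained at $W=0.1$.
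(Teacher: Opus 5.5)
Your proposal is correct and follows essentially the paper's argument. Both proofs match the signs using $s^{**}_{k,i}=s^{**}_{\ell,j}$, bound $|s^*_{k,i}-s^*_{\ell,j}|\le 2.60\cdot\frac{2\pi}{k}$ via $U'$ and Corollary~\ref{cor:sample-zero-2pi-k-distance-bound}, and finish with a mean value estimate. The only difference is in that last step: the paper applies the mean value theorem once to $H(W)=-\frac{2}{\sqrt3}\log\bigl(\epsilon T(W)\bigr)$ with $|H'|>4.5$, whereas you factor the derivative through $T^{-1}$ and the exponential, as in Case 2 of Lemma~\ref{lem:perfect-interlacing}. That uses $s^*\ge 3.1$ and gives the sharper constant $1.2/k$.

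One numerical slip: $T'(0.1)\approx 0.8365$, slightly below $0.84$. Since $1/0.8365<1.21$, your bound $(T^{-1})'<1.21$ and the conclusion are unaffected.
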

\begin{proof}
The equality $s^{**}_{k,i} = s^{**}_{\ell,j}$ implies that $(i,\delta_k) = (j,\delta_\ell)$, so that $(-1)^{i-\delta_k} = (-1)^{j-\delta_\ell}$. These are precisely the signs appearing in the definitions of $W_{k,i}$ and $W_{\ell,j}$ in Proposition \ref{prop:postsample-sample-distance-formula}; call this sign $\epsilon$, where $\epsilon\in\{+1,-1\}$. 

Observe that 
    \begin{align}
        \left | s^*_{k,i} - s^*_{\ell, j}\right | &= \left |s^{**}_{k,i} - s^{**}_{\ell, j} + U(\theta^{*}_{\ell, j}) -U(\theta^*_{k,i})\right|\qquad \text{(by Lemma \ref{lem:sample-presample-exact-expression})} \\
&=\left|U(\theta^{*}_{\ell, j}) -U(\theta^*_{k,i}) \right| \qquad \text{(since we assume } s^{**}_{k,i} = s^{**}_{\ell,j})\\
        &\leq 2.60 \left |\theta^*_{\ell, j} - \theta^*_{k, i}\right| \qquad\text{(by \eqref{eq:s-star-exact-expression}})\\
        &< 2.60 \left(\frac{2\pi}{k}\right) \qquad \text{(since } |\theta^*_{\ell, j} - \theta^*_{k,i}| < |\theta^*_{k,i} - \theta^*_{k,i+1}| \leq \tfrac{2\pi}{k} \text{ by Corollary \ref{cor:sample-zero-2pi-k-distance-bound}}).
    \end{align}
    By the mean value theorem,
    \begin{align}
        \left |W_{k,i} - W_{\ell, j} \right| = \left | \frac{s^*_{\ell,j} - s^*_{k,i}}{H'(c)} \right |,
    \end{align}
    where
    \begin{align}
        H(W) := \left(-\frac{2}{\sqrt{3}}\right)\log\left(2\epsilon\sin\left(\frac{W}{2}\right)e^{-W\frac{\sqrt{3}}{2}}\right)
    \end{align}
    is a certain function satisfying $H(W_{k,i}) = s^*_{k,i}$ (obtained by solving the identity $T(W_{k,i})=\epsilon\, e^{-\frac{\sqrt 3}{2} s^*_{k,i}}$ for $s^*_{k,i}$). Finally, we have that
    \begin{align}
        \left |H'(W) \right | > 4.5 \quad \text{for } -0.1 < W < 0.1,
    \end{align}
    so
    \begin{align}
        \left |W_{k,i} - W_{\ell, j} \right| < \frac{2.60(2\pi)}{4.5k} < \frac{3.7}{k},
    \end{align}
    completing the proof.
\end{proof}

We also give an upper bound for $W_{k,i}$ in terms of $s^*_{k,i}$.
\begin{lemma} \label{lem:exponential-W-bound}
    With $W_{k,i}$ as defined in Proposition \ref{prop:postsample-sample-distance-formula},
    \begin{align}
        \left|W_{k,i}\right| \leq 1.1e^{-\frac{\sqrt{3}}{2}s^*_{k,i}}.
    \end{align}
\end{lemma}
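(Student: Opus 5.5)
We need $|W_{k,i}|\le 1.1 e^{-\frac{\sqrt3}{2}s^*_{k,i}}$, where $W=W_{k,i}\in[-0.1,0.1]$ is defined by $T(W)=(-1)^{i-\delta_k}e^{-\frac{\sqrt3}{2}s^*_{k,i}}$ and $T(W)=2\sin(W/2)e^{-\frac{\sqrt3}{2}W}$. Since $|T(W)|=e^{-\frac{\sqrt3}{2}s^*_{k,i}}$, it suffices to prove the purely one-variable inequality
\begin{align}
    |W| \le 1.1\,|T(W)| \qquad \text{for all } W\in[-0.1,0.1].
\end{align}
This is trivial at $W=0$, where both sides vanish.

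For $W\neq 0$, I would bound $|T(W)|/|W|$ from below. Using $|\sin x|\ge |x|\bigl(1-\tfrac{x^2}{6}\bigr)$, we get $|2\sin(W/2)|\ge |W|(1-W^2/24)\ge |W|(1-0.01/24)$ for $|W|\le 0.1$. Also $e^{-\frac{\sqrt3}{2}W}\ge e^{-\frac{\sqrt3}{2}\cdot 0.1}\ge e^{-0.0867}\ge 0.9169$. Hence
\begin{align}
    |T(W)| \ge 0.9169\,(0.9995)\,|W| \ge 0.916\,|W|,
\end{align}
so $|W|\le |T(W)|/0.916 \le 1.092\,|T(W)|\le 1.1\,|T(W)|$. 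Substituting $|T(W_{k,i})|=e^{-\frac{\sqrt3}{2}s^*_{k,i}}$ gives the claim.

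There is no real obstacle. The only point to check is that the worst case occurs at $W=0.1$, where the exponential factor is smallest, and that the numerical constant there stays below $1.1$. The computation above gives about $1.092$, which suffices.
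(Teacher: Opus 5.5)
Your proof is correct and follows essentially the same route as the paper: reduce to the one-variable inequality $|W|\le 1.1\,|T(W)|$ on $[-0.1,0.1]$, then bound $|T(W)|/|W|$ from below using the worst-case exponential factor $e^{-\frac{\sqrt3}{2}\cdot 0.1}$. The only difference is cosmetic: the paper lower-bounds $|2\sin(W/2)|$ via $|\tan x|\ge |x|$ together with the factor $\cos(0.05)$, while you use $\sin x\ge x-x^3/6$; both give a constant of about $1.09$.
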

\begin{proof}
    Since $|W_{k,i}| \leq 0.1$ and $|\!\tan x|\ge |x|$, we have that
    \begin{align}
        \left|T(W_{k,i})\right| = 2\lrabs{\sin\lrp{\frac{W_{k,i}}2}} e^{-\frac{\sqrt{3}}{2} W_{k,i}} &= 2 \lrabs{\tan\lrp{\frac{W_{k,i}}{2}}} \lrabs{\cos\lrp{\frac{W_{k,i}}{2}}} e^{- \frac{\sqrt 3}{2} W_{k,i}} \\
        &\geq |W_{k,i}|\cos\left(\frac{0.1}{2}\right)e^{-0.1\frac{\sqrt{3}}{2}}.
    \end{align}
    This then implies that
    \begin{align}
        |W_{k,i}| &\leq \frac{ \left|T(W_{k,i})\right|}{\cos(0.05)e^{-0.1\frac{\sqrt{3}}{2}}} \leq 1.1|T(W_{k,i})| = 1.1e^{-\frac{\sqrt{3}}{2} s^*_{k,i} },
    \end{align}
    as desired.
\end{proof}

\subsection{Proof of the upper bound of Proposition \ref{prop:stieltjes-interlacing-large-k}}

{
\renewcommand{\thetheorem}{\ref{prop:stieltjes-interlacing-large-k}}
\addtocounter{theorem}{-1}
\begin{proposition}[\textbf{upper bound}]
\label{prop:actual-interlacing-upper-ineq}
    For $k \ge 1000$, let
    \begin{align}
        \theta^*_{k,i+1} + \frac{3(\ell-k)}{\ell k}
        <
        \theta^*_{\ell,j} 
        <
        \theta^*_{k,i} - \frac{3 (\ell-k)}{\ell k}
    \end{align}
    be the sample zeros from Lemma \ref{lem:interlacing-sample-zeros-with-gap}. Then
    \begin{align}
        \theta_{\ell,j} 
        <
        \theta_{k,i}.
    \end{align}
\end{proposition}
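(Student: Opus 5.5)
We will compare the actual zeros through the representation of Corollary \ref{cor:actual-zero-sample-zero-distance-bound}:
\begin{align}
    \theta_{k,i} - \theta_{\ell,j}
    = \lrp{\theta^*_{k,i} - \theta^*_{\ell,j}} + \lrp{\frac{W_{k,i}}{k} - \frac{W_{\ell,j}}{\ell}} + \frac{\alpha_{k,i}}{k^2} - \frac{\alpha_{\ell,j}}{\ell^2}.
\end{align}
By hypothesis the first bracket exceeds $\frac{3(\ell-k)}{\ell k}$, and the last two terms are at most $\frac{0.60001}{k^2}+\frac{0.60001}{\ell^2}$ in absolute value. The whole task is therefore to control the $W$-bracket. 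We split according to whether $s^{**}_{k,i} = s^{**}_{\ell,j}$, and use the fact that $|\theta^*_{k,i}-\theta^*_{\ell,j}|\le \frac{2\pi}{k}$ by Corollary \ref{cor:sample-zero-2pi-k-distance-bound}, so that Lemma \ref{lemma:s-inequalities} applies.

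\textbf{Case A: $s^{**}_{k,i} = s^{**}_{\ell,j}$.} Here Lemma \ref{lem:W-diff-bound} gives $|W_{k,i}-W_{\ell,j}|<\frac{3.7}{k}$. We write
\begin{align}
    \frac{W_{k,i}}{k}-\frac{W_{\ell,j}}{\ell} = \frac{W_{k,i}-W_{\ell,j}}{\ell} + W_{k,i}\frac{\ell-k}{\ell k}.
\end{align}
The first piece is $O(1/(\ell k))$ and the second is at most $0.1\frac{\ell-k}{\ell k}$. We then need
\begin{align}
    2.9\frac{\ell-k}{\ell k} > \frac{3.7}{\ell k} + \frac{1.2001}{k^2}.
\end{align}
This holds for $\ell-k\ge 4$, since $2.9\cdot 4$ is comfortably larger than $3.7+1.2\ell/k$ as long as $\ell\le 2k$, say. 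When $\ell-k=2$ or $\ell$ is huge, this crude bound fails, and we refine as follows. For $\ell \gg k$, we note that $s^{**}$ equal forces $s^*$ close, and we use Lemma \ref{lem:exponential-W-bound} to get $|W|\le 1.1e^{-\frac{\sqrt3}{2}s^*}$. For small $\ell-k$, the sharper estimate $|W_{k,i}-W_{\ell,j}|\le 1.21\,e^{-\frac{\sqrt3}{2}\min s^*}\,|s^*_{k,i}-s^*_{\ell,j}|$ from the mean value theorem applied to $T^{-1}$ (as in the proof of Lemma \ref{lem:perfect-interlacing}) gives a factor $0.084$. We also use $|s^*_{k,i}-s^*_{\ell,j}|\le 2.6\,|\theta^*_{k,i}-\theta^*_{\ell,j}|\cdot k$ bounded in terms of $\frac{\ell-k}{\ell}$, which should make the $W$-difference a small multiple of $\frac{\ell-k}{\ell k}$.

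\textbf{Case B: $s^{**}_{k,i}\ne s^{**}_{\ell,j}$.} By Lemma \ref{lemma:s-inequalities}, $|s_{k,i}-s_{\ell,j}|>1.876$, with the same sign as $s^{**}_{k,i}-s^{**}_{\ell,j}$. Since $\theta^*_{\ell,j}<\theta^*_{k,i}$, we would like to conclude directly. If $s_{\ell,j}>s_{k,i}$, we get $\theta_{\ell,j} = \frac{2\pi}{3}-\frac{s_{\ell,j}}{\ell}$. This is not immediate from $s_{\ell,j}>s_{k,i}$ alone when $\ell>k$, so instead we argue via $W$ sizes. Both $|W_{k,i}|$ and $|W_{\ell,j}|$ are bounded by $1.1e^{-\frac{\sqrt3}{2}s^*}$, and the sample-gap is at least $\frac{3(\ell-k)}{\ell k}$ but, more usefully, $\theta^*_{k,i}-\theta^*_{\ell,j}$ is at least about $\frac{1}{k}\cdot\min(2.078,\ldots)$ whenever the $s^*$ values differ. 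Concretely, we write
\begin{align}
    \theta^*_{k,i}-\theta^*_{\ell,j} = \frac{s^*_{\ell,j}}{\ell}-\frac{s^*_{k,i}}{k}.
\end{align}
If $s^*_{\ell,j}\ge s^*_{k,i}+2.078$, this is at least $\frac{2.078}{\ell} - s^*_{k,i}\frac{\ell-k}{\ell k}$. When $s^*_{k,i}$ is large, $W_{k,i}$ is exponentially small and the hypothesis gap $\frac{3(\ell-k)}{\ell k}$ must absorb only $\frac{|W_{\ell,j}|}{\ell}\le \frac{1.1e^{-\frac{\sqrt3}{2}s^*_{\ell,j}}}{\ell}$, which is tiny because $s^*_{\ell,j}\ge s^*_{k,i}+2$ is also large. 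In the complementary subcase $s^*_{\ell,j}<s^*_{k,i}$, the $\ell$-zero is closer to $\frac{2\pi}{3}$ in $s$ and the bound $|W|\le 0.1$ may be too weak. There we balance the gap against $\frac{1.1}{k}e^{-\frac{\sqrt3}{2}s^*_{k,i}} + \frac{1.1}{\ell}e^{-\frac{\sqrt3}{2}s^*_{\ell,j}}$, splitting at a threshold such as $s^*\approx 6$ where the exponentials drop below $0.01$.

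The main obstacle is the Case B subcase where both $s^*$ values are small (near $3.1$), so that the $W$ terms are of size about $0.07/k$ while the guaranteed gap $\frac{3(\ell-k)}{\ell k}$ may be only about $\frac{6}{k^2}$. There I expect to need the true gap $\frac{s^*_{\ell,j}}{\ell}-\frac{s^*_{k,i}}{k}$, which is bounded below by a constant over $k$ for $\ell$ near $k$ thanks to the $2.078$ separation. I would carry that out first, checking numerically that $\frac{2.078}{\ell}-(\text{small})\,\frac{\ell-k}{\ell k}$ beats $\frac{0.16}{k}$.
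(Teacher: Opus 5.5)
Your architecture matches the paper's. You reduce via Corollary~\ref{cor:actual-zero-sample-zero-distance-bound} to controlling $\frac{W_{k,i}}{k}-\frac{W_{\ell,j}}{\ell}$ plus the $\alpha$-terms, split on whether $s^{**}_{k,i}=s^{**}_{\ell,j}$, and use Lemma~\ref{lem:W-diff-bound} in the equal case and Lemmas~\ref{lemma:s-inequalities} and~\ref{lem:exponential-W-bound} otherwise. However, several steps as written are wrong or unfinished.

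\textbf{Case A.} Your own crude inequality already settles this case. The condition $2.9(\ell-k)>3.7+1.2001\,\ell/k$ is equivalent to $(2.9-1.2001/k)(\ell-k)>4.9001$, which holds for every $\ell\ge k+2$; at $\ell-k=2$ it reads roughly $5.8>4.9$. So it fails neither at $\ell-k=2$ nor for large $\ell$, and this is essentially the paper's computation. The ``refinement'' is therefore unnecessary, and as stated it is also incorrect. When $s^{**}_{k,i}=s^{**}_{\ell,j}$, Lemma~\ref{lem:sample-presample-exact-expression} gives $|s^*_{k,i}-s^*_{\ell,j}|=|U(\theta^*_{\ell,j})-U(\theta^*_{k,i})|\le 2.6\,|\theta^*_{k,i}-\theta^*_{\ell,j}|$ with no factor $k$. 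With your factor $k$ the bound is $O(1)$ and gives nothing.

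\textbf{Case B.} The main defect is that you never use the hypothesis to eliminate a subcase, and your plan for that subcase would not work.
\begin{itemize}
\item Since $\theta^*_{\ell,j}<\theta^*_{k,i}$ means $s^*_{\ell,j}/\ell>s^*_{k,i}/k$, you get $s^*_{\ell,j}>(\ell/k)\,s^*_{k,i}>s^*_{k,i}$. So the subcase $s^*_{\ell,j}<s^*_{k,i}$ (equivalently, by Lemma~\ref{lemma:s-inequalities}, $s^{**}_{\ell,j}<s^{**}_{k,i}$) is empty; this is how the paper dismisses it. Had it been nonempty, a fixed threshold like $s^*\approx 6$ could not work: a residual error of size $0.01/k$ is not absorbed by a guaranteed gap of about $6/k^2$.
\item In the remaining subcase $s^*_{\ell,j}>s^*_{k,i}+2.078$, your direct dichotomy does work, but the cut must be at scale $x:=k/(\ell-k)$, not ``$s^*$ near $3.1$'' versus ``large''.
\item You must first dispose of $\ell\ge 1.1k$, as the paper does. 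There the hypothesis gap $3(\ell-k)/(\ell k)\ge 0.27/k$ beats $0.1/k+0.1/\ell+O(k^{-2})$ outright, whereas your bound $\frac{2.078}{\ell}-s^*_{k,i}\frac{\ell-k}{\ell k}$ turns negative once $\ell-k$ is comparable to $k$.
\item For $\ell<1.1k$ (so $x>10$), if $s^*_{k,i}\le x$, the true sample gap is at least $\frac{2.078}{\ell}-\frac{1}{\ell}=\frac{1.078}{\ell}$. This exceeds the total error, which is at most $0.22/\ell$.
\item If instead $s^*_{k,i}>x$, then both $|W_{k,i}|,|W_{\ell,j}|\le 1.1e^{-\frac{\sqrt3}{2}x}$ by Lemma~\ref{lem:exponential-W-bound}. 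The needed inequality then reduces to $1.1(2x+1)e^{-\frac{\sqrt3}{2}x}+0.61<3$.
\end{itemize}
The paper reaches the same split without choosing a threshold, by contradiction. Assuming $\theta_{\ell,j}\ge\theta_{k,i}$ gives $k s_{\ell,j}\le \ell s_{k,i}$. Together with $s_{\ell,j}-s_{k,i}>1.876$, this forces $s_{k,i}>1.876k/(\ell-k)$ (Lemma~\ref{lem:actual-interlacing-upper-ineq-s-lb}), which lands automatically in the exponentially small regime.
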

}
\begin{proof}
    Using Corollary \ref{cor:actual-zero-sample-zero-distance-bound}, the desired result is straightforward to prove if $\ell \geq 1.1 k$ or if $\theta^*_{\ell,j} \leq \theta^*_{k,i+1}$.  
    
    So we assume $\ell < 1.1 k$ and $\theta^*_{\ell,j} > \theta^*_{k,i+1}$. Recall that
    \begin{align}
        \theta_{k,i} - \theta^*_{k,i} &= \frac{W_{k,i}}{k} + \frac{\alpha_{k,i}}{k^2} &\left |\alpha_{k,i} \right |\leq 0.60001
    \end{align}
    Hence to prove $\theta_{\ell, j} < \theta_{k,i}$, it suffices to show that
    \begin{align}
        \left |\theta_{k,i} - \theta^*_{k,i} - (\theta_{\ell, j} - \theta^*_{\ell,j})\right | = \left | \frac{W_{k, i}}{k} + \frac{ \alpha_{k,i}}{k^2} - \frac{W_{\ell, j}}{\ell} - \frac{ \alpha_{\ell, j}}{\ell^2} \right| < \frac{3 (\ell-k)}{\ell k}.
    \end{align}
    We case on whether $s^{**}_{k,i} = s^{**}_{\ell,j}$. Assume first $s^{**}_{k,i} = s^{**}_{\ell,j}$. Then
    \begin{align}
        \left |\frac{W_{k, i}}{k} + \frac{\alpha_{k,i}}{k^2} - \frac{W_{\ell, j}}{\ell} - \frac{\alpha_{\ell, j}}{\ell^2} \right| &\leq \left |\frac{W_{k, i}}{k}- \frac{W_{\ell, j}}{\ell}\right| + \frac{0.60001}{k^2} + \frac{0.60001}{\ell^2} \\
        &\leq |W_{k,i}|\left(\frac{1}{k} - \frac{1}{\ell}\right) + \frac{\left|W_{k,i} - W_{\ell, j} \right|}{\ell} + \frac{0.60001}{k^2} + \frac{0.60001}{\ell^2} \\
         &< 0.1\left(\frac{1}{k} - \frac{1}{\ell}\right) + \frac{3.7}{\ell k} + \frac{0.60001}{k^2} + \frac{0.60001}{\ell^2} \\
         &\quad (\text{by Proposition \ref{prop:postsample-sample-distance-formula} and Lemma \ref{lem:W-diff-bound}})
         \\
        &\leq \frac{0.1(\ell - k) + 3.7 + 0.60001\left(\frac{\ell}{k} + \frac{k}{\ell}\right)}{\ell k} \\
        &< \frac{0.1(\ell - k) + 4.93}{\ell k}\qquad \text{(since } \tfrac{\ell}{k} < 1.1) \\
        &< \frac{3(\ell - k)}{\ell k} \quad \text{for }\ell - k \geq 2,
    \end{align}
    implying $\theta_{\ell, j} < \theta_{k,i}$.

    The case of $s^{**}_{k,i} > s^{**}_{\ell, j}$ is impossible; by Lemma \ref{lemma:s-inequalities}, this would imply that
    \begin{align}
        \frac{s^{*}_{k,i}}{k} > \frac{s^*_{\ell,j}}{k} > \frac{s^*_{\ell, j}}{\ell},
    \end{align}
    or equivalently
    \begin{align}
        \theta^*_{\ell,j} > \theta^*_{k,i},
    \end{align}
    contradicting our assumptions for this proposition.

    Now assume $s^{**}_{k,i} < s^{**}_{\ell, j}$. Suppose for contradiction that $\theta_{\ell, j} \geq \theta_{k,i}$. Then by Lemma \ref{lem:actual-interlacing-upper-ineq-s-lb},
    \begin{align} \label{eqn:temp-s*ellj-lb}
        s^*_{\ell, j} > s^*_{k,i} > \frac{1.876k}{\ell - k}-0.1007.
    \end{align} 
    Then,
    \begin{align}
        \left |\theta_{k,i} - \theta^*_{k,i} - (\theta_{\ell, j} - \theta^*_{\ell,j})\right | &= \left |\frac{W_{k, i}}{k} + \frac{ \alpha_{k,i}}{k^2} - \frac{W_{\ell, j}}{\ell} - \frac{\alpha_{\ell, j}}{\ell^2} \right| \\ &
        \leq \left |\frac{W_{k,i}}{k} \right | + \left | \frac{W_{\ell, j}}{\ell} \right | + \left |\frac{\alpha_{k,i}}{k^2} \right| + \left| \frac{\alpha_{\ell,j}}{\ell^2}\right| \\
        &\leq \frac{1.1}{k}e^{-s^*_{k,i}\frac{\sqrt{3}}{2}} + \frac{1.1}{\ell}e^{-s^*_{\ell,j}\frac{\sqrt{3}}{2}} + \frac{0.60001}{k^2} + \frac{0.60001}{\ell^2} \quad \text{(by Lemma \ref{lem:exponential-W-bound})} \\
        &\leq 1.1e^{-\frac{\sqrt{3}}{2}\left(1.876 \frac{k}{\ell - k} - 0.1007 \right)}\left(\frac{1}{k} + \frac{1}{\ell}\right) + \frac{0.60001}{k^2} + \frac{0.60001}{\ell^2} \quad \text{(by \eqref{eqn:temp-s*ellj-lb})}\\
        &= \frac{\ell - k}{\ell k}\left[1.1e^{-\frac{\sqrt{3}}{2}\left(1.876 \frac{k}{\ell - k} - 0.1007 \right)}\left(\frac{2 + \frac{\ell - k}{k}}{\frac{\ell - k}{k}}\right) + \frac{0.60001}{\ell - k}\left(\frac{\ell}{k} + \frac{k}{\ell}\right)\right] \\
        &\leq \frac{\ell - k}{\ell k}\left(0.00001 + \frac{0.60001\left(1.1 + \frac{1}{1.1}\right)}{2}\right)
        \qquad \left( \text{since $\tfrac{k}{\ell - k} \geq 10$} \right)
        \\
        &< \frac{3(\ell-k)}{\ell k}.
    \end{align}
    
    This implies $\theta_{\ell, j} < \theta_{k,i}$, a contradiction. We thus have $\theta_{\ell, j} < \theta_{k, i}$ in all three cases, completing the proof.    
\end{proof}

\begin{lemma}\label{lem:actual-interlacing-upper-ineq-s-lb}
    Let $k \geq 1000$. Suppose that
    \begin{align}
        \theta^*_{k,i+1} + \frac{3(\ell - k)}{\ell k}<\theta^*_{\ell,j} 
        <
        \theta^*_{k,i} - \frac{3 (\ell-k)}{\ell k}, 
        \qquad 
        \theta_{\ell, j} \geq \theta_{k,i},
        \qquad \text{and} \qquad
        s^{**}_{k,i} < s^{**}_{\ell,j}.
    \end{align}
    Then 
    \begin{align}
        s^*_{\ell, j} > s^*_{k,i} > \frac{1.876k}{\ell - k} - 0.1007.
    \end{align}
\end{lemma}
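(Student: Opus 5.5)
The plan is to pass to the $s$-coordinates and combine the separation estimate of Lemma \ref{lemma:s-inequalities} with the reversed ordering $\theta_{\ell,j} \ge \theta_{k,i}$ of the actual zeros.

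\textbf{Step 1: applying Lemma \ref{lemma:s-inequalities}.} First I check its hypotheses. By assumption $\theta^*_{k,i+1} < \theta^*_{\ell,j} < \theta^*_{k,i}$. Hence
\begin{align}
|\theta^*_{k,i}-\theta^*_{\ell,j}| < \theta^*_{k,i}-\theta^*_{k,i+1} \le \tfrac{2\pi}{k}
\end{align}
by Corollary \ref{cor:sample-zero-2pi-k-distance-bound}. Also $s^{**}_{k,i} \neq s^{**}_{\ell,j}$, since we assume $s^{**}_{k,i} < s^{**}_{\ell,j}$. Lemma \ref{lemma:s-inequalities} then gives two conclusions with the sign of $s^{**}_{\ell,j}-s^{**}_{k,i}>0$:
\begin{align}
s^*_{\ell,j} - s^*_{k,i} > 2.078 > 0, \qquad s_{\ell,j} - s_{k,i} > 1.876.
\end{align}
The first of these is already the inequality $s^*_{\ell,j} > s^*_{k,i}$ in the statement.

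\textbf{Step 2: using the reversed ordering of actual zeros.} Since $\theta = \frac{2\pi}{3} - s/k$, the hypothesis $\theta_{\ell,j} \ge \theta_{k,i}$ is equivalent to $s_{\ell,j}/\ell \le s_{k,i}/k$, that is, $s_{\ell,j} \le \frac{\ell}{k} s_{k,i}$. Combining this with the second bound of Step 1 gives
\begin{align}
s_{k,i} + 1.876 < s_{\ell,j} \le \tfrac{\ell}{k}\, s_{k,i}.
\end{align}
Therefore $\frac{\ell-k}{k}\, s_{k,i} > 1.876$, i.e.\ $s_{k,i} > \frac{1.876k}{\ell-k}$.

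\textbf{Step 3: returning to the sample zero.} By Corollary \ref{cor:actual-zero-sample-zero-distance-bound},
\begin{align}
s^*_{k,i} - s_{k,i} = k(\theta_{k,i}-\theta^*_{k,i}) = W_{k,i} + \frac{\alpha_{k,i}}{k}.
\end{align}
Here $|W_{k,i}| \le 0.1$ and $|\alpha_{k,i}| \le 0.60001$, and $k \ge 1000$. So this difference is at least $-0.1 - 0.0006 > -0.1007$, which yields
\begin{align}
s^*_{k,i} > \frac{1.876k}{\ell-k} - 0.1007.
\end{align}

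I do not expect a real obstacle in this argument. The only point requiring care is Step 1: one must confirm that the hypotheses of Lemma \ref{lemma:s-inequalities} hold, in particular the $\frac{2\pi}{k}$ proximity, and that the lemma's sign conclusion applies to the actual-zero coordinates $s_{k,i}, s_{\ell,j}$ and not only to the sample ones.
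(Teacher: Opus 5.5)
Your proposal is correct and follows the paper's proof essentially step for step: you get the $\frac{2\pi}{k}$ proximity from Corollary \ref{cor:sample-zero-2pi-k-distance-bound}, apply Lemma \ref{lemma:s-inequalities} to obtain both $s^*_{\ell,j}>s^*_{k,i}$ and $s_{\ell,j}-s_{k,i}>1.876$, and combine the latter with $k s_{\ell,j}\le \ell s_{k,i}$ to get $s_{k,i}>\frac{1.876k}{\ell-k}$. You then pass back to $s^*_{k,i}$ via Corollary \ref{cor:actual-zero-sample-zero-distance-bound}, and the sign conclusion you flag for the actual-zero coordinates is exactly what the third claim of Lemma \ref{lemma:s-inequalities} states, so there is no gap.
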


\begin{proof}
    Since  $\theta^*_{k,i+1} + \frac{3(\ell-k)}{\ell k} <\theta^*_{\ell,j} 
        <
        \theta^*_{k,i} - \frac{3 (\ell-k)}{\ell k}$, we have $\left | \theta^*_{k,i} - \theta^*_{\ell,j}\right |\le\frac{2\pi}{k}$ by Corollary \ref{cor:sample-zero-2pi-k-distance-bound}.
    Observe that
    \begin{align}
         \theta_{\ell, j} \geq \theta_{k,i} 
         \quad\text{implies}\quad
         \frac{s_{\ell,j}}{\ell} \leq \frac{s_{k,i}}{k} 
         \quad\text{implies}\quad 
         ks_{\ell,j} \leq \ell s_{k,i}.
    \end{align}
    Hence by Lemma \ref{lemma:s-inequalities},
    \begin{align}
        1.876k &< ks_{\ell,j} - ks_{k,i} \leq (\ell - k)s_{k,i},
    \end{align}
    so that
    \begin{align}
        s_{k,i} > \frac{1.876k}{\ell - k}.
    \end{align}
    Since $s^{**}_{\ell,j} > s^{**}_{k,i}$ implies $s^*_{\ell,j} > s^*_{k,i}$ by Lemma \ref{lemma:s-inequalities}, using Corollary \ref{cor:actual-zero-sample-zero-distance-bound} we obtain
    \begin{align}
        s^*_{\ell,j} > s^*_{k,i} &= s_{k,i} +W_{k,i} + \frac{\alpha_{k,i}}{k}\\
        &\geq \frac{1.876k}{\ell - k} - 0.1 - \frac{0.60001}{k}\\
        &> \frac{1.876k}{\ell - k} - 0.1007, \qquad \text{(since $k \ge 1000$)}
    \end{align}
    as desired.
\end{proof}

\subsection{Proof of the lower bound of Proposition \ref{prop:stieltjes-interlacing-large-k}}

{
\renewcommand{\thetheorem}{\ref{prop:stieltjes-interlacing-large-k}}
\addtocounter{theorem}{-1}
\begin{proposition}[\textbf{lower bound}]
    For $k \ge 1000$, let
    \begin{align}
        \theta^*_{k,i+1} + \frac{3(\ell-k)}{\ell k}
        <
        \theta^*_{\ell,j} 
        <
        \theta^*_{k,i} - \frac{3 (\ell-k)}{\ell k}
    \end{align}
    be the sample zeros from Lemma \ref{lem:interlacing-sample-zeros-with-gap}. Then
    \begin{align}
        \theta_{\ell,j} > \theta_{k,i+1}.
    \end{align}
\end{proposition}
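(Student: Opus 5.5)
We mirror the proof of the upper bound, with the roles of $\theta_{k,i}$ and $\theta_{k,i+1}$ swapped. Write $\theta_{k,i+1} = \theta^*_{k,i+1} + \frac{W_{k,i+1}}{k} + \frac{\alpha_{k,i+1}}{k^2}$ and similarly for $\theta_{\ell,j}$, using Corollary \ref{cor:actual-zero-sample-zero-distance-bound}. Given the hypothesis $\theta^*_{\ell,j} - \theta^*_{k,i+1} > \frac{3(\ell-k)}{\ell k}$, it suffices to show
\begin{align}
\left|\frac{W_{k,i+1}}{k} + \frac{\alpha_{k,i+1}}{k^2} - \frac{W_{\ell,j}}{\ell} - \frac{\alpha_{\ell,j}}{\ell^2}\right| < \frac{3(\ell-k)}{\ell k}.
\end{align}
If $\ell \ge 1.1k$ this is immediate. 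In that case $\frac{3(\ell-k)}{\ell k} \ge \frac{0.27}{k}$, while the left side is at most $\frac{0.1}{k} + \frac{0.1}{\ell} + \frac{1.2001}{k^2}$. So we assume $\ell < 1.1k$.

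We split into cases on the comparison of $s^{**}_{k,i+1}$ and $s^{**}_{\ell,j}$.

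\textbf{Case $s^{**}_{k,i+1} = s^{**}_{\ell,j}$.} We need an analogue of Lemma \ref{lem:W-diff-bound} for the pair $(k,i+1),(\ell,j)$. Its proof goes through verbatim, since it only uses $|\theta^*_{\ell,j} - \theta^*_{k,i+1}| < \frac{2\pi}{k}$ (Corollary \ref{cor:sample-zero-2pi-k-distance-bound}) together with the equality of signs $(-1)^{i+1-\delta_k} = (-1)^{j-\delta_\ell}$. This gives $|W_{k,i+1} - W_{\ell,j}| < \frac{3.7}{k}$, and the same chain of estimates as in the upper bound yields a bound of $\frac{0.1(\ell-k) + 4.93}{\ell k} < \frac{3(\ell-k)}{\ell k}$.

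\textbf{Case $s^{**}_{k,i+1} < s^{**}_{\ell,j}$.} This is impossible. By Lemma \ref{lemma:s-inequalities} it would give $s^*_{k,i+1} < s^*_{\ell,j}$, hence $\frac{s^*_{k,i+1}}{k} < \frac{s^*_{\ell,j}}{k}$. We still need to pass from $\frac{s^*_{\ell,j}}{k}$ to $\frac{s^*_{\ell,j}}{\ell}$, and the direction is now unfavourable. Instead we use the gap $s^*_{\ell,j} - s^*_{k,i+1} > 2.078$, together with $\ell < 1.1k$ and the bound $s^*_{\ell,j} \le \frac{\pi}{6}\ell$. From these we get
\begin{align}
\frac{s^*_{\ell,j}}{\ell} - \frac{s^*_{k,i+1}}{k} \ge \frac{2.078}{k} - s^*_{\ell,j}\,\frac{\ell-k}{\ell k}.
\end{align}
This does not obviously have a sign, so this case may not be vacuous, and we fold it into the next case's argument.

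\textbf{Case $s^{**}_{k,i+1} > s^{**}_{\ell,j}$, and the remaining part of the previous case.} This is the genuinely new case and the main obstacle. Suppose for contradiction that $\theta_{\ell,j} \le \theta_{k,i+1}$, i.e.\ $\ell s_{k,i+1} \le k s_{\ell,j}$. The analogue of Lemma \ref{lem:actual-interlacing-upper-ineq-s-lb} would combine this with $s_{k,i+1} - s_{\ell,j} > 1.876$ from Lemma \ref{lemma:s-inequalities}. That gives $1.876k < k s_{k,i+1} - k s_{\ell,j} \le k s_{k,i+1} - \ell s_{k,i+1} < 0$, which is an immediate contradiction. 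So when $s^{**}_{k,i+1} > s^{**}_{\ell,j}$, the ordering $\theta_{\ell,j} > \theta_{k,i+1}$ follows directly from the $s$-gap, without any exponential estimate.

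Where the sign of $s^{**}$ goes the other way (the previous case, if not vacuous), we run the upper-bound argument. We get $s^*_{k,i+1} > \frac{1.876k}{\ell-k} - 0.1007$, then use Lemma \ref{lem:exponential-W-bound} to make $W_{k,i+1}$ and $W_{\ell,j}$ exponentially small. The left side of the key inequality is then bounded by $\frac{\ell-k}{\ell k}\,(0.00001 + 0.6)$, which is again less than $\frac{3(\ell-k)}{\ell k}$. The most delicate point is checking carefully which sign case yields the direct contradiction and which needs the exponential bound. I expect it to be exactly the mirror of the upper bound: the exponential argument is needed for $s^{**}_{k,i+1} < s^{**}_{\ell,j}$ and the direct $s$-gap contradiction for $s^{**}_{k,i+1} > s^{**}_{\ell,j}$.
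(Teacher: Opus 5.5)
There is a genuine gap, in the case $s^{**}_{k,i+1} < s^{**}_{\ell,j}$. Your case split matches the paper, and so do your treatments of the equality case and of $s^{**}_{k,i+1} > s^{**}_{\ell,j}$; your contradiction argument there is equivalent to the paper's direct chain $\frac{s_{k,i+1}}{k} > \frac{s_{\ell,j}}{k} > \frac{s_{\ell,j}}{\ell}$.

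The case $s^{**}_{k,i+1} < s^{**}_{\ell,j}$ genuinely occurs, so your opening claim that it is impossible is false, as you half-suspect. Here you propose to ``run the upper-bound argument'' under the contradiction hypothesis $\theta_{\ell,j}\le\theta_{k,i+1}$ to get $s^*_{k,i+1} > \frac{1.876k}{\ell-k}-0.1007$. In Lemma \ref{lem:actual-interlacing-upper-ineq-s-lb}, however, the contradiction hypothesis $\theta_{\ell,j}\ge\theta_{k,i}$ supplied the upper bound $k s_{\ell,j}\le \ell s_{k,i}$, i.e.\ $k(s_{\ell,j}-s_{k,i})\le(\ell-k)s_{k,i}$. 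Your hypothesis instead gives $k s_{\ell,j}\ge \ell s_{k,i+1}$, i.e.\ $k(s_{\ell,j}-s_{k,i+1})\ge(\ell-k)s_{k,i+1}$. This points the wrong way: combined with $s_{\ell,j}-s_{k,i+1}>1.876$ it gives no lower bound on $s_{k,i+1}$ at all. So the key step of the only non-trivial case is unjustified as written.

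The missing idea is that no contradiction hypothesis is needed; the lower bound comes from the given ordering of the sample zeros. Since $\theta^*_{\ell,j} > \theta^*_{k,i+1}$ and $|\theta^*_{\ell,j}-\theta^*_{k,i+1}|\le \frac{2\pi}{k}$, you have $k s^*_{\ell,j} < \ell s^*_{k,i+1}$. Lemma \ref{lemma:s-inequalities} gives $s^*_{\ell,j}-s^*_{k,i+1} > 2.078$. Hence $2.078k < k(s^*_{\ell,j}-s^*_{k,i+1}) < (\ell-k)s^*_{k,i+1}$, so $s^*_{\ell,j} > s^*_{k,i+1} > \frac{2.078k}{\ell-k}$ unconditionally. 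This is exactly Lemma \ref{lem:actual-interlacing-upper-ineq-s-lb-i+1}, which the paper uses.

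Your own displayed inequality already contains this bound. Its left-hand side is negative precisely by the hypothesis $\theta^*_{\ell,j} > \theta^*_{k,i+1}$, which forces $s^*_{\ell,j} > \frac{2.078\ell}{\ell-k}$ and then $s^*_{k,i+1} > \frac{k}{\ell}s^*_{\ell,j} > \frac{2.078k}{\ell-k}$. So you had the ingredients but read the display as sign-indeterminate instead of combining it with the hypothesis. With this bound, Lemma \ref{lem:exponential-W-bound} and $\frac{k}{\ell-k}\ge 10$ finish the case as you describe.
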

}
\begin{proof}
   Using Corollary \ref{cor:actual-zero-sample-zero-distance-bound}, the desired result is straightforward to prove if $\ell \geq 1.1 k$, so we assume $\ell < 1.1k$. It suffices to show
    \begin{align}
        \left |\frac{W_{k, i+1}}{k} + \frac{ \alpha_{k,i+1}}{k^2} - \frac{W_{\ell, j}}{\ell} - \frac{\alpha_{\ell, j}}{\ell^2} \right| < \frac{3 (\ell-k)}{\ell k}.
    \end{align}
    In the case of $s^{**}_{k,i+1}=s^{**}_{\ell,j}$, the proof is identical to the corresponding case in Proposition \ref{prop:actual-interlacing-upper-ineq} (upper bound), so we will not repeat it here.

    Suppose now $s^{**}_{k,i+1} > s^{**}_{\ell,j}$. By Lemma \ref{lemma:s-inequalities} we know $s_{k,i+1} >  s_{\ell,j}$. Since $k < \ell$ we conclude that
    \begin{align}
        \frac{s_{k,i+1}}{k} > \frac{s_{\ell,j}}{\ell},
    \end{align}
    or equivalently 
    \begin{align}
        \theta_{k,i+1} < \theta_{\ell,j},
    \end{align}
    as desired.

    Finally, suppose $s^{**}_{k,i+1} < s^{**}_{\ell,j}$. Then by Lemma \ref{lem:actual-interlacing-upper-ineq-s-lb-i+1}
    \begin{align}
        s^{*\*}_{\ell,j} > s^*_{k,i+1}>\frac{2.078k}{\ell - k},
    \end{align}
    so
    \begin{align} 
        &\left |\frac{W_{k, i+1}}{k} + \frac{\alpha_{k,i+1}}{k^2} - \frac{W_{\ell, j}}{\ell} - \frac{\alpha_{\ell, j}}{\ell^2} \right| \\
        &
        \leq \left |\frac{W_{k,i+1}}{k} \right | + \left | \frac{W_{\ell, j}}{\ell} \right | + \left |\frac{\alpha_{k,i+1}}{k^2} \right| + \left| \frac{ \alpha_{\ell,j}}{\ell^2}\right| \\
        &\leq \frac{1.1}{k}e^{-s^*_{k,i+1}\frac{\sqrt{3}}{2}} + \frac{1.1}{\ell}e^{-s^*_{\ell,j}\frac{\sqrt{3}}{2}} + \frac{0.60001}{k^2} + \frac{0.60001}{\ell^2} \qquad \text{(by Lemma \ref{lem:exponential-W-bound})} \\
        &\leq 1.1e^{-\frac{\sqrt{3}}{2}\left(2.078 \frac{k}{\ell - k}\right)}\left(\frac{1}{k} + \frac{1}{\ell}\right) + \frac{0.60001}{k^2} + \frac{0.60001}{\ell^2} \qquad \text{(by Lemma \ref{lem:actual-interlacing-upper-ineq-s-lb-i+1})} \\
        &= \frac{\ell - k}{\ell k}\Bigg[1.1e^{-\frac{\sqrt{3}}{2}\left(2.078 \frac{k}{\ell - k} \right)}\left(\frac{2 + \frac{\ell - k}{k}}{\frac{\ell - k}{k}}\right) + \frac{0.60001}{\ell - k}\left(\frac{\ell}{k} + \frac{k}{\ell}\right)\Bigg] \\
        &\leq \frac{\ell - k}{\ell k}\left(0.00001 + \frac{0.60001\left(1.1 + \frac{1}{1.1}\right)}{2}\right) \qquad \left(\text{since $\tfrac{k}{\ell - k} \geq 10$}\right)
        \\
        &< \frac{3(\ell-k)}{\ell k}.
    \end{align}

    We thus have $\theta_{k,i+1} < \theta_{\ell, j}$ in all three cases, completing the proof.
\end{proof}

We now prove the inequality we assumed:
\begin{lemma}\label{lem:actual-interlacing-upper-ineq-s-lb-i+1}
    Let $k\ge1000$. Suppose
    \begin{align}
         \theta^*_{k,i+1} + \frac{3(\ell-k)}{\ell k}
        <
        \theta^*_{\ell,j} 
        < \theta^*_{k,i} - \frac{3(\ell-k)}{\ell k}\qquad\text{and}\qquad s^{**}_{\ell,j}>s^{**}_{k,i+1}.
    \end{align}
    Then,  
    \begin{align}
        s^{*\*}_{\ell,j} > s^*_{k,i+1}>\frac{2.078k}{\ell-k}.
    \end{align}
\end{lemma}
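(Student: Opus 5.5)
The plan is to mirror the proof of Lemma \ref{lem:actual-interlacing-upper-ineq-s-lb}, but to work directly with the sample coordinates $s^*$ instead of the actual coordinates $s$. This removes the $-0.1007$ loss and produces the constant $2.078$ of Lemma \ref{lemma:s-inequalities}.

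\textbf{Step 1: the hypothesis of Lemma \ref{lemma:s-inequalities}.} The assumption gives $\theta^*_{k,i+1} < \theta^*_{\ell,j} < \theta^*_{k,i}$. By Corollary \ref{cor:sample-zero-2pi-k-distance-bound}, $\theta^*_{k,i}-\theta^*_{k,i+1}\le \frac{2\pi}{k}$, so
\[
\left|\theta^*_{\ell,j}-\theta^*_{k,i+1}\right| \le \frac{2\pi}{k}.
\]
Since $s^{**}_{\ell,j}\neq s^{**}_{k,i+1}$ by hypothesis, Lemma \ref{lemma:s-inequalities} applies to the pair $(k,i+1)$, $(\ell,j)$. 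Its second claim, together with the sign statement, gives
\[
s^*_{\ell,j}-s^*_{k,i+1} > 2.078,
\]
with the same sign as $s^{**}_{\ell,j}-s^{**}_{k,i+1}>0$. This already proves the first inequality $s^*_{\ell,j}>s^*_{k,i+1}$.

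\textbf{Step 2: converting the angular ordering.} From $\theta^*_{\ell,j} > \theta^*_{k,i+1}$ (which follows from the strict inequality with the positive gap $\frac{3(\ell-k)}{\ell k}$) we get $\frac{2\pi}{3}-\theta^*_{\ell,j} < \frac{2\pi}{3}-\theta^*_{k,i+1}$. That is,
\[
\frac{s^*_{\ell,j}}{\ell} < \frac{s^*_{k,i+1}}{k}, \qquad\text{equivalently}\qquad k\,s^*_{\ell,j} < \ell\, s^*_{k,i+1}.
\]

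\textbf{Step 3: combining.} Multiplying Step 1 by $k$ and using Step 2,
\[
2.078k < k\,s^*_{\ell,j}-k\,s^*_{k,i+1} < (\ell-k)\,s^*_{k,i+1}.
\]
Dividing by $\ell-k>0$ yields $s^*_{k,i+1} > \frac{2.078k}{\ell-k}$, which is the claim.

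There is no real obstacle here. The only point needing care is checking that Lemma \ref{lemma:s-inequalities} is applicable: this requires the $\frac{2\pi}{k}$ proximity of the two sample zeros from Corollary \ref{cor:sample-zero-2pi-k-distance-bound}, and the inequality $s^{**}_{\ell,j}\ne s^{**}_{k,i+1}$, which is exactly the case hypothesis.
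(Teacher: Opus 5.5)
Your proof is correct and follows essentially the same route as the paper. Both arguments obtain the $\frac{2\pi}{k}$ proximity from Corollary \ref{cor:sample-zero-2pi-k-distance-bound}, apply Lemma \ref{lemma:s-inequalities} (with its sign statement) to get $s^*_{\ell,j}-s^*_{k,i+1}>2.078$, and combine this with $k\,s^*_{\ell,j}<\ell\,s^*_{k,i+1}$ from the angular ordering to conclude $s^*_{k,i+1}>\frac{2.078k}{\ell-k}$.
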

\begin{proof}
    Since $\theta^*_{k,i+1} + \frac{3(\ell-k)}{\ell k}
        <
        \theta^*_{\ell,j} 
        < \theta^*_{k,i} - \frac{3(\ell-k)}{\ell k}$, we have  $\left | \theta^*_{k,i+1} - \theta^*_{\ell,j}\right |\le\frac{2\pi}{k}$ by Corollary \ref{cor:sample-zero-2pi-k-distance-bound}. Observe that 
    \begin{align}
        \theta^*_{\ell, j} > \theta^*_{k,i+1} 
       \quad&\text{implies}\quad
       \tfrac{s^*_{\ell,j}}{\ell} < \tfrac{s^*_{k,i+1}}{k},\qquad\text{and}\\
       s^{**}_{\ell,j}>s^{**}_{k,i+1} \hspace{4mm}&\text{implies}\hspace{4mm}s^{*}_{\ell,j}>s^{*}_{k,i+1}\qquad(\text{by Lemma \ref{lemma:s-inequalities}}).
    \end{align}
    Hence, by Lemma \ref{lemma:s-inequalities},
    \begin{align}
       2.078k &< ks^*_{\ell, j} - ks^*_{k,i+1} < \left(\ell - k\right)s^*_{k,i+1} 
    \end{align}
    so that
    \begin{align}
        s^*_{\ell, j} >s^*_{k,i+1}>\frac{2.078k}{\ell-k}
    \end{align}
    as desired.
\end{proof}

\section*{Acknowledgments}
This research was supported by NSF grant DMS-2349174. Hui Xue is supported by Simons Foundation grant MPS-TSM-00007911.

\bibliographystyle{plain} 
\bibliography{bib.bib}

\end{document}